\newtheorem{definition}{Definition}
\newtheorem{theorem}{Theorem}[section]
\newtheorem{lemma}[theorem]{Lemma}
\newtheorem{problem}{Problem}
\newtheorem{example}{Example}
\newtheorem{corollary}{Corollary}
\newtheorem{remark}{Remark}
\newtheorem{prop}{Proposition}
\newtheorem{fact}{Fact}
\newtheorem{claim}{Claim}
\title{\bf \Large Spectral bounds for vertex-weighted Laplacians of simplicial complexes}
\author{{Yueli Han, \ \ Lu Lu\footnote{Corresponding author.}\setcounter{footnote}{-1}\footnote{\emph{E-mail address:} lulumath@csu.edu.cn (L. Lu).}}\\[2mm]
	\small School of Mathematics and Statistics, Central South University,\\
	\small Changsha, Hunan, 410083, China\\
}
\date{ }
\begin{document}
	\maketitle
	\begin{abstract}
	The vertex-weighted Laplacian naturally extends the combinatorial Laplacian for simplicial complexes. Inspired by Lew’s foundational techniques for vertex-weighted Laplacians, we present a comprehensive spectral analysis of this operator. First, we determine how basic operations, including joins, complements, and Alexander duals, affect its spectrum. This yields a sharp upper bound on the spectral radius in terms of vertex weights, along with a lower bound on the multiplicity at which this bound is attained. Second, we establish a sharp lower bound for the spectral gap and characterize when the equality holds. Third, explicit lower bounds for the remaining eigenvalues are derived, linking the vertex-weighted Laplacian spectrum to that of a related weighted graph. Finally, we reveal new spectral relations between a simplicial complex and its subcomplexes. These results not only generalize numerous known theorems on combinatorial Laplacians but also provide deeper spectral insights into simplicial structures, ultimately unifying and extending a broad range of earlier work in this field.
		\end{abstract}
	
	{{\bf Keywords:}  simplicial complex, vertex-weighted Laplacians, spectra }
	
	{{\bf 2020 Mathematics Subject Classification.}  05C50, 05C65, 05E45.}

\tableofcontents	

	\section{Introduction}
	This paper studies the spectrum of the Laplace operator on simplicial complexes endowed with vertex weights. We begin by recalling some relevant historical developments.
	
	The study of combinatorial Laplacians on simplicial complexes originates from Kirchhoff's pioneering work \cite{K1847} on graph Laplacians. In his investigation of electrical networks, Kirchhoff established the celebrated matrix tree theorem connecting spanning trees to Laplacian determinants, and introduced the fundamental operator now known as the graph Laplacian. Concretely, for a real-valued function $f$ defined on the vertices of a graph, the Laplacian $L$ acts on $f$ according to
	\[
	(Lf)(v) = \deg(v)f(v) - \sum_{u \sim v} f(u),
	\]
	where $\deg(v)$ denotes the degree of the vertex $v$, and $u \sim v$ indicates that $u$ is adjacent to $v$. This foundational work lay dormant for nearly a century until Fiedler's breakthrough \cite{F1973}  in 1973, when he investigated the relationship between the smallest nonzero eigenvalue and the connectivity of a graph, sparking renewed interest in the spectral properties of graphs. Prior to Fiedler’s work, graph properties were primarily studied through the eigenvalues of adjacency matrices. Fiedler’s results shifted the focus toward the Laplacian operator, prompting extensive research on its spectral properties. For instance, Grone and Merris \cite{GM1990} investigated the spectrum of the graph Laplacian, emphasizing the multiplicities of integer eigenvalues and the effects of various modifications of the graph $G$ on its spectrum. In a subsequent work \cite{GM1994}, they further explored Laplacian integral graphs and established connections between degree sequences and the Laplacian spectrum via majorization. Later, Zhang \cite{Z2004} characterized all connected bipartite graphs whose third largest Laplacian eigenvalue is less than three. For further references and surveys on the spectrum of graph Laplacians, we refer the reader to \cite{HZ2005,AW2008,M1991,M1994}. 
	
In 1944, Eckmann \cite{E1944} made a pioneering contribution by extending the concept of Laplacians from graphs to simplicial complexes,
thereby defining higher order combinatorial Laplacians.  As part of this work, he formulated the combinatorial Hodge theorem, which gives a discrete analogue of the classical Hodge decomposition:
\[
\operatorname{ker}\left(d_i^* d_i + d_{i+1} d_{i+1}^*\right) = \widetilde{H}^i(X, \mathbb{R}),
\]
thus identifying the kernel of the combinatorial Laplacian
\[
L_i = d_i^*d_i + d_{i+1}d_{i+1}^*
\]
with the reduced cohomology of the complex. Building on this observation, Friedman \cite{F1996} proposed that the cohomology group $\widetilde{H}^i(X; \mathbb{R})$ can be efficiently approximated via the spectral decomposition of the combinatorial Laplacian $L_i$. In particular, for certain combinatorially significant families of complexes, notably the chessboard complexes, the spectrum of $L_i$ was observed to be entirely integral. This integrality was subsequently established rigorously by Friedman and Hanlon \cite{FP1998}, which in turn motivated further investigation of integral spectra for combinatorial Laplacians for other well-known families of simplicial complexes, including matching complexes \cite{DW2002} and matroid complexes \cite{KRS2000}.

Inspired by the normalized Laplacian on graphs, several efforts have been made to construct normalized analogues of the combinatorial Laplacian on simplicial complexes.  Chung \cite{C1993} introduced a normalized Laplacian of the form  
$\delta^{*}\delta + \rho\, \delta\delta^{*}$, where $\rho>0$ is a constant; however, no spectral upper bounds for this operator are known.  Later, Taszus \cite{T2010} proposed a different normalization, given in matrix form by  $D^{-1/2} L_iD^{-1/2}$, where $L_i$ represents the combinatorial Laplacian, and $D$ is the diagonal matrix of $L_i$.  As with Chung’s operator, the spectral upper bounds of this normalized form also remain undetermined. Lu and Peng \cite{LP2011} defined a normalized Laplacian on a hypergraph $H$ by
$L_s(H)=\Delta(G_H^s)$, where $G_H^s$ denotes the $(s-1)$-dual graph of $H$. While this construction is natural from the perspective of random walks, it has intrinsic limitations, as it does not fully reflect the higher-order relationships among hyperedges. To overcome these shortcomings, Horak and Jost \cite{HJ2013}, inspired by weighted graph Laplacians, developed a general framework for weighted Laplacians, providing a systematic definition of normalized Laplacians together with spectral upper bounds. In this framework, a weight function is assigned to each simplex of a simplicial complex, and the induced inner product on cochain groups is used to define generalized weighted Laplacians. By selecting different weight functions, one recovers, as special cases, the combinatorial Laplacian (when $\omega \equiv 1$), the normalized Laplacian, the vertex-weighted Laplacian, and several other variants of Laplacian operators. 

The combinatorial Laplacian on simplicial complexes has been extensively studied. In particular, after establishing the integrality of the combinatorial Laplacian spectra for chessboard complexes \cite{FP1998}, matroid complexes \cite{KRS2000}, and matching complexes \cite{DW2002}, Duval and Reiner \cite{DR2002} further extended these results to shifted simplicial complexes, proving that their combinatorial Laplacian spectra are integral and that the nonzero eigenvalues coincide with the conjugate partition of the degree sequence of the vertices. In addition, they established both upper and lower bounds for the spectral radius of the combinatorial Laplacian. Subsequently, Aharoni, Berger, and Meshulam \cite{ABM2005} investigated the relationship between the smallest eigenvalues of the combinatorial Laplacians of successive orders on the clique complex, and established a lower bound for the smallest eigenvalue of $L_i$. In 2020, Lew \cite{L20202} and Shukla--Yogeshwaran \cite{SY2020} independently extended the work of Aharoni et al. to general simplicial complexes, with the latter also establishing relationships between the smallest eigenvalues of the combinatorial Laplacians of a simplicial complex and those of its subcomplexes. Meanwhile, Lew \cite{L2020} generalized the lower bound of \cite{ABM2005} to arbitrary simplicial complexes. Recently, Zhan, Huang, and Lin \cite{ZHL2026} resolved the conjecture posed by Lew in \cite{L2020}, characterizing the simplicial complexes for which the smallest eigenvalue of the $L_i$ attains the aforementioned lower bound. For more advanced developments on the combinatorial Laplacian of simplicial complexes, see \cite{G2002,ZHZ2025, ZHL2025, FS2025, L20251, L20252}.

In 2024, Lew \cite{L2024} introduced a new class of Laplacian operators, known as vertex–weighted Laplacians, by imposing specific structural conditions on the weight functions of simplicial complexes, and derived a lower bound for all eigenvalues of the vertex-weighted Laplacians of independence complexes, expressed in terms of the eigenvalues of the weighted graph Laplacian. Despite the growing interest in simplicial complex Laplacians, the vertex--weighted Laplacian has been comparatively underexplored. Since it encompasses the combinatorial Laplacian as a special case, a natural question is whether the spectral properties established for combinatorial Laplacians on simplicial complexes also hold for the vertex--weighted Laplacian. 

Motivated by this question and the foundational work of Lew, in this paper we investigate several fundamental spectral properties of vertex–weighted Laplacians on simplicial complexes. Our study is significant for multiple reasons. First, it generalizes well-known results for combinatorial Laplacians to a broader, weighted  Laplacians, thereby providing a unified framework that encompasses both combinatorial and vertex–weighted operators. Second, it offers new insights into the interplay between the weight functions of simplicial complexes and their Laplacian spectra, which has implications for spectral gap estimates, eigenvalue bounds, and cohomological properties. Compared with previous studies that primarily focused on specific classes of simplicial complexes or on combinatorial Laplacians, we consider the vertex–weighted Laplacian in full generality. To investigate the eigenvalues and spectral gaps of these operators, we employ a variety of established tools and techniques—including spectral analysis of self-adjoint operators, additive compound matrices, and high-dimensional Laplacians—and implement certain modifications to some of them to better suit the vertex–weighted setting. In particular, we examine how various constructions, such as joins, complements, and Alexander duals, influence the spectra of vertex–weighted Laplacians. Building on this framework, we establish conditions under which eigenvalue bounds and spectral gap estimates, previously known for combinatorial Laplacians, remain valid in the vertex–weighted context. Moreover, we provide explicit constructions and examples of simplicial complexes that attain certain spectral bounds, and we characterize the families of complexes for which the lower bounds of the spectral gap are realized. Finally, we extend our analysis to the relationship between the spectra of a simplicial complex and those of its subcomplexes, yielding new results concerning the $i$-smallest eigenvalues and upper bounds on the dimensions of the corresponding cohomology groups.

\section{Notations and preliminaries}

\subsection{Notations, definitions and the weighted Laplacian operator}
Let $V$ be a finite set. An \textit{abstract simplicial complex} (or simply \textit{complex}) $X$ on $V$ is a collection of the subsets of $V$ closed under taking subsets; that is, if $F\in X$ and $F'\subseteq F$, then $F'\in X$.  A $\textit{subcomplex}$ of $X$ is a subset of $X$ that itself forms a complex under the same closure property. An \textit{$k$-simplex} (or \textit{$k$-face}) in $X$ is an element of $X$ with cardinality $k+1$, and its \textit{dimension} is defined as its cardinality minus one. The \textit{dimension} of $X$, denoted by $\operatorname{dim}(X)$, is the maximum dimension of all simplices of $X$. A \textit{missing face} of $X$ is a subset $\sigma \subseteq V$ such that $\sigma \notin X$ while every proper subset $\tau \subsetneq \sigma$ lies in $X$. Denote by $h(X)$ the maximal dimension among all missing faces of $X$. 

Two abstract simplicial complexes $X_1$ and $X_2$ are said to be \textit{isomorphic} if there is a bijective correspondence $f$ mapping the vertex set of $X_1$ to the vertex set of $X_2$ such that $\left\{v_0, v_1, \dots, v_k\right\} \in X_1$ if and only if $\left\{f\left(v_0\right), f\left(v_1\right), \dots, f\left(v_k\right)\right\} \in X_2$ for any $k$.

Let $X(k)$ be the set of all $k$-simplices of $X$ for $k \ge 0$, and write $f_k(X) \colon= |X(k)|$ for its cardinality.
The \textit{$p$-skeleton} $X^{(p)}$ is the subcomplex  containing all simplices of dimension at most $p$.  Specifically, the $1$-skeleton $X^{(1)}$ forms a graph with vertex set $V(X)$ and edge set $X(1)$. We refer to this graph as the \textit{underlying graph} of $X$, and denote it by $G_{X}$. A simplicial complex $X$ is called a \textit{clique complex} if it consists of all cliques of its underlying graph $G_X$. Note that $h(X)=1$ if and only if $X$ is the clique complex of its underlying graph $G_X$ (the missing faces of $X$ are the edges of the complement of $G_X$). For each $k$-simplex $\sigma$, define $\sigma(k-1) \colon= \{\tau \in X(k-1) \colon \tau \subseteq \sigma\}$,
called the \textit{boundary} of $\sigma$. Further, define $N^{+}_X(\sigma)\colon=\left\{\tau\in X(k+1)\colon\sigma\subseteq\tau\right\}$, 
called the \textit{upper neighborhood} of $\sigma$; its cardinality is referred to as the \textit{upper degree} of $\sigma$, denoted by $\deg^{+}_X(\sigma)$.
The \textit{link} of $\sigma$ is defined by
$\operatorname{lk}_X(\sigma)\colon= \{v \in V \setminus \sigma \colon \sigma \cup \{v\} \in X\}$.
For any $\sigma, \tau\in X(k)$, let $\sigma\triangle\tau\colon=\left(\sigma\setminus\tau\right)\cup\left(\tau\setminus\sigma\right)$ denote their symmetric difference, and write $\sigma\sim \tau$ if $\left|\sigma\cap \tau\right|=k$ and $\sigma\cup \tau\notin X(k+1)$.

A simplex $\sigma$ is said to be \textit{oriented} if its vertices are assigned a linear ordering, denoted by $[\sigma]$. Two orderings of the vertices of $\sigma$ define the \textit{same orientation} if they differ by an even permutation; otherwise, they induce \textit{opposite} orientations. An $\textit{orientation}$ of $X$ is defined as a choice of orientation for each simplex in $X$. In this paper, we fixed a linear ordering for $V$ as the orientation of $X$. For any $\sigma,\tau\in X(k)$ with $|\sigma \cap \tau|=k$, we define the \textit{sign} of $\sigma \cap \tau$ in $\sigma$ by
$\operatorname{sgn}(\sigma \cap \tau,\sigma)=(-1)^{j-1}$, where $j$ denotes the position (with respect to the fixed linear order on $V$) of the vertex of $\sigma$ that is removed to obtain $\sigma \cap \tau$.

The \textit{$k$-chain group} $C_k(X,\mathbb{R})$ is the real vector space formally generated by all oriented $k$-simplices $\sigma$ as a basis. We may assume that $\emptyset \in X$, called the \textit{empty simplex }with dimension $-1$. Then $(-1)$-chain group $C_{-1}(X,\mathbb{R})$ is the $1$-dimensional
$\mathbb{R}$-vector space generated by the singleton set $\{\emptyset\}$, yielding
$C_{-1}(X,\mathbb{R})\cong \mathbb{R}$. For each integer $0\le k\le \operatorname{dim} (X)$, the \textit{boundary map} $\partial_k\colon C_k(X, \mathbb{R}) \rightarrow C_{k-1}(X,\mathbb{R})$ acts on oriented $k$-simplices $\left[v_0, \dots, v_k\right]\in X$ via:
\[
\partial_k\left(\left[v_0, \dots, v_k\right]\right)=\sum_{j=0}^k(-1)^j\left[v_0, \dots, \hat{v}_j, \dots, v_k\right],
\]
where $\hat{v}_j$ indicates vertex deletion. This operator extends linearly to all chains in
$C_k(X, \mathbb{R})$. It is well known that the fundamental identity $\partial_k \circ \partial_{k+1}=0$ holds for all $0\le k<\operatorname{dim}(X)-1$. The \textit{$k$-th reduced homology group} of $X$ is given by \[\tilde{H}_k(X,\mathbb{R})\colon=\operatorname{Ker}\partial_k / \mathrm{im} \partial_{k+1}.\] We call the $k$-th reduced homology $\tilde{H}_k(X,\mathbb{R})$ \textit{vanishing} if $\tilde{H}_k(X,\mathbb{R}) = 0$.

The \textit{$k$-cochain groups $C^k(X, \mathbb{R})$} are defined as dual space of the $k$-chain groups $C_k(X,\mathbb{R})$, i.e., $C^k(X, \mathbb{R})\colon= \operatorname{Hom}(C_k(X,\mathbb{R}), \mathbb{R})$. The basis of $C^k(X, \mathbb{R})$ is given by the set of all functions $e_{\sigma}$ such that
\[e_{\sigma}\left(\tau\right)= \begin{cases}1 & \text { if }\tau=\sigma, \\ 0 & \text { otherwise,}\end{cases}\]
for all oriented $k$-simplices $\sigma$.
The functions $e_{\sigma}$ are also known as \textit{elementary $k$-cochains}.  Note that the one-dimensional vector space $C^{-1}(X, \mathbb{R})$ is generated by the identity function on the empty simplex, yielding
$C^{-1}(X,\mathbb{R})\cong C_{-1}(X,\mathbb{R})\cong \mathbb{R}$. For each integer $-1\le k\le \operatorname{dim} (X)-1$,
we define \textit{the simplicial coboundary map} $d_k : C^k(X, \mathbb{R})\rightarrow C^{k+1}(X, \mathbb{R})$ as the dual of the boundary map $\partial_{k+1}$. Specifically, for any $\phi\in C^k(X, \mathbb{R})$ and  $\sigma\in X(k+1)$, 
\[	\left(d_k (\phi)\right)\left(\sigma\right)=\sum_{j=0}^{k+1}(-1)^j \phi \left(\sigma_j\right),\]
where $\sigma_j$ denote the oriented $k$-simplex obtained from $\sigma$ by deleting its $(j+1)$-st vertex.
That is, $d_k (\phi)=\phi \circ\partial_{k+1}$. The coboundary maps $d_k$ are the connecting maps in \textit{the augmented cochain complex} of $K$ with coefficients in $\mathbb{R}$, i.e., the sequence of vector spaces and linear transformations
	\[\cdots \stackrel{\delta_{i+1}}{\longleftarrow} C^{i+1}(X, \mathbb{R}) \stackrel{\delta_i}{\longleftarrow} C^i(X, \mathbb{R}) \stackrel{\delta_{i-1}}{\longleftarrow} \cdots \longleftarrow C^{-1}(X, \mathbb{R}) \longleftarrow 0,\]
	It is well-known that $d_{k} d_{k-1} = 0$, so that the image of $d_{k-1}$ is contained in the kernel of $d_{k}$. 
	This allows us to define \textit{the $k$-th reduced cohomology group} for every $k \ge 0$ by
	\[
	\widetilde{H}^k(X;\mathbb{R})\colon= \ker d_k / \operatorname{im} d_{k-1}.
	\]
	Similarly, we say that $\widetilde{H}^k(X, \mathbb{R})$ is \textit{vanishing} if $\widetilde{H}^k(X, \mathbb{R}) = 0$. Moreover, as vector spaces, $\widetilde{H}^k(X;\mathbb{R})\cong \widetilde{H}_k(X;\mathbb{R})$.

Let $\omega \colon X \rightarrow \mathbb{R}_{>0}$ be a positive weight function on $X$, assigning a positive weight to each simplex. The positive weight function induces an inner product on each cochain group 
	$C^k(X;\mathbb{R})$ by declaring the basis elements to be orthogonal and setting
	\[
	\langle e_\sigma, e_\tau \rangle =
	\begin{cases}
		w(\sigma), & \text{if } \sigma = \tau,\\[2mm]
		0,          & \text{otherwise},
	\end{cases}
	\]
	for all $\sigma, \tau \in X(k)$. 
	The inner product on arbitrary cochains is then obtained by extending this rule bilinearly to all of $C^k(X;\mathbb{R})$.
Under the inner products $\left\langle , \right\rangle_{C^k}$ and $	\left\langle , \right\rangle_{C^{k+1}}$ on $C^k(X, \mathbb{R})$ and $C^{k+1}(X, \mathbb{R})$, respectively, the \textit{adjoint} $d_k^{\omega *}: C^{k+1}(X, \mathbb{R}) \rightarrow C^k(X, \mathbb{R})$ of the coboundary operator $d_i$ is defined by the adjoint relation
\[\left\langle d_k f_1, f_2\right\rangle_{C^{k+1}}=\left\langle f_1, d^{\omega *}_k f_2\right\rangle_{C^k}\]
for all $f_1 \in C^k(X, \mathbb{R})$ and $f_2 \in C^{k+1}(X, \mathbb{R})$. 

Given a positive weight function $\omega$, the following three fundamental operators on $C^{k}(X,\mathbb{R})$ are defined (see \cite{HJ2013}):
\begin{itemize}
	\item The \textit{weighted $k$-dimensional up-Laplacian operator} $L^{\omega \operatorname{up}}_k(X)\colon=d_k^{\omega*} d_k$;
	\item The \textit{weighted $k$-dimensional down-Laplacian operator} $L^{\omega \operatorname{down}}_k(X)\colon=d_{k-1} d_{k-1}^{\omega*}$;
	\item The \textit{weighted $k$-dimensional Laplacian operator} $L^{\omega}_k(X)=d_k^{\omega*} d_k+d_{k-1} d_{k-1}^{\omega*}$.
\end{itemize}
The \textit{$k$-th weighted spectral gap} of $X$, is the smallest eigenvalue of $L^{\omega}_k(X)$. Define the spectra $\mathbf{s}_k^{\omega \operatorname{up}}\left(X\right)$, $\mathbf{s}_k^{\omega \operatorname{down}}\left(X\right)$ and $\mathbf{s}^{\omega}_k\left(X\right)$ of operators $L_k^{\omega \operatorname{up}}\left(X\right)$, $L_k^{\omega\operatorname{down}}\left(X\right)$ and  $L^{\omega}_k\left(X\right)$ as the weakly decreasing rearrangements of their eigenvalues, respectively. Since both $L_k^{\omega \operatorname{up}}\left(X\right)$ and $L_k^{\omega\operatorname{down}}\left(X\right)$ are of the form $\phi^*\phi$, these operators are self-adjoint and positive semidefinite, as is their sum $L^{\omega}_k\left(X\right)$, which implies that all eigenvalues are real and nonnegative. The nilpotency conditions $d_kd_{k-1} = 0$ and $d^{\omega *}_{k-1}d^{\omega *}_{k} = 0$ yield the orthogonal relations
\[
L_k^{\omega \operatorname{up}}\left(X\right) \circ L_k^{\omega\operatorname{down}}\left(X\right) = 0 =L_k^{\omega\operatorname{down}}\left(X\right) \circ L_k^{\omega \operatorname{up}}\left(X\right) ,
\]
which implies that the nonzero spectrum of $L^{\omega}_k\left(X\right)$ is exactly the union (as multisets) of the nonzero spectra of $L_k^{\omega \operatorname{up}}\left(X\right)$ and $L_k^{\omega\operatorname{down}}\left(X\right)$, i.e.,
\begin{equation}\label{eq-01}
	\mathbf{s}_k^{\omega }\left(X\right) \stackrel{\circ}{=} 	\mathbf{s}_k^{\omega \operatorname{down}}\left(X\right)\stackrel{\circ}{\cup}  \mathbf{s}_{k}^{\omega \operatorname{up}}\left(X\right),
\end{equation}
where the symbol $\stackrel{\circ}{\cup}$ denotes the multiset union (i.e., the union in which multiplicities are taken into account), and symbol $\stackrel{\circ}{=}$ denotes equality of multisets up to zero eigenvalues, that is, the nonzero parts (counted with multiplicities) coincide. 
Moreover,
it is a standard result in linear algebra that for any linear operator $\phi$, the nonzero eigenvalues (including multiplicities) of $\phi^*\phi$ coincide with those of $\phi\phi^*$. Applying this to the boundary operator $d_k$, we obtain the following multiset equality up to zero eigenvalues:
\begin{equation}\label{eq-05}
	\mathbf{s}_k^{\omega \operatorname{down}}\left(X\right) \stackrel{\circ}{=} \mathbf{s}_{k-1}^{\omega \operatorname{up}}\left(X\right).
\end{equation}

Depending on the choice of the weight function, the weighted Laplacian encompasses several classical Laplacians on simplicial complexes. In particular:

\begin{itemize}
	\item If $\omega \equiv 1$, the weighted Laplacian coincides with the \textit{combinatorial Laplacian operator}, denoted by $L_k(X)$,  which has been extensively studied in \cite{F1996,DR2002,FWW2024,DW2002,ZHL2026,L2020,SY2020,KRS2000}.
	
	\item If $\omega$ is defined by
	\[
	\omega(\sigma)=
	\begin{cases}
		\displaystyle \sum_{v\in \operatorname{lk}_X(\sigma)} w(\sigma\cup\{v\}), & \text{if } \dim(\sigma)<\dim(X),\\[1mm]
		1, & \text{otherwise}
	\end{cases}
	\quad \text{for all } \sigma \in X,
	\]
	it yields the \textit{normalized combinatorial Laplacian operator}, as discussed in \cite{HJ2013,
		SWF2025}.
	
	\item If $\omega$ is defined by
	\[
	\omega(\sigma)=
	\begin{cases}
		\displaystyle \prod_{v\in \sigma}\omega(v), & \text{if } \sigma \neq \emptyset,\\[1mm]
		1, & \text{otherwise}
	\end{cases}
	\quad \text{for all } \sigma \in X,
	\]
	the resulting weighted Laplacian is called the \textit{vertex-weighted Laplacian operator}, which was introduced in \cite{L2024}. 
	The positive weight function defined in this way is called a \textit{vertex weight function}.  A simplicial complex $X$ together with a vertex weight function $\omega$ forms an ordered pair $(X, \omega)$, referred to as a \textit{vertex-weighted simplicial complex}.
\end{itemize}
Observe that the combinatorial Laplacian operator is a special case of the vertex-weighted Laplacian, obtained by setting $\omega(v)=1$ for all vertices $v$. In this paper, we mainly study the vertex-weighted Laplacian operator of simplicial complexes.

\subsection{Spectral properties of self-adjoint operators}
	For any linear operator on an $n$-dimensional vector space (or any $n\times n$ matrix) $\mathcal{A}$, let $\operatorname{m}_{\mathcal{A}}(\lambda)$ denote the multiplicity of an eigenvalue $\lambda$ of $\mathcal{A}$.  We write $\lambda_k^{\uparrow}(\mathcal{A})$ and $\lambda_k^{\downarrow}(\mathcal{A})$ for the $k$-th smallest and $k$-th largest eigenvalues of $\mathcal{A}$, respectively.
	For  $1 \le i \le n$, define
	\[
	S_i(\mathcal{A})
	\colon=
	\left\{
	\sum_{j\in J} \lambda_j^{\uparrow}(\mathcal{A})
	\colon
	J \subseteq [n],\ |J| = i
	\right\},
	\]
	the set of all sums of $i$ eigenvalues of $\mathcal{A}$. The elements of $S_i(\mathcal{A})$ can then be ordered, and we denote the $k$-th smallest and $k$-th largest elements by $S_{i,k}^{\uparrow}(\mathcal{A})$ and $S_{i,k}^{\downarrow}(\mathcal{A})$, respectively.

The following classical result of Weyl asserts that the spectrum of the sum of two symmetric matrices can be controlled by the spectra of the summands.
\begin{theorem}[{\cite[Theorem 2.8.1]{BH2012}}]\label{thm-8}
	Let A,B be real symmetric matrices of size $n \times n$. Then, for all $1 \leq i \leq n$,
	\[\lambda_i^{\uparrow}(A+B) \geq \lambda_i^{\uparrow}(A)+\lambda_1^{\uparrow}(B),\]
	or, equivalently,
	\[\lambda_i^{\downarrow}(A+B) \geq \lambda_i^{\downarrow}(A)+\lambda_n^{\downarrow}(B) .\]
\end{theorem}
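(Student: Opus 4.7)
The statement is the classical Weyl inequality for real symmetric matrices, and my plan is to deduce it directly from the Courant--Fischer min-max characterization of eigenvalues together with the Rayleigh-quotient lower bound $x^T B x \geq \lambda_1^{\uparrow}(B)$ valid for every unit vector $x$. The whole argument is short, so the strategy is essentially to package a pointwise quadratic-form inequality inside the extremal formula for $\lambda_i^{\uparrow}(A+B)$.

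Concretely, the key input is the identity
\[
\lambda_i^{\uparrow}(M) \;=\; \min_{\substack{U \subseteq \mathbb{R}^n\\ \dim U = i}}\ \max_{\substack{x \in U\\ \|x\|=1}}\ x^T M x,
\]
valid for any real symmetric $M$, which I would invoke for both $M=A$ and $M=A+B$. Specializing this formula to $B$ with $i=1$ yields $x^T B x \geq \lambda_1^{\uparrow}(B)$ for every unit vector $x$. Thus for any $i$-dimensional subspace $U$ and any unit $x \in U$, the linearity of the quadratic form in the matrix gives
\[
x^T(A+B)x \;=\; x^T A x + x^T B x \;\geq\; x^T A x + \lambda_1^{\uparrow}(B).
\]
Since $\lambda_1^{\uparrow}(B)$ is a constant, taking the maximum over unit $x \in U$ and then the minimum over all such $U$ preserves the inequality, producing the first claim.

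For the equivalent formulation in terms of $\lambda^{\downarrow}$, I would simply reindex $i \mapsto n+1-i$ and invoke the identities $\lambda_{n+1-i}^{\uparrow}(M) = \lambda_i^{\downarrow}(M)$ for $M = A$ and $M = A+B$, together with $\lambda_1^{\uparrow}(B) = \lambda_n^{\downarrow}(B)$. Alternatively, one could run a symmetric argument using the dual Courant--Fischer formula $\lambda_i^{\downarrow}(M) = \max_{\dim U = i} \min_{x \in U,\ \|x\|=1} x^T M x$ and the bound $x^T B x \geq \lambda_n^{\downarrow}(B)$.

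There is no real obstacle here; the only subtlety worth flagging is that pushing the pointwise estimate through both the inner maximum and the outer minimum is legitimate precisely because the quantity $\lambda_1^{\uparrow}(B)$ added to $x^T A x$ is a constant independent of $x$ and $U$. Consequently, this is a proof whose substance lies entirely in invoking Courant--Fischer correctly, and the argument essentially writes itself once that formula is on the table.
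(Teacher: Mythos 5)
Your argument is correct and is the standard Courant--Fischer proof of Weyl's inequality: the pointwise bound $x^{T}Bx\ge\lambda_1^{\uparrow}(B)$ passes through the inner maximum and outer minimum because it adds a constant, and the $\lambda^{\downarrow}$ form follows by the reindexing $\lambda_i^{\downarrow}(M)=\lambda_{n+1-i}^{\uparrow}(M)$. The paper does not prove this statement at all --- it imports it verbatim from \cite[Theorem 2.8.1]{BH2012} --- so there is nothing to compare against; your proof matches the canonical textbook argument.
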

The following result is known as Cauchy's interlacing theorem.
\begin{theorem}[{\cite[Corollary 2.5.2]{BH2012}}]\label{thm-7}
	Let $A$ be a real symmetric matrix of size $n \times n$ and $B$ a principal submatrix of $A$ of size $m \times m$. Then, for all $1 \le i \le m$,
	\[\lambda_{n-m+i}^{\downarrow}(A)\le \lambda_{i}^{\downarrow}(B)\le \lambda_{i}^{\downarrow}(A).\]
\end{theorem}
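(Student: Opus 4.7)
The plan is to prove Cauchy's interlacing theorem via a dimension-counting argument on eigenspaces, which is essentially a direct consequence of the Courant--Fischer min-max principle. Let $S \subseteq [n]$ be the index subset defining $B$ as a principal submatrix of $A$, and let $\iota \colon \mathbb{R}^m \hookrightarrow \mathbb{R}^n$ denote the embedding that extends a vector by zero on the coordinates outside $S$. The key identity I would use throughout is that for every $x \in \mathbb{R}^m$, $(\iota x)^{T} A (\iota x) = x^{T} B x$, since the rows and columns outside $S$ contribute nothing. I would then fix orthonormal eigenbases $\{v_1, \ldots, v_m\}$ of $B$ and $\{u_1, \ldots, u_n\}$ of $A$, ordered so that the corresponding eigenvalues decrease weakly.

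For the upper bound $\lambda_i^{\downarrow}(B) \le \lambda_i^{\downarrow}(A)$, I would intersect the two subspaces $\iota(\operatorname{span}(v_1, \ldots, v_i)) \subseteq \mathbb{R}^n$, of dimension $i$, and $\operatorname{span}(u_i, \ldots, u_n) \subseteq \mathbb{R}^n$, of dimension $n - i + 1$. Since the dimensions sum to $n+1$, the intersection is nontrivial; any nonzero $y = \iota x$ in it then satisfies
\[
\lambda_i^{\downarrow}(B)\,\|y\|^2 \;\le\; x^{T} B x \;=\; y^{T} A y \;\le\; \lambda_i^{\downarrow}(A)\,\|y\|^2
\]
by the Rayleigh quotient estimates for vectors in each of the two spans. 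For the lower bound $\lambda_{n-m+i}^{\downarrow}(A) \le \lambda_i^{\downarrow}(B)$, I would run the dual pairing: intersect $\iota(\operatorname{span}(v_i, \ldots, v_m))$, of dimension $m - i + 1$, with $\operatorname{span}(u_1, \ldots, u_{n-m+i})$, of dimension $n - m + i$; the dimensions again add to $n+1$, and the analogous Rayleigh-quotient squeeze yields the stated inequality.

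The main subtlety --- rather than a genuine obstacle --- is the bookkeeping: one must keep the decreasing-ordering convention straight and track the shifted index $n - m + i$, which encodes the $n - m$ eigenvalues of $A$ that get ``squeezed out'' when passing to the $m \times m$ principal submatrix. An alternative route that avoids exhibiting explicit eigenvectors is to invoke the min-max formula $\lambda_i^{\downarrow}(A) = \max_{\dim U = i} \min_{0 \ne x \in U} (x^{T} A x)/(x^{T} x)$ directly: restricting the outer maximum to subspaces contained in $\iota(\mathbb{R}^m)$ yields the upper bound, and the dual formula $\lambda_j^{\downarrow}(A) = \min_{\dim W = n - j + 1} \max_{0 \ne x \in W} (x^{T} A x)/(x^{T} x)$, applied with $j = n - m + i$, yields the lower bound with precisely the required index shift.
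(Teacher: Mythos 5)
Your argument is correct: the intersection of an $i$-dimensional and an $(n-i+1)$-dimensional subspace of $\mathbb{R}^n$ is nontrivial, the identity $(\iota x)^{T}A(\iota x)=x^{T}Bx$ holds for principal submatrices, and the two Rayleigh-quotient squeezes give exactly the stated inequalities with the right index shift. The paper itself states this theorem as a cited classical result without proof, but the dimension-counting min-max argument you give is precisely the technique the paper uses to prove its operator-compression generalization (Lemma~\ref{lem-11}), the only cosmetic difference being that the paper phrases the second subspace as an orthogonal complement and obtains the lower bound by applying the upper bound to $-\mathcal{A}$ rather than by a symmetric direct pairing.
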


Let $(\mathscr{H}, \langle \cdot, \cdot \rangle)$ be an $n$-dimensional Hilbert space with an orthogonal basis $\beta=\{\beta_i : i\in [n]\}$. In what follows, we present several spectral results concerning the self-adjoint operator $\mathcal{A}$ that will be used later.
 \begin{theorem}[Min-Max Theorem, see {\cite[Theorem 2.1]{HJ2013}}]\label{thm-5}
	Let $\mathscr{H}_k$ and $\mathcal{H}_k$ denote a $k$-dimensional subspace of $\mathscr{H}$ and a family of $k$-dimensional subspaces of $\mathscr{H}$, respectively, and assume that $\mathcal{A}\colon \mathscr{H} \rightarrow \mathscr{H}$ is a compact, self-adjoint operator of Hilbert space $\mathscr{H}$. Then \[ \lambda_k^{\uparrow}\left(\mathcal{A}\right)=\min _{\mathscr{H}_k \in \mathcal{H}_k} \max _{\boldsymbol{x} \in \mathscr{H}_k} \frac{\left\langle\mathcal{A}\boldsymbol{x}, \boldsymbol{x}\right\rangle}{\left\langle \boldsymbol{x}, \boldsymbol{x}\right\rangle}=\max _{\mathscr{H}_{n-k+1} \in \mathcal{H}_{n-k+1}} \min _{\boldsymbol{x} \in \mathscr{H}_{n-k+1}} \frac{\left\langle\mathcal{A} \boldsymbol{x}, \boldsymbol{x}\right\rangle}{\left\langle \boldsymbol{x}, \boldsymbol{x}\right\rangle}.\]
The $\boldsymbol{x}$ realizing such a $\min$-$\max$ or $\max$-$\min$ then is corresponding eigenfunction, and the $\min$-$\max$ space $\mathscr{H}_k$ are spanned by the eigenfunctions for the eigenvalues $\lambda_1^{\uparrow}\left(\mathcal{A}\right), \dots, \lambda_k^{\uparrow}\left(\mathcal{A}\right)$, and analogously, the $\max$-$\min$ spaces $\mathscr{H}_{n-k+1}$ are spanned by the eigenfunctions for the eigenvalues $\lambda_k^{\uparrow}\left(\mathcal{A}\right), \dots, \lambda_n^{\uparrow}\left(\mathcal{A}\right)$.
\end{theorem}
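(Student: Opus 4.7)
The plan is to deduce the theorem from the spectral theorem for compact self-adjoint operators, which provides an orthonormal eigenbasis $\{\phi_1,\dots,\phi_n\}$ of $\mathscr{H}$ satisfying $\mathcal{A}\phi_i=\lambda_i^{\uparrow}(\mathcal{A})\phi_i$, with $\lambda_1^{\uparrow}(\mathcal{A})\le\dots\le\lambda_n^{\uparrow}(\mathcal{A})$. Writing an arbitrary $\boldsymbol{x}\in\mathscr{H}$ in this basis as $\boldsymbol{x}=\sum_{i=1}^{n}c_i\phi_i$, I would reduce the Rayleigh quotient to the explicit weighted average
\[
R(\boldsymbol{x})\colon=\frac{\langle\mathcal{A}\boldsymbol{x},\boldsymbol{x}\rangle}{\langle\boldsymbol{x},\boldsymbol{x}\rangle}=\frac{\sum_{i=1}^{n}\lambda_i^{\uparrow}(\mathcal{A})\,|c_i|^2}{\sum_{i=1}^{n}|c_i|^2},
\]
so $R(\boldsymbol{x})$ is a convex combination of the $\lambda_i^{\uparrow}(\mathcal{A})$, and its size is controlled by which coordinates $c_i$ are nonzero.

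Next, I would establish the min-max identity through two matching inequalities. For the ``$\le$'' direction, take the test subspace $U_k\colon=\operatorname{span}\{\phi_1,\dots,\phi_k\}$: every $\boldsymbol{x}\in U_k$ has $c_i=0$ for $i>k$, so $R(\boldsymbol{x})\le\lambda_k^{\uparrow}(\mathcal{A})$, forcing the outer minimum to be at most $\lambda_k^{\uparrow}(\mathcal{A})$. For the matching ``$\ge$'' direction, I would use a dimension-counting argument: any $k$-dimensional $\mathscr{H}_k$ must intersect the $(n-k+1)$-dimensional subspace $V_k\colon=\operatorname{span}\{\phi_k,\dots,\phi_n\}$ nontrivially since $k+(n-k+1)>n$. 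Any nonzero $\boldsymbol{x}\in \mathscr{H}_k\cap V_k$ has $c_1=\dots=c_{k-1}=0$, and the formula above then yields $R(\boldsymbol{x})\ge \lambda_k^{\uparrow}(\mathcal{A})$; taking the max over $\boldsymbol{x}\in\mathscr{H}_k$ and then the min over all such $\mathscr{H}_k$ completes the bound. The max-min identity then follows by a symmetric argument with the roles of $U_k$ and $V_k$ interchanged and index $n-k+1$ in place of $k$, or by applying the min-max identity to $-\mathcal{A}$.

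For the characterization of the extremizers, I would trace the equality cases of the two inequalities above. Equality $R(\boldsymbol{x})=\lambda_k^{\uparrow}(\mathcal{A})$ for $\boldsymbol{x}\in U_k$ forces $c_i=0$ whenever $\lambda_i^{\uparrow}(\mathcal{A})\neq\lambda_k^{\uparrow}(\mathcal{A})$, so $\boldsymbol{x}$ lies in the $\lambda_k^{\uparrow}(\mathcal{A})$-eigenspace; the analogous analysis on $V_k$ yields the same conclusion for the lower bound, identifying the realizing $\boldsymbol{x}$ as an eigenfunction. Combining both observations and iterating (or invoking uniqueness of the eigenspace decomposition) shows that any $\mathscr{H}_k$ attaining the outer minimum is spanned by eigenfunctions for $\lambda_1^{\uparrow}(\mathcal{A}),\dots,\lambda_k^{\uparrow}(\mathcal{A})$, and, symmetrically, any optimal $\mathscr{H}_{n-k+1}$ is spanned by eigenfunctions for $\lambda_k^{\uparrow}(\mathcal{A}),\dots,\lambda_n^{\uparrow}(\mathcal{A})$. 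This is essentially the classical Courant--Fischer argument, so no substantive obstacle is expected; the only delicate point is the bookkeeping when $\lambda_k^{\uparrow}(\mathcal{A})$ is a repeated eigenvalue, in which case the extremizing subspaces are determined only up to rotations within the corresponding eigenspace.
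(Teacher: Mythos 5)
Your proposal is the standard Courant--Fischer argument (spectral decomposition, Rayleigh quotient as a convex combination, test subspace for one inequality and dimension counting for the other), and it is correct in the finite-dimensional setting the paper works in; your caveat about repeated eigenvalues is also the right one, since the extremizing subspaces are only determined up to the choice of basis within each eigenspace. The paper itself gives no proof of this statement --- it imports it by citation from Horak and Jost --- so there is nothing to compare against beyond noting that your argument is the classical one that any such reference would use.
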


\begin{lemma}\label{lem-12}
	Let $\mathcal{A}_1$ and $\mathcal{A}_2$ be two compact self-adjoint operators on $\mathscr{H}$. Then, for all $1 \le i \le n$,
	\[\lambda_i^{\uparrow}(\mathcal{A}_1+\mathcal{A}_2) \geq \lambda_i^{\uparrow}(\mathcal{A}_1)+\lambda_1^{\uparrow}(\mathcal{A}_2),\]
	or, equivalently,
	\[\lambda_i^{\downarrow}(\mathcal{A}_1+\mathcal{A}_2) \geq \lambda_i^{\downarrow}(\mathcal{A}_1)+\lambda_n^{\downarrow}(\mathcal{A}_2) .\]
\end{lemma}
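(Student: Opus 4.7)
The plan is to deduce the lemma from Weyl's inequality for real symmetric matrices (Theorem \ref{thm-8}) by passing to a matrix representation. Starting from the given orthogonal basis $\beta = \{\beta_i : i \in [n]\}$ of $\mathscr{H}$, I would first rescale to obtain an orthonormal basis $\{\beta_i/\sqrt{\langle \beta_i,\beta_i\rangle}\}$. Since the cochain groups in this paper are real vector spaces, in this orthonormal basis each compact self-adjoint operator $\mathcal{A}_j$ is represented by a real symmetric matrix $A_j$, and $\mathcal{A}_1+\mathcal{A}_2$ is represented by $A_1+A_2$. Because the matrix representation in an orthonormal basis preserves eigenvalues with multiplicities, applying Theorem \ref{thm-8} to $A_1$ and $A_2$ immediately yields the first inequality $\lambda_i^{\uparrow}(\mathcal{A}_1+\mathcal{A}_2) \geq \lambda_i^{\uparrow}(\mathcal{A}_1)+\lambda_1^{\uparrow}(\mathcal{A}_2)$. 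The second ``equivalent'' inequality then follows by re-indexing through the identities $\lambda_i^{\uparrow}(\mathcal{A}) = \lambda_{n-i+1}^{\downarrow}(\mathcal{A})$ and $\lambda_1^{\uparrow}(\mathcal{A}_2) = \lambda_n^{\downarrow}(\mathcal{A}_2)$.

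As an alternative and more self-contained route, one can invoke the Min-Max Theorem (Theorem \ref{thm-5}) directly. For any $i$-dimensional subspace $\mathscr{H}_i \subseteq \mathscr{H}$ and any nonzero $x \in \mathscr{H}_i$, bilinearity of the inner product together with the Rayleigh quotient bound $\langle \mathcal{A}_2 x, x\rangle / \langle x,x\rangle \geq \lambda_1^{\uparrow}(\mathcal{A}_2)$ (the $k=1$ case of Min-Max) gives
\[
\frac{\langle (\mathcal{A}_1+\mathcal{A}_2)x,x\rangle}{\langle x,x\rangle}
= \frac{\langle \mathcal{A}_1 x,x\rangle}{\langle x,x\rangle} + \frac{\langle \mathcal{A}_2 x,x\rangle}{\langle x,x\rangle}
\geq \frac{\langle \mathcal{A}_1 x,x\rangle}{\langle x,x\rangle} + \lambda_1^{\uparrow}(\mathcal{A}_2).
\]
Taking $\max$ over $x \in \mathscr{H}_i \setminus \{0\}$ (using $\max(f+c)=\max f+c$ for a constant $c$) and then $\min$ over the family $\mathcal{H}_i$ of $i$-dimensional subspaces recovers $\lambda_i^{\uparrow}(\mathcal{A}_1+\mathcal{A}_2) \geq \lambda_i^{\uparrow}(\mathcal{A}_1)+\lambda_1^{\uparrow}(\mathcal{A}_2)$ via Theorem \ref{thm-5}.

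There is essentially no serious obstacle here, since the lemma is just the operator-theoretic restatement of a classical matrix inequality in a finite-dimensional setting. The only point requiring care is verifying the equivalence of the $\uparrow$ and $\downarrow$ forms, which is a mechanical re-indexing rather than a genuine difficulty; I would include a brief sentence to that effect in the final write-up.
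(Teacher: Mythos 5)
Your first route is essentially the paper's own proof: the authors likewise pass to matrix representations of $\mathcal{A}_1$ and $\mathcal{A}_2$ with respect to a common orthonormal basis, observe the matrices are real symmetric with the same spectra as the operators, and invoke Theorem \ref{thm-8}. Your explicit rescaling of the orthogonal basis $\beta$ to an orthonormal one, and the alternative self-contained Min--Max argument, are both correct but add nothing beyond what the paper's one-line argument already accomplishes.
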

\begin{proof}
Let $A_1$ and $A_2$ be the matrix representations of $\mathcal{A}_1$ and $\mathcal{A}_2$, respectively, with respect to a common orthonormal basis $\beta$. Since the spectrum of $\mathcal{A}_i$ coincides with that of $A_i$ for each $i$, and since both $A_1$ and $A_2$ are real symmetric matrices, the result follows directly from Theorem \ref{thm-8}.
\end{proof}

Recall that the concept of operator compression (see \cite{H1982}). Let $\mathscr{K}$ be a subspace of a Hilbert space $\mathscr{H}$, and let $\mathcal{P} \colon \mathscr{H} \to \mathscr{K}$ denote the orthogonal projection onto $\mathscr{K}$. For any operator $\mathcal{A}$ on $\mathscr{H}$, the \textit{compression} of $\mathcal{A}$ to $\mathscr{K}$ is the operator
\[
\mathcal{A}_{\mathscr{K}}\colon= \mathcal{P} \mathcal{A} \big|_{\mathscr{K}} \colon \mathscr{K} \to \mathscr{K}.
\]
Equivalently, one can write
\[
\mathcal{A}_{\mathscr{K}} \colon= \mathcal{P} \mathcal{A} \mathcal{P}^*,
\]
where $\mathcal{P}^* \colon \mathscr{K} \to \mathscr{H}$ is the adjoint of $\mathcal{P}$. The following result is an immediate corollary of Theorem \ref{thm-5}, and we include its proof for completeness.

\begin{lemma}\label{lem-11}
Let $\mathcal{A}$ be a compact self-adjoint operator on $\mathscr{H}$, and let $\mathcal{A}_{\mathscr{K}}$ denote its compression to a subspace $\mathscr{K}$ of dimension $m$. Then for all $i\in [m]$, 
\[
\lambda_{n-m+i}^{\downarrow}(\mathcal{A}) \le \lambda_i^{\downarrow}(\mathcal{A}_{\mathscr{K}}) \le \lambda_i^{\downarrow}(\mathcal{A}),
\]
or equivalently,
\[
\lambda_i^{\uparrow}(\mathcal{A}) \le \lambda_i^{\uparrow}(\mathcal{A}_{\mathscr{K}}) \le \lambda_{n-m+i}^{\uparrow}(\mathcal{A}).
\]
\end{lemma}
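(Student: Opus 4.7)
The plan is to apply the Min--Max Theorem (Theorem \ref{thm-5}) directly, using the fact that the quadratic form of $\mathcal{A}_{\mathscr{K}}$ on $\mathscr{K}$ coincides with that of $\mathcal{A}$, so variational formulas on $\mathscr{K}$ can be compared with those on $\mathscr{H}$ simply by noting that subspaces of $\mathscr{K}$ form a subfamily of subspaces of $\mathscr{H}$.

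The key preliminary observation is that for every $x\in\mathscr{K}$ one has $\mathcal{P}^{*}x=x$ (since $\mathcal{P}$ is the orthogonal projection onto $\mathscr{K}$, its adjoint is the inclusion), and hence
\[
\langle \mathcal{A}_{\mathscr{K}}x,x\rangle
=\langle \mathcal{P}\mathcal{A}\mathcal{P}^{*}x,x\rangle
=\langle \mathcal{A}x,\mathcal{P}^{*}x\rangle
=\langle \mathcal{A}x,x\rangle.
\]
Thus the Rayleigh quotient of $\mathcal{A}_{\mathscr{K}}$ evaluated at $x\in\mathscr{K}$ equals that of $\mathcal{A}$.

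For the lower bound $\lambda_i^{\uparrow}(\mathcal{A})\le\lambda_i^{\uparrow}(\mathcal{A}_{\mathscr{K}})$, I would apply the $\min$--$\max$ formulation of Theorem \ref{thm-5}: writing $\lambda_i^{\uparrow}(\mathcal{A}_{\mathscr{K}})$ as the minimum of the maximum Rayleigh quotient over $i$-dimensional subspaces of $\mathscr{K}$ and $\lambda_i^{\uparrow}(\mathcal{A})$ as the analogous minimum over $i$-dimensional subspaces of the larger space $\mathscr{H}$, the latter minimum is taken over a strictly larger family and therefore cannot exceed the former. For the upper bound $\lambda_i^{\uparrow}(\mathcal{A}_{\mathscr{K}})\le \lambda_{n-m+i}^{\uparrow}(\mathcal{A})$, I would use the $\max$--$\min$ formulation: $\lambda_i^{\uparrow}(\mathcal{A}_{\mathscr{K}})$ is realized over $(m-i+1)$-dimensional subspaces of $\mathscr{K}$, while $\lambda_{n-m+i}^{\uparrow}(\mathcal{A})$ is realized over $(n-(n-m+i)+1)=(m-i+1)$-dimensional subspaces of $\mathscr{H}$. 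Since every subspace of $\mathscr{K}$ of that dimension is also a subspace of $\mathscr{H}$, the latter maximum dominates the former. The equivalent descending inequalities then follow from the identities $\lambda_j^{\downarrow}(\mathcal{A})=\lambda_{n-j+1}^{\uparrow}(\mathcal{A})$ and $\lambda_j^{\downarrow}(\mathcal{A}_{\mathscr{K}})=\lambda_{m-j+1}^{\uparrow}(\mathcal{A}_{\mathscr{K}})$ by a direct index substitution.

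There is essentially no serious obstacle: the only point requiring a moment's care is the identification $\langle \mathcal{A}_{\mathscr{K}}x,x\rangle=\langle \mathcal{A}x,x\rangle$ for $x\in\mathscr{K}$, which is exactly what makes the two variational problems directly comparable. An alternative route, which I would mention as a sanity check, is to extend an orthonormal basis of $\mathscr{K}$ to one of $\mathscr{H}$; then the matrix of $\mathcal{A}_{\mathscr{K}}$ appears as the leading $m\times m$ principal submatrix of the matrix of $\mathcal{A}$, and the conclusion reduces to Cauchy's interlacing theorem (Theorem \ref{thm-7}). Either route yields the result in a few lines.
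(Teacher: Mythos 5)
Your proof is correct. Both your argument and the paper's rest on the Min--Max Theorem (Theorem \ref{thm-5}), and your key observation that $\mathcal{P}^{*}$ acts as the inclusion on $\mathscr{K}$, so that $\langle\mathcal{A}_{\mathscr{K}}\boldsymbol{x},\boldsymbol{x}\rangle=\langle\mathcal{A}\boldsymbol{x},\boldsymbol{x}\rangle$ for $\boldsymbol{x}\in\mathscr{K}$, is exactly what makes the comparison legitimate. The mechanics differ, though. The paper proves only the inequality $\lambda_i^{\downarrow}(\mathcal{A})\ge\lambda_i^{\downarrow}(\mathcal{A}_{\mathscr{K}})$ directly, by the classical interlacing-style construction: it fixes orthonormal eigenfunctions of both operators, chooses a nonzero test vector $\boldsymbol{z}_i$ in the intersection $\langle\boldsymbol{y}_1,\dots,\boldsymbol{y}_i\rangle\cap\langle\mathcal{P}\boldsymbol{x}_1,\dots,\mathcal{P}\boldsymbol{x}_{i-1}\rangle^{\perp}$ (nonzero by a dimension count), and sandwiches its Rayleigh quotient; the other inequality is then obtained by passing to $-\mathcal{A}$ and $-\mathcal{A}_{\mathscr{K}}$. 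You instead use pure monotonicity of the variational formulas under inclusion of subspace families, handling the two bounds symmetrically via the dual $\min$--$\max$ and $\max$--$\min$ formulations, with no explicit eigenvectors needed. Your route is shorter and more transparent; the paper's makes the extremizing vectors explicit, which costs a dimension-counting step but is otherwise equivalent in strength. Your sanity-check reduction to Cauchy's interlacing theorem via an orthonormal basis of $\mathscr{K}$ extended to $\mathscr{H}$ is also valid for the operator-level statement (the eigenvalues are basis-independent), though note the paper deliberately proves the operator version first precisely so that Corollary \ref{cor-1} can later handle matrix representations with respect to merely orthogonal, non-orthonormal bases.
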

\begin{proof}
	Let $\mathcal{P}\colon \mathscr{H} \rightarrow \mathscr{K}$ denote the orthogonal projection. Then $\mathcal{A}_{\mathscr{K}}= \mathcal{P} \mathcal{A} \mathcal{P}^*$.
    Let $\boldsymbol{x}_1, \dots, \boldsymbol{x}_n$ be orthonormal eigenfunctions of $\mathcal{A}$ corresponding to the eigenvalues \[\lambda^{\downarrow}_1(\mathcal{A}), \dots,\lambda^{\downarrow}_n(\mathcal{A}),\] respectively. Let $\boldsymbol{y}_1, \dots, \boldsymbol{y}_m$ be orthonormal eigenfunction of $\mathcal{A}_{\mathscr{K}}$ corresponding to the eigenvalues \[\lambda^{\downarrow}_1\left(\mathcal{A}_{\mathscr{K}}\right), \dots,\lambda^{\downarrow}_m\left(\mathcal{A}_{\mathscr{K}}\right),\] respectively. For each $i \in\{1, \dots, m\}$, let $\boldsymbol{z}_i$ be a non-zero function in the subspace
	\[\left\langle\boldsymbol{y}_1, \dots, \boldsymbol{y}_i\right\rangle \cap\left\langle \mathcal{P}\boldsymbol{x}_1, \dots, \mathcal{P} \boldsymbol{x}_{i-1}\right\rangle^{\perp} .\]
	Then $\mathcal{P}^*\mathbf{z}_i \in\left\langle\boldsymbol{x}_1, \dots, \boldsymbol{x}_{i-1}\right\rangle^{\perp}$, and by Theorem \ref{thm-5},
	\[\lambda_{i}^{\downarrow}\left(\mathcal{A}\right)\ge \frac{\left\langle\mathcal{A}\mathcal{P}^*\boldsymbol{z}_i ,\mathcal{P}^*\boldsymbol{z}_i \right\rangle}{\left\langle\mathcal{P}^*\boldsymbol{z}_i ,\mathcal{P}^*\boldsymbol{z}_i \right\rangle}\ge\frac{\left\langle\mathcal{P}\mathcal{A}\mathcal{P}^*\boldsymbol{z}_i ,\boldsymbol{z}_i \right\rangle}{\left\langle\boldsymbol{z}_i ,\boldsymbol{z}_i \right\rangle}\ge\lambda_{i}^{\downarrow}\left(\mathcal{P}\mathcal{A}\mathcal{P}^*\right)=\lambda_{i}^{\downarrow}\left(\mathcal{A}_{\mathscr{K}}\right).\]
	As the second inequality, consider the self-operators $-\mathcal{A}$ and $-\mathcal{A}_{\mathscr{K}}$. It follows that
	\[\lambda_{i}^{\downarrow}\left(\mathcal{A}_{\mathscr{K}}\right)=-\lambda_{m-i+1}^{\downarrow}\left(-\mathcal{P}\mathcal{A}\mathcal{P}^*\right)\ge -\lambda_{m-i+1}^{\downarrow}\left(-\mathcal{A}\right)=\lambda_{n-m+i}^{\downarrow}\left(\mathcal{A}\right).
	\]
\end{proof}
\begin{remark}
It is worth noting that the subspace $\mathscr{K}$ in Lemma \ref{lem-11} has codimension $n-m$. Consequently, the present result may be viewed as a generalization of the finite-dimensional case in \cite{DD1987}, extending their finding from compressions onto subspaces of codimension $1$ to those of arbitrary codimension.
\end{remark}
\begin{corollary}\label{cor-1}
Let $A$ be the matrix representation of a compact self-adjoint operator $\mathcal{A}$ on $\mathscr{H}$ with respect to the orthogonal basis $\beta=\{\beta_i : i\in [n]\}$, and let $B$ be a principal submatrix of $A$ of order $m$. Then for all $i\in [m]$, 
\[
\lambda_{n-m+i}^{\downarrow}(A)\le \lambda_{i}^{\downarrow}(B)\le \lambda_{i}^{\downarrow}(A),
\]
or, equivalently,
\[
\lambda_{i}^{\uparrow}(A)\le \lambda_{i}^{\uparrow}(B)\le \lambda_{n-m+i}^{\uparrow}(A).
\]
\end{corollary}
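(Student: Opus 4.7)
The plan is to reduce the statement to Lemma \ref{lem-11} by recognizing that a principal submatrix of $A$ is exactly the matrix representation of the compression of $\mathcal{A}$ to the coordinate subspace corresponding to the retained indices. Since eigenvalues of an operator equal those of any of its matrix representations (matrices in different bases are similar), it suffices to prove the inequalities at the operator level and then transfer them back.

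Concretely, I would proceed as follows. Write $B$ as the principal submatrix of $A$ indexed by some subset $I \subseteq [n]$ with $|I| = m$, and set $\mathscr{K} \colon= \operatorname{span}\{\beta_i : i \in I\}$, a subspace of $\mathscr{H}$ of dimension $m$. Let $\mathcal{P}\colon \mathscr{H} \to \mathscr{K}$ be the orthogonal projection. The orthogonality of the basis $\beta$ guarantees that $\mathcal{P}\beta_i = \beta_i$ for $i \in I$ and $\mathcal{P}\beta_i = 0$ for $i \notin I$; this is the crucial point that makes the argument go through. Consequently, for each $j \in I$,
\[
\mathcal{P}\mathcal{A}\beta_j = \mathcal{P}\Bigl(\sum_{i=1}^{n} A_{ij}\beta_i\Bigr) = \sum_{i \in I} A_{ij}\beta_i = \sum_{i \in I} B_{ij}\beta_i,
\]
so $B$ is exactly the matrix representation of $\mathcal{A}_{\mathscr{K}} = \mathcal{P}\mathcal{A}|_{\mathscr{K}}$ in the orthogonal basis $\{\beta_i : i \in I\}$ of $\mathscr{K}$.

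With this identification, the spectrum of $B$ equals the spectrum of $\mathcal{A}_{\mathscr{K}}$ and the spectrum of $A$ equals the spectrum of $\mathcal{A}$. Applying Lemma \ref{lem-11} to $\mathcal{A}$ and its compression $\mathcal{A}_{\mathscr{K}}$ yields
\[
\lambda_{n-m+i}^{\downarrow}(A) = \lambda_{n-m+i}^{\downarrow}(\mathcal{A}) \le \lambda_i^{\downarrow}(\mathcal{A}_{\mathscr{K}}) = \lambda_i^{\downarrow}(B) \le \lambda_i^{\downarrow}(\mathcal{A}) = \lambda_i^{\downarrow}(A)
\]
for all $i \in [m]$, and the equivalent $\uparrow$-statement follows by passing to $-\mathcal{A}$ exactly as in the proof of Lemma \ref{lem-11}.

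The only non-routine step is verifying that the principal submatrix $B$ genuinely coincides with the matrix representation of the compression $\mathcal{A}_{\mathscr{K}}$; this is where orthogonality of $\beta$ is indispensable, since otherwise $\mathcal{P}$ would introduce off-diagonal cross terms and the compression would differ from the naive principal submatrix. Once this identification is made, the corollary follows immediately from Lemma \ref{lem-11}, which is why the result is billed as an immediate corollary.
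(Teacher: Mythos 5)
Your proposal is correct and follows essentially the same route as the paper: identify $B$ with the matrix representation of the compression $\mathcal{A}_{\mathscr{K}}$ to the span of the retained orthogonal basis vectors, then invoke Lemma \ref{lem-11}. Your explicit verification that $\mathcal{P}\mathcal{A}\beta_j=\sum_{i\in I}B_{ij}\beta_i$ (and the remark that orthogonality of $\beta$ is what makes this work) only fills in a step the paper states without computation.
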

\begin{proof}
 Let the columns of $B$ be indexed by $\mathcal{I}_B \subseteq [n]$, and let $\mathscr{H}_B $ be the subspace spanned by $\left\{\beta_i\colon i\in \mathcal{I}_B\right\}$. 
 Then $B$ is the matrix representation of the compression $\mathcal{A}_{\mathscr{H}_B}$ with respect to the basis $\left\{\beta_i\colon i\in \mathcal{I}_B\right\}$.  Since any linear operator on a finite-dimensional Hilbert space has the same eigenvalues as its matrix representation, the conclusion follows immediately from Lemma \ref{lem-11}.
\end{proof}
\begin{remark}
 If the basis $\beta=\{\beta_i : i\in[n]\}$ in Corollary \ref{c-1} is not orthonormal, then the matrix representations of self-adjoint operators are not necessarily real symmetric.
Therefore, our result extends Theorem \ref{thm-7}.
\end{remark}

The following lemma provides a necessary and sufficient condition under which a compact self-adjoint operator on a finite-dimensional Hilbert space and its compression to a subspace have the same smallest eigenvalue.
	\begin{lemma}\label{lem-me}
		Let $\mathcal{A}$ be a compact self-adjoint operator on $\mathscr{H}$, and let $\mathcal{A}_{\mathscr{K}}$ denote its compression to a subspace $\mathscr{K}$ of dimension $m$.  Then $\lambda_m^{\downarrow}\left(\mathcal{A}_{\mathscr{K}}\right)=\lambda_n^{\downarrow}\left(\mathcal{A}\right)$ if and only if $\lambda_n^{\downarrow}\left(\mathcal{A}\right)$ has a eigenfunction $\boldsymbol{x}\in \mathscr{K}\setminus \boldsymbol{0}$.        
	\end{lemma}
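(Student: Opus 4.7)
The plan is to combine Lemma \ref{lem-11}, which with $i=m$ yields the baseline inequality $\lambda_n^{\downarrow}(\mathcal{A})\le \lambda_m^{\downarrow}(\mathcal{A}_{\mathscr{K}})$, with the extremal characterization of the smallest eigenvalue furnished by the Min-Max theorem (Theorem \ref{thm-5}). Throughout I will identify $\mathscr{K}$ with a subspace of $\mathscr{H}$, so that $\mathcal{P}^*\colon\mathscr{K}\to\mathscr{H}$ is the inclusion and the compression $\mathcal{A}_{\mathscr{K}}=\mathcal{P}\mathcal{A}\mathcal{P}^*$ acts on any $\boldsymbol{y}\in\mathscr{K}$ simply as $\mathcal{P}\mathcal{A}\boldsymbol{y}$.

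The sufficiency direction is the easy one. Given a nonzero $\boldsymbol{x}\in\mathscr{K}$ with $\mathcal{A}\boldsymbol{x}=\lambda_n^{\downarrow}(\mathcal{A})\,\boldsymbol{x}$, applying $\mathcal{P}$ (and using $\mathcal{P}\boldsymbol{x}=\boldsymbol{x}$) shows that $\mathcal{A}_{\mathscr{K}}\boldsymbol{x}=\lambda_n^{\downarrow}(\mathcal{A})\,\boldsymbol{x}$, so $\lambda_n^{\downarrow}(\mathcal{A})$ is an eigenvalue of $\mathcal{A}_{\mathscr{K}}$ and hence $\lambda_m^{\downarrow}(\mathcal{A}_{\mathscr{K}})\le\lambda_n^{\downarrow}(\mathcal{A})$. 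The baseline inequality from Lemma \ref{lem-11} closes the loop and forces equality.

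For necessity, assume $\lambda_m^{\downarrow}(\mathcal{A}_{\mathscr{K}})=\lambda_n^{\downarrow}(\mathcal{A})=:\lambda$ and pick a nonzero $\boldsymbol{y}\in\mathscr{K}$ with $\mathcal{A}_{\mathscr{K}}\boldsymbol{y}=\lambda\boldsymbol{y}$. The key step is to evaluate the Rayleigh quotient of $\boldsymbol{y}$ against $\mathcal{A}$ on the ambient space. Writing $\mathcal{A}\boldsymbol{y}=\mathcal{P}\mathcal{A}\boldsymbol{y}+(I-\mathcal{P})\mathcal{A}\boldsymbol{y}$ and using that $\boldsymbol{y}\in\mathscr{K}$ is orthogonal to the second summand, I obtain $\langle\mathcal{A}\boldsymbol{y},\boldsymbol{y}\rangle=\langle\mathcal{A}_{\mathscr{K}}\boldsymbol{y},\boldsymbol{y}\rangle=\lambda\langle\boldsymbol{y},\boldsymbol{y}\rangle$. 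Since $\lambda=\lambda_n^{\downarrow}(\mathcal{A})$ is the global minimum of the Rayleigh quotient on $\mathscr{H}$, Theorem \ref{thm-5} (in its max-min form with $k=1$, where the maximizing subspace is the whole of $\mathscr{H}$) forces $\boldsymbol{y}$ to be an eigenfunction of $\mathcal{A}$ for $\lambda$, delivering precisely the vector in $\mathscr{K}\setminus\{\boldsymbol{0}\}$ that is required.

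Conceptually the argument is light. The only non-notational ingredient is the well-known corollary of the Min-Max theorem that a nonzero vector attaining the minimum Rayleigh quotient of a compact self-adjoint operator must lie in the eigenspace of its smallest eigenvalue; no assumption on the multiplicity of $\lambda_n^{\downarrow}(\mathcal{A})$ is needed. The single place requiring care is the orthogonality $\boldsymbol{y}\perp (I-\mathcal{P})\mathcal{A}\boldsymbol{y}$ that collapses $\langle\mathcal{A}\boldsymbol{y},\boldsymbol{y}\rangle$ to $\langle\mathcal{A}_{\mathscr{K}}\boldsymbol{y},\boldsymbol{y}\rangle$; once this is in place, the rest is bookkeeping between $\mathscr{H}$, $\mathscr{K}$, and the two formulations $\mathcal{A}_{\mathscr{K}}=\mathcal{P}\mathcal{A}|_{\mathscr{K}}=\mathcal{P}\mathcal{A}\mathcal{P}^*$.
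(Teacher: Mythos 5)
Your proof is correct and follows essentially the same route as the paper's: both arguments rest on the identity $\langle\mathcal{A}_{\mathscr{K}}\boldsymbol{y},\boldsymbol{y}\rangle=\langle\mathcal{A}\boldsymbol{y},\boldsymbol{y}\rangle$ for $\boldsymbol{y}\in\mathscr{K}$ together with the equality case of the Rayleigh-quotient characterization of $\lambda_n^{\downarrow}(\mathcal{A})$ from Theorem \ref{thm-5}. The paper packages both directions into a single inequality chain $\lambda_m^{\downarrow}(\mathcal{A}_{\mathscr{K}})=\min_{\boldsymbol{x}\in\mathscr{K}\setminus\boldsymbol{0}}\langle\mathcal{A}\boldsymbol{x},\boldsymbol{x}\rangle/\langle\boldsymbol{x},\boldsymbol{x}\rangle\ge\lambda_n^{\downarrow}(\mathcal{A})$, while you separate sufficiency and necessity, but the substance is identical.
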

	\begin{proof}
		Let $\mathcal{P}\colon \mathscr{H} \rightarrow \mathscr{K}$ denote the orthogonal projection. Then $\mathcal{A}_{\mathscr{K}}= \mathcal{P} \mathcal{A} \mathcal{P}^*$.
		By Theorem \ref{thm-5}, we have
		\[\lambda_n^{\downarrow}\left(\mathcal{A}\right)=\min_{\boldsymbol{x}\in \mathscr{H}\setminus \boldsymbol{0}}\frac{\left\langle\mathcal{A}\boldsymbol{x},\boldsymbol{x}\right\rangle}{\left\langle\boldsymbol{x},\boldsymbol{x}\right\rangle}.\]
		where the minimum value is achieved if and only if $\boldsymbol{x}$ is an eigenfunction of $\mathcal{A}$ with respect to $\lambda_n^{\downarrow}\left(\mathcal{A}\right)$.
		By the Rayleigh quotient theorem,
		\[\lambda_m^{\downarrow}\left(\mathcal{P}\mathcal{A}\mathcal{P}^*\right)
		=\min_{\boldsymbol{x}\in \mathscr{K}\setminus \boldsymbol{0}}\frac{\left\langle\mathcal{P}\mathcal{A}\mathcal{P}^*\boldsymbol{x},\boldsymbol{x}\right\rangle}{\left\langle\boldsymbol{x},\boldsymbol{x}\right\rangle}=\min_{\boldsymbol{x}\in \mathscr{K}\setminus \boldsymbol{0}}\frac{\left\langle\mathcal{A}\boldsymbol{x},\boldsymbol{x}\right\rangle}{\left\langle\boldsymbol{x},\boldsymbol{x}\right\rangle}
		\ge \min _{\boldsymbol{x} \in \mathscr{H}\setminus\boldsymbol{0}} \frac{\left\langle\mathcal{A}\boldsymbol{x},\boldsymbol{x}\right\rangle}{\left\langle\boldsymbol{x},\boldsymbol{x}\right\rangle}=\lambda_n^{\downarrow}\left(\mathcal{A}\right).\]
		It follows that $\lambda_m^{\downarrow}\left(\mathcal{P}\mathcal{A}\mathcal{P}^*\right)=\lambda_n^{\downarrow}\left(\mathcal{A}\right)$ if and only if $\lambda_n^{\downarrow}\left(\mathcal{A}\right)$ has a eigenfunction $\boldsymbol{x}\in \mathscr{K}\setminus \boldsymbol{0}$.
	\end{proof}
	\begin{corollary}\label{cor-06}
		Let $A$ be the matrix representation of a compact self-adjoint operator $\mathcal{A}$ on $\mathscr{H}$ with respect to the orthogonal basis $\beta=\{\beta_i : i\in [n]\}$, and let $B$ be a principal submatrix of $A$ of order $m(m<n)$. Suppose that the columns of $B$ are indexed by $\mathcal{I}_B \subseteq [n]$. Then $\lambda_n^{\downarrow}(A)=\lambda_m^{\downarrow}(B)$ if and only if $A$ has an eigenvector $\operatorname{x}=\left(x_1, \dots, x_n\right)^{\top} \in \mathbb{R}^n$ with respect to $\lambda_n^{\downarrow}(A)$ such that $x_i=0$ for all $i \notin \mathcal{I}_B$.
	\end{corollary}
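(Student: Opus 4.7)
The plan is to derive Corollary~\ref{cor-06} as a direct matrix-level translation of Lemma~\ref{lem-me}. Concretely, I would set $\mathscr{K}\colon=\operatorname{span}\{\beta_i\colon i\in\mathcal{I}_B\}$, an $m$-dimensional subspace of $\mathscr{H}$ whose orthogonal complement (with respect to $\langle\cdot,\cdot\rangle$) is $\operatorname{span}\{\beta_j\colon j\notin\mathcal{I}_B\}$, using that $\beta$ is an orthogonal basis. Then apply Lemma~\ref{lem-me} to $\mathcal{A}$ and $\mathscr{K}$.

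The first step is to verify that $B$ coincides with the matrix representation of the compression $\mathcal{A}_{\mathscr{K}}=\mathcal{P}\mathcal{A}\mathcal{P}^{*}$ with respect to the sub-basis $\{\beta_i\colon i\in\mathcal{I}_B\}$. Since the orthogonal projection $\mathcal{P}$ sends $\beta_i$ to $\beta_i$ for $i\in\mathcal{I}_B$ and to $\boldsymbol{0}$ otherwise, the $(i,j)$-entries of the matrix of $\mathcal{A}_{\mathscr{K}}$ with $i,j\in\mathcal{I}_B$ agree with the corresponding entries of $A$, and these are precisely the entries of the principal submatrix $B$. Therefore $\operatorname{spec}(B)=\operatorname{spec}(\mathcal{A}_{\mathscr{K}})$ and $\operatorname{spec}(A)=\operatorname{spec}(\mathcal{A})$, so Lemma~\ref{lem-me} gives $\lambda_m^{\downarrow}(B)=\lambda_n^{\downarrow}(A)$ if and only if $\lambda_n^{\downarrow}(\mathcal{A})$ has an eigenfunction $\boldsymbol{y}\in\mathscr{K}\setminus\{\boldsymbol{0}\}$. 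Writing $\boldsymbol{y}=\sum_{i=1}^n x_i\beta_i$, the coordinate vector $\operatorname{x}=(x_1,\dots,x_n)^{\top}$ is an eigenvector of $A$ with eigenvalue $\lambda_n^{\downarrow}(A)$ (matrix representations carry eigenfunctions to eigenvectors bijectively), and $\boldsymbol{y}\in\mathscr{K}$ is equivalent to $x_j=0$ for every $j\notin\mathcal{I}_B$; this is exactly the condition in the corollary, and the converse direction is obtained by reversing the same correspondence.

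The only point requiring a small amount of care is that $\beta$ is assumed to be orthogonal but not necessarily orthonormal. This matters because the adjoint $\mathcal{P}^{*}$ and the matrix of $\mathcal{A}_{\mathscr{K}}$ are computed via the inner product, not by mere coordinate transpose. However, $\mathcal{P}$ acts diagonally on $\beta$ (selecting coordinates indexed by $\mathcal{I}_B$), so the identification $B=[\mathcal{A}_{\mathscr{K}}]_{\{\beta_i\,:\,i\in\mathcal{I}_B\}}$ is unaffected by the varying norms $\langle\beta_i,\beta_i\rangle$. Hence no real obstacle arises: the argument is essentially a bookkeeping translation of Lemma~\ref{lem-me} into matrix language, entirely analogous to how Corollary~\ref{cor-1} was deduced from Lemma~\ref{lem-11}.
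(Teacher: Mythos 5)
Your proposal is correct and follows essentially the same route as the paper: identify $B$ with the matrix representation of the compression of $\mathcal{A}$ to $\operatorname{span}\{\beta_i : i\in\mathcal{I}_B\}$ and then invoke Lemma \ref{lem-me}, translating eigenfunctions into coordinate eigenvectors. The paper's own proof is terser, but your extra verification that the compression's matrix really is the principal submatrix (and that orthogonality, not orthonormality, of $\beta$ suffices for this) is exactly the point being used implicitly there.
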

	\begin{proof}
		Let the columns of $B$ be indexed by $\mathcal{I}_B \subseteq [n]$, and let $\mathscr{H}_B $ be the subspace spanned by $\left\{\beta_i\colon i\in \mathcal{I}_B\right\}$. 
		Then $B$ is the matrix representation of the compression $\mathcal{A}_{\mathscr{H}_B}$ with respect to the basis $\left\{\beta_i\colon i\in \mathcal{I}_B\right\}$.  By Lemma \ref{lem-me} and the fact that any linear operator on a finite-dimensional Hilbert space has the same eigenvalues as its matrix representation, the conclusion follows immediately.
	\end{proof}
	\begin{remark}
		Zhan, Huang, and Lin \cite{ZHL2026} proved that Corollary 2 holds for real symmetric matrices $A$.  Our result in Corollary \ref{cor-06} extends this to matrix representations of self-adjoint operators with respect to an orthogonal basis, where the matrices involved are not necessarily real symmetric.
\end{remark}

The following theorem, which plays a crucial role for us, is commonly known as the Geršgorin circle theorem.
\begin{theorem}[{\cite[Theorem 6.1.1]{HJ2012}}]\label{thm-4}
	Let $A=\left(a_{i j}\right) \in \mathbb{C}^{n \times n}$, and let $\lambda \in \mathbb{C}$ be an eigenvalue of $A$. Let $\operatorname{x}=\left(x_1, \dots, x_n\right)^T \in \mathbb{C}^n$ be an eigenvector of $A$ with respect to $\lambda$, and let $i \in[n]$ be an index such that $\left|x_i\right|=\max _{1 \le j \le n}\left|x_j\right|$. Then
	\[	\left|\lambda-a_{i i}\right| \le \sum_{j \neq i}\left|a_{i j}\right|.\]
	Moreover, if the equality holds, then $\left|x_j\right|=\left|x_i\right|$ whenever $a_{i j} \neq 0$, and all complex numbers in $\left\{a_{i j} x_j\colon  j \neq i, a_{i j} \neq 0\right\}$ have the same argument.
\end{theorem}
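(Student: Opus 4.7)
The plan is to read the eigenvalue equation $Ax = \lambda x$ one coordinate at a time and extract the stated inequality from the $i$-th row, then obtain the equality characterization by tracing which of the intermediate inequalities are tight. First, I would look at the $i$-th coordinate of $Ax = \lambda x$ and isolate the diagonal contribution on the left, writing
\[
(\lambda - a_{ii})\, x_i \;=\; \sum_{j \neq i} a_{ij}\, x_j.
\]
Taking moduli and applying the triangle inequality gives $|\lambda - a_{ii}|\,|x_i| \le \sum_{j \neq i} |a_{ij}|\,|x_j|$. Since $x$ is a nonzero eigenvector, $|x_i| = \max_j |x_j| > 0$, so $|x_j| \le |x_i|$ for every $j$; substituting this bound and dividing by $|x_i|$ yields the target inequality $|\lambda - a_{ii}| \le \sum_{j \neq i} |a_{ij}|$.

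For the moreover part I would analyze the two distinct inequalities that were invoked above. The crude bound $\sum_{j \neq i} |a_{ij}|\,|x_j| \le \sum_{j \neq i} |a_{ij}|\,|x_i|$ is tight precisely when $|a_{ij}|\bigl(|x_i| - |x_j|\bigr) = 0$ for every $j \neq i$, so whenever $a_{ij} \neq 0$ we must have $|x_j| = |x_i|$, which is the first half of the equality claim. Next, the triangle inequality $\left|\sum_{j \neq i} a_{ij} x_j\right| \le \sum_{j \neq i} |a_{ij} x_j|$ is saturated exactly when all nonzero summands $a_{ij} x_j$ point in the same direction in $\mathbb{C}$, i.e. share a common argument, which is precisely the second half.

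The argument is essentially a one-line manipulation, so I do not anticipate a substantive obstacle; the only place requiring some care is the equality analysis, where one must verify that both inequalities in the chain are saturated simultaneously and attribute the correct structural consequence to each (maximality of $|x_i|$ on the one hand, the triangle inequality on the other). Beyond this careful bookkeeping, no technical difficulty is expected.
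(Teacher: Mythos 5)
Your proof is correct and is the standard argument for the Geršgorin circle theorem; the paper itself gives no proof, citing the result directly from Horn and Johnson, and your derivation (isolating the $i$-th row of $Ax=\lambda x$, applying the triangle inequality together with the maximality of $|x_i|$, and tracing which inequalities are saturated in the equality case) matches the classical proof. The equality analysis correctly attributes $|x_j|=|x_i|$ for $a_{ij}\neq 0$ to the bound $|x_j|\le|x_i|$ and the common-argument condition to the triangle inequality, so nothing is missing.
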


\subsection{Additive compound matrices}

Let $V$ be an $n$-dimensional vector space over a field $\mathbb{F}$. For $1\le k \le n$, let $\bigwedge^k V$ be the $k$-th exterior power of $V$. Given a linear operator $\mathcal{M}: V \rightarrow V$, the \textit{$k$-th additive compound} of $\mathcal{M}$ is the linear operator $\mathcal{M}^{[k]}\colon \bigwedge^k V \rightarrow \bigwedge^k V$ defined by
\[\mathcal{M}^{[k]}\left(v_1 \wedge \cdots \wedge v_k\right)=\sum_{i=1}^k v_1 \wedge \cdots \wedge\left(\mathcal{M} v_i\right) \wedge \cdots \wedge v_k\]
for every $v_1, \dots, v_k \in V$. 
Let $e_1, \dots, e_n$ be the standard basis for $V$, and denote by $M$ the matrix representation of the operator $\mathcal{M}$ with respect to this basis. Then \[\left\{e_{i_1} \wedge \cdots \wedge e_{i_k}: 1 \leq i_1<\cdots<i_k \leq n\right\}\] forms a basis for the exterior power $\wedge^k V$. The matrix representation of the operator $\mathcal{M}^{[k]}$ with respect to this basis is called the \textit{$k$-th additive compound matrix} of $M$, and is denoted by $M^{[k]}$.

The study of additive compound operators was introduced by Wielandt \cite{W1967} in 1967, with further discussions available in \cite{MOA2011}. Schwarz \cite{S1970} and London \cite{L1976} investigated how additive compound operators can be applied to the differential equations. In \cite{F1974}, Fiedler proposed a family of generalized compound operators, which form a bridge between the classical multiplicative compounds and the additive compounds.

Let $\sigma$ and $\tau$ be two $k$-subsets of the ordered set $V$ such that $|\sigma \cap \tau| = k-1$, with $\sigma \setminus \tau = \{i\}$ and $\tau \setminus \sigma = \{j\}$. Denote by $\varepsilon(\sigma, \tau)$ the number of elements in $\sigma \cap \tau$ that are between $i$ and $j$. The following theorem provides an explicit formula for the additive compound of a matrix.
\begin{theorem}[\cite{F1974}]\label{thm-9}
	Let $M$ be an $n \times n$ matrix, and let $1 \le k \le n$. Then, $M^{[k]}$ is an $\binom{n}{k} \times\binom{ n}{k}$ matrix, with rows and columns indexed by the $k$-subsets of $[n]$, defined by
	\[	\left(M^{[k]}\right)_{\sigma, \tau}= \begin{cases}\sum _{i \in \sigma} M_{i, i} & \text { if } \sigma=\tau, \\[2mm] (-1)^{\varepsilon(\sigma, \tau)} M_{i, j} & \text { if }|\sigma \cap \tau|=k-1, \sigma \setminus \tau=\{i\}, \tau \setminus \sigma=\{j\}, \\[2mm] 0 & \text { otherwise },\end{cases}\]
	for every $\sigma, \tau \in\binom{[n]}{k}$.
\end{theorem}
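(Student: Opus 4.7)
The plan is to unpack the definition of $\mathcal{M}^{[k]}$ applied to a basis wedge $e_\tau\colon= e_{j_1}\wedge\cdots\wedge e_{j_k}$ with $\tau=\{j_1,\dots,j_k\}$ and $j_1<\cdots<j_k$, and to extract the coefficient of $e_\sigma$ for each $k$-subset $\sigma$ of $[n]$. Expanding $\mathcal{M} e_{j_\ell}=\sum_{p=1}^{n} M_{p,j_\ell}\,e_p$ in the defining formula and using multilinearity of the wedge yields
\[
\mathcal{M}^{[k]}(e_\tau)=\sum_{\ell=1}^{k}\sum_{p=1}^{n} M_{p,j_\ell}\, e_{j_1}\wedge\cdots\wedge e_p\wedge\cdots\wedge e_{j_k},
\]
where $e_p$ sits in the $\ell$-th slot. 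The whole argument amounts to classifying which summands contribute to which $e_\sigma$, and computing the resulting sign.

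First I would split the inner sum according to whether $p$ lies in $\tau$ or not. If $p=j_\ell$, the wedge is unchanged and contributes $M_{j_\ell,j_\ell}\,e_\tau$; summing over $\ell$ yields the diagonal entry $\sum_{i\in\sigma}M_{i,i}$ when $\sigma=\tau$. If $p=j_m$ for some $m\ne\ell$, the wedge acquires a repeated factor and hence vanishes. If $p\notin\tau$, the resulting index set $(\tau\setminus\{j_\ell\})\cup\{p\}$ is a $k$-set that differs from $\tau$ in exactly one element. This already shows that whenever $|\sigma\cap\tau|<k-1$ no summand can produce $e_\sigma$, giving the third case of the formula.

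For the off-diagonal case $|\sigma\cap\tau|=k-1$, write $\sigma\setminus\tau=\{i\}$ and $\tau\setminus\sigma=\{j\}$. Among all summands above, the only one whose index multiset equals $\sigma$ is the one with $j_\ell=j$ and $p=i$, and the scalar it carries is $M_{p,j_\ell}=M_{i,j}$. What remains is to identify the sign obtained when the wedge
\[
e_{j_1}\wedge\cdots\wedge e_{j_{\ell-1}}\wedge e_i\wedge e_{j_{\ell+1}}\wedge\cdots\wedge e_{j_k}
\]
is rewritten in the standard increasing form $e_\sigma$. This sign extraction is the one place where I would expect to be most careful, though the difficulty is bookkeeping rather than anything conceptual.

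The sign is computed by adjacent transpositions, each contributing a factor $-1$. Since the indices in $\sigma\cap\tau=\tau\setminus\{j\}$ are already in increasing order inside the displayed wedge, sorting reduces to sliding $e_i$ past every $e_{j_m}$ whose index lies strictly between $i$ and $j$: if $i>j$ one moves $e_i$ to the right past $j_{\ell+1},j_{\ell+2},\dots$ while those indices remain less than $i$, and if $i<j$ one moves $e_i$ symmetrically to the left. In either case, the number of swaps equals $|\{v\in\sigma\cap\tau : \min(i,j)<v<\max(i,j)\}|$, which is exactly $\varepsilon(\sigma,\tau)$ by definition. Therefore the coefficient of $e_\sigma$ in $\mathcal{M}^{[k]}(e_\tau)$ is $(-1)^{\varepsilon(\sigma,\tau)}M_{i,j}$, and combining this with the diagonal and vanishing cases yields all three lines of the claimed formula; the stated shape $\binom{n}{k}\times\binom{n}{k}$ is immediate from the basis of $\bigwedge^{k}V$ recalled just before the theorem.
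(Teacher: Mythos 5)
The paper does not prove Theorem \ref{thm-9}; it is quoted from Fiedler's 1974 paper without proof, so there is no internal argument to compare yours against. Your blind proof is a correct direct verification from the definition of $\mathcal{M}^{[k]}$ given in the paper: the expansion of $\mathcal{M}^{[k]}(e_\tau)$ by multilinearity, the three-way classification of summands (fixed slot, repeated factor, genuinely new index), and the identification of the unique contributing term in the off-diagonal case are all handled correctly. The sign computation is also right: with the remaining indices of $\tau\setminus\{j\}$ already increasing, sorting $e_i$ into place costs exactly one adjacent transposition per element of $\sigma\cap\tau$ lying strictly between $i$ and $j$, which is precisely $\varepsilon(\sigma,\tau)$ as the paper defines it, and the scalar $M_{i,j}$ (row index the element of $\sigma\setminus\tau$, column index the element of $\tau\setminus\sigma$) matches the stated formula under the convention $\mathcal{M}e_j=\sum_p M_{p,j}e_p$. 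This is the standard argument one would expect for this lemma; no gaps.
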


The spectrum of the additive compound $M^{[k]}$ of $M$ is completely determined by that of $M$, and their relation is described as follows.

\begin{theorem}[\cite{F1974},Theorem 2.1]\label{thm-10}
	Let $M$ be an $n \times n$ matrix over a field $\mathbb{F}$, with eigenvalues $\lambda_1, \dots, \lambda_n$. Then, the $k$-th additive compound $M^{[k]}$ has eigenvalues $\lambda_{i_1}+\cdots+\lambda_{i_k}$, for $1 \leq i_1< \cdots<i_k \leq n$.
\end{theorem}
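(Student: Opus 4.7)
The plan is to reduce the problem to upper triangular $M$, where the statement becomes transparent, by exploiting that the additive compound construction is equivariant under similarity. The diagonalizable case gives the cleanest intuition and motivates the triangularization strategy.

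First, the diagonalizable case is immediate: if $v_1, \dots, v_n$ is an eigenbasis with eigenvalues $\lambda_1, \dots, \lambda_n$, then $\{v_{i_1} \wedge \cdots \wedge v_{i_k} : 1 \le i_1 < \cdots < i_k \le n\}$ is a basis of $\bigwedge^{k} V$, and by multilinearity of the wedge,
\[
M^{[k]}(v_{i_1} \wedge \cdots \wedge v_{i_k}) = \sum_{j=1}^{k} v_{i_1} \wedge \cdots \wedge (\lambda_{i_j} v_{i_j}) \wedge \cdots \wedge v_{i_k} = (\lambda_{i_1} + \cdots + \lambda_{i_k})\, v_{i_1} \wedge \cdots \wedge v_{i_k}.
\]
So $M^{[k]}$ has the claimed eigenvalues whenever $M$ is diagonalizable over $\overline{\mathbb{F}}$.

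To pass to arbitrary $M$, I would establish similarity-equivariance: for any invertible $P$,
\[
(PMP^{-1})^{[k]} = \Lambda^{k} P \cdot M^{[k]} \cdot (\Lambda^{k} P)^{-1},
\]
where $\Lambda^{k} P$ denotes the multiplicative compound on $\bigwedge^{k} V$, i.e.\ the operator $u_1 \wedge \cdots \wedge u_k \mapsto Pu_1 \wedge \cdots \wedge Pu_k$. This is a short direct computation: substitute $w_i = P^{-1} v_i$ into the definition of $M^{[k]}$ and apply $\Lambda^{k} P$ termwise. Spectra are therefore preserved under similarity, so after extending scalars to $\overline{\mathbb{F}}$ and invoking Schur's theorem, it suffices to treat $M$ upper triangular with diagonal $\lambda_1, \dots, \lambda_n$.

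For such $M$, I would order the basis $\{e_\sigma : \sigma \in \binom{[n]}{k}\}$ of $\bigwedge^{k} V$ lexicographically. Writing $Me_{i_j} = \lambda_{i_j} e_{i_j} + \sum_{\ell < i_j} M_{\ell, i_j} e_\ell$ and expanding $M^{[k]}(e_\sigma)$ via the defining formula, the diagonal contribution is $(\lambda_{i_1} + \cdots + \lambda_{i_k})\, e_\sigma$, while each off-diagonal term either vanishes (when $\ell \in \sigma \setminus \{i_j\}$) or yields $\pm e_\tau$ with $\tau$ obtained from $\sigma$ by replacing $i_j$ with some $\ell < i_j$. The combinatorial heart, and the main bookkeeping obstacle, is to confirm that every such $\tau$ is strictly lex-smaller than $\sigma$: locating the index $m$ with $i_m < \ell < i_{m+1}$ (using the convention $i_0 = 0$), the sorted $\tau$ first differs from $\sigma$ at position $m+1$, where $\tau$ has entry $\ell < i_{m+1}$. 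This shows $M^{[k]}$ is upper triangular in the lex-ordered basis with diagonal entries $\lambda_{i_1} + \cdots + \lambda_{i_k}$, proving the claim.
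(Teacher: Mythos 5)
The paper does not prove this statement at all: it is imported verbatim from Fiedler \cite{F1974} (Theorem 2.1 there) as a known result, so there is no internal proof to compare against. Your argument is a correct, self-contained proof. The similarity equivariance $(PMP^{-1})^{[k]} = (\Lambda^k P)\, M^{[k]}\, (\Lambda^k P)^{-1}$ is exactly as you compute, the multiplicative compound $\Lambda^k P$ is invertible when $P$ is, and the characteristic polynomial of $M^{[k]}$ is insensitive to extending scalars to $\overline{\mathbb{F}}$, so reducing to an upper triangular $M$ is legitimate. The lex-order bookkeeping also checks out: replacing $i_j$ by some $\ell < i_j$ either kills the wedge (if $\ell \in \sigma$) or produces $\pm e_\tau$ where the sorted $\tau$ first differs from $\sigma$ at position $m+1$ (with $i_m < \ell < i_{m+1}$) and has the smaller entry $\ell$ there, so $M^{[k]}$ is triangular in the lex basis with the claimed diagonal. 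Two cosmetic points: over a general field you should invoke triangularizability over the algebraic closure (Jordan form or induction on invariant subspaces) rather than ``Schur's theorem,'' which usually refers to unitary triangularization over $\mathbb{C}$; and the opening diagonalizable-case paragraph is strictly redundant once the triangular argument is in place, since it is subsumed by it. Note also that Fiedler's own proof runs through the identity relating additive and multiplicative compounds (essentially $\Lambda^k(e^{tM}) = e^{tM^{[k]}}$, or a generalized compound interpolating between the two), whereas your triangularization route is more elementary and works uniformly over any field, which is arguably a better fit for the way the theorem is stated here.
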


\subsection{High dimensional Laplacians}

The following result, originally formulated and proved by Eckmann \cite{E1944} and later restated and proved by Horak and Jost \cite{HJ2013}, is often referred to as the discrete Hodge theorem.
\begin{theorem}[\cite{E1944,HJ2013}]\label{thm-6}
 For an abstract simplicial complex $X$ with weight function $\omega$, \[\operatorname{ker}L^{\omega}_k(X) \cong \widetilde{H}^k(X;\mathbb{R}).\]
\end{theorem}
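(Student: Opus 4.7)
The plan is to carry out the standard Hodge-decomposition argument, adapted to the weighted inner product $\langle\cdot,\cdot\rangle$ on $C^k(X,\mathbb{R})$. The whole proof boils down to showing
\[
\ker L^{\omega}_k(X) \;=\; \ker d_k \,\cap\, \ker d_{k-1}^{\omega *},
\]
and then identifying this intersection with a canonical complement of $\operatorname{im} d_{k-1}$ inside $\ker d_k$, so that the quotient defining $\widetilde{H}^k(X;\mathbb{R})$ is literally $\ker L^{\omega}_k(X)$.

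First, I would establish the identity displayed above by a one-line Rayleigh argument. For any $f\in C^k(X,\mathbb{R})$, the adjoint relation gives
\[
\langle L^{\omega}_k(X) f, f\rangle \;=\; \langle d_k f, d_k f\rangle \,+\, \langle d_{k-1}^{\omega *} f, d_{k-1}^{\omega *} f\rangle.
\]
Since both summands are nonnegative, $L^{\omega}_k(X) f = 0$ forces $f\in\ker L^{\omega}_k(X)$ to satisfy $d_k f = 0$ and $d_{k-1}^{\omega *} f = 0$; the reverse inclusion is obvious.

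Next, I would identify $\ker d_{k-1}^{\omega *}$ with the orthogonal complement of $\operatorname{im} d_{k-1}$ inside $C^k(X,\mathbb{R})$: for every $g\in C^{k-1}(X,\mathbb{R})$ the adjoint relation yields $\langle f, d_{k-1} g\rangle = \langle d_{k-1}^{\omega *} f, g\rangle$, hence $d_{k-1}^{\omega *} f = 0$ if and only if $f\perp \operatorname{im} d_{k-1}$. Combined with $d_k d_{k-1}=0$, which gives $\operatorname{im} d_{k-1}\subseteq \ker d_k$, the finite-dimensional orthogonal decomposition with respect to $\langle\cdot,\cdot\rangle$ produces
\[
\ker d_k \;=\; \operatorname{im} d_{k-1} \;\oplus\; \bigl(\ker d_k \cap \operatorname{im}(d_{k-1})^{\perp}\bigr) \;=\; \operatorname{im} d_{k-1} \;\oplus\; \ker L^{\omega}_k(X).
\]
Passing to the quotient then gives $\widetilde{H}^k(X;\mathbb{R})=\ker d_k/\operatorname{im} d_{k-1}\cong \ker L^{\omega}_k(X)$ as $\mathbb{R}$-vector spaces.

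There is no genuine obstacle here: the whole argument is linear-algebraic once one has the weighted adjoint relation defining $d_{k-1}^{\omega *}$ and the nilpotency identity $d_k d_{k-1}=0$, both of which are already recorded in the preliminaries. The only point that warrants a sentence of care is verifying that the orthogonality used for the decomposition of $\ker d_k$ is taken with respect to the $\omega$-weighted inner product (not the standard one), since the adjoint $d_{k-1}^{\omega *}$ is defined only with respect to that inner product; this ensures the complementary summand really is $\ker d_k\cap\ker d_{k-1}^{\omega *}$ and hence $\ker L^{\omega}_k(X)$.
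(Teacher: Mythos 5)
Your proof is correct: it is the standard weighted Hodge-decomposition argument, and every step (the Rayleigh identity forcing $\ker L^{\omega}_k(X)=\ker d_k\cap\ker d_{k-1}^{\omega*}$, the identification $\ker d_{k-1}^{\omega*}=(\operatorname{im}d_{k-1})^{\perp}$, and the orthogonal splitting of $\ker d_k$) goes through because $\omega>0$ makes the inner product positive definite and $d_kd_{k-1}=0$ is already recorded in the preliminaries. The paper itself offers no proof of this statement --- it is quoted as a known result of Eckmann and of Horak--Jost --- and your argument is precisely the one those sources use, so there is nothing further to reconcile.
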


By identifying the weighted $k$-dimensional down-Laplacian and up-Laplacian, $L_k^{\omega \operatorname{down}}(X)$ and $L_k^{\omega \operatorname{up}}(X)$, with their matrix representations in the standard basis, we obtain the following explicit descriptions of $L_k^{\omega \operatorname{down}}(X)$ and $L_k^{\omega \operatorname{up}}(X)$, respectively.
\begin{theorem}[\cite{HJ2013}]\label{thm-13}
Let $X$ be a simplicial complex on vertex set $V$, and let $\omega\colon  X \rightarrow \mathbb{R}_{>0}$ be a weight function of $X$. Then, for all $k \geq-1$, $L_k^{\omega \operatorname{down}}(X)$ is $f_k(X) \times f_k(X)$ matrix, with rows and columns indexed by the $k$-dimensional simplices of $X$, defined by
\[	L_k^{\omega \operatorname{down}}(X)_{\sigma, \tau}= \begin{cases}\sum _{v \in \sigma} \frac{\omega(\sigma)}{\omega(\sigma \setminus\{v\})}& \text { if } \sigma=\tau, \\[3mm](-1)^{\varepsilon(\sigma, \tau)} \frac{\omega(\tau)}{\omega(\sigma \cap \tau)} & \text { if }|\sigma \cap \tau|=k,  \\[3mm] 0 & \text { otherwise, }\end{cases}\]
		for every $\sigma, \tau \in X(k)$.
\end{theorem}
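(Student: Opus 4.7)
The plan is a direct computation of the matrix entries of $L_k^{\omega\operatorname{down}}(X) = d_{k-1} d_{k-1}^{\omega*}$ in the elementary cochain basis $\{e_\sigma : \sigma \in X(k)\}$, separating the computation into three cases depending on $|\sigma \cap \tau|$.

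First, I would unpack the coboundary on a single basis element using the defining formula $(d_{k-1}\phi)(\tau) = \sum_{j=0}^{k}(-1)^j \phi(\tau_j)$; applying this to $\phi = e_\rho$ shows that $d_{k-1} e_\rho = \sum_{\tau \supset \rho,\, \tau \in X(k)}\operatorname{sgn}(\rho,\tau)\, e_\tau$, where the sign is determined by the position of the extra vertex of $\tau$ relative to $\rho$. Because the weighted inner product $\langle e_\sigma, e_\tau\rangle = \omega(\sigma)\delta_{\sigma\tau}$ is diagonal in this basis, the adjoint relation immediately yields the matrix identity $d_{k-1}^{\omega*} = W_{k-1}^{-1} (d_{k-1})^{T} W_k$, where $W_k$ is the diagonal matrix of weights on $k$-simplices. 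Composing then gives
\[
(L_k^{\omega\operatorname{down}}(X))_{\sigma,\tau} \;=\; \omega(\tau)\sum_{\substack{\rho \subset \sigma \cap \tau \\ |\rho|=k}} \frac{\operatorname{sgn}(\rho,\sigma)\,\operatorname{sgn}(\rho,\tau)}{\omega(\rho)}.
\]

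The three cases now fall out readily. When $\sigma = \tau$, the sum ranges over $\rho = \sigma\setminus\{v\}$ for $v \in \sigma$, the signs square to $1$, and one recovers the stated diagonal entry $\sum_{v\in\sigma}\omega(\sigma)/\omega(\sigma\setminus\{v\})$. When $|\sigma\cap\tau| = k$ with $\sigma \neq \tau$, only the single $(k-1)$-face $\rho = \sigma\cap\tau$ contributes. When $|\sigma\cap\tau| < k$, there is no common codimension-one face and the entry vanishes.

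The one step needing a careful argument is the sign identity $\operatorname{sgn}(\rho,\sigma)\operatorname{sgn}(\rho,\tau) = (-1)^{\varepsilon(\sigma,\tau)}$ in the off-diagonal case. Writing $\sigma\setminus\tau = \{i\}$ and $\tau\setminus\sigma = \{j\}$, the position of $i$ in $\sigma$ (resp.\ of $j$ in $\tau$) equals one plus the number of vertices of $\rho$ smaller than the deleted vertex, since the only vertices of $\sigma$ and $\tau$ outside $\rho$ are $i$ and $j$ themselves. Adding these two counts modulo $2$ cancels the contributions from vertices below $\min(i,j)$, leaving precisely $|\{v\in\rho : \min(i,j)<v<\max(i,j)\}| = \varepsilon(\sigma,\tau)$, which is the parity required to complete the formula. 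This parity check is the only nontrivial calculation in the proof.
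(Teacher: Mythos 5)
Your computation is correct. Note, however, that the paper does not prove this statement at all: Theorem \ref{thm-13} is quoted verbatim from Horak and Jost \cite{HJ2013}, so there is no in-paper argument to compare against. Your direct verification — expressing $d_{k-1}^{\omega*}$ as $W_{k-1}^{-1}(d_{k-1})^{T}W_k$ via the diagonal inner products, composing to get $(L_k^{\omega\operatorname{down}})_{\sigma,\tau}=\omega(\tau)\sum_{\rho}\operatorname{sgn}(\rho,\sigma)\operatorname{sgn}(\rho,\tau)/\omega(\rho)$, and checking the parity identity $\operatorname{sgn}(\rho,\sigma)\operatorname{sgn}(\rho,\tau)=(-1)^{\varepsilon(\sigma,\tau)}$ by counting vertices of $\sigma\cap\tau$ below each deleted vertex — is the standard proof and is sound, including the one genuinely delicate step (the sign bookkeeping), which you handle correctly.
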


\begin{theorem}[\cite{HJ2013}]\label{thm-15}
	Let $X$ be a simplicial complex on vertex set $V$, and let $\omega\colon  X \rightarrow \mathbb{R}_{>0}$ be a weight function of $X$. Then, for all $k \geq-1$, $L_k^{\omega \operatorname{up}}(X)$ is $f_k(X) \times f_k(X)$ matrix, with rows and columns indexed by the $k$-dimensional simplices of $X$, defined by
\[	L_k^{\omega \operatorname{up}}(X)_{\sigma, \tau}= \begin{cases}\sum _{u \in \operatorname{lk}_X(\sigma)} \frac{\omega(\sigma\cup \left\{u\right\})}{\omega(\sigma)}& \text { if } \sigma=\tau, \\[3mm]-(-1)^{\varepsilon(\sigma, \tau)} \frac{\omega(\sigma\cup \tau)}{\omega(\sigma)} & \text { if }|\sigma \cap \tau|=k, \sigma \cup \tau \in X,\\[3mm] 0 & \text { otherwise, }\end{cases}\]
for every $\sigma, \tau \in X(k)$.
\end{theorem}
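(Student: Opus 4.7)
The plan is to unfold the definition $L_k^{\omega \operatorname{up}}(X)=d_k^{\omega *}d_k$ on the elementary cochain basis $\{e_\sigma\colon\sigma\in X(k)\}$ and then read off the matrix entries, paying careful attention to the weight ratios produced by the non-orthonormality of the basis and to the sign bookkeeping against the definition of $\varepsilon(\sigma,\tau)$.

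First I would compute $d_k(e_\sigma)$ for $\sigma\in X(k)$ by applying the identity $(d_k\phi)(\eta)=\phi(\partial_{k+1}\eta)$, which gives $d_k(e_\sigma)=\sum_{v\in\operatorname{lk}_X(\sigma)}\operatorname{sgn}(\sigma,\sigma\cup\{v\})\,e_{\sigma\cup\{v\}}$. Next, for $\eta\in X(k+1)$, I would derive $d_k^{\omega *}(e_\eta)$ by writing it as $\sum_{\mu}a_\mu e_\mu$ and imposing the adjoint identity against each basis element $e_\mu$ with $\mu\subset\eta$: since $\langle d_k e_\mu,e_\eta\rangle=\operatorname{sgn}(\mu,\eta)\,\omega(\eta)$, while $\langle e_\mu,\sum_\nu a_\nu e_\nu\rangle$ collapses to $a_\mu\,\omega(\mu)$ (the orthogonal but non-orthonormal basis contributing the factor $\omega(\mu)$), one obtains $a_\mu=\operatorname{sgn}(\mu,\eta)\,\omega(\eta)/\omega(\mu)$. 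The weight ratio is precisely the feature that distinguishes the weighted case from the combinatorial one.

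Composing the two formulas and collecting the coefficient of $e_\sigma$ in $L_k^{\omega \operatorname{up}}(e_\tau)$ yields the matrix entry $L_k^{\omega \operatorname{up}}(X)_{\sigma,\tau}$. When $\sigma=\tau$, the signs square to $1$ and one recovers $\sum_{v\in\operatorname{lk}_X(\sigma)}\omega(\sigma\cup\{v\})/\omega(\sigma)$. When $\sigma\neq\tau$, a non-zero contribution requires a common coface $\sigma\cup\tau\in X(k+1)$, equivalently $|\sigma\cap\tau|=k$ and $\sigma\cup\tau\in X$; the coefficient then equals $\operatorname{sgn}(\tau,\sigma\cup\tau)\,\operatorname{sgn}(\sigma,\sigma\cup\tau)\,\omega(\sigma\cup\tau)/\omega(\sigma)$, with the denominator $\omega(\sigma)$ arising because the outer adjoint step distributes the weight onto the row index.

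The main obstacle is the sign identity $\operatorname{sgn}(\tau,\sigma\cup\tau)\,\operatorname{sgn}(\sigma,\sigma\cup\tau)=-(-1)^{\varepsilon(\sigma,\tau)}$. I would handle it by letting $\{u\}=\tau\setminus\sigma$, $\{v\}=\sigma\setminus\tau$, and denoting by $p_u,p_v$ their $1$-indexed positions in $\sigma\cup\tau$, so that each individual sign is $(-1)^{p_\star-1}$ and the product is $(-1)^{p_u+p_v}$. Assuming without loss of generality that $u<v$, the position $p_v$ exceeds $p_u$ by exactly $1$ plus the number of vertices of $\sigma\cap\tau$ lying strictly between $u$ and $v$, i.e.\ by $1+\varepsilon(\sigma,\tau)$. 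Hence $p_u+p_v\equiv 1+\varepsilon(\sigma,\tau)\pmod{2}$, producing exactly the extra minus sign and yielding the stated off-diagonal formula.
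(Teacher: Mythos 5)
Your derivation is correct and complete: the computation of $d_k(e_\sigma)$, the adjoint coefficients $a_\mu=\operatorname{sgn}(\mu,\eta)\,\omega(\eta)/\omega(\mu)$ forced by the non-orthonormal inner product, and the sign identity $\operatorname{sgn}(\tau,\sigma\cup\tau)\operatorname{sgn}(\sigma,\sigma\cup\tau)=(-1)^{p_u+p_v}=-(-1)^{\varepsilon(\sigma,\tau)}$ via $p_v=p_u+1+\varepsilon(\sigma,\tau)$ all check out against the paper's conventions for $\operatorname{sgn}$ and $\varepsilon$. The paper itself offers no proof of this statement --- it is imported verbatim from Horak and Jost \cite{HJ2013} --- so there is nothing to compare against beyond noting that your argument is the standard first-principles verification one would expect there.
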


By Theorem \ref{thm-13}, the following explicit formula for the vertex-weighted down-Laplacian follows immediately.
\begin{lemma}\label{lem-4}
	Let $X$ be a simplicial complex on vertex set $V$, and let $\omega\colon X \rightarrow \mathbb{R}_{>0}$ be a vertex weight function of $X$. Then, for all $k \geq-1$, $L_k^{\omega\operatorname{down}}(X)$ is an $f_k(X) \times f_k(X)$ matrix, with rows and columns indexed by the $k$-dimensional simplices of $X$, defined by
	\[L_k^{\omega\operatorname{down}}(X)_{\sigma, \tau}= \begin{cases}\sum _{v \in \sigma} \omega(v), & \text { if } \sigma=\tau, \\[3mm] (-1)^{\varepsilon(\sigma, \tau)} \omega(\tau \setminus \sigma), & \text { if }|\sigma \cap \tau|=k, \\[3mm] 0, & \text { otherwise, }\end{cases}\]	for every $\sigma, \tau \in X(k)$. \end{lemma}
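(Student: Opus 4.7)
The plan is to obtain Lemma \ref{lem-4} as a direct specialization of Theorem \ref{thm-13}, exploiting the multiplicative structure of the vertex weight function. Since $\omega$ is defined to be completely multiplicative on simplices, i.e.\ $\omega(\sigma) = \prod_{v \in \sigma} \omega(v)$ for $\sigma \neq \emptyset$ and $\omega(\emptyset) = 1$, every ratio of the form $\omega(A)/\omega(B)$ with $B \subseteq A$ collapses to the product of the vertex weights of $A \setminus B$. The proof is therefore a matter of substituting this identity into the three cases of Theorem \ref{thm-13}.

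First, I would fix $k \geq -1$ and two simplices $\sigma, \tau \in X(k)$, and recall from Theorem \ref{thm-13} that the off-diagonal block with $|\sigma \cap \tau| = k$ contributes $(-1)^{\varepsilon(\sigma,\tau)} \omega(\tau)/\omega(\sigma \cap \tau)$, while all entries with $|\sigma \cap \tau| < k$ vanish. For the off-diagonal case, I would write
\[
\frac{\omega(\tau)}{\omega(\sigma \cap \tau)}
= \frac{\prod_{v \in \tau}\omega(v)}{\prod_{v \in \sigma \cap \tau}\omega(v)}
= \prod_{v \in \tau \setminus \sigma} \omega(v)
= \omega(\tau \setminus \sigma),
\]
where the last equality uses the definition of $\omega$ on the nonempty simplex $\tau \setminus \sigma$ (noting that $|\tau \setminus \sigma| = 1$, so this reduces to the vertex weight of the single vertex in $\tau \setminus \sigma$, but the same formula is valid in general by multiplicativity).

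Next, for the diagonal entries I would apply the same cancellation to each summand of Theorem \ref{thm-13}:
\[
\frac{\omega(\sigma)}{\omega(\sigma \setminus \{v\})}
= \frac{\prod_{u \in \sigma}\omega(u)}{\prod_{u \in \sigma \setminus \{v\}}\omega(u)}
= \omega(v).
\]
Summing over $v \in \sigma$ yields the claimed diagonal entry $\sum_{v \in \sigma} \omega(v)$. The entries with $|\sigma \cap \tau| < k$ remain zero by Theorem \ref{thm-13}. Putting all three cases together recovers the matrix formula in the lemma, which completes the proof. There is no real obstacle here; the only point requiring any care is the bookkeeping of the ratios $\omega(\sigma)/\omega(\sigma\setminus\{v\})$ and $\omega(\tau)/\omega(\sigma\cap\tau)$, and a brief remark that the sign factor $(-1)^{\varepsilon(\sigma,\tau)}$ and the index set over which each sum runs are identical in Theorem \ref{thm-13} and in the desired specialization.
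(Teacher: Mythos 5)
Your proposal is correct and follows essentially the same route as the paper: specialize Theorem \ref{thm-13} and use the multiplicativity of the vertex weight function to collapse the ratios $\omega(\sigma)/\omega(\sigma\setminus\{v\})$ and $\omega(\tau)/\omega(\sigma\cap\tau)$ to $\omega(v)$ and $\omega(\tau\setminus\sigma)$, respectively. The paper's own proof is the same two-line computation (it happens to simplify the equivalent ratio $\omega(\sigma\cup\tau)/\omega(\sigma)$ for the off-diagonal entry, which yields the same value), so there is nothing substantive to distinguish the two arguments.
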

    \begin{proof}
    By the definition of the vertex weight function, for any $\sigma \in X(k)$ we have
\[
\frac{\omega(\sigma)}{\omega(\sigma \setminus\{v\})}
 = \frac{\prod_{u\in \sigma}\omega(u)}{\prod_{u\in \sigma \setminus\{v\}}\omega(u)}
 = \omega(v),
\]
and for any $\sigma,\tau \in X(k)$ with $|\sigma \cap \tau| = k$,
\[
\frac{\omega(\sigma \cup \tau)}{\omega(\sigma)}
 = \frac{\prod_{u\in \sigma\cup\tau}\omega(u)}{\prod_{u\in \sigma}\omega(u)}
 = \omega(\tau\setminus\sigma).
\]
Thus, the results follows from Theorem \ref{thm-13} immediately.
\end{proof}

Let $\Delta_n$ denote the clique complex of the complete graph on $n+1$ vertices, which is also called the \textit{complete simplicial complex} on $n+1$ vertices.
Observe that while the vertex-weighted $k$-up Laplacian spectrum $\mathbf{s}_i^{\omega\operatorname{up}}$ is defined as the spectrum of the operator $L_k^{\omega \operatorname{up}}(X)= d_k^{\omega*} d_k$, which acts on $C^{k}(X,\mathbb{R})$, its nonzero eigenvalues are entirely determined by the $(k+1)$-simplices of $X$. This is because if an $k$-simplex $\sigma \in X$ is not contained in any $(k+1)$-simplex of $X$, then $d_k(e_{\sigma}) = 0$, and hence $e_{\sigma} \in \operatorname{Ker} L_k^{\omega,\operatorname{up}}(X)$.
For this reason, regard $L_k^{\omega,\operatorname{up}}(X)$ as acting on the $\binom{n}{k+1}$-dimensional real vector space $C^{k}(\Delta^{(k+1)}_{n-1}, \mathbb{R})$.
As a consequence of Theorem \ref{thm-15}, we obtain the following lemma.
\begin{lemma}\label{lem-14}
	Let $X$ be a simplicial complex on vertex set $V$, and let $\omega\colon X \rightarrow \mathbb{R}_{>0}$ be a vertex weight function of $X$. Then, for all $k \geq-1$, $L_k^{\omega \operatorname{up}}(X)$ is an $\binom{n}{k+1} \times \binom{n}{k+1}$ matrix, with rows and columns indexed by all elements in $\binom{[n]}{k+1}$, defined by
	\[	L_k^{\omega \operatorname{up}}(X)_{\sigma, \tau}= \begin{cases}\sum _{u \in \operatorname{lk}_X(\sigma)} \omega(u) & \text { if } \sigma=\tau, \\[3mm]-(-1)^{\varepsilon(\sigma, \tau)} \omega\left(\tau\setminus\sigma\right) & \text { if }|\sigma \cap \tau|=k, \sigma \cup \tau \in X,\\[3mm] 0  & \text { otherwise, }\end{cases}\]
	for every $\sigma, \tau \in \binom{[n]}{k+1}$.
\end{lemma}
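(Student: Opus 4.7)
The plan is to derive Lemma \ref{lem-14} directly from Theorem \ref{thm-15}, in perfect parallel to the way Lemma \ref{lem-4} is deduced from Theorem \ref{thm-13}. The entire content is that the weight ratios appearing in the general formula collapse cleanly under the multiplicative structure of a vertex weight function, after which one only needs to reconcile the indexing set.

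First, I would apply the identity $\omega(\sigma) = \prod_{v \in \sigma} \omega(v)$ to the two nonzero cases in Theorem \ref{thm-15}. For $\sigma \in X(k)$ and $u \in \operatorname{lk}_X(\sigma)$,
\[
\frac{\omega(\sigma \cup \{u\})}{\omega(\sigma)}
 = \frac{\prod_{v \in \sigma \cup \{u\}} \omega(v)}{\prod_{v \in \sigma} \omega(v)}
 = \omega(u),
\]
and for $\sigma, \tau \in X(k)$ with $|\sigma \cap \tau| = k$ and $\sigma \cup \tau \in X$,
\[
\frac{\omega(\sigma \cup \tau)}{\omega(\sigma)}
 = \frac{\prod_{v \in \sigma \cup \tau} \omega(v)}{\prod_{v \in \sigma} \omega(v)}
 = \omega(\tau \setminus \sigma).
\]
Substituting these into the entry formula given by Theorem \ref{thm-15} immediately produces the claimed entries of $L_k^{\omega \operatorname{up}}(X)_{\sigma,\tau}$ for all $\sigma,\tau \in X(k)$.

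Next, I would justify enlarging the indexing set from $X(k)$ to $\binom{[n]}{k+1}$. By the remark that precedes the lemma, any $k$-simplex $\sigma \in X(k)$ with $\operatorname{lk}_X(\sigma) = \emptyset$ satisfies $d_k(e_\sigma) = 0$, so $e_\sigma \in \ker L_k^{\omega \operatorname{up}}(X)$ and the corresponding row and column are identically zero; adjoining also the basis vectors $e_\sigma$ with $\sigma \in \binom{[n]}{k+1} \setminus X(k)$, on which the operator is declared to vanish, again contributes only zero rows and columns. In either case, the stated diagonal value $\sum_{u \in \operatorname{lk}_X(\sigma)} \omega(u)$ and the off-diagonal condition $\sigma \cup \tau \in X$ automatically force the entry to be $0$, so the formula of Lemma \ref{lem-14} is compatible with this extension. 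Hence, regarded as an operator on $C^k(\Delta_{n-1}^{(k+1)}, \mathbb{R})$, the matrix of $L_k^{\omega \operatorname{up}}(X)$ in the basis $\{e_\sigma : \sigma \in \binom{[n]}{k+1}\}$ is exactly the one asserted.

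There is no substantive obstacle here: the weight simplifications are purely algebraic, and the sign conventions $(-1)^{\varepsilon(\sigma,\tau)}$ are inherited verbatim from Theorem \ref{thm-15}. The only point that requires attention is the bookkeeping in the second step, namely checking that the extension by zero rows and columns is consistent with both the diagonal expression and the off-diagonal support condition and does not affect the nonzero spectrum of the original operator on $C^k(X,\mathbb{R})$.
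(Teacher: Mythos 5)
Your proposal is correct and follows essentially the same route as the paper's proof: simplify the weight ratios in Theorem \ref{thm-15} using $\omega(\sigma)=\prod_{v\in\sigma}\omega(v)$, then extend the index set to $\binom{[n]}{k+1}$ by observing that $d_k(e_\sigma)=0$ forces zero rows and columns for $\sigma\notin X(k)$, consistent with the stated formula since $\operatorname{lk}_X(\sigma)=\emptyset$ and $\sigma\cup\tau\notin X$ in that case. No gaps.
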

\begin{proof}
 For any $\sigma, \tau \in \binom{[n]}{k+1}$ with $|\sigma \cap \tau|=k$ and $\sigma \cup \tau \in X$, we have $\sigma, \tau\in X(k)$. By Theorem \ref{thm-15},
 \[L_k^{\omega \operatorname{up}}(X)_{\sigma, \tau}=-(-1)^{\varepsilon(\sigma, \tau)} \frac{\omega(\sigma\cup \tau)}{\omega(\sigma)}=-(-1)^{\varepsilon(\sigma, \tau)} \frac{\omega(\sigma)\omega(\tau\setminus\sigma)}{\omega(\sigma)}=-(-1)^{\varepsilon(\sigma, \tau)} \omega(\tau\setminus\sigma).\]
If $\sigma\notin X(k)$, it follows from $d_k(e_{\sigma}) = 0$ that
\[L_k^{\omega \operatorname{up}}(X)(e_{\sigma})=0.\]  This implies that its matrix representation satisfies
\[L_k^{\omega \operatorname{up}}(X)_{\sigma, \sigma}=0=\sum _{u \in \operatorname{lk}_X(\sigma)}\omega(u)\] since $\operatorname{lk}_X(\sigma)=\emptyset$,
and
\[L_k^{\omega \operatorname{up}}(X)_{\sigma, \tau}=0\] for any $\sigma\ne \tau \in \binom{[n]}{k+1}$.
If $\sigma\in X(k)$, by Theorem \ref{thm-15} again, we have
\[L_k^{\omega \operatorname{up}}(X)_{\sigma, \sigma}=\sum _{u \in \operatorname{lk}_X(\sigma)} \frac{\omega(\sigma\cup \left\{u\right\})}{\omega(\sigma)}
 = \sum _{u \in \operatorname{lk}_X(\sigma)} \frac{\omega(\sigma)\omega(u)}{\omega(\sigma)}
 = \sum _{u \in \operatorname{lk}_X(\sigma)}\omega(u)\]
and \[L_k^{\omega \operatorname{up}}(X)_{\sigma, \tau}=0\] for any $\tau(\ne\sigma)\in \binom{[n]}{k+1}$ with $\sigma \cup \tau \notin X$.

This complete the proof.   
\end{proof}

The following lemma provides an explicit formula for the vertex-weighted Laplacian.
\begin{lemma}[\cite{L2024}]\label{lem-3} Let $X$ be a simplicial complex on vertex set $V$, and let $\omega\colon V \rightarrow \mathbb{R}_{>0}$. Then, for all $k \geq-1$, $L_k^{\omega}(X)$ is an $f_k(X) \times f_k(X)$ matrix, with rows and columns indexed by the $k$-dimensional simplices of $X$, defined by \[L_k^{\omega}(X)_{\sigma, \tau}= \begin{cases}\sum _{v \in \operatorname{lk}_X(\sigma)} \omega(v)+\sum _{v \in \sigma} \omega(v), & \text { if } \sigma=\tau, \\[4mm] (-1)^{\varepsilon(\sigma, \tau)} \omega(\tau\setminus\sigma), & \text { if } \sigma\sim \tau, \\[4mm] 0, & \text { otherwise, }\end{cases}\]	for every $\sigma, \tau \in X(k)$.\end{lemma}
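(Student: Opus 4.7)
The plan is to apply the definition $L_k^{\omega}(X) = L_k^{\omega \operatorname{up}}(X) + L_k^{\omega \operatorname{down}}(X)$ and add the explicit entrywise descriptions already provided by Lemma~\ref{lem-4} and Lemma~\ref{lem-14}. Since both summands act on $C^k(X,\mathbb{R})$ in the standard basis $\{e_\sigma : \sigma \in X(k)\}$, the verification reduces to a case analysis on the relative position of $\sigma$ and $\tau$.

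For the diagonal case $\sigma=\tau$, Lemma~\ref{lem-4} contributes $\sum_{v\in\sigma}\omega(v)$ while Lemma~\ref{lem-14} contributes $\sum_{v\in\operatorname{lk}_X(\sigma)}\omega(v)$, and these sum to the claimed diagonal value. For pairs with $|\sigma\cap\tau|=k$, both formulas produce $\pm(-1)^{\varepsilon(\sigma,\tau)}\omega(\tau\setminus\sigma)$, but the up contribution is present exactly when $\sigma\cup\tau\in X(k+1)$ and carries the opposite sign to the down contribution. Hence the total cancels to $0$ whenever $\sigma\cup\tau\in X(k+1)$, and equals $(-1)^{\varepsilon(\sigma,\tau)}\omega(\tau\setminus\sigma)$ precisely when $|\sigma\cap\tau|=k$ and $\sigma\cup\tau\notin X(k+1)$—that is, exactly when $\sigma\sim\tau$ in the sense fixed in the preliminaries. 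The case $|\sigma\cap\tau|<k$ is immediate, since both tables record $0$.

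One bookkeeping subtlety deserves explicit attention: Lemma~\ref{lem-14} realizes $L_k^{\omega \operatorname{up}}(X)$ as a $\binom{n}{k+1}\times\binom{n}{k+1}$ matrix on the enlarged space $C^k(\Delta_{n-1}^{(k+1)},\mathbb{R})$, whereas the target formula is stated on $C^k(X,\mathbb{R})$ with rows and columns indexed by $X(k)$. I would therefore first restrict to the principal submatrix indexed by $X(k)$ before forming the sum; the observation preceding Lemma~\ref{lem-14} that $e_\sigma\in\ker L_k^{\omega\operatorname{up}}(X)$ whenever $\sigma\notin X(k)$ justifies discarding the complementary block, and within the $X(k)$-block the description of Lemma~\ref{lem-14} applies verbatim. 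No substantive obstacle is anticipated, since the argument is a direct bookkeeping from the two component-wise formulas and the precise matching of the sign in the up and down contributions when $|\sigma\cap\tau|=k$ and $\sigma\cup\tau\in X(k+1)$.
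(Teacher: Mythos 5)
Your proposal is correct. The paper itself states Lemma \ref{lem-3} as a cited result from Lew \cite{L2024} without supplying a proof, and your derivation --- summing the entrywise formulas of Lemma \ref{lem-4} and Lemma \ref{lem-14}, checking the sign cancellation when $|\sigma\cap\tau|=k$ and $\sigma\cup\tau\in X(k+1)$ so that the nonzero off-diagonal entries survive exactly when $\sigma\sim\tau$, and restricting the up-Laplacian to the principal $X(k)$-block (which is legitimate since the entries between $X(k)$ and its complement vanish) --- is precisely the natural argument implicit in the paper's setup and is complete.
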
 

The notion of a \textit{join} of simplicial complexes goes back to the foundational work of Eilenberg and Zilber \cite{EZ1953} in algebraic topology. In 2002, Duval and Reiner \cite{DR2002} provided a standardized definition of this construction. 
\begin{definition}[{\cite[Definition 4.8]{DR2002}}]
 Given a simplicial complex $X_1$ on ordered vertex set $V_1$ and a simplicial complex $X_2$ on ordered vertex set $V_2$, where $V_1\cap V_2=\emptyset$, their join $X_1 * X_2$ is the simplicial complex on $V_1\cup V_2$ consisting of all simplices of the form $F_1 \cup F_2$,
where $F_1\in X_1, F_2\in X_2$ and the ordering on $V_1\cup V_2$ is obtained by keeping the internal orderings of $V_1$ and $V_2$ unchanged and placing all vertices of $V_2$ naturally after those of $V_1$.
\end{definition}

Later, Horak and Jost \cite{HJ2013} studied the cochain groups and coboundary operators of the join of simplicial complexes. 
Given a generalized weight function on the join, they analyzed the induced inner product on each cochain group, as well as the corresponding weighted adjoint coboundary operators with respect to this inner product. Let $X_1$ and  $X_2$ be two simplicial complex on vertex set $[m]$ and $[n]$, with wight function $\omega_{X_1}$ and $\omega_{X_1}$, respectively.
The cochain groups of $X_1 * X_2$ are
\[C^k\left(X_1 * X_2, \mathbb{R}\right)=\bigoplus_{k_1+k_2+1=k} C^{k_1}\left(X_1, \mathbb{R}\right) \otimes C^{k_2}\left(X_2, \mathbb{R}\right),\]
and the coboundary map $d_k$ is defined as the graded derivation
\[d_k(X_1 * X_2)(f \otimes g)=d_{k_1}(X_1)(f) \otimes g+(-1)^{|f|} f \otimes d_{k_2}(X_2)(g),\]
where $f \otimes g \in C^k\left(X_1 * X_2, \mathbb{R}\right)$, $f\in C^{k_1}\left(X_1, \mathbb{R}\right)$, $g\in C^{k_2}\left(X_2, \mathbb{R}\right)$ and $|f|$ denotes the order of a cochain group which contains $f$. Let
\[p\colon\left\{0,1,\dots,\operatorname{dim}(X_1)\right\} \rightarrow \mathbb{R}^{+}\text{ and } q\colon\left\{0,1,\dots, \operatorname{dim} (X_2)\right\} \rightarrow \mathbb{R}^{+}\]
are positive, real valued functions. 
The weight function of the join $X_1 * X_2$ is given by \begin{equation}\label{eq-p}
    \omega_{X_1 * X_2}\left(F_1 \otimes F_2\right)=p\left(\operatorname{dim} (F_1)\right) \omega_{X_1}\left(F_1\right) q\left(\operatorname{dim} (F_2)\right) \omega_{X_2}\left(F_2\right),
\end{equation} for all $F_1\in X_2$ and $F_2\in X_2$.
An induced inner product on $C^k\left(X_1 * X_2, \mathbb{R}\right)$ is 
\[\langle f_1 \otimes g_1, f_2 \otimes g_2\rangle=p\left(k_1\right)\langle f_1, f_2\rangle_{C^{k_1}\left(X_1, \mathbb{R}\right)} q\left(k_2\right)\langle g_1, g_2\rangle_{C^{k_2}\left(X_2, \mathbb{R}\right)},\]
where $f_1, f_2 \in C^{i_1}\left(X_1, \mathbb{R}\right)$, $ g_1, g_2 \in C^{i_2}\left(X_2, \mathbb{R}\right)$, and $k_1+k_2+1=i$.
The corresponding weighted adjoint operator is given by 
\[d_k^{\omega_{X_1 * X_2} *}(f \otimes g)=\frac{p(|f|)}{p(|f|-1)} d_{k_1}^{\omega_{X_1} *} {f} \otimes g+\frac{q(|g|)}{q(|g|-1)}(-1)^{|f|} f \otimes d_{k_2}^{\omega_{X_2} *}(g),\] for all $f\in C^{k_1}\left(X_1, \mathbb{R}\right)$ and $g\in C^{k_2}\left(X_2, \mathbb{R}\right)$.

Based on the above constructions, Horak and Jost defined the weighted Laplacian (see \cite[The proof of Theorem 6.5]{HJ2013}) of the join $X_1*X_2$ as the graded derivation
\begin{equation}\label{eq-j}
    \begin{aligned}
L^{\omega}_k(X_1*X_2)(f \otimes g)= & \left(\frac{p(|f|+1)}{p(|f|)} L^{\omega_{X_1} \operatorname{up}}_{|f|}(X_1)+\frac{p(|f|)}{p(|f|-1)} L^{\omega_{X_1}\operatorname{down} }_{|f|}(X_1)\right) f \otimes g \\
& +f \otimes\left(\frac{q(|g|+1)}{q(|g|)} L^{\omega_{X_2} \operatorname{up}}_{|g|}(X_2)+\frac{q(|g|)}{q(|g|-1)} L^{\omega_{X_2} \operatorname{down}}_{|g|}(X_2)\right) g
\end{aligned}
\end{equation}
for any $f \otimes g\in C^k\left(X_1 * X_2, \mathbb{R}\right).$

For vertex-weighted simplicial complexes, we establish a spectral formula for the Laplacian of the join. This result generalizes the combinatorial Laplacian formula for the join operation given by Duval and Reiner (see \cite[Theorem 4.10]{DR2002}).

\begin{theorem}\label{thm-j}
	Let $X_i$ be a simplicial complex on vertex set $V_i$, with vertex weight function $\omega_i \colon X_i \to \mathbb{R}_{>0}$ for $i = 1, \dots, m$, where the sets $V_i$ are pairwise disjoint. Let
	\[X = X_1 * \cdots * X_m\]
	be their join, endowed with the vertex weight function $\omega$ which is defined as $\omega(v)=\omega_i(v)$ for $v\in V_i$. Then for every $k \ge -1$, we have
    \[\mathbf{s}_{k}^{\omega}(X)=\bigcup_{\substack{\sum_{j\in [m]}i_j=k-(m-1)\\-1 \le i_j \le \dim(X_j),\ \forall j \in [m]\\\lambda_j \in \mathbf{s}_{i_j}^{\omega_j}(X_j)}}\sum_{j\in [m]}\lambda_j.\]
\end{theorem}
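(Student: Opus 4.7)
The plan is to establish the statement first for $m = 2$ by exploiting the graded-derivation formula \eqref{eq-j} of Horak and Jost, and then to iterate by induction on $m$ using the associativity of the join. The crucial observation that collapses the general weighted machinery in our setting is that, when $\omega$ is the vertex weight function on $X_1 * X_2$, the weight of any face $F_1 \cup F_2$ with $F_i \in X_i$ factors multiplicatively as $\omega(F_1 \cup F_2) = \omega_1(F_1)\,\omega_2(F_2)$. Comparing with \eqref{eq-p}, this forces the Horak--Jost auxiliary scaling functions to satisfy $p \equiv q \equiv 1$, whence every ratio of the form $p(i+1)/p(i)$ and $q(i+1)/q(i)$ appearing in \eqref{eq-j} collapses to $1$.

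For the case $m = 2$, substituting $p \equiv q \equiv 1$ into \eqref{eq-j} reduces it to
\[
\left. L^{\omega}_k(X_1 * X_2) \right|_{C^{k_1}(X_1,\mathbb{R}) \otimes C^{k_2}(X_2,\mathbb{R})} = L^{\omega_1}_{k_1}(X_1) \otimes I + I \otimes L^{\omega_2}_{k_2}(X_2)
\]
on each summand indexed by $k_1 + k_2 + 1 = k$ with $-1 \le k_i \le \dim(X_i)$. Since the induced inner product on $C^k(X_1 * X_2,\mathbb{R})$ makes this tensor decomposition an orthogonal direct sum of invariant subspaces, the spectrum of $L^{\omega}_k(X_1 * X_2)$ is the multiset union, over all admissible pairs $(k_1, k_2)$, of the spectra of the block operators. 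For each block, since both factors $L^{\omega_i}_{k_i}(X_i)$ are self-adjoint, I would diagonalize them in orthonormal bases $\{u_\alpha\}$ and $\{v_\beta\}$; then $\{u_\alpha \otimes v_\beta\}$ is an eigenbasis with eigenvalues $\lambda_\alpha + \mu_\beta$, which yields exactly the pairwise multiset sum of $\mathbf{s}_{k_1}^{\omega_1}(X_1)$ and $\mathbf{s}_{k_2}^{\omega_2}(X_2)$.

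For general $m$, I would set $Y = X_2 * \cdots * X_m$ (endowed with the restriction of $\omega$ as its vertex weight), note that $\dim(Y) = (m-2) + \sum_{j=2}^m \dim(X_j)$, and combine the $m = 2$ result applied to $X = X_1 * Y$ with the induction hypothesis for $Y$. The bookkeeping then reduces to checking that the two combined conditions $i_1 + k_Y = k - 1$ and $\sum_{j=2}^m i_j = k_Y - (m-2)$ collapse exactly to $\sum_{j=1}^m i_j = k - (m-1)$, matching the stated formula. I expect the main obstacle to be uniformly treating the boundary indices $k_i = -1$ and $k_i = \dim(X_i)$: one must verify that the derivation formula \eqref{eq-j} continues to produce the correct block operator when one factor lives in the augmented piece $C^{-1}(X_i,\mathbb{R}) \cong \mathbb{R}$ (where $L^{\omega_i}_{-1}(X_i)$ reduces to the scalar $\sum_{v \in V_i} \omega_i(v)$), and to confirm that the ranges of $i_j$ in the final formula exactly match the decomposition obtained by iteration.
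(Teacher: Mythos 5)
Your proposal is correct and follows essentially the same route as the paper: both identify the vertex weight on the join with the Horak--Jost weight \eqref{eq-p} for $p\equiv q\equiv 1$, so that \eqref{eq-j} collapses to a sum of tensor-factor Laplacians, whose spectrum is the multiset of sums of eigenvalues. The only cosmetic difference is that the paper writes the $m$-fold derivation formula in one step, whereas you iterate the $m=2$ case by induction on $m$.
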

\begin{proof}
For any $\sigma = \sigma_1 \otimes \sigma_2 \otimes \dots \otimes \sigma_m \in X$, the vertex weight function $\omega$ satisfies
\[
\omega(\sigma) = \prod_{v \in \sigma} \omega(v) = \prod_{i \in [m]} \prod_{v \in \sigma_i} \omega_i(v) = \prod_{i \in [m]} \omega_i(\sigma_i).
\]
This shows that $\omega$ is precisely the weight function on the join complex $X_1 * \cdots * X_m$ as defined in \eqref{eq-p}, in the special case where the functions $p$ and $q$ are both taken to be the constant function $1$. Therefore, from \eqref{eq-j} we have
\[\begin{aligned}
    L^{\omega}_k(X)(f_1 \otimes f_2\otimes \cdots \otimes f_m)&=\left(L^{\omega_1\operatorname{up}}_{|f_1|}(X_1)+L^{\omega_1 \operatorname{down}}_{|f_1|}(X_1)\right)(f_1) \otimes f_2\otimes \dots \otimes f_m\\&\ \ \ +f_1\otimes\left(L^{\omega_2 \operatorname{up}}_{|f_2|}(X_2)+L^{\omega_2 \operatorname{down}}_{|f_2|}(X_2)\right)(f_2)\otimes \dots \otimes f_m\\&\ \ \ +\dots+f_1\otimes f_2\otimes\dots \otimes \left(L^{\omega_m \operatorname{up}}_{|f_m|}(X_m)+L^{\omega_m \operatorname{down}}_{|f_m|}(X_m)\right)(f_m)\\&=\sum^{m}_{i=1}f_1\otimes\dots\otimes L^{\omega}_{|f_i|}(X_i)(f_i)\otimes \dots \otimes f_m.
\end{aligned} \]
Therefore, for every $k \ge -1$, we have
	\[\mathbf{s}_{k}^{\omega}(X)=\bigcup_{\substack{\sum_{j\in [m]}i_j=k-(m-1)\\-1 \le i_j \le \dim(X_j),\ \forall j \in [m]\\\lambda_j \in \mathbf{s}_{i_j}^{\omega_j}(X_j)}}\sum_{j\in [m]}\lambda_j.\]
\end{proof}

\subsection{Geometric realization}
We adopt the notation of \cite{MKP2025} for algebraic topology. For completeness, we briefly recall the notation and some basic concepts concerning geometric simplicial complexes, which could be found in, for example, \cite{H2002,M2003,D2008}.

Let $F=\{\boldsymbol{v}_0, \boldsymbol{v}_1, \dots, \boldsymbol{v}_k\}$ be a set of points in $\mathbb{R}^n$.  
The set $F$ is \textit{geometrically independent} if, for any real numbers $t_i$, the conditions
\[
\sum_{i=0}^k t_{\boldsymbol{v}_i} = 0 
\qquad\text{and}\qquad
\sum_{i=0}^k t_{\boldsymbol{v}_i} \boldsymbol{v}_i = \mathbf{0}
\]
imply $t_{\boldsymbol{v}_0} = t_{\boldsymbol{v}_1} = \cdots = t_{\boldsymbol{v}_k} = 0$.  
Otherwise, $F$ is said to be \textit{geometrically dependent}. If $F$ is geometrically independent, define the \textit{geometric $k$-simplex} $\sigma_F$ spanned by $F$ to be the set
\[\sigma_F\colon=\left\{\sum^{k}_{i=1}t_{\boldsymbol{v}_i}\boldsymbol{v}_i \colon\ \sum^{k}_{i=1}t_{\boldsymbol{v}_i}=1\text{ and } t_{\boldsymbol{v}_i}\ge 0 \text{ for all } \boldsymbol{v}_i\in F \right\}.\]
The points of $F$ are called the \textit{vertices} of $\sigma_F$, and the \textit{dimension} of $\sigma_F$ is $\operatorname{dim}(\sigma_F)=|F|-1$. Thus, every $k$-dimensional geometric simplex has $k+1$ vertices. Any geometric simplex spanned by a subset of $F$ is called a \textit{face} of $\sigma_F$. The union of the all faces of $\sigma_F$ different from $\sigma_F$ itself is called the \textit{boundary} of $\sigma$ and is denoted $\operatorname{Bd}\sigma_F$.

A collection $K$ of geometric simplices in $\mathbb{R}^n$ is a \textit{geometric simplicial complex} if it satisfies:
\begin{enumerate}[(1)]
		\item Each face of any simplex $\sigma \in K$ is also a simplex of $K$;
		\item  The intersection of any two simplices of $K$ is a face of each of them.
	\end{enumerate}
Endowing each geometric simplex in $K$ with the subspace topology inherited from $\mathbb{R}^n$, the union of all geometric simplices of $K$ forms a topological space $\|K\|$, where $A \subseteq \|K\|$ is closed if and only if $A \cap \sigma$ is closed in $\sigma$ for every $\sigma \in K$. The space $\|K\|$ is called the \textit{underlying space} of $K$, or the \textit{polyhedron} of $K$.

Each geometric simplicial complex $K$ determines an abstract simplicial complex $X_K$, called the \textit{vertex scheme} of $K$, whose vertex set $V$ consists of the vertices of $K$ and whose simplices are precisely the subsets $\{\boldsymbol{v}_0, \boldsymbol{v}_1, \dots, \boldsymbol{v}_k\} \subseteq V$ that span a simplex of $K$. For any abstract simplicial complex $X$, the simplicial complex $K$ is said to be a \textit{geometric realization} of $X$ if $X$ is isomorphic to $X_K$, and the polyhedron of $K$ is also referred to as a polyhedron of $X$. 
%The relationships among $K$, $X_K$, and $\|K\|$ are illustrated in Fig.\ref{fig-3}.\begin{figure}[htbp]\centering\includegraphics[width=13cm]{fig-3.eps}\caption{The relationships among $K$, $X_K$, and $\|K\|$}\label{fig-3}\end{figure}
Moreover, every abstract simplicial complex $X$ has a geometric realization $K^X$. Let the vertex set of $X$ be $\{v_0,v_1,\dots,v_k\}$ with $k\le n$.  
Identify $\{v_0,v_1,\dots,v_k\}$ with a geometrically independent set 
$\{\boldsymbol{v}_0, \boldsymbol{v}_1, \dots, \boldsymbol{v}_k\}\subseteq \mathbb{R}^{n}$ 
by setting $v_i = \boldsymbol{v}_i$ for each $i$.
The collection $K^X\colon=\{\sigma_F: F \in X\}$ is a geometric simplicial complex, and its vertex scheme is $X$. The relationships among $X$, $K^X$, and $\|K^X\|$ are illustrated in 
Fig.\ref{fig-2}.
\begin{figure}[htbp]
    \centering
    \includegraphics[width=13cm]{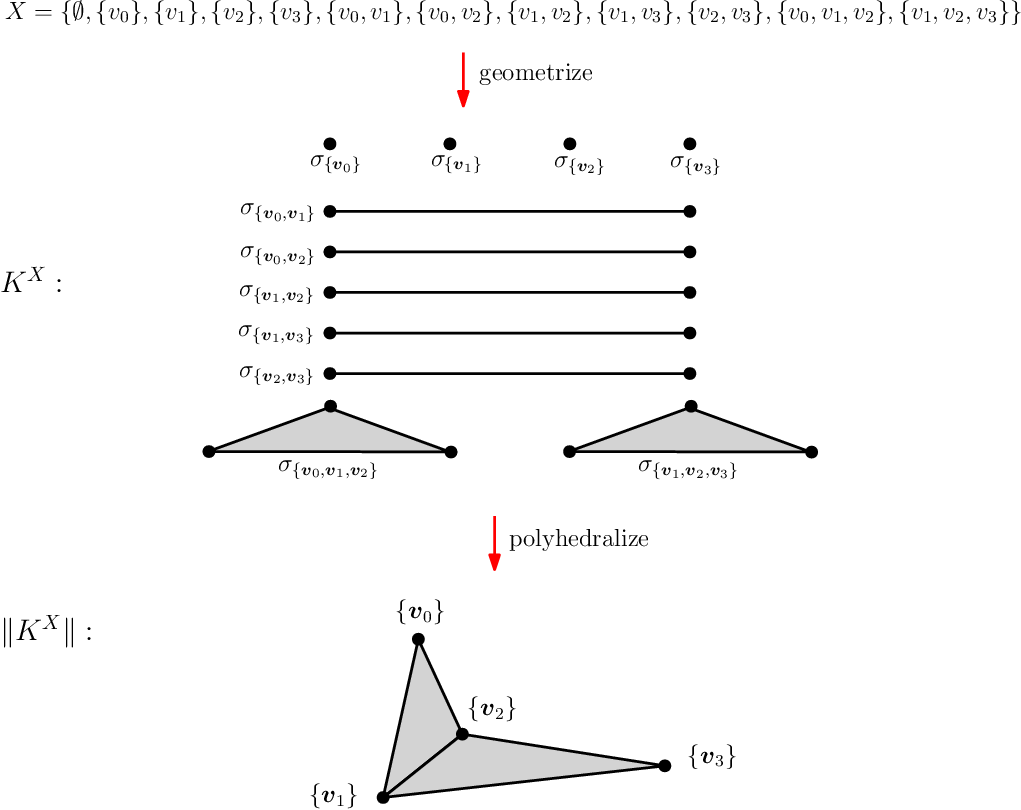}
    \caption{The relationships among $X$, $K^X$, and $\| K^X\|$}
    \label{fig-2}
\end{figure}

Let $\mathcal{X}$ and $\mathcal{Y}$ be two topological spaces. A mapping $f: \mathcal{X} \to \mathcal{Y}$ is called a \textit{homeomorphism} if it is bijective and both $f$ and $f^{-1}$ are continuous. The spaces $\mathcal{X}$ and $\mathcal{Y}$ are said to be \textit{homeomorphic} if there exists a homeomorphism between them, which is denoted by $\mathcal{X} \cong \mathcal{Y}$. An abstract simplicial complex $X$ is a \textit{triangulation} of a topological space $\mathcal{X}$ if its polyhedron $\|K^X\|$ is homeomorphic to $\mathcal{X}$. 
Recall that $\Delta_k^{(k-1)}$ is the $(k-1)$-skeleton of the abstract complete simplicial complex $\Delta_k$ on $k+1$ vertices. Identifying the vertex set $V(\Delta_k^{(k-1)})$ of $\Delta_k^{(k-1)}$ with a geometrically independent set $F=\{\boldsymbol{v}_0, \boldsymbol{v}_1, \dots, \boldsymbol{v}_k\}$ in $\mathbb{R}^n$, the boundary $\operatorname{Bd}\sigma_{F}$ of $\sigma_{F}$ is a geometric realization of $\Delta_k^{(k-1)}$. By elementary algebraic topology, the polyhedron $\|\operatorname{Bd}\sigma_{F}\|$ of the boundary of the geometric $k$-simplex $\sigma_{F}$ is homeomorphic to $(k-1)$-sphere $S^{k-1}$, and thus the following result holds.
\begin{lemma}[{\cite[p.10]{M2003}}]\label{lem-8}
Let $k$ be a positive integer. Then $\Delta_k^{(k-1)}$ is a triangulation of $S^{k-1}$.

\end{lemma}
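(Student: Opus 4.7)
The plan is to produce an explicit homeomorphism between $\|K^{\Delta_k^{(k-1)}}\|$ and the standard sphere $S^{k-1}$. Following the construction in the excerpt, identify the vertex set of $\Delta_k^{(k-1)}$ with a geometrically independent set $F=\{\boldsymbol{v}_0,\dots,\boldsymbol{v}_k\}\subseteq \mathbb{R}^n$. The simplices of $K^{\Delta_k^{(k-1)}}$ are then precisely the geometric simplices $\sigma_{F'}$ for every proper subset $F'\subsetneq F$, so that $\|K^{\Delta_k^{(k-1)}}\|=\|\operatorname{Bd}\sigma_F\|$. Thus it suffices to show $\|\operatorname{Bd}\sigma_F\|\cong S^{k-1}$.

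First, I would restrict to the affine hull $\mathcal{A}$ of $F$, which, by geometric independence of $F$, is a $k$-dimensional affine subspace of $\mathbb{R}^n$. Translating by the barycenter $\boldsymbol{b}=\frac{1}{k+1}\sum_{i=0}^{k}\boldsymbol{v}_i$ and choosing any linear isometry $\mathcal{A}-\boldsymbol{b}\to \mathbb{R}^k$, I would regard $\sigma_F$ as a compact convex subset of $\mathbb{R}^k$ with $\boldsymbol{b}$ mapped to the origin $\mathbf{0}$. A short argument (using that $\boldsymbol{b}$ is a convex combination with all coefficients strictly positive) shows that $\boldsymbol{b}$ lies in the relative interior of $\sigma_F$, hence $\mathbf{0}$ lies in the interior of the image convex body.

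The main step is to define the radial projection
\[
\varphi\colon\|\operatorname{Bd}\sigma_F\|\longrightarrow S^{k-1},\qquad
\varphi(\boldsymbol{x})=\frac{\boldsymbol{x}-\boldsymbol{b}}{\|\boldsymbol{x}-\boldsymbol{b}\|},
\]
and verify that it is a homeomorphism. Continuity is immediate since $\boldsymbol{b}\notin\|\operatorname{Bd}\sigma_F\|$. For bijectivity, I would invoke the following standard fact from convex geometry: if $C\subseteq\mathbb{R}^k$ is a compact convex set with $\mathbf{0}$ in its interior, then every ray from $\mathbf{0}$ meets the topological boundary $\partial C$ in exactly one point. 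Applied to $C$ equal to the image of $\sigma_F$, and noting that $\|\operatorname{Bd}\sigma_F\|$ coincides with this topological boundary, this yields that $\varphi$ is a continuous bijection.

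Finally, since $\|\operatorname{Bd}\sigma_F\|$ is a closed subset of the compact simplex $\sigma_F$, it is compact, and $S^{k-1}$ is Hausdorff; a continuous bijection from a compact space to a Hausdorff space is automatically a homeomorphism, so $\varphi$ is the desired homeomorphism and $\Delta_k^{(k-1)}$ triangulates $S^{k-1}$. The principal obstacle in this plan is the ``one ray, one boundary point'' lemma from convex geometry; this is where the hypothesis of geometric independence of $F$ is truly used, since it guarantees that $\sigma_F$ is a genuine $k$-dimensional convex body with nonempty interior rather than a degenerate lower-dimensional polytope.
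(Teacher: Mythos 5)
Your proposal is correct and follows the same route the paper takes: the paper justifies Lemma \ref{lem-8} by asserting (with a citation to \cite{M2003}) that $\|\operatorname{Bd}\sigma_F\|\cong S^{k-1}$, and your radial projection from the barycenter is precisely the standard proof of that assertion, with all the relevant points (barycenter in the relative interior, the union of proper faces equals the relative boundary, continuous bijection from compact to Hausdorff is a homeomorphism) correctly handled. No gaps.
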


The join of two topological spaces $\mathcal{X}$ and $\mathcal{Y}$, denoted by $\mathcal{X} * \mathcal{Y}$, is defined as the quotient space
\[
\mathcal{X} \times \mathcal{Y} \times [0,1] \big/ \!\sim,
\]
where the equivalence relation $\sim$ is specified by
\[
(x_1, y, 0) \sim (x_2, y, 0) \quad \text{for all } x_1, x_2 \in \mathcal{X}, y \in \mathcal{Y},
\]
and
\[
(x, y_1, 1) \sim (x, y_2, 1) \quad \text{for all } x \in \mathcal{X}, y_1, y_2 \in \mathcal{Y}.
\]
The following two lemmas characterize the structure of the join of two spheres and the structure of the join of polyhedra of two abstract simplicial complexes, respectively.
\begin{lemma}[{\cite[p.19, Exercise 18]{H2002}}]\label{lem-9}
Let $n$ and $m$ be two non-negative integers. Then
\[S^n * S^m \cong S^{n+m+1}.\]
\end{lemma}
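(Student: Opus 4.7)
The plan is to construct an explicit homeomorphism from $S^n * S^m$ to $S^{n+m+1}$ by viewing $S^{n+m+1}$ as the unit sphere in the product $\mathbb{R}^{n+1} \times \mathbb{R}^{m+1} = \mathbb{R}^{n+m+2}$ and interpolating between the two factors via a trigonometric parameterization. The map is forced by how the join equivalence relation collapses the two ends of the cylinder $S^n \times S^m \times [0,1]$: at $t=0$ the $x$-coordinate is forgotten, and at $t=1$ the $y$-coordinate is forgotten.

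First I would define a continuous map
$$f \colon S^n \times S^m \times [0,1] \longrightarrow S^{n+m+1}, \qquad f(x,y,t) = \bigl(\sin(\pi t/2)\,x,\ \cos(\pi t/2)\,y\bigr),$$
and verify that the image lies in $S^{n+m+1}$, since $\sin^2(\pi t/2)|x|^2 + \cos^2(\pi t/2)|y|^2 = 1$. Observe that $f(x,y,0) = (0,y)$ depends only on $y$ and $f(x,y,1) = (x,0)$ depends only on $x$, which are precisely the identifications imposed by the join relation as stated in the excerpt. Hence $f$ is constant on each equivalence class and descends to a continuous map $\widetilde{f} \colon S^n * S^m \to S^{n+m+1}$ by the universal property of the quotient topology.

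Next I would verify that $\widetilde{f}$ is a bijection. For surjectivity, given $(a,b) \in S^{n+m+1}$ with $|a|^2 + |b|^2 = 1$, there is a unique $t \in [0,1]$ with $\sin(\pi t/2) = |a|$ and $\cos(\pi t/2) = |b|$; setting $x = a/|a|$ when $a \neq 0$ and $y = b/|b|$ when $b \neq 0$ (and making arbitrary choices otherwise) produces a preimage. For injectivity on the quotient, if $f(x,y,t) = f(x',y',t')$ then matching the norms of the two coordinates forces $t = t'$; when $t \in (0,1)$ both $x = x'$ and $y = y'$ follow, while at $t = 0$ only $y = y'$ is forced and at $t = 1$ only $x = x'$ is forced, and these are exactly the freedoms allowed by $\sim$.

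Finally I would invoke the standard fact that a continuous bijection from a compact space to a Hausdorff space is a homeomorphism. The domain $S^n * S^m$ is the continuous image of the compact space $S^n \times S^m \times [0,1]$ and hence compact, while $S^{n+m+1} \subset \mathbb{R}^{n+m+2}$ is Hausdorff, so $\widetilde{f}^{-1}$ is automatically continuous. I do not anticipate a serious obstacle here; the only mildly delicate point is the bookkeeping at the endpoints $t \in \{0,1\}$, where one must confirm that the collapsing of $x$ or $y$ matches the join relation exactly rather than accidentally introducing further identifications.
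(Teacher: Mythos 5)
Your argument is correct and complete: the map $f(x,y,t)=\bigl(\sin(\pi t/2)\,x,\ \cos(\pi t/2)\,y\bigr)$ respects exactly the identifications in the paper's definition of the join (the $x$-coordinate is collapsed at $t=0$ and the $y$-coordinate at $t=1$), the induced map on the quotient is a continuous bijection, and the compact-to-Hausdorff argument finishes the proof. Note that the paper does not prove this lemma at all; it is quoted as an exercise from Hatcher, so there is no in-paper argument to compare against, but what you have written is the standard and correct justification.
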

\begin{lemma}[{\cite[p.77, Exercise 3]{M2003}}]\label{lem-10}
	Let $X$ and $Y$ be two finite abstract simplicial complexes. Then
\[\|K^X\| *\|K^Y\| \cong\|K^{X * Y}\|.\]
\end{lemma}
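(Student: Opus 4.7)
The plan is to build an explicit homeomorphism between $\|K^X\|*\|K^Y\|$ and $\|K^{X*Y}\|$ via the natural ``straight-line'' join map, exploiting the fact that $K^{X*Y}$ can be geometrically realized with the vertex sets of $K^X$ and $K^Y$ placed together in general position. First, I would choose realizations of $X$ and $Y$ inside a common Euclidean space $\mathbb{R}^N$ (with $N$ large enough, e.g., $N=|V(X)|+|V(Y)|-1$) so that the union $V(K^X)\cup V(K^Y)$ is geometrically independent; then $K^{X*Y}$ is realized inside the same ambient space as the collection of simplices $\sigma_{F_1\cup F_2}$ with $F_1\in X$, $F_2\in Y$, and in particular $\|K^X\|,\|K^Y\|\subseteq \|K^{X*Y}\|$.

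Next, I would define the ``straight-line'' map
\[
\Phi\colon \|K^X\|\times \|K^Y\|\times [0,1]\longrightarrow \|K^{X*Y}\|,
\qquad
\Phi(x,y,t)=tx+(1-t)y.
\]
If $x\in \sigma_{F_1}$ and $y\in \sigma_{F_2}$ then $\Phi(x,y,t)$ is a convex combination of vertices in $F_1\cup F_2\in X*Y$, and hence lies in $\sigma_{F_1\cup F_2}\subseteq \|K^{X*Y}\|$; so the image is well placed. The map $\Phi$ is manifestly continuous, and since $\Phi(x,y,0)=y$ depends only on $y$ while $\Phi(x,y,1)=x$ depends only on $x$, it respects the equivalence relation defining the topological join. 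Therefore, $\Phi$ descends to a continuous map $\bar\Phi\colon \|K^X\|*\|K^Y\|\to \|K^{X*Y}\|$.

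For bijectivity, I would use the geometric independence of $V(K^X)\cup V(K^Y)$. Any point $p\in \|K^{X*Y}\|$ has a unique expansion in the vertices of the minimal simplex containing it, of the form $p=\sum_{v\in F_1}a_v v+\sum_{w\in F_2}b_w w$ with nonnegative coefficients summing to $1$. Setting $t=\sum_{v\in F_1}a_v$, and (when $t\in(0,1)$) normalizing to recover the unique $x\in\sigma_{F_1}$ and $y\in\sigma_{F_2}$, produces a preimage under $\Phi$; uniqueness of barycentric coordinates then forces injectivity of $\bar\Phi$, with the non-injectivity of $\Phi$ at $t\in\{0,1\}$ matching the join's identifications exactly. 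Since the domain is compact (finite complexes have compact polyhedra, and $[0,1]$ is compact) and the target is Hausdorff, a continuous bijection is automatically a homeomorphism, concluding the proof.

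The main obstacle I anticipate is the bookkeeping at $t\in\{0,1\}$ and at the degenerate cases where $F_1=\emptyset$ or $F_2=\emptyset$ (so $p$ already lies in $\|K^X\|$ or $\|K^Y\|$): here the decomposition $p=tx+(1-t)y$ ceases to determine one of $x,y$, and one must check that the resulting indeterminacy is \emph{precisely} the equivalence relation used in the topological join, so that $\bar\Phi$ remains injective on the quotient.
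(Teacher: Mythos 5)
The paper does not actually prove this lemma; it is quoted verbatim from Matou\v{s}ek (\cite[p.~77, Exercise~3]{M2003}) and used as a black box, so there is no in-paper argument to compare against. Your proof is the standard solution to that exercise and it is correct: realizing both complexes so that the combined vertex set $V(K^X)\cup V(K^Y)$ is geometrically (affinely) independent is exactly what makes the argument work, since uniqueness of barycentric coordinates in $\sigma_{F_1\cup F_2}$ then gives both well-definedness of the inverse assignment $p\mapsto (x,y,t)$ with $t=\sum_{v\in F_1}a_v$ and injectivity of $\bar\Phi$ on the quotient; the identifications that $\Phi$ fails to see at $t=0$ and $t=1$ are precisely those imposed in the definition of the topological join used in this paper (your convention $\Phi(x,y,0)=y$, $\Phi(x,y,1)=x$ matches the relations $(x_1,y,0)\sim(x_2,y,0)$ and $(x,y_1,1)\sim(x,y_2,1)$), and the compact-domain/Hausdorff-target step legitimately upgrades the continuous bijection to a homeomorphism because the complexes are finite. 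The only genuine loose end is the one you already flag: if one of $X$, $Y$ is void the statement degenerates (the topological join with the empty space needs a convention), and when $F_1=\emptyset$ or $F_2=\emptyset$ surjectivity requires picking an arbitrary point in the other polyhedron, which is harmless precisely because of the join identifications. None of this affects correctness for the nonempty finite complexes to which the paper applies the lemma.
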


\section{Constructions and their effect on the vertex-weighted Laplacian spectra}
In this section, we introduce several new simplicial complexes constructed from a given simplicial complex and investigate the relationships between their vertex-weighted Laplacian spectra and that of the original complex. In addition, for a simplicial complex $X$ and each $-1\le k\le \operatorname{dim}(X)$, we establish a new upper bound for the largest eigenvalue $\lambda^{\downarrow}_{1}\left(L^{\omega}_k(X)\right)$ of the vertex-weighted $k$-dimensional Laplacian operator $L^{\omega}_k(X)$.

\begin{definition}
Let $X$ be a simplicial complex on $V$ with $|V|=n$. For each $-1\le k\le \operatorname{dim}(X)$, the simplicial complexes $X_k^*$ and $X^c_k$ are defined as
\[X_k^*\colon=\left\{\tau\subseteq V\colon \exists \ \sigma\in X(k) \text{ such that } \tau \subseteq V\setminus \sigma\right\},\]
\[X_k^{c}\colon=\left\{\tau\subseteq V\colon \exists \ \sigma\in \tbinom{V}{k+1}\setminus X(k) \text{ such that } \tau \subseteq \sigma\right\}.\]
\end{definition}
\subsection{Vertex-wighted Laplacian spectra of $X$ and $X^*$}

		\begin{theorem}\label{thm-14}
	Let $X$ be a simplicial complex on vertex set $V$, and let $\omega\colon X \rightarrow \mathbb{R}_{>0}$ be a vertex weight function of $X$. For any $0\le k\le \operatorname{dim}(X)$ and $1\le i\le f_k(X)$, we have \begin{equation}\label{eq-18}
		\lambda_{i}^{\downarrow}\left(L_k^{\omega\operatorname{down}}\left(X\right)\right)+\lambda_{f_k(X)+1-i}^{\downarrow}\left(L_{n-k-2}^{\omega\operatorname{down}}\left(X_k^*\right)\right)=\sum_{v \in V} \omega(v).
	\end{equation}
		\end{theorem}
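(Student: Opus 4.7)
The plan is to construct an invertible linear map $\Phi\colon C^k(X,\mathbb{R})\to C^{n-k-2}(X_k^*,\mathbb{R})$ that implements a similarity
\[
\Phi \circ L_k^{\omega\operatorname{down}}(X) \circ \Phi^{-1} \;=\; W\,I \;-\; L_{n-k-2}^{\omega\operatorname{down}}(X_k^*),
\qquad W:=\sum_{v\in V}\omega(v).
\]
Such a similarity forces the spectra of $L_k^{\omega\operatorname{down}}(X)$ and $W\,I - L_{n-k-2}^{\omega\operatorname{down}}(X_k^*)$ to coincide as multisets; rewriting $\lambda_i^{\downarrow}(W\,I - B) = W - \lambda_{f_k(X)+1-i}^{\downarrow}(B)$ then yields \eqref{eq-18} at once.

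The first step is the basis correspondence. Every $(n-k-2)$-simplex of $X_k^*$ has cardinality $n-k-1=|V\setminus\sigma|$, and by the definition of $X_k^*$ it must be contained in some such $V\setminus\sigma$ with $\sigma\in X(k)$; hence $X_k^*(n-k-2)=\{V\setminus\sigma \colon \sigma\in X(k)\}$ and $\sigma\leftrightarrow V\setminus\sigma$ is a bijection. Under this identification, Lemma \ref{lem-4} applied to both complexes gives
\[
L_{n-k-2}^{\omega\operatorname{down}}(X_k^*)_{V\setminus\sigma,\,V\setminus\sigma}
=\sum_{v\in V\setminus\sigma}\omega(v)
=W-\sum_{v\in\sigma}\omega(v)
=W-L_k^{\omega\operatorname{down}}(X)_{\sigma,\sigma},
\]
so the diagonal requirement of the intended similarity is automatic; all of the work lies in matching the off-diagonal entries.

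For an off-diagonal pair with $\sigma\setminus\tau=\{i\}$ and $\tau\setminus\sigma=\{j\}$, Lemma \ref{lem-4} gives $L_k^{\omega\operatorname{down}}(X)_{\sigma,\tau}=(-1)^{\varepsilon(\sigma,\tau)}\omega(j)$, whereas $L_{n-k-2}^{\omega\operatorname{down}}(X_k^*)_{V\setminus\sigma,V\setminus\tau}=(-1)^{\varepsilon(V\setminus\sigma,V\setminus\tau)}\omega(i)$. The two entries carry different weights \emph{and} possibly different signs, so no signed permutation can serve as $\Phi$; I propose instead to take $\Phi$ diagonal in the two bases via
\[
\Phi(e_\sigma) \;:=\; c_\sigma\, e_{V\setminus\sigma},
\qquad c_\sigma \;:=\; (-1)^{s(\sigma)}\,\omega(\sigma),
\qquad s(\sigma):=\sum_{v\in\sigma}v,
\]
where $V$ is identified with $[n]$ using the fixed linear order and $\omega(\sigma)=\prod_{v\in\sigma}\omega(v)$. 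The multiplicative factor $\omega(\sigma)$ is chosen so that $c_\sigma/c_\tau$ contributes exactly $\omega(i)/\omega(j)$, while the sign $(-1)^{s(\sigma)-s(\tau)}=(-1)^{i-j}$ combines with the key combinatorial identity
\[
\varepsilon(\sigma,\tau)+\varepsilon(V\setminus\sigma,\,V\setminus\tau) \;=\; |j-i|-1,
\]
obtained by partitioning the integers strictly between $i$ and $j$ according to whether they lie in $\sigma\cap\tau$ or in $V\setminus(\sigma\cup\tau)$.

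A direct entrywise check then yields $\frac{c_\sigma}{c_\tau}L_k^{\omega\operatorname{down}}(X)_{\sigma,\tau} = -\,L_{n-k-2}^{\omega\operatorname{down}}(X_k^*)_{V\setminus\sigma,V\setminus\tau}$, which together with the diagonal computation above is exactly the matrix identity $\Phi\circ L_k^{\omega\operatorname{down}}(X) = (W\,I - L_{n-k-2}^{\omega\operatorname{down}}(X_k^*))\circ \Phi$. Since each $c_\sigma$ is nonzero, $\Phi$ is invertible, so the similarity holds and the spectral equality follows, completing the proof. The main obstacle is the sign bookkeeping in the penultimate paragraph: one must handle both orderings $i<j$ and $i>j$ uniformly and carefully invoke the partition identity (using that neither $i$ nor $j$ itself is strictly between them) to align the two signs $\varepsilon(\sigma,\tau)$ and $\varepsilon(V\setminus\sigma,V\setminus\tau)$; once this verification is performed, the remainder is mechanical.
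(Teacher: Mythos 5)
Your proposal is correct and follows essentially the same route as the paper: both proofs exploit the bijection $\sigma\mapsto V\setminus\sigma$ between $X(k)$ and $X_k^*(n-k-2)$, a diagonal change of basis, and the identity $\varepsilon(\sigma,\tau)+\varepsilon(V\setminus\sigma,V\setminus\tau)=|i-j|-1$ to show $L_k^{\omega\operatorname{down}}(X)$ is similar to $\bigl(\sum_{v\in V}\omega(v)\bigr)I-L_{n-k-2}^{\omega\operatorname{down}}(X_k^*)$. The only (cosmetic) difference is that the paper conjugates by a pure sign matrix and then takes a transpose to fix the mismatched weights $\omega(i)$ versus $\omega(j)$, whereas you absorb the factor $\omega(\sigma)=\prod_{v\in\sigma}\omega(v)$ into the diagonal conjugation and thereby avoid the transpose.
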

	  \begin{proof}
		Without loss of generality, assume $V=[n]$ and assigned the natural orientation to $[n]$. Let $
			\sigma_1, \dots, \sigma_{f_k(X)}$ denote all $k$-simplices of $X$.
			For convenience, we may assume that $L_k^{\omega\operatorname{down}}(X)$ and $L_{n-k-2}^{\omega\operatorname{down}}\left(X_k^*\right)$ are both $f_k(X)\times f_k(X)$ matrices, where the $(i,j)$-entry of $L_k^{\omega\operatorname{down}}$ is the entry corresponding to $(\sigma_i,\sigma_j)$ and the $(i,j)$-entry of $L_{n-k-2}^{\omega\operatorname{down}}\left(X_k^*\right)$ is the entry corresponding to $(V\setminus \sigma_i,V\setminus\sigma_j)$ for $1\le i,j\le f_k(X)$.
			Define $\psi$ as the $f_k(X) \times f_k(X)$ diagonal matrix with diagonal entry $\psi_{i,i}\colon=(-1)^{\sum _{s\in V\setminus\sigma_i}s}$ for $1\le i\le f_k(X)$. It is easy to verify that $\psi$ is invertible.
			Therefore, by Lemma \ref{lem-4}, we have
					\[\left(\psi^{-1}L_{n-k-2}^{\omega\operatorname{down}}(X^{*})\psi\right)_{i, j}= \begin{cases}\sum _{v \in {V\setminus\sigma_i}} \omega(v) & \text { if } i=j, \\[6mm] (-1)^{\sum _{v\in V\setminus\sigma_i}v+\sum _{v\in V\setminus\sigma_j}v}(-1)^{\varepsilon(V\setminus\sigma_i, V\setminus\sigma_j)} \omega(\sigma_i\setminus \sigma_j)  & \text { if }|\sigma_i \cap \sigma_j|=k, \\[6mm] 0 & \text { otherwise. }\end{cases}\]
				Now, consider the $f_k(X) \times f_k(X)$ matrix $L_k^{\omega\operatorname{down}}(X)+\left(\psi^{-1}L_{n-k-2}^{\omega\operatorname{down}}(X^{*})\psi\right)^{\top}$. For $1\le i\le f_k(X)$, the diagonal $(i,i)$-entry is
				\[\sum _{v \in {\sigma_i}} \omega(v)+\sum _{v \in {V\setminus\sigma_i}}\omega(v)=\sum _{v \in {V}} \omega(v).\]
				 For $1\le i,j\le f_k(X)$ with $|\sigma_i \cap \sigma_j|=k$, assume $\sigma_i\setminus \sigma_j=\{u\}$ and $\sigma_j\setminus \sigma_i=\{u^{\prime}\}$. Then, the $(i,j)$-entry is given as
						\[\begin{array}{lll}
						&&(-1)^{\varepsilon(\sigma_i, \sigma_j)} \omega(u^{\prime})+(-1)^{\sum _{v\in V\setminus\sigma_i}v+\sum _{v\in V\setminus\sigma_j}v}(-1)^{\varepsilon(V\setminus\sigma_i, V\setminus\sigma_j)} \omega(u^{\prime})\\[2mm]
                        &=&(-1)^{\varepsilon(\sigma_i,\sigma_j)} \omega(u^{\prime})+(-1)^{u+u^{\prime}}(-1)^{\varepsilon(V\setminus\sigma_i, V  \setminus\sigma_j)} \omega(u^{\prime})\\[2mm]&=&(-1)^{\varepsilon(\sigma_i, \sigma_j)} \omega(u^{\prime})+(-1)^{u+u^{\prime}}(-1)^{|u-u^{\prime}|-1}(-1)^{\varepsilon(\sigma_i, \sigma_j)} \omega(u^{\prime})\\&=&0,
					\end{array} \]
                    where the second equality holds because $\varepsilon(V\setminus\sigma_i,V\setminus\sigma_j)+\varepsilon(\sigma_i,\sigma_j)=|u-u'|-1$. 
	This implies that \[L_k^{\omega\operatorname{down}}(X)+\left(\psi^{-1}L_{n-k-2}^{\omega\operatorname{down}}(X^{*})\psi\right)^{\top}=\sum_{v \in V} \omega(v)I_{f_k(X)},\]
	where $I_{f_k(X)}$ is the $f_k(X) \times f_k(X)$ identity matrix. Therefore, we have
\[\lambda_{i}^{\downarrow}\left(L_k^{\omega\operatorname{down}}\left(X\right)\right)+\lambda_{f_k(X)+1-i}^{\downarrow}\left(L_{n-k-2}^{\omega\operatorname{down}}\left(X_k^*\right)\right)=\sum_{v \in V} \omega(v).\]
This completes the proof.
		\end{proof}
		
Applying Theorem \ref{thm-14}, we could get the spectra of simplicial complexes with complete skeleton.		
\begin{lemma}\label{lem-15}
	Let $X$ be a simplicial complex with complete $p$-skeleton on vertex set $V$ of size $n$, and let $\omega\colon X \rightarrow \mathbb{R}_{>0}$ be a vertex weight function of $X$. Then \[\mathbf{s}_{k }^{\omega}\left(X\right)=\left\{\left(\sum_{v\in V}\omega(v)\right)^{\left(\binom{n}{k+1}\right)} \right\},\]
	for each $-1\le k\le p-1$. In particular, if $k=p=\operatorname{dim} (X)$,  then 
	\[\mathbf{s}_{k }^{\omega}\left(X\right)=\left\{\left(\sum_{v\in V}\omega(v)\right)^{\left(\binom{n-1}{k}\right)}, 0^{\left(\binom{n-1}{k+1}\right)} \right\}.\]
\end{lemma}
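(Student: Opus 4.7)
The plan is to split the statement according to the two ranges: the generic range $-1 \le k \le p-1$ reduces to a direct matrix computation via Lemma~\ref{lem-3}, while the top-dimensional case $k=p=\operatorname{dim}(X)$ requires combining the algebraic relation between up- and down-Laplacians with Hodge theory.

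For $-1 \le k \le p-1$, I would apply Lemma~\ref{lem-3} entry by entry. The complete $p$-skeleton hypothesis guarantees $\operatorname{lk}_X(\sigma) = V \setminus \sigma$ for every $\sigma \in X(k)$, since any $\sigma \cup \{v\}$ has $k+2 \le p+1$ elements and hence lies in $X(k+1)$; thus each diagonal entry equals $\sum_{v\in V}\omega(v)$. For the same reason $\sigma \cup \tau \in X(k+1)$ whenever $|\sigma \cap \tau|=k$, so the adjacency condition $\sigma\sim\tau$ never occurs and all off-diagonal entries vanish. This identifies $L^{\omega}_k(X)$ with $\left(\sum_{v\in V}\omega(v)\right)I_{\binom{n}{k+1}}$, yielding the first claim since $f_k(X)=\binom{n}{k+1}$ in this range.

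For $k=p=\operatorname{dim}(X)$, the absence of $(p+1)$-simplices forces $L^{\omega\operatorname{up}}_p(X)=0$, so $L^{\omega}_p(X)=L^{\omega\operatorname{down}}_p(X)$. Equation~\eqref{eq-05} equates the nonzero spectrum of $L^{\omega\operatorname{down}}_p(X)$ with that of $L^{\omega\operatorname{up}}_{p-1}(X)$. Applying the previous case at level $p-1$ gives $L^{\omega\operatorname{up}}_{p-1}(X)+L^{\omega\operatorname{down}}_{p-1}(X)=cI$, where $c\colon=\sum_{v\in V}\omega(v)$. The nilpotency relations $L^{\omega\operatorname{up}}_{p-1}\circ L^{\omega\operatorname{down}}_{p-1}=0=L^{\omega\operatorname{down}}_{p-1}\circ L^{\omega\operatorname{up}}_{p-1}$ let me simultaneously diagonalize the two positive semidefinite operators, and on any common eigenvector the respective eigenvalues $\mu,\nu$ must satisfy $\mu+\nu=c$ and $\mu\nu=0$, forcing $\{\mu,\nu\}=\{0,c\}$. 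Consequently every nonzero eigenvalue of $L^{\omega}_p(X)$ equals $c$.

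It then remains to compute the multiplicity of $0$. By Theorem~\ref{thm-6} this multiplicity equals $\dim\widetilde{H}^p(X;\mathbb{R})$, and since $X=\Delta_{n-1}^{(p)}$ shares its chain complex with the contractible full simplex $\Delta_{n-1}$ through dimension $p-1$, the reduced cohomology $\widetilde{H}^k(X;\mathbb{R})$ vanishes for $k<p$; a reduced Euler characteristic computation combined with the identity $\sum_{j=0}^{m}(-1)^j\binom{n}{j}=(-1)^m\binom{n-1}{m}$ then yields $\dim\widetilde{H}^p(X;\mathbb{R})=\binom{n-1}{p+1}$. Pascal's identity gives the multiplicity $\binom{n}{p+1}-\binom{n-1}{p+1}=\binom{n-1}{p}$ of the eigenvalue $c$, completing the proof. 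The main delicate point is the top-dimensional case, where the purely algebraic eigenvalue structure inherited from the first case must be reconciled with a topological dimension count from Hodge theory.
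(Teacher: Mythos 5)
Your proposal is correct, but it follows a genuinely different route from the paper's. For $-1\le k\le p-1$ the paper does \emph{not} observe that $L_k^{\omega}(X)$ is literally $\bigl(\sum_{v\in V}\omega(v)\bigr)I$; instead it determines the up- and down-spectra separately by a three-step argument: bound the number of nonzero eigenvalues of $L_k^{\omega\operatorname{down}}(X)$ (resp.\ $L_k^{\omega\operatorname{up}}(X)$) by exhibiting a large subspace of the kernel via $\operatorname{im}(d_k^{\omega*})\subseteq\ker d_{k-1}^{\omega*}$ (resp.\ $\operatorname{im}(d_{k-1})\subseteq\ker d_k$), bound each eigenvalue above by $\sum_{v\in V}\omega(v)$ using the duality with $X_k^*$ from Theorem~\ref{thm-14}, and then compare with the trace to force all nonzero eigenvalues to equal $\sum_{v\in V}\omega(v)$. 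Your direct computation from Lemma~\ref{lem-3} (links are all of $V\setminus\sigma$, and $\sigma\sim\tau$ never occurs since $\sigma\cup\tau$ has at most $p+1$ elements) is shorter and entirely elementary for this range. The trade-off appears in the top case $k=p$: the paper gets it for free, since it has already computed $\mathbf{s}_p^{\omega\operatorname{down}}(X)$ and $L_p^{\omega}(X)=L_p^{\omega\operatorname{down}}(X)$, whereas you must do additional work — simultaneous diagonalization of the commuting operators $L_{p-1}^{\omega\operatorname{up}}$ and $L_{p-1}^{\omega\operatorname{down}}$ summing to $cI$ to pin the nonzero eigenvalues at $c$, followed by a Hodge-theorem and reduced-Euler-characteristic count of $\dim\widetilde{H}^p(\Delta_{n-1}^{(p)};\mathbb{R})=\binom{n-1}{p+1}$ for the multiplicity of zero. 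All of these steps are sound (the operators are self-adjoint for the weighted inner product and their products vanish, so they do commute, and $X=\Delta_{n-1}^{(p)}$ when $\dim X=p$). One minor remark: the paper's version also records the individual spectra $\mathbf{s}_k^{\omega\operatorname{up}}$ and $\mathbf{s}_k^{\omega\operatorname{down}}$ in \eqref{eq-2} and \eqref{eq-0}, which are reused later (e.g.\ in Lemma~\ref{lem-16}); your argument recovers them too via the same simultaneous-diagonalization observation, but you would need to state that explicitly if those byproducts are wanted.
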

\begin{proof}
	Recall that $L^{\omega}_{k}(X)=L_{k}^{\omega \operatorname{up}}(X)+L_{k}^{\omega \operatorname{down}}(X)=d_{k}^{\omega*} d_{k}+d_{k-1} d_{k-1}^{\omega*}$. If $k=-1$, then $L^{\omega}_{-1}(X)=L_{-1}^{\omega \operatorname{up}}(X)$, and we get $\mathbf{s}_{-1}^{\omega}\left(X\right)=\left\{\sum_{v\in V}\omega(v) \right\}$ immediately from Lemma \ref{lem-14}. If $-1<k\le p-1$, since $d^{\omega *}_{k-1}d^{\omega *}_{k } = 0$, we have 
	$\operatorname{im}\left(d^{\omega *}_{k}\right)\subseteq \operatorname{Ker}\left(d_{k-1}^{\omega*}\right)=\operatorname{Ker}\left(L^{\omega \operatorname{down}}_{k }(X)\right)$. Taking $v_0\in V$, by noticing that $\left\{d^{\omega *}_{k }\left(e_{\sigma}\right)\colon v_0\in \sigma\in X(k+1)\right\}\subseteq \operatorname{im}\left(d^{\omega *}_{k}\right)$ are linearly independent, we have 
   \[ \operatorname{dim}\operatorname{Ker}\left(L^{\omega \operatorname{down}}_{k}(X)\right)\ge \operatorname{dim}\operatorname{im}\left(d^{\omega *}_{k}\right)\ge \left|\left\{d^{\omega *}_{k }\left(e_{\sigma}\right)\colon v_0\in \sigma\in X(k+1)\right\}\right|=\binom{n-1}{k+1}. \]
  Thus, $L^{\omega \operatorname{down}}_{k }(X)$ has at most 
	$\binom{n}{k+1}-\binom{n-1}{k+1}=\binom{n-1}{k }$ non-zero eigenvalues.
Therefore, from Theorem \ref{thm-14}, we get
   \begin{equation}\label{eq-w-1}
       \sum_{i=1}^{f_{k }}\lambda_{i}^{\downarrow}(L_{k }^{\omega \operatorname{down}}(X))\le \binom{n-1}{k }\lambda_{1}^{\downarrow}(L_{k}^{\omega,\operatorname{down}}(X))\le\binom{n-1}{k }\sum_{v\in V}\omega(v).
   \end{equation}
On the other hand, by Lemma \ref{lem-4}, we have
	\[\sum_{i=1}^{f_{k }}\lambda_{i}^{\downarrow}(L_{k }^{\omega \operatorname{down}}(X))=\sum_{\sigma\in X(k )}\sum _{v \in \sigma} \omega(v)=\binom{n-1}{k }\sum_{v\in V}\omega(v).\]
This implies that all the inequalities in \eqref{eq-w-1} are actually equalities.
	Therefore, all non-zero eigenvalues of $L_{k}^{\omega,\operatorname{down}}(X)$ are $\sum_{v\in V}\omega(v)$, and its mulitiplicity is $\binom{n-1}{k }$, i.e.	\begin{equation}\label{eq-2}
		\mathbf{s}_{k }^{\omega,\operatorname{down}}\left(X\right)=\left\{\left(\sum_{v\in V}\omega(v)\right)^{\left(\binom{n-1}{k}\right)}, 0^{\left(\binom{n-1}{k+1}\right)} \right\}.
	\end{equation}
	Similarly, 	since $d_{k }d_{k-1} = 0$, we obtain
	$\operatorname{im}\left(d_{k-1}\right)\subseteq \operatorname{Ker}\left(d_{k }\right)=\operatorname{Ker}\left(L^{\omega \operatorname{up}}_{k }(X)\right)$. Combining the fact	that $\left\{d_{k-1}\left(e_{\sigma}\right)\colon v_0\notin \sigma\in X(k-1)\right\}\subseteq \operatorname{im}\left(d_{k-1}\right)$ are linearly independent, it follows that \[\operatorname{dim}\operatorname{Ker}\left(L^{\omega \operatorname{up}}_{k }(X)\right)\ge \operatorname{dim}\operatorname{im}\left(d_{k-1}\right)\ge \left|\left\{d_{k-1}\left(e_{\sigma}\right)\colon v_0\notin \sigma\in X(k-1)\right\}\right|=\binom{n-1}{k},\]
  Therefore, $L^{\omega \operatorname{up}}_{k }(X)$ has at most 
	\[\binom{n}{k+1}-\binom{n-1}{k }=\binom{n-1}{k+1}\] non-zero eigenvalues. Combining \eqref{eq-05} and Theorem \ref{thm-14}, we have
    $\lambda_{1}^{\downarrow}(L_{k}^{\omega\operatorname{up}}(X))\le \sum_{v\in V}\omega(v)$, and thereby \begin{equation}\label{eq-w-2}
        \sum_{i=1}^{f_{k}}\lambda_{i}^{\downarrow}(L_{k}^{\omega \operatorname{up}}(X))\le \binom{n-1}{k+1}\lambda_{1}^{\downarrow}(L_{k}^{\omega,\operatorname{up}}(X))\le \binom{n-1}{k+1}\sum_{v\in V}\omega(v).
    \end{equation}
On the other hand, by Lemma \ref{lem-14}, we obtain
	\[\sum_{i=1}^{f_{k}}\lambda_{i}^{\downarrow}(L_{k}^{\omega \operatorname{up}}(X))=\sum_{\sigma\in X(k)}\sum _{v \in \operatorname{lk}_X(\sigma)} \omega(v)=\binom{n-1}{k+1}\sum_{v\in V}\omega(v).\]
This implies that each inequality in \eqref{eq-w-2} holds with equality.
Therefore, all non-zero eigenvalues of $L_{k}^{\omega\operatorname{up}}(X)$ are $\sum_{v\in V}\omega(v)$, and its mulitiplicity is $\binom{n-1}{k+1}$, i.e.,
	\begin{equation}\label{eq-0}
		\mathbf{s}_{k }^{\omega\operatorname{up}}\left(X\right)=\left\{\left(\sum_{v\in V}\omega(v)\right)^{\left(\binom{n-1}{k+1}\right)}, 0^{\left(\binom{n-1}{k}\right)} \right\}.
	\end{equation}
Combining \eqref{eq-01}, \eqref{eq-2}, and \eqref{eq-0}, we have
	\[\mathbf{s}_{k }^{\omega}\left(X\right)=\left\{\left(\sum_{v\in V}\omega(v)\right)^{\left(\binom{n}{k+1}\right)} \right\}.\]
	In particular, if $k=p=\operatorname{dim}(X)$,  then $L^{\omega}_{k}(X)=L_{k}^{\omega \operatorname{down}}(X)$, and thereby
	$\mathbf{s}_{k }^{\omega}\left(X\right)=\mathbf{s}_{k }^{\omega \operatorname{down}}\left(X\right).$
	
	We complete this proof.
\end{proof}

Recall that for the complete simplicial complex $\Delta_n$ on vertex set $V$ of size $n+1$, all of its skeleton are complete. Hence, the following result is immediately from Lemma \ref{lem-15}.
\begin{corollary}\label{cor-2} Let $\omega\colon \Delta_{n-1} \rightarrow \mathbb{R}_{>0}$ be a vertex weight function of $\Delta_{n-1}$. Then
	\[\mathbf{s}_{k }^{\omega}\left(\Delta^{(p)}_{n-1}\right)=\begin{cases}
		\left\{\left(\sum_{v\in V}\omega(v)\right)^{\left(\binom{n}{k+1}\right)} \right\}\quad\quad\quad\quad, &  -1\le k\le p-1, \\[6mm]
		\left\{\left(\sum_{v\in V}\omega(v)\right)^{\left(\binom{n-1}{k}\right)}, 0^{\left(\binom{n-1}{k+1}\right)}\right\},	& k=p.
	\end{cases}\]
\end{corollary}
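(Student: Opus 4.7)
The plan is to recognize that Corollary \ref{cor-2} is essentially the specialization of Lemma \ref{lem-15} to a concrete family of complexes, so the proof reduces to verifying that $\Delta_{n-1}^{(p)}$ meets the hypotheses of Lemma \ref{lem-15} and then reading off the conclusion. First I would note that, by definition, the $p$-skeleton $\Delta_{n-1}^{(p)}$ of the complete simplicial complex $\Delta_{n-1}$ on a vertex set $V$ of size $n$ consists of \emph{all} subsets of $V$ of cardinality at most $p+1$. In particular its $p$-skeleton is itself, and this skeleton is complete in the sense required by Lemma \ref{lem-15} (every $(p{+}1)$-subset of $V$ is a simplex). Moreover $\dim(\Delta_{n-1}^{(p)}) = p$.

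With these observations in hand, I would simply invoke Lemma \ref{lem-15} with $X = \Delta_{n-1}^{(p)}$. For $-1 \le k \le p-1$, the first case of the lemma yields
\[
\mathbf{s}_{k}^{\omega}\!\left(\Delta_{n-1}^{(p)}\right)
=\left\{\left(\sum_{v\in V}\omega(v)\right)^{\left(\binom{n}{k+1}\right)}\right\},
\]
while for $k = p = \dim\!\left(\Delta_{n-1}^{(p)}\right)$ the second case of the lemma yields
\[
\mathbf{s}_{k}^{\omega}\!\left(\Delta_{n-1}^{(p)}\right)
=\left\{\left(\sum_{v\in V}\omega(v)\right)^{\left(\binom{n-1}{k}\right)},\ 0^{\left(\binom{n-1}{k+1}\right)}\right\}.
\]
Combining the two cases gives exactly the piecewise formula in the statement.

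There is no real obstacle here beyond bookkeeping: the only thing to check is that the hypothesis ``complete $p$-skeleton, dimension $p$'' genuinely applies to $\Delta_{n-1}^{(p)}$, which is immediate from the definitions of complete simplicial complex and $p$-skeleton. Accordingly, I would keep the written proof to a few lines, citing Lemma \ref{lem-15} and splitting on whether $k<p$ or $k=p$, without reproducing any of the spectral analysis already carried out in the proof of that lemma.
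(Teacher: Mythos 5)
Your proposal is correct and matches the paper's own argument: the paper likewise observes that every skeleton of $\Delta_{n-1}$ is complete, so $\Delta_{n-1}^{(p)}$ has complete $p$-skeleton with $\dim(\Delta_{n-1}^{(p)})=p$, and then reads off both cases directly from Lemma \ref{lem-15}. No differences worth noting.
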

\begin{remark} Gundert and Wagner \cite{GW2016} proved that the spectra of the combinatorial  $k$-dimensional Laplacian operators $L_k(X)$ of $\Delta^{(p)}_{n-1}$ is
\[\mathbf{s}_{k }\left(\Delta^{(p)}_{n-1}\right)=\begin{cases}
		\left\{n^{\left(\binom{n}{k+1}\right)} \right\}\quad\quad\quad\ \ , &  -1\le k\le p-1, \\[6mm]
		\left\{n^{\left(\binom{n-1}{k}\right)}, 0^{\left(\binom{n-1}{k+1}\right)}\right\},	& k=p.
	\end{cases}\]
Therefore, for $\Delta^{(p)}_{n-1}$, Corollary \ref{cor-2} extends the spectral result from the combinatorial Laplacian operators to the vertex-weighted Laplacian operators.
\end{remark}		

From Theorem \ref{thm-14}, we could also establish an upper bound for the largest eigenvalue $\lambda^{\downarrow}_{1}\left(L^{\omega}_k(X)\right)$ of the vertex-weighted $k$-dimensional Laplacian operator $L^{\omega}_k(X)$. 
\begin{theorem}\label{thm-18}
		Let $X$ be a simplicial complex on vertex set $V$ with $|V|=n$, and let $\omega\colon X \rightarrow \mathbb{R}_{>0}$ be a vertex weight function of $X$. 
 For every $0 \le k \le \operatorname{dim}(X)$, we have
 \begin{equation}\label{eq-19}
 	 \lambda^{\downarrow}_{1}\left(L^{\omega}_k(X)\right)\le \sum_{v \in V} \omega(v).
 \end{equation}
	Moreover, if $f_k(X)>\binom{n-1}{k+1}$ or $f_{k+1}(X)>\binom{n-1}{k+2}$, then $\sum_{v\in V}\omega(v)$ is an eigenvalue of $L_k^{\omega}(X)$ with mulitiplicity at least \[\max\left\{f_k(X)+f_{k+1}(X)-\binom{n}{k+2}, f_k(X)-\binom{n-1}{k+1},f_{k+1}(X)-\binom{n-1}{k+2}\right\}.  \]
\end{theorem}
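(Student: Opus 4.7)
The plan is to identify $L_k^{\omega\operatorname{down}}(X)$ as a principal submatrix of $L_k^{\omega\operatorname{down}}(\Delta_{n-1})$, apply the Cauchy-type interlacing of Corollary \ref{cor-1}, feed in the explicit spectrum of the complete complex from Lemma \ref{lem-15}, and then shift the index via \eqref{eq-05} to import the same bounds into $L_k^{\omega\operatorname{up}}(X)$. The orthogonal-image decomposition \eqref{eq-01} finally assembles both pieces into the desired statement about $L_k^{\omega}(X)$.

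First I would observe that by the explicit entrywise formula of Lemma \ref{lem-4}, each entry $(L_k^{\omega\operatorname{down}}(X))_{\sigma,\tau}$ for $\sigma,\tau\in X(k)$ depends only on $\sigma$, $\tau$, and the vertex weights, and not on the higher simplices of $X$. Since $X(k)\subseteq \Delta_{n-1}(k)=\binom{V}{k+1}$, the matrix $L_k^{\omega\operatorname{down}}(X)$ is precisely the principal submatrix of $L_k^{\omega\operatorname{down}}(\Delta_{n-1})$ indexed by $X(k)$. Although $L_k^{\omega\operatorname{down}}(X)$ is typically not symmetric when $\omega$ is non-constant, it is the matrix representation of the compact self-adjoint down-Laplacian operator with respect to the orthogonal basis $\{e_\sigma\}$ of $C^k(\Delta_{n-1},\mathbb{R})$, so Corollary \ref{cor-1} applies and delivers
\[
\lambda_{\binom{n}{k+1}-f_k(X)+i}^{\downarrow}\bigl(L_k^{\omega\operatorname{down}}(\Delta_{n-1})\bigr)\le\lambda_i^{\downarrow}\bigl(L_k^{\omega\operatorname{down}}(X)\bigr)\le\lambda_i^{\downarrow}\bigl(L_k^{\omega\operatorname{down}}(\Delta_{n-1})\bigr).
\]
I would then combine this with equation \eqref{eq-2} from the proof of Lemma \ref{lem-15}, namely $\mathbf{s}_k^{\omega\operatorname{down}}(\Delta_{n-1})=\{(\sum_{v\in V}\omega(v))^{(\binom{n-1}{k})},0^{(\binom{n-1}{k+1})}\}$: the right inequality immediately yields $\lambda_1^{\downarrow}(L_k^{\omega\operatorname{down}}(X))\le \sum_{v\in V}\omega(v)$, while the left inequality forces $\lambda_i^{\downarrow}(L_k^{\omega\operatorname{down}}(X))=\sum_{v\in V}\omega(v)$ whenever $\binom{n}{k+1}-f_k(X)+i\le \binom{n-1}{k}$. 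By Pascal's identity this translates to $i\le f_k(X)-\binom{n-1}{k+1}$, producing multiplicity at least $\max\{f_k(X)-\binom{n-1}{k+1},0\}$ for the eigenvalue $\sum_{v\in V}\omega(v)$ in $L_k^{\omega\operatorname{down}}(X)$.

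Next I would repeat this argument one dimension up for $L_{k+1}^{\omega\operatorname{down}}(X)$, obtaining $\lambda_1^{\downarrow}(L_{k+1}^{\omega\operatorname{down}}(X))\le \sum_{v\in V}\omega(v)$ and multiplicity at least $\max\{f_{k+1}(X)-\binom{n-1}{k+2},0\}$ for $\sum_{v\in V}\omega(v)$. Since $\sum_{v\in V}\omega(v)>0$, the multiset identity \eqref{eq-05} transfers both the upper bound and the multiplicity verbatim to $L_k^{\omega\operatorname{up}}(X)$. Finally, \eqref{eq-01}---which states that the nonzero spectrum of $L_k^{\omega}(X)$ is the multiset union of the nonzero spectra of $L_k^{\omega\operatorname{down}}(X)$ and $L_k^{\omega\operatorname{up}}(X)$---gives on one hand $\lambda_1^{\downarrow}(L_k^{\omega}(X))=\max\{\lambda_1^{\downarrow}(L_k^{\omega\operatorname{down}}(X)),\lambda_1^{\downarrow}(L_k^{\omega\operatorname{up}}(X))\}\le \sum_{v\in V}\omega(v)$, and on the other that the multiplicity of $\sum_{v\in V}\omega(v)$ in $L_k^{\omega}(X)$ equals the sum of the two individual multiplicities. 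Taking the maximum of the summed bound $f_k(X)+f_{k+1}(X)-\binom{n}{k+2}$ with each one-sided bound $f_k(X)-\binom{n-1}{k+1}$ and $f_{k+1}(X)-\binom{n-1}{k+2}$ (necessary because one summand may be negative while the other is positive) yields the claimed lower bound, and the hypothesis $f_k(X)>\binom{n-1}{k+1}$ or $f_{k+1}(X)>\binom{n-1}{k+2}$ guarantees this quantity is strictly positive.

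The main technical hurdle will be the non-symmetry of the representing matrices: classical Cauchy interlacing (Theorem \ref{thm-7}) does not apply to $L_k^{\omega\operatorname{down}}(X)$ directly, and one must invoke the operator-theoretic Corollary \ref{cor-1} for self-adjoint operators in a merely orthogonal basis. Once this identification is secured, the remainder is bookkeeping with Lemma \ref{lem-15}, Pascal's identity, and the Hodge-type spectral decomposition.
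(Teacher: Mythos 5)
Your proof is correct and essentially coincides with the paper's: the ambient matrix you interlace against, $L_k^{\omega\operatorname{down}}(\Delta_{n-1})$, is by Theorem \ref{thm-9} literally the additive compound $L_0^{\omega\operatorname{down}}(X)^{[k+1]}$ that the paper uses, its two-valued spectrum is the same multiset $S_{k+1}\left(L_0^{\omega\operatorname{down}}(X)\right)$, and the assembly of the down- and up-parts via \eqref{eq-05} and \eqref{eq-01} is identical. The only cosmetic difference is that you obtain the upper bound \eqref{eq-19} from the right-hand inequality of Corollary \ref{cor-1} against the spectrum of the complete complex, whereas the paper reads it off the nonnegativity of the complementary eigenvalue in the duality identity of Theorem \ref{thm-14}; both are valid.
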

\begin{proof}
Since $\lambda_{i}^{\downarrow}\left(L_k^{\omega\operatorname{down}}\left(X\right)\right)$ and $\lambda_{f_k(X)+1-i}^{\downarrow}\left(L_{n-k-2}^{\omega\operatorname{down}}\left(X_k^*\right)\right)$ are nonnegative, it follows from Theorem \ref{thm-14} that \begin{equation}\label{eq-11}
\lambda_{i}^{\downarrow}\left(L_k^{\omega\operatorname{down}}\left(X\right)\right)\le \sum_{v \in V} \omega(v)
\end{equation}
for any $0\le k\le \operatorname{dim}(X)$ and $1\le i\le f_k(X)$. By Equations \eqref{eq-01} and \eqref{eq-05}, we have
 \[\lambda^{\downarrow}_{1}\left(L^{\omega}_k(X)\right)\le \sum_{v \in V} \omega(v).\]

From Lemma \ref{lem-4}, for every $u,v \in V$, we have \[L_0^{\omega\operatorname{down}}(X)_{u, v}= \begin{cases}\omega(u) & \text { if } u=v, \\  \omega(v) & \text { if } u \ne v.  \end{cases}\] Let $L_0^{\omega\operatorname{down}}(X)^{[k+1]}$ be the $(k+1)$-th additive compound matrix of $L_0^{\omega\operatorname{down}}(X)$, and let $L$ be the principal submatrix of $L_0^{\omega\operatorname{down}}(X)^{[k+1]}$ obtained by removing all rows and columns corresponding to the $(k+1)$-sets not in $X$. Therefore, by Lemma \ref{thm-9}, we have
	\[	\begin{aligned}
		L_{\sigma, \tau} & = \begin{cases}\sum_{v \in \sigma}
			\omega(v),& \text { if } \sigma=\tau, \\
			(-1)^{\varepsilon(\sigma, \tau)} \omega( \tau \setminus \sigma), & \text { if }|\sigma \cap \tau|=k, \\
			0, & \text { otherwise, }\end{cases}
	\end{aligned}\]
	for all $\sigma, \tau \in X(k)$. Applying Lemma \ref{lem-4} again, we have
	\[	L_k^{\omega\operatorname{down}}(X)=L.\]
	For any $1\le m\le f_k(X)$, it follows from Corollary \ref{cor-1} and Theorem \ref{thm-10} that
	\[	\lambda_{m}^{\downarrow}(L)\geq \lambda_{\binom{n}{k+1}-f_k(X)+m}^{\downarrow}\left(L_0^{\omega,\operatorname{down}}(X)^{[k+1]}\right)=S_{k+1, \binom{n}{k+1}-f_k(X)+m}^{\downarrow}\left(L_0^{w,\operatorname{down}}(X)\right).\]
	Since $\mathbf{s}_0^{\omega \operatorname{down}}\left(X\right) =\left\{\sum_{v\in V}\omega(v), 0^{(n-1)}\right\}$,
	we have \[S_{k+1}\left(L_0^{\omega,\operatorname{down}}(X)\right)=\left\{\left(\sum_{v\in V}\omega(v)\right)^{\left( \binom{n-1}{k}\right)},0^{\left(\binom{n-1}{k+1}\right)}\right\}.\]
	Therefore, if $\binom{n}{k+1}-f_k(X)+m\le \binom{n-1}{k}$, i.e., $m\le f_k(X)-\binom{n-1}{k+1}$, we have
\[\lambda_{m}^{\downarrow}\left(L_k^{\omega\operatorname{down}}\left(X\right)\right)=\lambda_{m}^{\downarrow}(L)\geq S_{k+1, \binom{n}{k+1}-f_k(X)+m}^{\downarrow}\left(L_0^{w,\operatorname{down}}(X)\right)=\sum_{v\in V}\omega(v).\]
Combining this with \eqref{eq-11}, we conclude that if $m\le f_k(X)-\binom{n-1}{k+1}$, then $\lambda_{m}^{\downarrow}\left(L_k^{\omega\operatorname{down}}\left(X\right)\right)= \sum_{v\in V}\omega(v)$. Similarly, by replacing $k$ with $k+1$ , if  $m\le f_{k+1}(X)-\binom{n-1}{k+2}$, we have  \[\lambda_{m}^{\downarrow}\left(L_k^{\omega\operatorname{up}}\left(X\right)\right)=\lambda_{m}^{\downarrow}\left(L_{k+1}^{\omega\operatorname{down}}\left(X\right)\right)\ge\sum_{v\in V}\omega(v),\]
	 and thus $\lambda_{m}^{\downarrow}(L_k^{\omega\operatorname{up}}(X))= \sum_{v\in V}\omega(v)$. Therefore,  if $f_k(X)>\binom{n-1}{k+1}$ or $f_{k+1}(X)>\binom{n-1}{k+2}$, then $\sum_{v\in V}\omega(v)$ is an eigenvalue of $L_k^{\omega}(X)$ with mulitiplicity at least \[\max\left\{f_k(X)+f_{k+1}(X)-\binom{n}{k+2}, f_k(X)-\binom{n-1}{k+1},f_{k+1}(X)-\binom{n-1}{k+2}\right\}.  \]

The proof is completed.
\end{proof}

\begin{remark}
	In their work \cite[Theorem 4.3]{DR2002}, Duval and Reiner established a spectral relation for a simplicial complex $X$ with $n$ vertices. Specifically, they demonstrated that the spectra of the combinatorial $k$-dimensional down-Laplacian $L_k^{\operatorname{down}}(X)$ and the $(n-k-2)$-dimensional down-Laplacian $L_{n-k-2}^{\operatorname{down}}(X_k^*)$ satisfy the identity\[\lambda_{i}^{\downarrow}\left(L_k^{\operatorname{down}}\left(X\right)\right)+\lambda_{f_k(X)+1-i}^{\downarrow}\left(L_{n-k-2}^{\operatorname{down}}\left(X_k^*\right)\right)=n.\]
	A direct consequence of this relation is the upper bound
	\[\lambda^{\downarrow}_{1}\left(L_k(X)\right)\le n\]
    on the largest eigenvalue $L_k(X)$. Therefore, Theorems \ref{thm-14} and \ref{thm-18} can be situated as a generalization of these results in two principal directions. Firstly, the equations \eqref{eq-18} and \eqref{eq-19} extend the aforementioned spectral relation and its concomitant eigenvalue bound from the combinatorial Laplacian to the vertex-weighted Laplacian framework. Secondly, Theorem \ref{thm-18} also provides a characterization of the extremal cases in which the upper bound is attained, alongside a lower bound on the multiplicity of the corresponding eigenvalue.
\end{remark}
\begin{example}
	Let $\Delta_{n-1}$ be the complete simplicial complex on vertex set $V$ of size $n$. Then $\operatorname{dim}(\Delta_{n-1})=n-1$.
	Noticing that $f_k(\Delta_{n-1})=\binom{n}{k+1}>\binom{n-1}{k+1}$ for each $0\le k\le n-1$ and $f_{k+1}(\Delta_{n-1})=\binom{n}{k+2}>\binom{n-1}{k+2}$ for $-1\le k\le n-2$,
it follows from Theorem \ref{thm-18} that $\sum_{v\in V}\omega(v)$ is an eigenvalue of $L_k^{\omega}(\Delta_{n-1})$ with mulitiplicity at least 
	\[\max\left\{f_k(\Delta_{n-1})+f_{k+1}(\Delta_{n-1})-\binom{n}{k+2}, f_k(\Delta_{n-1})-\binom{n-1}{k+1},f_{k+1}(\Delta_{n-1})-\binom{n-1}{k+2}\right\},\]
which equals $1$ if $k\in\{1,n-1\}$, and $\binom{n}{k+1}$ if $0\le k\le n-2$.
By Corollary \ref{cor-2}, the multiplicity of $\sum_{v \in V} \omega(v)$ in $\mathbf{s}_{k }^{\omega}\left(\Delta_{n-1}\right)$ attains precisely the lower bound provided by Theorem \ref{thm-18}.
    \end{example}
\begin{figure}[htbp]
    \centering
    \includegraphics[width=10cm]{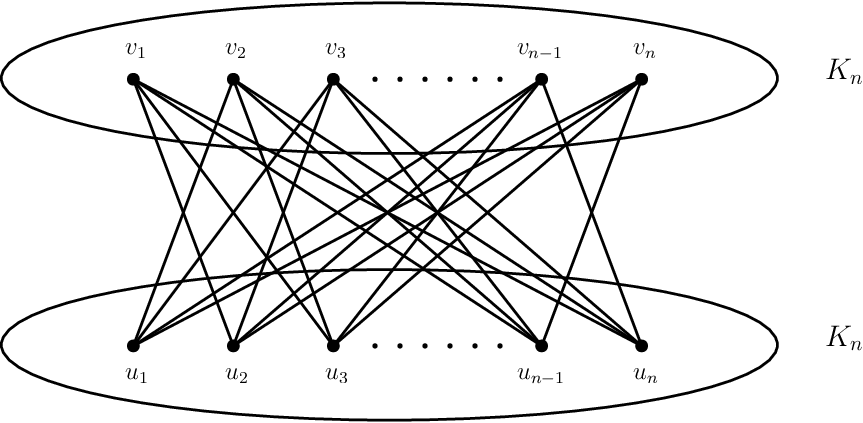}
    \caption{The cocktail party graph $CP(n)$}
    \label{fig-1}
\end{figure}
\begin{example}
The cocktail party graph $CP(n)$ is the graph obtained from the complete graph $K_{2n}$ by removing a perfect matching (see Fig.\ref{fig-1}). Let $X$ be the clique complex of $CP(n)$. Noticing that \[f_0(X)=2n>\binom{2n-1}{1},\ f_1(X)=\binom{2n}{2}-n>\binom{2n-1}{2} \text{ and } f_k(X)<\binom{2n-1}{k+1}\] for any $k>1$, it follows from Theorem \ref{thm-18} that $\sum_{v\in V}\omega(v)$ is an eigenvalue of $L_{-1}^{\omega}(X)$ with mulitiplicity at least $f_0(X)-\binom{2n-1}{1}=1$, that $\sum_{v\in V}\omega(v)$ is an eigenvalue of $L_0^{\omega}(X)$ with mulitiplicity at least \[f_1(X)-\binom{2n-1}{2}+f_0(X)-\binom{2n-1}{1}=n,\]
and that $\sum_{v\in V}\omega(v)$ is an eigenvalue of $L_1^{\omega}(X)$ with mulitiplicity at least \[f_1(X)-\binom{2n-1}{2}=n-1.\]
Moreover, a straightforward computation shows that the eigenvalue 
$\sum_{v\in V}\omega(v)$ of $L_{0}^{\omega}(X)$ has multiplicity exactly equal to the lower bound $n$.
\end{example}

\subsection{Vertex-wighted Laplacian spectra of $X$ and $X^c$}

We then study the relationship between the spectra of the vertex-weighted $k$-dimensional up-Laplacian operators of $X$ and those of the complexes $X_{k+1}^{c}$. Let $\Delta_n$ be the complete simplicial complex on a $(n+1)$-set $V$. 
\begin{lemma}\label{lem-16}
	Let $X$ be a simplicial complex on vertex set $V$ with $|V|=n$, and let $\omega\colon X \rightarrow \mathbb{R}_{>0}$ be a vertex weight function of $X$. For any $-1\le k\le \operatorname{dim}(X)$, we have
	\begin{equation}\label{eq-12}
			L_k^{\omega \operatorname{up}}\left(X\right)+L_k^{\omega \operatorname{up}}\left(X^c_{k+1}\right)=L_k^{\omega \operatorname{up}}\left(\Delta^{(k+1)}_{n-1}\right),
	\end{equation}
	 and all three operators pairwise commute.
\end{lemma}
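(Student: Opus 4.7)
The plan is a two-part proof: first the identity~\eqref{eq-12}, then the commutativity. I verify the identity entrywise via Lemma~\ref{lem-14}, and prove commutativity by recasting the up-Laplacians in an operator-theoretic form and exploiting a Hodge-type scalar relation on the ambient complete complex.

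For the identity, fix $\sigma, \tau \in \binom{V}{k+1}$. On the diagonal, a vertex $v \in V \setminus \sigma$ lies in $\operatorname{lk}_X(\sigma)$ precisely when $\sigma \cup \{v\} \in X(k+1)$, and in $\operatorname{lk}_{X_{k+1}^c}(\sigma)$ precisely when $\sigma \cup \{v\} \notin X(k+1)$; hence these two link sets partition $V \setminus \sigma$, and the $\omega$-weighted diagonal entries sum to $\sum_{u \in V \setminus \sigma} \omega(u)$, which is the diagonal of $L_k^{\omega \operatorname{up}}(\Delta_{n-1}^{(k+1)})$ given by Lemma~\ref{lem-14}. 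For an off-diagonal entry with $|\sigma \cap \tau| = k$, exactly one of $\sigma \cup \tau \in X$ or $\sigma \cup \tau \in X_{k+1}^c$ holds, so exactly one summand contributes $-(-1)^{\varepsilon(\sigma,\tau)} \omega(\tau \setminus \sigma)$, matching the corresponding entry of $L_k^{\omega \operatorname{up}}(\Delta_{n-1}^{(k+1)})$; all remaining entries vanish in every term.

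For commutativity, let $d_k$ and $d_{k+1}$ denote the coboundary maps of the complete complex on $V$, and let $P_X$ be the orthogonal projection from $C^{k+1}(\Delta_{n-1}^{(k+1)})$ onto $\operatorname{span}\{e_\mu : \mu \in X(k+1)\}$. A direct check gives $L_k^{\omega \operatorname{up}}(X) = d_k^{\omega *} P_X d_k$, with analogous formulas for $X_{k+1}^c$ (using $P_{X_{k+1}^c}$) and for $\Delta_{n-1}^{(k+1)}$ (with projection $I$). Two ingredients drive the commutativity: Lemma~\ref{lem-15} applied to $\Delta_{n-1}$ at index $k+1$ yields the Hodge-type scalar identity
\[
d_k d_k^{\omega *} + d_{k+1}^{\omega *} d_{k+1} \;=\; \Bigl(\sum_{v \in V} \omega(v)\Bigr) I
\]
on $C^{k+1}(\Delta_{n-1})$, while the nilpotency $d_{k+1} d_k = 0$ (together with its adjoint $d_k^{\omega *} d_{k+1}^{\omega *} = 0$), when decomposed through $P_X + P_{X_{k+1}^c} = I$, produces the sign-flip identities $d_{k+1} P_X d_k = -d_{k+1} P_{X_{k+1}^c} d_k$ and $d_k^{\omega *} P_X d_{k+1}^{\omega *} = -d_k^{\omega *} P_{X_{k+1}^c} d_{k+1}^{\omega *}$.

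Setting $A = L_k^{\omega \operatorname{up}}(X)$ and $B = L_k^{\omega \operatorname{up}}(X_{k+1}^c)$, substituting the Hodge identity into $AB = d_k^{\omega *} P_X (d_k d_k^{\omega *}) P_{X_{k+1}^c} d_k$ kills the scalar part via $P_X P_{X_{k+1}^c} = 0$, and the sign-flips rewrite what remains as $AB = Y^{\omega *} Y$ with $Y = d_{k+1} P_{X_{k+1}^c} d_k$. The analogous computation for $BA$ yields $BA = Z^{\omega *} Z$ with $Z = d_{k+1} P_X d_k = -Y$, whence $AB = BA$; commutativity with $C = A + B$ is then automatic, since $AC = A^2 + AB = A^2 + BA = CA$ and similarly $BC = CB$. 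The main obstacle is spotting the right pair of identities, namely the Hodge-type scalar relation on the complete complex (extracted from Lemma~\ref{lem-15}) and the nilpotency $d_{k+1} d_k = 0$, whose interaction through the projections collapses the commutator to the tautology $Y^{\omega *} Y = (-Y)^{\omega *}(-Y)$.
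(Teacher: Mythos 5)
Your verification of the identity \eqref{eq-12} is exactly the paper's argument: an entrywise comparison via Lemma~\ref{lem-14}, using that $\operatorname{lk}_X(\sigma)$ and $\operatorname{lk}_{X^c_{k+1}}(\sigma)$ partition $V\setminus\sigma$ on the diagonal and that exactly one of $\sigma\cup\tau\in X$, $\sigma\cup\tau\in X^c_{k+1}$ holds off the diagonal.

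For commutativity you take a genuinely different, and correct, route. The paper observes that it suffices to show $L_k^{\omega\operatorname{up}}(X)$ commutes with $L_k^{\omega\operatorname{up}}\bigl(\Delta^{(k+1)}_{n-1}\bigr)$, and exploits that by Lemma~\ref{lem-15} the latter acts as $0$ on its kernel $V_0$ and as the scalar $\sum_{v\in V}\omega(v)$ on $V_1=V_0^{\perp}$; since $V_0=\operatorname{Ker}L_k^{\omega\operatorname{up}}(X)\cap\operatorname{Ker}L_k^{\omega\operatorname{up}}(X^c_{k+1})$ by positive semidefiniteness and $L_k^{\omega\operatorname{up}}(X)$ maps $V_1$ into $V_1$ by self-adjointness, the two operators commute. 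You instead factor $A=d_k^{\omega*}P_Xd_k$ and $B=d_k^{\omega*}P_{X^c_{k+1}}d_k$ through the coboundary of the ambient complete complex and compute $AB$ and $BA$ directly, using the scalar Hodge identity on $C^{k+1}(\Delta_{n-1},\mathbb{R})$ (which does follow from Lemma~\ref{lem-15}, since a self-adjoint operator whose spectrum is a single point is that scalar times the identity) together with $P_XP_{X^c_{k+1}}=0$ and the sign flips coming from $d_{k+1}d_k=0$. All the supporting facts check out: $P_X$ is self-adjoint for the weighted inner product because the elementary cochains form an orthogonal basis, and the factorization $A=d_k^{\omega*}P_Xd_k$ agrees with Lemma~\ref{lem-14} on all of $C^k\bigl(\Delta^{(k+1)}_{n-1},\mathbb{R}\bigr)$, including rows indexed by $(k+1)$-sets outside $X(k)$. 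Your computation is a bit heavier but yields a small bonus the paper's spectral argument does not record: $AB=BA=Y^{\omega*}Y$ with $Y=d_{k+1}P_{X^c_{k+1}}d_k$, so the product of the two up-Laplacians is itself positive semidefinite.
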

\begin{proof}
The rows and columns of all these three matrices are labeled by $\binom{V}{k+1}$. We consider the $(\sigma,\tau)$-entry of both sides of \eqref{eq-12} for $\sigma,\tau\in \binom{V}{k+1}$. If $\sigma=\tau$, from Lemma \ref{lem-14}, we have
\[L_k^{\omega \operatorname{up}}\left(X\right)_{\sigma,\sigma}=\sum_{u\in \operatorname{lk}_X(\sigma)}\omega(u)\]
and 
\[L_k^{\omega \operatorname{up}}\left(X^c_{k+1}\right)_{\sigma,\sigma}=\sum_{u\in \operatorname{lk}_{X^c_{k+1}}(\sigma)}\omega(u).\]
Since $\operatorname{lk}_X(\sigma)=\{u\not\in \sigma\colon \sigma\cup\{u\}\in X(k+1)\}$ and $\operatorname{lk}_{X^c_{k+1}}(\sigma)=\{u\not\in \sigma\colon \sigma\cup\{u\}\in \binom{V}{k+2}\setminus X(k+1)\}$, we get 
\[\left(L_k^{\omega \operatorname{up}}\left(X^c_{k+1}\right)+L_k^{\omega \operatorname{up}}\left(X^c_{k+1}\right)\right)_{\sigma,\sigma}=\sum_{u\not\in\sigma}\omega(u)=L_k^{\omega \operatorname{up}}\left(\Delta^{(k+1)}_{n-1}\right)_{\sigma,\sigma}.\]
If $\sigma,\tau\in\binom{V}{k+1}$ with $|\sigma\cap\tau|=k$, then either $\sigma\cup\tau\in X$ or $\sigma\cup\tau\in X^c_{k+1}$. Therefore, from Lemma \ref{lem-14}, we have
\[\left(L_k^{\omega \operatorname{up}}\left(X^c_{k+1}\right)+L_k^{\omega \operatorname{up}}\left(X^c_{k+1}\right)\right)_{\sigma,\tau}=-(-1)^{\varepsilon(\sigma,\tau)}\omega(\tau\setminus \sigma)=L_k^{\omega \operatorname{up}}\left(\Delta^{(k+1)}_{n-1}\right)_{\sigma,\tau}.\]
If $\sigma,\tau\in\binom{V}{k+1}$ with $|\sigma\cap\tau|<k$, then, from Lemma \ref{lem-14}, we have
\[\left(L_k^{\omega \operatorname{up}}\left(X^c_{k+1}\right)+L_k^{\omega \operatorname{up}}\left(X^c_{k+1}\right)\right)_{\sigma,\tau}=0=L_k^{\omega \operatorname{up}}\left(\Delta^{(k+1)}_{n-1}\right)_{\sigma,\tau}.\]
Hence, \eqref{eq-12} holds.

Next, we show that these operators commute pairwise.
In fact, it is sufficient to verify that $L_k^{\omega \operatorname{up}}(X)$ commutes with $L_k^{\omega \operatorname{up}}\left(\Delta^{(k+1)}_{n-1}\right)$. By Lemma \ref{lem-15}, we have 	
\begin{equation}\label{eq-13}
	\mathbf{s}_{k }^{\omega \operatorname{up}}\left(\Delta^{(k+1)}_{n-1}\right)=\left\{\left(\sum_{v\in V}\omega(v)\right)^{\left(\binom{n-1}{k+1}\right)},0^{\left(\binom{n-1}{k}\right)} \right\}.
\end{equation}
Consequently, the space $C^{k}\left(\Delta^{(k+1)}_{n-1}, \mathbb{R}\right)$ decomposes as a direct sum  \[C^{k}(\Delta^{(k+1)}_{n-1}, \mathbb{R})=V_0\oplus V_1,\]
where $V_0$ and $V_1$ denote the eigenspaces corresponding to the eigenvalues $0$ and $\sum_{v\in V}\omega(v)$, respectively.
It follows from the self-adjointness of the three operators above and \eqref{eq-12} that
\begin{equation}\label{eq-14}
	V_0=\operatorname{Ker}\left(L_k^{\omega \operatorname{up}}\left(\Delta^{(k+1)}_{n-1}\right)\right)=\operatorname{Ker}\left(L_k^{\omega \operatorname{up}}(X)\right)\cap \operatorname{Ker}\left(L_k^{\omega \operatorname{up}}(X^c_{k+1})\right),
\end{equation}
and thus $L_k^{\omega \operatorname{up}}(X)(V_0)=\boldsymbol{0}$.
For any $\boldsymbol{x}=\boldsymbol{x}_0+\boldsymbol{x}_1\in C^{k}\left(\Delta^{(k+1)}_{n-1}, \mathbb{R}\right)$ with $\boldsymbol{x}_0\in V_0$ and $\boldsymbol{x}_1\in V_1$, we have
\[L_k^{\omega \operatorname{up}}(X)L_k^{\omega \operatorname{up}}\left(\Delta^{(k+1)}_{n-1}\right)(\boldsymbol{x})=\left(\sum_{v\in V}\omega(v)\right)L_k^{\omega \operatorname{up}}(X)(\boldsymbol{x}_1),\]
and
\[L_k^{\omega \operatorname{up}}\left(\Delta^{(k+1)}_{n-1}\right)L_k^{\omega \operatorname{up}}(X)(\boldsymbol{x})=L_k^{\omega \operatorname{up}}\left(\Delta^{(k+1)}_{n-1}\right)	L_k^{\omega \operatorname{up}}(X)(\boldsymbol{x}_1).\]
Note that for any $\boldsymbol{y} \in V_0$, we have \[\left\langle L_k^{\omega \operatorname{up}}(X)(\boldsymbol{x}_1),\boldsymbol{y}\right\rangle=\left\langle \boldsymbol{x}_1,L_k^{\omega \operatorname{up}}(X)(\boldsymbol{y})\right\rangle =0.\]
This implies $L_k^{\omega \operatorname{up}}(X)(\boldsymbol{x}_1)\in V_0^{\perp}=V_1$. Therefore, 
\[L_k^{\omega \operatorname{up}}\left(\Delta^{(k+1)}_{n-1}\right)L_k^{\omega \operatorname{up}}(X)(\boldsymbol{x})=\left(\sum_{v\in V}\omega(v)\right)L_k^{\omega \operatorname{up}}(X)(\boldsymbol{x}_1)=L_k^{\omega \operatorname{up}}(X)L_k^{\omega \operatorname{up}}\left(\Delta^{(k+1)}_{n-1}\right)(\boldsymbol{x}),\] and hence the two operators commute.
\end{proof}

Applying Lemma \ref{lem-16}, we get the relation between the spectra of $\left(L_k^{\omega\operatorname{up}}\left(X\right)\right)$ and those of $\left(L_k^{\omega\operatorname{up}}\left(X^c_{k+1}\right)\right)$.
\begin{theorem}\label{thm-17}
	Let $X$ be a simplicial complex on vertex set $V$ of size $n$, and let $\omega\colon X \rightarrow \mathbb{R}_{>0}$ be a vertex weight function of $X$. For any $-1\le k\le \operatorname{dim}(X)$,
we have $\lambda_{i}^{\downarrow}\left(L_k^{\omega\operatorname{up}}\left(X\right)\right)=\lambda_{i}^{\downarrow}\left(L_k^{\omega\operatorname{up}}\left(X^c_{k+1}\right)\right)=0$ for $\binom{n-1}{k+1}+1\le i\le \binom{n}{k+1}$, and 
\begin{equation}\label{eq-20}
	\lambda_{i}^{\downarrow}\left(L_k^{\omega\operatorname{up}}\left(X\right)\right)+\lambda_{\binom{n-1}{k+1}-i+1}^{\downarrow}\left(L_k^{\omega\operatorname{up}}\left(X^c_{k+1}\right)\right)=\sum_{v\in V}\omega(v)
\end{equation}
for  $1\le i\le \binom{n-1}{k+1}$.
\end{theorem}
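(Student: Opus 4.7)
The plan is to exploit the commutativity and the spectral data already assembled in Lemma \ref{lem-16} and Lemma \ref{lem-15}. Since $L_k^{\omega \operatorname{up}}(X)$, $L_k^{\omega \operatorname{up}}(X^c_{k+1})$, and $L_k^{\omega \operatorname{up}}(\Delta^{(k+1)}_{n-1})$ are pairwise commuting self-adjoint operators on the $\binom{n}{k+1}$-dimensional space $C^{k}(\Delta^{(k+1)}_{n-1},\mathbb{R})$, they admit a common orthonormal eigenbasis. By Lemma \ref{lem-15}, $L_k^{\omega \operatorname{up}}(\Delta^{(k+1)}_{n-1})$ has exactly two eigenvalues, $\sum_{v\in V}\omega(v)$ with multiplicity $\binom{n-1}{k+1}$ and $0$ with multiplicity $\binom{n-1}{k}$, so the ambient space decomposes orthogonally as $V_0 \oplus V_1$ where $V_0$ is the kernel and $V_1$ the $\sum_{v\in V}\omega(v)$-eigenspace. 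Both subspaces are invariant under $L_k^{\omega \operatorname{up}}(X)$ and $L_k^{\omega \operatorname{up}}(X^c_{k+1})$ by commutativity.

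The identity \eqref{eq-14} established inside the proof of Lemma \ref{lem-16} gives $V_0 = \operatorname{Ker}(L_k^{\omega \operatorname{up}}(X)) \cap \operatorname{Ker}(L_k^{\omega \operatorname{up}}(X^c_{k+1}))$, so both operators vanish identically on $V_0$. This contributes $\binom{n-1}{k} = \binom{n}{k+1}-\binom{n-1}{k+1}$ zero eigenvalues to each spectrum and immediately yields the first assertion $\lambda_i^{\downarrow}(L_k^{\omega \operatorname{up}}(X))=\lambda_i^{\downarrow}(L_k^{\omega \operatorname{up}}(X^c_{k+1}))=0$ for $\binom{n-1}{k+1}+1\le i\le \binom{n}{k+1}$.

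On the complementary block $V_1$, relation \eqref{eq-12} of Lemma \ref{lem-16} specializes to
\[
L_k^{\omega \operatorname{up}}(X)\big|_{V_1}+L_k^{\omega \operatorname{up}}(X^c_{k+1})\big|_{V_1}=\Bigl(\sum_{v\in V}\omega(v)\Bigr)\,I_{V_1}.
\]
Let $\mu_1\ge \mu_2\ge \cdots \ge \mu_{\binom{n-1}{k+1}}$ denote the eigenvalues of $L_k^{\omega \operatorname{up}}(X)|_{V_1}$, arranged by a common orthonormal eigenbasis of $V_1$. Then the eigenvalues of $L_k^{\omega \operatorname{up}}(X^c_{k+1})|_{V_1}$ on the same eigenvectors are $\sum_{v\in V}\omega(v)-\mu_j$, and reordering them decreasingly places $\sum_{v\in V}\omega(v)-\mu_{\binom{n-1}{k+1}-i+1}$ in the $i$-th slot. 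Since both restricted operators are positive semidefinite, each $\mu_j \in [0,\sum_{v\in V}\omega(v)]$; merging the eigenvalue list of $V_1$ with the $\binom{n-1}{k}$ zeros from $V_0$ therefore preserves the ordering, giving $\lambda_i^{\downarrow}(L_k^{\omega \operatorname{up}}(X))=\mu_i$ and $\lambda_{\binom{n-1}{k+1}-i+1}^{\downarrow}(L_k^{\omega \operatorname{up}}(X^c_{k+1}))=\sum_{v\in V}\omega(v)-\mu_i$ for $1\le i\le \binom{n-1}{k+1}$, and adding these yields \eqref{eq-20}.

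I do not anticipate a serious obstacle; the only point that requires a moment's care is the merging step, where one must observe that positive semidefiniteness forces the $\mu_i$'s to dominate the zero block, so that the decreasing reindexing on the full space really does read off the $\mu_i$'s first. Everything else is an immediate consequence of the simultaneous diagonalization provided by Lemma \ref{lem-16} together with the explicit spectrum of the complete-skeleton operator from Lemma \ref{lem-15}.
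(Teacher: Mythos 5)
Your argument is correct, and it rests on exactly the same foundations as the paper's proof: Lemma \ref{lem-16} (the identity \eqref{eq-12}, the commutativity, and the kernel identification \eqref{eq-14}) together with the spectrum of $L_k^{\omega\operatorname{up}}\bigl(\Delta^{(k+1)}_{n-1}\bigr)$ from Lemma \ref{lem-15}. The first assertion (the $\binom{n-1}{k}$ forced zeros) is handled identically in both treatments. Where you diverge is in extracting \eqref{eq-20}: the paper argues eigenvector by eigenvector, decomposing an eigenfunction of $L_k^{\omega\operatorname{up}}(X)$ as $\boldsymbol{x}_0+\boldsymbol{x}_1$ with $\boldsymbol{x}_0\in V_0$, $\boldsymbol{x}_1\in V_1$, and running a two-case analysis (according to whether $\lambda_i^{\downarrow}$ equals $\sum_{v}\omega(v)$ or not) to exhibit $\sum_{v}\omega(v)-\lambda_i^{\downarrow}$ as an eigenvalue of $L_k^{\omega\operatorname{up}}(X^c_{k+1})$, leaving the final reindexing $\beta_i=\lambda^{\downarrow}_{\binom{n-1}{k+1}-i+1}$ somewhat implicit. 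You instead restrict \eqref{eq-12} to the invariant subspace $V_1$, where it becomes a scalar identity, and read off both spectra simultaneously; this is cleaner and makes the ordering transparent. The one point you rightly flag --- that merging the $V_1$-eigenvalues with the zero block from $V_0$ preserves the decreasing order --- is exactly where positive semidefiniteness of both up-Laplacians is needed (giving $0\le\mu_j\le\sum_{v\in V}\omega(v)$), and you supply it. No gap.
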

\begin{proof}
By \eqref{eq-13}, the multiplicity of $0$ as eigenvalue of operator $L_k^{\omega \operatorname{up}}\left(\Delta^{(k+1)}_{n-1}\right)$ is $\operatorname{m}_{L_k^{\omega \operatorname{up}}\left(\Delta^{(k+1)}_{n-1}\right)}(0)=\binom{n-1}{k}$. 
From \eqref{eq-14}, it follows that the multiplicities of $0$ as an eigenvalue of operators $L_k^{\omega \operatorname{up}}\left(X\right)$ and $L_k^{\omega \operatorname{up}}\left(X^c_{k+1}\right)$ satisfy
\[\operatorname{m}_{L_k^{\omega \operatorname{up}}\left(X\right)}(0)\ge \binom{n-1}{k}\text{ and } \operatorname{m}_{L_k^{\omega \operatorname{up}}\left(X^c_{k+1}\right)}(0)\ge  \binom{n-1}{k},\]
and the eigenfunctions corresponding to such $0$ belong to $V_0$, where $V_0$ and $V_1$ are the space defined in the proof of Lemma \ref{lem-16}.. Therefore, both of them have at most $\binom{n}{k+1}-\binom{n-1}{k}=\binom{n-1}{k+1}$ positive eigenvalues, which implies that
\[\lambda_{i}^{\downarrow}\left(L_k^{\omega\operatorname{up}}\left(X\right)\right)=\lambda_{i}^{\downarrow}\left(L_k^{\omega\operatorname{up}}\left(X^c_{k+1}\right)\right)=0,\]
for $i\ge \binom{n-1}{k+1}+1$.

For $1\le i\le \binom{n-1}{k+1}$, let $\boldsymbol{x}$ be an eigenfunction corresponding to $\lambda_{i}^{\downarrow}\left(L_k^{\omega\operatorname{up}}\left(X\right)\right)$ which is independent to the functions in $V_0$. Clearly, $\boldsymbol{x}$ can be decomposed as $\boldsymbol{x}=\boldsymbol{x}_0+\boldsymbol{x}_1$ with $\boldsymbol{x}_0\in V_0$ and $\boldsymbol{x}_1\in V_1\setminus\{\boldsymbol{0}\}$.  Then we have
\begin{equation}
	\begin{aligned}\label{eq-15}
		L_k^{\omega\operatorname{up}}\left(X^c_{k+1}\right)\left(\boldsymbol{x}_1 \right)&=L_k^{\omega \operatorname{up}}\left(\Delta^{(k+1)}_{n-1}\right)\left(\boldsymbol{x}_1 \right)-L_k^{\omega\operatorname{up}}\left(X\right)\left(\boldsymbol{x}_1 \right)\\&=\left(-\lambda_{i}^{\downarrow}\left(L_k^{\omega\operatorname{up}}\left(X\right)\right)+\sum_{v\in V}\omega(v)\right)\boldsymbol{x}_1-\lambda_{i}^{\downarrow}\left(L_k^{\omega\operatorname{up}}\left(X\right)\right)\boldsymbol{x}_0.
	\end{aligned}
\end{equation}
For convenience, we write $-\lambda_{i}^{\downarrow}\left(L_k^{\omega\operatorname{up}}\left(X\right)\right)+\sum_{v\in V}\omega(v)$ as $\beta_i$. If $\lambda_{i}^{\downarrow}\left(L_k^{\omega\operatorname{up}}\left(X\right)\right)\ne \sum_{v\in V}\omega(v)$, then $\beta_i\ne 0$, and we have
\[	\begin{array}{lll}
	&&L_k^{\omega\operatorname{up}}\left(X^c_{k+1}\right)\left(\boldsymbol{x}_1 -\frac{\lambda_{i}^{\downarrow}\left(L_k^{\omega\operatorname{up}}\left(X\right)\right)}{\beta_i}\boldsymbol{x}_0\right)=L_k^{\omega\operatorname{up}}\left(X^c_{k+1}\right)\left(\boldsymbol{x}_1\right)\\[2mm]
	&=&\beta_i\boldsymbol{x}_1-\lambda_{i}^{\downarrow}\left(L_k^{\omega\operatorname{up}}\left(X\right)\right)\boldsymbol{x}_0=\beta_i\left(\boldsymbol{x}_1 -\frac{\lambda_{i}^{\downarrow}\left(L_k^{\omega\operatorname{up}}\left(X\right)\right)}{\beta_i}\boldsymbol{x}_0\right).
\end{array}\] 
If $\lambda_{i}^{\downarrow}\left(L_k^{\omega\operatorname{up}}\left(X\right)\right)= \sum_{v\in [n]}\omega(v)$, then $\beta_i=0$.
From \eqref{eq-15}, we have $L_k^{\omega\operatorname{up}}\left(X^c_{k+1}\right)\left(\boldsymbol{x}_1 \right)=-\lambda_{i}^{\downarrow}\left(L_k^{\omega\operatorname{up}}\left(X\right)\right)\boldsymbol{x}_0\in V_0$. However, by noticing that $L_k^{\omega\operatorname{up}}\left(X^c_{k+1}\right)\left(\boldsymbol{x}_1 \right)\in V_0^{\perp}=V_1$, we have
$\boldsymbol{x}_0=\boldsymbol{0}$, and thus $L_k^{\omega\operatorname{up}}\left(X^c_{k+1}\right)\left(\boldsymbol{x}_1 \right)=\boldsymbol{0}=\beta_i \boldsymbol{x}_1$. Therefore,  for $1\le i\le \binom{n-1}{k+1}$, we always get that $\beta_i=-\lambda_{i}^{\downarrow}\left(L_k^{\omega\operatorname{up}}\left(X\right)\right)+\sum_{v\in V}\omega(v)$ is an eigenvalue of $L_k^{\omega\operatorname{up}}\left(X^c_{k+1}\right)$. Moreover, the corresponding eigenfunction is linear independent to the functions in $V_0$. Hence, $\beta_i=\lambda_{\binom{n-1}{k+1}-i+1}^{\downarrow}\left(L_k^{\omega\operatorname{up}}\left(X^c_{k+1}\right)\right)$, and \eqref{eq-20} holds.

The proof is completed.
\end{proof}

\begin{remark}
Lemma \ref{lem-16} and Theorem \ref{thm-17} generalize the corresponding results \cite[Proposition 4.4, Theorem 4.5]{DR2002} by Duval and Reiner from combinatorial Laplacian operators to vertex-weighted Laplacian operators.
\end{remark}

\subsection{Vertex-weighted Laplacian of Alexander dual of $X$}
Given a simplicial complex $X$, having discussed the relationship between the spectra of the vertex-weighted Laplacian operators of $X$ and those of the complexes $X_k^{*}$ and $X_{k+1}^{c}$, we now turn to the study of the corresponding relationships between $X$ and its \textit{canonical Alexander dual}.
\begin{definition}[\cite{MS2005}]
	For a simplicial complex $X$ on vertex set $V$, the canonical Alexander dual $X^{\vee} $ is defined by
	\[X^{\vee}\colon=\left\{V\setminus\sigma\colon \sigma\notin X\right\}.\]
\end{definition}

Miller and Sturmfels \cite{MS2005} established an isomorphism between the homology of the Alexander dual of a simplicial complex and the cohomology of the complex itself:
\begin{theorem}[{\cite[Alexander duality]{MS2005}}]\label{thm-16}
	Let $X$ be a simplicial complex on $n$ vertices, and $\mathbb{F}$ be a field. For any $0\le k\le n-1$,
\[\widetilde{H}_{k-1}\left(X^{\vee} ; \mathbb{F}\right) \cong \widetilde{H}^{n-2-k}(X ; \mathbb{F}).\]
\end{theorem}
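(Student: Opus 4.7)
The plan is to prove this via combinatorial Alexander duality, by constructing an explicit isomorphism between the reduced cochain complex of $X^\vee$ and a shift of the relative chain complex $C_*(\Delta,X;\mathbb{F})$, where $\Delta = 2^V$ denotes the full $(n-1)$-simplex on $V$, and then invoking the contractibility of $\Delta$. Fix the natural ordering on $V=[n]$. The defining property $\tau \in X^\vee \iff V\setminus\tau \notin X$ provides a canonical bijection
\[
\{\tau \in X^\vee(i)\} \longleftrightarrow \{\sigma \in \Delta(n-i-2) : \sigma \notin X\},
\qquad \tau \longmapsto \tau^c := V\setminus\tau,
\]
so the first set indexes a basis of $C^i(X^\vee;\mathbb{F})$ while the second set indexes a basis of the relative chain group $C_{n-i-2}(\Delta,X;\mathbb{F}) := C_{n-i-2}(\Delta;\mathbb{F})/C_{n-i-2}(X;\mathbb{F})$. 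I would then define $\Phi_i \colon C^i(X^\vee;\mathbb{F}) \to C_{n-i-2}(\Delta,X;\mathbb{F})$ on basis elements by $\Phi_i(e_\tau) = \mathrm{sh}(\tau)\,[\tau^c]$, where $\mathrm{sh}(\tau)\in\{\pm 1\}$ is the sign of the shuffle permutation that concatenates the increasing lists of $\tau$ and $\tau^c$ into the standard ordering of $V$.

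The heart of the argument is to show that the family $\{\Phi_i\}$ intertwines the coboundary $d$ on $X^\vee$ with the boundary $\partial$ of the pair $(\Delta,X)$, up to a global sign depending only on $i$. For a fixed $\tau \in X^\vee(i)$, the nonzero terms of $d_i(e_\tau)$ are indexed by the $v \in V\setminus\tau$ with $\tau\cup\{v\} \in X^\vee$, while the nonzero terms of $\partial_{n-i-2}([\tau^c])$ are indexed by the $v \in \tau^c$ with $\tau^c\setminus\{v\} \notin X$. Since $V\setminus(\tau\cup\{v\}) = \tau^c\setminus\{v\}$, these two indexing sets coincide. The cochain side contributes a sign $(-1)^{q-1}$, where $q$ is the position of $v$ in the ordered tuple $\tau\cup\{v\}$, while the relative chain side contributes $(-1)^{p-1}$, where $p$ is the position of $v$ in $\tau^c$. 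A direct shuffle computation shows that $\mathrm{sh}(\tau)/\mathrm{sh}(\tau\cup\{v\})$ equals $(-1)^{q+p}$ times a sign depending only on $i$, which exactly reconciles the two expressions. This sign bookkeeping is the main obstacle of the proof; it is routine but delicate, and is best organized by comparing both sides to the determinant of a shuffle permutation.

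Once the chain-level intertwining is verified, $\Phi$ descends to an isomorphism $\widetilde{H}^i(X^\vee;\mathbb{F}) \cong H_{n-i-2}(\Delta,X;\mathbb{F})$ for every $i$. Since $\Delta$ is a single $(n-1)$-simplex its augmented chain complex is exact, so $\widetilde{H}_*(\Delta;\mathbb{F})=0$; the long exact sequence in reduced homology of the pair $(\Delta,X)$ then collapses to $H_j(\Delta,X;\mathbb{F}) \cong \widetilde{H}_{j-1}(X;\mathbb{F})$ for all $j$. Combining these and setting $j=n-i-2$ yields $\widetilde{H}^i(X^\vee;\mathbb{F}) \cong \widetilde{H}_{n-i-3}(X;\mathbb{F})$. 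Finally, applying this identity with the roles of $X$ and $X^\vee$ exchanged (which is legitimate because $(X^\vee)^\vee = X$) and substituting $i = n-2-k$ produces the claimed isomorphism $\widetilde{H}_{k-1}(X^\vee;\mathbb{F}) \cong \widetilde{H}^{n-2-k}(X;\mathbb{F})$.
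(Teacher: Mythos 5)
The paper does not actually prove this statement: Theorem~\ref{thm-16} is imported verbatim from Miller--Sturmfels \cite{MS2005} as a black box, so there is no internal proof to compare yours against. Your argument is the standard chain-level proof of combinatorial Alexander duality, and it is correct. All three ingredients are in place: the sign-twisted isomorphism $\Phi_i\colon C^i(X^{\vee};\mathbb{F})\to C_{n-i-2}(\Delta,X;\mathbb{F})$ intertwining $d$ with $\partial$ up to a degree-dependent global sign, the collapse $H_j(\Delta,X;\mathbb{F})\cong \widetilde{H}_{j-1}(X;\mathbb{F})$ coming from acyclicity of the full simplex, and the involution $(X^{\vee})^{\vee}=X$. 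The sign bookkeeping you defer does close: with $\mathrm{sh}$ the shuffle sign one finds $\mathrm{sh}(\tau\cup\{v\})=(-1)^{\,i+1+p-q}\,\mathrm{sh}(\tau)$, whence $\Phi_{i+1}\circ d_i=(-1)^{i+1}\,\partial_{n-i-2}\circ\Phi_i$, and a global sign per degree does not affect (co)homology. Two small remarks. First, your final step of exchanging $X$ and $X^{\vee}$ is one legitimate way to pass from $\widetilde{H}^i(X^{\vee};\mathbb{F})\cong \widetilde{H}_{n-i-3}(X;\mathbb{F})$ to the stated form; since $\mathbb{F}$ is a field you could equally well take $i=k-1$ and invoke $\widetilde{H}_{k-1}\cong\widetilde{H}^{k-1}$ over a field, which avoids needing $(X^{\vee})^{\vee}=X$. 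Second, you should spell out the degenerate degrees: the bijection and the intertwining must also be checked at $i=-1$ (the empty face lies in $X^{\vee}$ exactly when $V\notin X$) and at the top degree, and the long exact sequence of the pair must be taken in reduced homology so that $\widetilde{H}_{-1}(\Delta;\mathbb{F})=0$ is available; this is routine but it is precisely where the convention $\emptyset\in X$ adopted in Section~2 interacts with the stated range $0\le k\le n-1$.
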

\begin{remark}\label{rem-1}
For any $-1\le k\le \operatorname{dim}(X)$,
\[\begin{aligned}
	\left( X^{\vee}\right)^{(n-k-2)}&=\left\{V\setminus\sigma\colon \sigma\notin X\text{ with }|\sigma|\ge k+1\right\}\\&=\left\{\tau\colon \exists\ \sigma\notin X \text{ such that } |\sigma|= k+1\text{ and }\tau\subseteq V\setminus\sigma\right\}\\&=\left\{\tau\colon \exists\ \sigma\in X_k^{c}(k)\text{ such that }\tau\subseteq V\setminus\sigma\right\}\\&=\left(X_k^{c}\right)_k^*,
\end{aligned}\]
and 
\[\begin{aligned}
	\left( X^{\vee}\right)^{(n-k-2)}&=\left\{V\setminus\sigma\colon \sigma\notin X \text{ with } |\sigma|\ge k+1\right\}\\&=\left\{\tau\colon \exists\ \sigma\notin X\text{ such that }|\sigma|= k+1 \text{ and }\tau\subseteq V\setminus\sigma\right\}\\&=\left\{\tau\colon \exists\ V\setminus\sigma\in \tbinom{V}{n-k-1}\setminus X_k^*(n-k-2)\text{ such that }\tau\subseteq V\setminus\sigma\right\}\\&=\left(X_k^{*}\right)_{n-k-2}^c.
\end{aligned}\]
\end{remark}
In what follows, we consider the relation between $\mathbf{s}^{\omega}_k\left(X\right)$ and $\mathbf{s}^{\omega}_{n-k-3}\left(X^{\vee}\right)$ for $-1\le k\le n-2$. Theorem \ref{thm-18} indicates $\lambda\le \sum_{v\in V}\omega(v)$ for any $\lambda\in \mathbf{s}^{\omega}_k\left(X\right)$.
\begin{theorem}\label{thm-19}
	Let $X$ be a simplicial complex on vertex set $V$ of size $n$, and let $\omega\colon X \rightarrow \mathbb{R}_{>0}$ be a vertex weight function of $X$. For any $-1\le k\le n-2$ and $\lambda\in \mathbf{s}^{\omega}_k\left(X\right)$, if $\lambda<\sum_{v\in V}\omega(v)$, then $\operatorname{m}_{L_k^{\omega}(X)}(\lambda)=\operatorname{m}_{L_{n-k-3}^{\omega}(X^{\vee})}(\lambda)$; if $\lambda=\sum_{v\in V}\omega(v)$, then 
	\[\operatorname{m}_{L_k^{\omega}(X)}(\lambda)-\operatorname{m}_{L_{n-k-3}^{\omega}(X^{\vee})}(\lambda)=f_k\left(X\right)+f_{k+1}\left(X\right)-\binom{n}{k+2}.\]
\end{theorem}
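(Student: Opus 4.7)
The strategy is to decompose each Laplacian spectrum into its up and down parts via \eqref{eq-01} and \eqref{eq-05}, then transport the up-Laplacian multiplicities along a chain of spectral reflections that connect $X$ to its Alexander dual. Write $W = \sum_{v \in V}\omega(v)$. Using \eqref{eq-01} and \eqref{eq-05}, for any $\lambda > 0$ we have
$$\operatorname{m}_{L_k^\omega(X)}(\lambda) = \operatorname{m}_{L_{k-1}^{\omega \operatorname{up}}(X)}(\lambda) + \operatorname{m}_{L_k^{\omega \operatorname{up}}(X)}(\lambda),$$
and analogously for $X^\vee$ at dimension $n-k-3$. So the problem reduces to comparing up-Laplacian multiplicities of $X$ in dimensions $k-1, k$ with those of $X^\vee$ in dimensions $n-k-4, n-k-3$, plus a separate treatment of $\lambda = 0$.

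The key intermediate identity I would prove is
$$\operatorname{m}_{L_k^{\omega \operatorname{up}}(X)}(\lambda) = \operatorname{m}_{L_{n-k-3}^{\omega \operatorname{down}}(X^\vee)}(\lambda) \quad \text{for every } \lambda \notin \{0, W\}.$$
The plan is to compose four transformations. First, Theorem \ref{thm-17} pairs (via $\lambda \leftrightarrow W - \lambda$) the top $\binom{n-1}{k+1}$ eigenvalues of $L_k^{\omega \operatorname{up}}(X)$ with those of $L_k^{\omega \operatorname{up}}(X^c_{k+1})$. Second, \eqref{eq-05} transfers $\operatorname{m}_{W-\lambda}$ from $L_k^{\omega \operatorname{up}}(X^c_{k+1})$ to $L_{k+1}^{\omega \operatorname{down}}(X^c_{k+1})$ (the hypothesis $W - \lambda \neq 0$ is essential here). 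Third, Remark \ref{rem-1} with $k$ replaced by $k+1$ identifies $(X^c_{k+1})^*_{k+1}$ with the $(n-k-3)$-skeleton of $X^\vee$, and since $L_{n-k-3}^{\omega \operatorname{down}}$ depends only on simplices of dimension $n-k-3$, this gives the matrix equality $L_{n-k-3}^{\omega \operatorname{down}}((X^c_{k+1})^*_{k+1}) = L_{n-k-3}^{\omega \operatorname{down}}(X^\vee)$. Fourth, Theorem \ref{thm-14} applied to $X^c_{k+1}$ at dimension $k+1$ reflects $W - \lambda$ back to $\lambda$, closing the loop.

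With this in hand, the case $0 < \lambda < W$ is immediate from the chain
$$\operatorname{m}_{L_k^\omega(X)}(\lambda) = \operatorname{m}_{L_{k-1}^{\omega \operatorname{up}}(X)}(\lambda) + \operatorname{m}_{L_k^{\omega \operatorname{up}}(X)}(\lambda) = \operatorname{m}_{L_{n-k-2}^{\omega \operatorname{down}}(X^\vee)}(\lambda) + \operatorname{m}_{L_{n-k-3}^{\omega \operatorname{down}}(X^\vee)}(\lambda) = \operatorname{m}_{L_{n-k-3}^\omega(X^\vee)}(\lambda),$$
where the middle equality uses the intermediate identity at dimensions $k-1$ and $k$ (note $n-(k-1)-3 = n-k-2$), and the last equality reapplies \eqref{eq-01} and \eqref{eq-05} on the $X^\vee$ side. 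For $\lambda = 0$ the discrete Hodge theorem (Theorem \ref{thm-6}) converts the claim to the isomorphism $\widetilde{H}^k(X;\mathbb{R}) \cong \widetilde{H}^{n-k-3}(X^\vee;\mathbb{R})$, which is a direct reindexing of Alexander duality (Theorem \ref{thm-16}).

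The main obstacle is the case $\lambda = W$, where both \eqref{eq-05} (which matches only the nonzero spectra) and Theorem \ref{thm-17} (which leaves $\binom{n-1}{k}$ automatic zeros corresponding to the kernel of $L_k^{\omega \operatorname{up}}(\Delta^{(k+1)}_{n-1})$ furnished by Lemma \ref{lem-15}) fail to respect the reflection correspondence. I would redo the same chain while explicitly tracking these defects: the size gap $\binom{n}{k+1} - f_{k+1}(X^c_{k+1}) = \binom{n}{k+1} - \binom{n}{k+2} + f_{k+1}(X)$ between $L_k^{\omega \operatorname{up}}(X^c_{k+1})$ and $L_{k+1}^{\omega \operatorname{down}}(X^c_{k+1})$ contributes extra zeros that get reflected to $W$ under Theorem \ref{thm-14}, while the $\binom{n-1}{k}$ automatic zeros do not. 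Careful bookkeeping yields
$$\operatorname{m}_{L_k^{\omega \operatorname{up}}(X)}(W) - \operatorname{m}_{L_{n-k-3}^{\omega \operatorname{down}}(X^\vee)}(W) = f_{k+1}(X) - \binom{n-1}{k+2},$$
and applying the same bookkeeping with $k$ replaced by $k-1$ (then converting via \eqref{eq-05}) gives
$$\operatorname{m}_{L_k^{\omega \operatorname{down}}(X)}(W) - \operatorname{m}_{L_{n-k-3}^{\omega \operatorname{up}}(X^\vee)}(W) = f_k(X) - \binom{n-1}{k+1}.$$
Adding these two identities and invoking Pascal's rule $\binom{n-1}{k+1} + \binom{n-1}{k+2} = \binom{n}{k+2}$ produces exactly the claimed discrepancy $f_k(X) + f_{k+1}(X) - \binom{n}{k+2}$.
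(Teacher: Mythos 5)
Your proposal is correct, and for the cases $\lambda=0$ and $0<\lambda<W$ it follows essentially the same route as the paper: the zero case is Alexander duality (Theorem \ref{thm-16}) combined with the discrete Hodge theorem (Theorem \ref{thm-6}), and the intermediate identity $\operatorname{m}_{L_k^{\omega \operatorname{up}}(X)}(\lambda)=\operatorname{m}_{L_{n-k-3}^{\omega\operatorname{down}}(X^{\vee})}(\lambda)$ is exactly the paper's chain \eqref{eq-16} (Theorem \ref{thm-17}, then \eqref{eq-05}, then Theorem \ref{thm-14} via Remark \ref{rem-1}). Your treatment of the down-part by reusing this identity at dimension $k-1$ together with \eqref{eq-05} is an equivalent reorganization of the paper's second chain \eqref{eq-17}, which instead passes through $X_k^{*}$ directly. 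The one place you genuinely diverge is $\lambda=W$: the paper disposes of it in one line by total dimension counting --- once all multiplicities at $\lambda\neq W$ are known to agree, the discrepancy at $W$ must be $f_k(X)-f_{n-k-3}(X^{\vee})=f_k(X)+f_{k+1}(X)-\binom{n}{k+2}$, since $f_{n-k-3}(X^{\vee})=\binom{n}{k+2}-f_{k+1}(X)$. Your alternative --- rerunning the reflection chain while tracking the $\binom{n-1}{k}$ automatic zeros from Theorem \ref{thm-17} and the rank defect of $L_{k+1}^{\omega\operatorname{down}}(X^c_{k+1})$ --- does work (I checked that it yields $\operatorname{m}_{L_k^{\omega\operatorname{up}}(X)}(W)=\binom{n-1}{k+1}-\operatorname{rank}\bigl(L_k^{\omega\operatorname{up}}(X^c_{k+1})\bigr)$, from which both of your claimed identities follow), but you should actually write out this bookkeeping rather than assert it, and you would save yourself the trouble by noticing that the global count already forces the answer.
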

\begin{proof}
By Theorem \ref{thm-16}, for any $-1\le k\le n-2$, we have 
\[\widetilde{H}^{k}\left(X; \mathbb{R}\right) \cong\widetilde{H}_{k}\left(X; \mathbb{R}\right) \cong \widetilde{H}^{n-k-3}(X^{\vee}  ; \mathbb{R}).\]
This implies that the multiplicity of the eigenvalue zero in $\mathbf{s}^{\omega}_k\left(X\right)$ coincides with that in $\mathbf{s}^{\omega}_{n-k-3}\left(X^{\vee}\right)$. For every non-zero eigenvalue $\lambda<\sum_{v\in V}\omega(v)$, it follows from Theorems \ref{thm-14}, \ref{thm-17}, and Remark \ref{rem-1} that
\begin{equation}\label{eq-16}
	\begin{aligned}
		\operatorname{m}_{L_k^{\omega \operatorname{up}}\left(X\right)}(\lambda)&=\operatorname{m}_{L_k^{\omega \operatorname{up}}\left(X^c_{k+1}\right)}\left(\sum_{v \in V} \omega(v)-\lambda\right)=\operatorname{m}_{L_{k+1}^{\omega \operatorname{down}}\left(X^c_{k+1}\right)}\left(\sum_{v \in V} \omega(v)-\lambda\right)\\&=\operatorname{m}_{L_{n-k-3}^{\omega \operatorname{down}}\left(\left(X^c_{k+1}\right)^*_{k+1}\right)}\left(\lambda\right)=\operatorname{m}_{L_{n-k-3}^{\omega \operatorname{down}}\left(	\left( X^{\vee}\right)^{(n-k-3)}\right)}\left(\lambda\right)=\operatorname{m}_{L_{n-k-3}^{\omega \operatorname{down}}\left(X^{\vee}\right)}\left(\lambda\right),
	\end{aligned}
\end{equation}
and
\begin{equation}\label{eq-17}
	\begin{aligned}
		\operatorname{m}_{L_k^{\omega \operatorname{down}}\left(X\right)}(\lambda)&=\operatorname{m}_{L_{n-k-2}^{\omega \operatorname{down}}\left(X^*_{k}\right)}\left(\sum_{v \in V} \omega(v)-\lambda\right)=\operatorname{m}_{L_{n-k-3}^{\omega \operatorname{up}}\left(X^*_{k}\right)}\left(\sum_{v \in V} \omega(v)-\lambda\right)\\&=\operatorname{m}_{L_{n-k-3}^{\omega \operatorname{up}}\left(\left(X^*_{k}\right)^c_{n-k-2}\right)}\left(\lambda\right)=\operatorname{m}_{L_{n-k-3}^{\omega \operatorname{up}}\left(\left( X^{\vee}\right)^{(n-k-2)}\right)}\left(\lambda\right)=\operatorname{m}_{L_{n-k-3}^{\omega \operatorname{up}}\left(X^{\vee}\right)}\left(\lambda\right).
	\end{aligned}
\end{equation}
Recall that the nonzero spectrum of $L^{\omega}_k\left(X\right)$ is exactly the union (as multisets) of the nonzero spectra of $L_k^{\omega \operatorname{up}}\left(X\right)$ and $L_k^{\omega\operatorname{down}}\left(X\right)$ for any simplicial complex $X$. Combinning \eqref{eq-16} with \eqref{eq-17}, we have
\[\operatorname{m}_{L_k^{\omega}\left(X\right)}(\lambda)=\operatorname{m}_{L_{n-k-3}^{\omega}\left(X^{\vee}\right)}\left(\lambda\right).\]
Hence, (i) holds.
From (i), we further deduce that
\[\begin{aligned}
	\operatorname{m}_{L_k^{\omega}\left(X\right)}\left(\sum_{v \in V} \omega(v)\right)-\operatorname{m}_{L_{n-k-3}^{\omega}\left(X^{\vee}\right)}\left(\sum_{v \in V} \omega(v)\right)&=f_k\left(X\right)-f_{n-k-3}\left(X^{V}\right)\\&=f_k\left(X\right)+f_{k+1}\left(X\right)-\binom{n}{k+2}.
\end{aligned}\]
This completes the proof.
\end{proof}
\begin{remark}
Theorem \ref{thm-19} generalizes a corresponding result by Duval and Reiner \cite[Corollary 4.7]{DR2002} from combinatorial Laplacian to vertex-weighted Laplacian.
\end{remark}

\section{ The spectral gap of vertex-weighted Laplacian operators of simplicial complexes}

Recall that the $k$-th spectral gap (resp. weighted spectral gap) of a simplicial complex $X$ is defined as the smallest eigenvalue $\lambda_1^{\uparrow}(L_k(X))$ (resp. $\lambda_1^{\uparrow}(L_k^{\omega}(X))$). In their work \cite{L2020}, Lew obtained a lower bound on the spectral gap of the combinatorial Laplacian operators of simplicial complexes. Define $\delta_k(X)\colon=\min _{\sigma \in X(k)} \operatorname{deg}_X^{+}(\sigma)$ the minimal upper degree of $k$-simplex in $X$.
\begin{theorem}[\cite{L2020}]\label{thm-01}
	Let $X$ be a simplicial complex on vertex set $V$ of size $n$, with $h(X)=d$.
	Then for $k \geq-1$,
\[\lambda_1^{\uparrow}(L_k(X)) \geq(d+1)\left(\delta_k(X)+k+1\right)-d n.\]
\end{theorem}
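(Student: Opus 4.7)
The plan is to realize $L_k(X)$ as the compression of a larger operator on the ambient chain space $C^k(\Delta_{n-1}^{(k+1)},\mathbb{R})$ of dimension $\binom{n}{k+1}$, and then control the spectrum of that ambient operator through additive-compound and Gershgorin estimates, exactly in the spirit of the proofs of Theorems \ref{thm-14} and \ref{thm-18}. The first step is to verify the entrywise identity
\[
L_k(X)\;=\;\mathcal{P}\bigl(nI_{\binom{n}{k+1}}-L_k^{\operatorname{up}}(X^c_{k+1})\bigr)\mathcal{P}^{*},
\]
where $\mathcal{P}\colon C^k(\Delta_{n-1}^{(k+1)},\mathbb{R})\to C^k(X,\mathbb{R})$ is the orthogonal projection. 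On the diagonal one has $\deg^{+}_X(\sigma)+k+1=n-(n-k-1-\deg^{+}_X(\sigma))$, and on off-diagonal pairs with $|\sigma\cap\tau|=k$ the $L_k^{\operatorname{down}}$ and $L_k^{\operatorname{up}}$ contributions either cancel (when $\sigma\cup\tau\in X$) or survive as $(-1)^{\varepsilon(\sigma,\tau)}$, matching $-L_k^{\operatorname{up}}(X^c_{k+1})_{\sigma,\tau}$; entries with $|\sigma\cap\tau|<k$ vanish on both sides. Corollary \ref{cor-1} applied to this compression then yields
\[
\lambda_1^{\uparrow}\bigl(L_k(X)\bigr)\;\ge\;n-\lambda_1^{\downarrow}\bigl(L_k^{\operatorname{up}}(X^c_{k+1})\bigr),
\]
so it suffices to prove $\lambda_1^{\downarrow}(L_k^{\operatorname{up}}(X^c_{k+1}))\le(d+1)(n-\delta_k(X)-k-1)$.

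To obtain this upper bound, I would construct an $n\times n$ auxiliary matrix $A$ whose diagonal records a suitable co-degree bounded by $n-k-1-\delta_k(X)$, and whose off-diagonal entries are signed $\pm 1$ indicators that encode where $X$ fails along a missing face of dimension at most $d$. I would then form its $(k+1)$-th additive compound $A^{[k+1]}$ using Theorem \ref{thm-9}; a careful entry-by-entry comparison, matching the combinatorial signs $\varepsilon(\sigma,\tau)$ exactly as in the proof of Theorem \ref{thm-18}, shows that $L_k^{\operatorname{up}}(X^c_{k+1})$ is a principal submatrix of $A^{[k+1]}$ on the indices of $X^c_{k+1}(k)$. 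Since the rows and columns of $L_k^{\operatorname{up}}(X^c_{k+1})$ outside $X^c_{k+1}(k)$ vanish, Corollary \ref{cor-1} combined with Theorem \ref{thm-10} gives
\[
\lambda_1^{\downarrow}\bigl(L_k^{\operatorname{up}}(X^c_{k+1})\bigr)\;\le\;\lambda_1^{\downarrow}(A^{[k+1]})\;=\;\sum_{i=1}^{k+1}\lambda_i^{\downarrow}(A).
\]

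To finish, I would apply the Gershgorin circle theorem (Theorem \ref{thm-4}) to $A$, using the hypothesis $h(X)\le d$ to argue that every missing $(k+1)$-face of $X$ already contains a missing face of dimension at most $d$, so the off-diagonal support of $A$ in each row is controlled by the $\le d+1$ vertices of such a covering missing face. The resulting Gershgorin estimate should yield $\lambda_i^{\downarrow}(A)\le n-\delta_k(X)-k-1$ for every $i$; summing the top $k+1$ eigenvalues then produces the required $(d+1)(n-\delta_k(X)-k-1)$, which on substitution into the first inequality gives $\lambda_1^{\uparrow}(L_k(X))\ge(d+1)(\delta_k(X)+k+1)-dn$.

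The main obstacle is the careful design of the auxiliary matrix $A$ so that three conditions are simultaneously met: the signed-entry compression/compound identity with $L_k^{\operatorname{up}}(X^c_{k+1})$ is exact; the hypothesis $h(X)\le d$ is correctly encoded as a sparsity pattern in the off-diagonal entries of $A$; and the Gershgorin estimate combined with Theorem \ref{thm-10} reproduces precisely the factor $d+1$ multiplying $\delta_k(X)+k+1$, not some looser multiple. The signed bookkeeping through $\varepsilon(\sigma,\tau)$ is the delicate step, since the target constant $(d+1)(\delta_k(X)+k+1)-dn$ is tight and leaves essentially no slack to absorb constants lost in a crude estimate.
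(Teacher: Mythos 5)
Your first step is correct: with $\omega\equiv 1$ one does have $L_k(X)=\mathcal{P}\bigl(nI-L_k^{\operatorname{up}}(X^c_{k+1})\bigr)\mathcal{P}^{*}$, and Corollary \ref{cor-1} then reduces the theorem to the bound $\lambda_1^{\downarrow}\bigl(L_k^{\operatorname{up}}(X^c_{k+1})\bigr)\le(d+1)\bigl(n-\delta_k(X)-k-1\bigr)$. The gap is in the second half. The auxiliary matrix $A$ you need cannot exist: by Theorem \ref{thm-9}, any principal submatrix of $A^{[k+1]}$ has diagonal entries of the form $\sum_{i\in\sigma}A_{ii}$, which are additive over the vertices of $\sigma$, whereas the diagonal of $L_k^{\operatorname{up}}(X^c_{k+1})$ is $n-k-1-\deg^{+}_X(\sigma)$, which is not a sum of vertex contributions for a general complex. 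Worse, the off-diagonal entry of $A^{[k+1]}$ at a pair $(\sigma,\tau)$ with $\sigma\setminus\tau=\{i\}$, $\tau\setminus\sigma=\{j\}$ is $(-1)^{\varepsilon(\sigma,\tau)}A_{ij}$ and so depends only on $\{i,j\}$, while the corresponding entry of $L_k^{\operatorname{up}}(X^c_{k+1})$ is nonzero precisely when $\sigma\cup\tau\notin X(k+1)$; for a fixed pair $\{i,j\}$ this condition typically holds for some choices of $\sigma\cap\tau$ and fails for others, forcing $A_{ij}$ to be simultaneously $0$ and $-1$. This is exactly why the paper applies the additive-compound trick only to the down-Laplacian (Theorem \ref{thm-18}) and to the vertex-local matrix $Q^{\omega}$ (Theorem \ref{thm-11}): up-Laplacians are not vertex-local. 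Finally, even granting such an $A$, your closing arithmetic gives $\sum_{i=1}^{k+1}\lambda_i^{\downarrow}(A)\le(k+1)\bigl(n-\delta_k(X)-k-1\bigr)$, which is the wrong (weaker) constant whenever $k>d$; nothing in a Gershgorin estimate on an $n\times n$ vertex matrix can manufacture the factor $d+1$.

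The hypothesis $h(X)=d$ must enter at the level of $k$-simplices, and that is how the paper proceeds (in the weighted generality of Theorem \ref{thm-1}, which specializes to this statement at $\omega\equiv1$): it writes $L_k^{\omega}(X)=D^{\omega}_k+\rho^{\omega*}_k\rho_k$, where $\rho_k$ records the pairs $\sigma\sim\tau$ and $\rho^{\omega*}_k\rho_k\succeq 0$, so that $\lambda_1^{\uparrow}(L_k^{\omega}(X))\ge\min_{\tau}(D^{\omega}_k)_{\tau,\tau}$; the diagonal of $D^{\omega}_k$ is then bounded below via Lemma \ref{lem-1}, whose key input is Lemma \ref{lem-17}, namely $|N_\sigma(v)|\le d$ for $v\notin\operatorname{lk}_X(\sigma)$. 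That simplex-level count is where the factor $d+1$ comes from, and it has no counterpart in your vertex-level construction. To repair your argument you would need either a direct diagonal-dominance estimate on $nI-L_k^{\operatorname{up}}(X^c_{k+1})$ that invokes Lemma \ref{lem-17}, or to abandon the compound-matrix route altogether.
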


In this section, we first derive a lower bound for the spectral gap of the vertex-weighted $k$-dimensional Laplacian operator on simplicial complexes. We then show that this bound is tight by constructing a simplicial complex that attains it. Finally, we give a full characterization of all complexes that achieve this lower bound.

\subsection{The lower bound for the spectral gap}
\begin{theorem}\label{thm-1}
	Let $X$ be a simplicial complex on vertex set $V$ of size $n$, with $h(X)=d$, and let $\omega\colon X \rightarrow \mathbb{R}_{>0}$ be a vertex weight function of $X$. Then for all $-1\le k\le \operatorname{dim}(X)$, we have	
\[\lambda_1^{\uparrow}(L_k^{\omega}(X))\ge (d+1)m_k-d\sum_{v\in V}\omega(v), \]
	where $m_k=\min_{\sigma\in X(k)}\sum_{v \in \sigma\cup \operatorname{lk}_X(\sigma)} \omega(v)$.
\end{theorem}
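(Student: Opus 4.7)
The plan is to apply the Gersgorin circle theorem (Theorem \ref{thm-4}) directly to the matrix representation of $L_k^{\omega}(X)$ in the standard basis $\{e_\sigma\colon \sigma \in X(k)\}$ supplied by Lemma \ref{lem-3}, and then to control the off-diagonal row sums using the hypothesis $h(X)=d$. An attractive feature of this route is that it handles $L_k^{\omega}(X)$ as a single matrix rather than splitting it into up- and down-parts. Although this matrix is generally not symmetric when the weights are non-uniform, Gersgorin's theorem does not require symmetry, and since $L_k^{\omega}(X)$ represents a self-adjoint positive semi-definite operator, every eigenvalue is real, so each Gersgorin disk reduces to a real interval.

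Concretely, for a fixed $\sigma \in X(k)$, Lemma \ref{lem-3} gives the diagonal entry $m_\sigma\colon=\sum_{v \in \sigma \cup \operatorname{lk}_X(\sigma)}\omega(v)$ and, along row $\sigma$, nonzero off-diagonal entries $(-1)^{\varepsilon(\sigma,\tau)}\omega(\tau\setminus\sigma)$ indexed by those $\tau \in X(k)$ with $\sigma \sim \tau$. Theorem \ref{thm-4} then yields $\lambda \ge m_\sigma - \sum_{\tau \sim \sigma,\,\tau \in X(k)}\omega(\tau\setminus\sigma)$ for every eigenvalue $\lambda$ of $L_k^{\omega}(X)$ and at least one $\sigma$. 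The key technical step is to establish
\[
\sum_{\tau \sim \sigma,\;\tau \in X(k)} \omega(\tau \setminus \sigma) \;\le\; d\Bigl(\sum_{v\in V}\omega(v) - m_\sigma\Bigr).
\]
Every such $\tau$ has the form $\tau=(\sigma\setminus\{v_j\})\cup\{u\}$ for some $v_j\in\sigma$ and $u\in V\setminus\sigma$ with $\sigma\cup\{u\}\notin X$. Fix such a $u$. Since $\sigma \in X$ while $\sigma\cup\{u\}\notin X$, there is a missing face $M\subseteq \sigma\cup\{u\}$ with $u\in M$; the hypothesis $h(X)=d$ gives $|M\cap\sigma|=|M|-1\le d$. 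For the candidate $\tau$ to lie in $X$, the missing face $M$ must fail to be contained in $\tau$, which forces $v_j\in M\cap\sigma$. Hence for each bad vertex $u\notin\sigma\cup\operatorname{lk}_X(\sigma)$, at most $d$ indices $j$ produce a $\tau\sim\sigma$ inside $X(k)$, and summing over $u$ yields the displayed estimate.

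Combining the Gersgorin inequality with this bound gives, for some $\sigma\in X(k)$,
\[
\lambda \;\ge\; (d+1)m_\sigma - d\sum_{v\in V}\omega(v) \;\ge\; (d+1)m_k - d\sum_{v\in V}\omega(v),
\]
which is the claimed inequality. The main obstacle is precisely the structural counting just described: one has to choose, for each bad $u$, a single missing face $M$ containing $u$ inside $\sigma\cup\{u\}$ and exploit $|M|\le d+1$ to bound the deletions $v_j$ that preserve membership in $X$. This is where $h(X)=d$ enters in an essential way; without it the naive Gersgorin estimate permits all $k+1$ choices of $j$ and yields only the weaker bound $(k+2)m_k-(k+1)\sum_v\omega(v)$, which is inferior to the claimed bound whenever $k\ge d$.
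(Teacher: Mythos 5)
Your proof is correct, but it takes a genuinely different route from the paper's own proof of this theorem. The paper decomposes $L_k^{\omega}(X)=D^{\omega}_k+\rho^{\omega *}_k\rho_k$, where $\rho^{\omega *}_k\rho_k$ is positive semidefinite, so that $\lambda_1^{\uparrow}(L_k^{\omega}(X))\ge \min_{\tau\in X(k)}(D^{\omega}_k)_{\tau,\tau}$, and then bounds the diagonal of $D^{\omega}_k$ via Lemma \ref{lem-1}; you instead apply Ger\v{s}gorin (Theorem \ref{thm-4}) directly to the matrix of Lemma \ref{lem-3} and bound the off-diagonal row sum $\sum_{\tau\sim\sigma}\omega(\tau\setminus\sigma)=\sum_{u\notin\sigma\cup\operatorname{lk}_X(\sigma)}\omega(u)\,|N_{\sigma}(u)|$ by $d\bigl(\sum_{v\in V}\omega(v)-m_{\sigma}\bigr)$. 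Both arguments rest on the same combinatorial core, namely $|N_{\sigma}(u)|\le d$ for $u\notin\sigma\cup\operatorname{lk}_X(\sigma)$ (Lemma \ref{lem-17}, which you reprove from scratch via the missing-face argument — correctly, including the observation that one fixed missing face $M\subseteq\sigma\cup\{u\}$ with $u\in M$ suffices), and your identification of the row sum with the weighted count of $N_\sigma(u)$ is exactly the computation appearing in \eqref{eq-nds}. Your route is shorter and more elementary for the inequality itself; it is in fact the strategy the paper adopts later in the proof of Theorem \ref{thm-22}, where Ger\v{s}gorin's equality case is what drives the characterization of extremal complexes. What the paper's decomposition buys is the auxiliary operator $\rho_k$ and the identity $\rho_k\boldsymbol{x}=\boldsymbol{0}$ for minimizing eigenfunctions, which is needed for the sign analysis in Claims \ref{c-2} and \ref{c-3} of that characterization; for the bound alone your argument is fully adequate. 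One cosmetic point: you should note (as you implicitly do for general $k$) that the case $k=-1$ is degenerate, since $X(-1)=\{\emptyset\}$ gives a $1\times 1$ matrix with entry $\sum_{v\in V}\omega(v)=m_{-1}$ and an empty off-diagonal sum, so the bound holds with equality there.
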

\begin{remark}
If the weight function $\omega\equiv1$, then the vertex-weighted Laplacian reduces to the combinatorial Laplacian and \[(d+1)m_k-d\sum_{v\in V}\omega(v)=(d+1)\left(\delta_k+k+1\right)-d n\]
Therefore, Theorem \ref{thm-1} recovers Theorem \ref{thm-01}.
\end{remark}

Before presenting the proof of Theorem \ref{thm-1}, we introduce some concepts and notations. Let $X$ be a simplicial complex on vertex set $V$. For any subset $U\subseteq V$, denote by
\[
X[U]\colon= \{\sigma \in X \colon \sigma \subseteq U\}
\]
the \textit{induced subcomplex} of $X$ on $U$. For any $k\ge 0$, $\sigma\in X(k)$ and $v\in V\setminus \sigma$, define
\[N_{\sigma}(v)\colon=\{\eta\in \sigma(k-1)\colon v\in \operatorname{lk}_X(\eta)\}\text{ and }M_{\sigma}(v)\colon=\{\sigma\setminus\eta\colon \eta\in N_{\sigma}(v)\}.\]
Clearly, $|N_{\sigma}(v)|=|M_{\sigma}(v)|$.

The proof of Theorem \ref{thm-1} relies on two key lemmas. The first lemma, established by Zhan, Huang and Lin \cite{ZHL2026}, is stated as follows.
\begin{lemma}[\cite{ZHL2026}]\label{lem-17}
	Let $X$ be a simplicial complex on vertex set $V$, with $h(X)=d$. Let $\sigma \in X(k)$ and $v \in V \backslash \sigma$. If $v \notin \operatorname{lk}_X(\sigma)$, then
	\[	\left|N_\sigma(v)\right|=\left|M_\sigma(v)\right| \leq d\]
	Furthermore, if the equality holds, then
	\[	X\left[\{v\} \cup M_\sigma(v)\right] \cong \Delta_d^{(d-1)} \text { and } X[\{v\} \cup \sigma] \cong \Delta_d^{(d-1)} * \Delta_{k-d}.\]
\end{lemma}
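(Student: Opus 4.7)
The plan is to reduce the bound $|N_\sigma(v)|\le d$ to the hypothesis $h(X)=d$ by producing an explicit missing face of $X$ that contains $\{v\}\cup M_\sigma(v)$, and then exploit minimality of that missing face to deduce the equality case.

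First I would dispose of $|N_\sigma(v)|=|M_\sigma(v)|$ by observing that the map $\eta \mapsto \sigma\setminus\eta$ is a bijection from $N_\sigma(v)$ to $M_\sigma(v)$ (its inverse sends a singleton $\{u\}\subseteq\sigma$ to $\sigma\setminus\{u\}$). Next I would define $\rho$ to be a subset of $\sigma\cup\{v\}$ of minimum cardinality such that $v\in\rho$ and $\rho\notin X$. Such a $\rho$ exists because $\sigma\cup\{v\}$ itself is not in $X$ (since $v\notin\operatorname{lk}_X(\sigma)$). The key observation is that \emph{every} proper subset of $\rho$ lies in $X$: a proper subset containing $v$ is in $X$ by minimality of $|\rho|$, and a proper subset not containing $v$ is contained in $\sigma\in X$. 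Hence $\rho$ is a missing face of $X$, and the assumption $h(X)=d$ forces $|\rho|\le d+1$.

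The next step is to show $\{v\}\cup M_\sigma(v)\subseteq\rho$. Clearly $v\in\rho$. For any $u\in M_\sigma(v)$ the definition gives $(\sigma\cup\{v\})\setminus\{u\}\in X$. If $u\notin\rho$, then $\rho\subseteq(\sigma\cup\{v\})\setminus\{u\}$, which would imply $\rho\in X$, contradicting the choice of $\rho$. Therefore $u\in\rho$ for every $u\in M_\sigma(v)$, and consequently
\[
|M_\sigma(v)|+1 \;=\; |\{v\}\cup M_\sigma(v)| \;\le\; |\rho| \;\le\; d+1,
\]
which yields $|N_\sigma(v)|=|M_\sigma(v)|\le d$.

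For the equality case, suppose $|N_\sigma(v)|=d$. Then $|\{v\}\cup M_\sigma(v)|=d+1=|\rho|$, so $\rho=\{v\}\cup M_\sigma(v)$. Because $\rho$ is a missing face, the induced subcomplex $X[\{v\}\cup M_\sigma(v)]$ consists of all proper subsets of $\rho$, which is by definition $\Delta_d^{(d-1)}$. To identify $X[\{v\}\cup\sigma]$ with $\Delta_d^{(d-1)}*\Delta_{k-d}$, I would characterize the faces of $X[\{v\}\cup\sigma]$ directly: a set $\tau\subseteq\{v\}\cup\sigma$ lies in $X$ iff $\tau\not\supseteq\rho$. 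Indeed, if $v\notin\tau$ then $\tau\subseteq\sigma\in X$; if $v\in\tau$ but some $u\in M_\sigma(v)$ is missing from $\tau$, then $\tau\subseteq(\sigma\cup\{v\})\setminus\{u\}\in X$; and if $\tau\supseteq\rho$ then $\tau\notin X$ since $\rho\notin X$. This is precisely the face structure of the join of the boundary $\Delta_d^{(d-1)}$ on vertex set $\{v\}\cup M_\sigma(v)$ with the full simplex $\Delta_{k-d}$ on vertex set $\sigma\setminus M_\sigma(v)$ (note $|\sigma\setminus M_\sigma(v)|=k-d+1$), completing the isomorphism.

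The only subtle point is choosing $\rho$ correctly: the containment $M_\sigma(v)\subseteq\rho$ requires that $\rho$ be taken inside $\sigma\cup\{v\}$ and minimal among sets containing $v$, not merely any missing face; beyond this, the argument is combinatorial bookkeeping. No additional tools from the earlier sections (self-adjoint spectral theory, Geršgorin, compounds) are needed.
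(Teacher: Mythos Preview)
The paper does not supply its own proof of this lemma; it is quoted verbatim from \cite{ZHL2026} and used as a black box. Your argument is correct and complete: the minimal non-face $\rho\subseteq\sigma\cup\{v\}$ containing $v$ is a missing face of $X$, hence has size at most $d+1$, and the containment $\{v\}\cup M_\sigma(v)\subseteq\rho$ gives the bound; in the equality case $\rho=\{v\}\cup M_\sigma(v)$ is the unique missing face inside $\{v\}\cup\sigma$, from which both isomorphisms follow by direct inspection.
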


The second lemma can be viewed as a parallel result to \cite[Lemma 1.4]{L2020} in the context of weighted simplicial complexes.
\begin{lemma}\label{lem-1}
	Let $X$ be a simplicial complex on vertex set $V$ of size $n$, with $h(X)=d$,  and let $\omega\colon X \rightarrow \mathbb{R}_{>0}$ be a vertex weight function of $X$. For all $k\ge 0$ and $\sigma\in X(k)$, we have
	\[\sum_{\eta\in \sigma(k-1)}\sum_{v\in \operatorname{lk}_X(\eta)}\omega(v)\le d\sum_{v\in V}\omega(v)-(d-1) \sum_{v\in \sigma}\omega(v)+(k+1-d)\sum_{v\in \operatorname{lk}_X(\sigma)}\omega(v).\]
\end{lemma}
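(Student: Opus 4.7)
The plan is to fix $\sigma \in X(k)$ and rewrite the left-hand side by swapping the order of summation over pairs $(\eta, v)$, grouping contributions according to each vertex $v \in V$. The key combinatorial observation is that for each such $v$, the number of $\eta \in \sigma(k-1)$ with $v \in \operatorname{lk}_X(\eta)$ depends only on how $v$ sits relative to $\sigma$, and this count can then be controlled by Lemma \ref{lem-17}.

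Concretely, I would first show that for any $v \in V$,
\[
\bigl|\{\eta \in \sigma(k-1) : v \in \operatorname{lk}_X(\eta)\}\bigr|
=
\begin{cases}
1, & v \in \sigma,\\
|N_\sigma(v)|, & v \in V \setminus \sigma.
\end{cases}
\]
The $v \in \sigma$ case is immediate: the only $\eta \subseteq \sigma$ of size $k$ not containing $v$ is $\sigma \setminus \{v\}$, and $(\sigma\setminus\{v\})\cup\{v\}=\sigma\in X$. The $v \notin \sigma$ case is just the definition of $N_\sigma(v)$. This yields the identity
\[
\sum_{\eta \in \sigma(k-1)} \sum_{v \in \operatorname{lk}_X(\eta)} \omega(v)
=
\sum_{v \in \sigma}\omega(v) + \sum_{v \in V\setminus\sigma} \omega(v)\,|N_\sigma(v)|.
\]

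Next I would split the second sum using the disjoint partition $V\setminus\sigma = \operatorname{lk}_X(\sigma)\sqcup \bigl((V\setminus\sigma)\setminus\operatorname{lk}_X(\sigma)\bigr)$. For $v\in\operatorname{lk}_X(\sigma)$, every $\eta\cup\{v\}\subseteq\sigma\cup\{v\}\in X$, so $|N_\sigma(v)|=|\sigma(k-1)|=k+1$. For $v\in V\setminus\sigma$ with $v\notin\operatorname{lk}_X(\sigma)$, Lemma \ref{lem-17} gives $|N_\sigma(v)|\le d$. Substituting these bounds yields
\[
\sum_{\eta \in \sigma(k-1)} \sum_{v \in \operatorname{lk}_X(\eta)} \omega(v)
\le \sum_{v\in\sigma}\omega(v) + (k+1)\sum_{v\in\operatorname{lk}_X(\sigma)}\omega(v) + d\!\!\sum_{v\in V\setminus(\sigma\cup\operatorname{lk}_X(\sigma))}\!\!\omega(v).
\]

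Finally, I would rewrite the last sum as $\sum_{v\in V}\omega(v)-\sum_{v\in\sigma}\omega(v)-\sum_{v\in\operatorname{lk}_X(\sigma)}\omega(v)$ (using that $\sigma$ and $\operatorname{lk}_X(\sigma)$ are disjoint) and collect terms, which gives exactly the claimed right-hand side $d\sum_{v\in V}\omega(v)-(d-1)\sum_{v\in\sigma}\omega(v)+(k+1-d)\sum_{v\in\operatorname{lk}_X(\sigma)}\omega(v)$. The only nontrivial step is the application of Lemma \ref{lem-17} to bound $|N_\sigma(v)|$ by $d$; everything else is a direct double-counting argument, so I do not anticipate a serious obstacle beyond keeping the disjoint decomposition of $V$ clean.
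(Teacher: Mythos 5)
Your proposal is correct and follows essentially the same route as the paper's proof: both swap the order of summation, count for each $v$ the number of faces $\eta\in\sigma(k-1)$ with $v\in\operatorname{lk}_X(\eta)$ according to whether $v\in\sigma$, $v\in\operatorname{lk}_X(\sigma)$, or neither, and invoke Lemma \ref{lem-17} to bound $|N_\sigma(v)|$ by $d$ in the last case. The only cosmetic difference is that you make the count for $v\in\sigma$ explicit, which the paper leaves implicit.
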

\begin{proof}By directed calculations, we have
	\[\begin{array}{lll}
		&&\sum_{\eta\in \sigma(k-1)}\sum_{v\in \operatorname{lk}_X(\eta)}\omega(v)=\sum_{v\in V}\left(\omega(v)\sum_{\eta\in \sigma(k-1)\colon \{v\}\cup \eta\in X}1\right)
		\\[2mm]
&=&\sum_{v\in \sigma}\left(\omega(v)\sum_{\eta\in \sigma(k-1)\colon \{v\}\cup \eta\in X}1\right)+\sum_{v\in \operatorname{lk}_X(\sigma)}\left(\omega(v)\sum_{\eta\in \sigma(k-1)\colon \{v\}\cup \eta\in X}1\right)\\[2mm]
&&+\sum_{v\in V\setminus(\sigma\cup \operatorname{lk}_X(\sigma))}\left(\omega(v)\sum_{\eta\in \sigma(k-1)\colon \{v\}\cup \eta\in X}1\right)\\[2mm]
&=&\sum_{v\in \sigma}\omega(v)+(k+1)\sum_{v\in \operatorname{lk}_X(\sigma)}\omega(v)+\sum_{v\in V\setminus(\sigma\cup \operatorname{lk}_X(\sigma))}\left(\omega(v)\sum_{\eta\in \sigma(k-1)\colon \{v\}\cup \eta\in X}1\right)\\[2mm]
&=&\sum_{v\in \sigma}\omega(v)+(k+1)\sum_{v\in \operatorname{lk}_X(\sigma)}\omega(v)+\sum_{v\in V\setminus(\sigma\cup \operatorname{lk}_X(\sigma))}\left(\omega(v)\left|N_{\sigma}(v)\right|\right).
	\end{array}\]
By Lemma \ref{lem-17}, $\left|N_{\sigma}(v)\right|\le d$. Therefore, we have
		\[\begin{array}{lll}
		&&\sum_{\eta\in \sigma(k-1)}\sum_{v\in \operatorname{lk}_X(\eta)}\omega(v)\\[2mm]
		&\le& \sum_{v\in \sigma}\omega(v)+(k+1)\sum_{v\in \operatorname{lk}_X(\sigma)}\omega(v)+d\sum_{v\in V\setminus(\sigma\cup \operatorname{lk}_X(\sigma))}\omega(v)\\[2mm]&=& \sum_{v\in \sigma}\omega(v)+(k+1)\sum_{v\in \operatorname{lk}_X(\sigma)}\omega(v)+d\left(\sum_{v\in V}\omega(v)-\sum_{v\in\sigma}\omega(v)-\sum_{v\in \operatorname{lk}_X(\sigma)}\omega(v)\right)\\[2mm]&=&d\sum_{v\in V}\omega(v)-(d-1) \sum_{v\in \sigma}\omega(v)+(k+1-d)\sum_{v\in \operatorname{lk}_X(\sigma)}\omega(v).
	\end{array}\]
    This completes the proof.
\end{proof}
Next, we introduce the necessary definitions and notations that will be used in the proof of the Theorem \ref{thm-1}.
Let \[E_k\left(X\right)\colon=\left\{{\left\{\sigma, \tau\right\}}\colon \sigma,\tau \in X(k),\sigma\sim \tau\right\},\] and let $C(E_k\left(X\right), \mathbb{R})$ be the real vector space formally generated by all elements of $E_k\left(X\right)$ modulo the relation $\left\{\sigma, \tau\right\}+\left(-\left\{\sigma, \tau\right\}\right)=0$. The vector space $C^*(E_k\left(X\right), \mathbb{R})$ is defined as dual space of $C(E_k\left(X\right), \mathbb{R})$, with basis $\left\{e_{\left\{\sigma,\tau\right\}}\colon \left\{\sigma,\tau\right\}\in E_k\left(X\right) \right\}$ such that
\[e_{\left\{\sigma,\tau\right\}}\left(\left\{\eta,\gamma\right\}\right)= \begin{cases}1 & \text { if }\left\{\eta,\gamma\right\}=\left\{\sigma,\tau\right\}, \\ 0 & \text { otherwise },\end{cases}\]
for any $\left\{\sigma,\tau\right\}\in E_k\left(X\right)$. Let $\omega\colon X\to \mathbb{R}$ be a positive vertex weight function on $X$. Define an inner product $\langle \cdot, \cdot \rangle$ on $C^*(E_k(X),\mathbb{R})$ induced by $\omega$ by declaring the basis $\{ e_{\{\sigma,\tau\}} : \{\sigma,\tau\} \in E_k(X)\}$ orthogonal and setting
\[	\left\langle e_{\left\{\sigma,\tau\right\}}, e_{\left\{\eta,\gamma\right\}}\right\rangle= \begin{cases}\prod_{v\in \sigma\cup \tau}\omega(v) & \text { if } \left\{\sigma,\tau\right\}=\left\{\eta,\gamma\right\}, \\ 0 & \text { otherwise },\end{cases}\]
and then extending bilinearly to all of $C^*(E_k(X),\mathbb{R})$.

\begin{proof}[\rm\textbf{Proof of Theorem \ref{thm-1}}]
	For $k=-1$, it is easy to verify that 
	\[m_{-1}(X)=\lambda_1^{\uparrow}(L_{-1}^{\omega}(X))=\sum _{v\in V}\omega(v),\]
	and the result holds.  
For $k\ge 0$, let $\rho_k\colon C^k(X, \mathbb{R})\rightarrow C^*(E_k(X), \mathbb{R})$ be the $\mathbb{R}$-linear operator determined on basis elements by
\[
\rho_k\left(e_{\sigma}\right)=\sum_{\tau\in X(k)\colon \tau\sim \sigma}\operatorname{sgn}\left(\sigma\cap \tau,\sigma\right)e_{\left\{\sigma, \tau\right\}}
\]
for any $\sigma\in X(k)$. 
With respect to the above inner product, its adjoint $\rho^{\omega *}_k\colon C^*(E_k(X), \mathbb{R})\rightarrow C^k(X, \mathbb{R})$ is given by
\[
\rho^{\omega *}_k\left(e_{\left\{\sigma, \tau\right\}}\right)=\sum_{\eta\in \left\{\sigma, \tau\right\}}\frac{\prod_{v\in\sigma\cup\tau}\omega(v)}{\omega(\eta)}\operatorname{sgn}\left(\sigma\cap \tau,\eta\right)e_{\eta}.
\]
Since $\rho^{\omega *}_k\rho_k$ is self-adjoint and positive semidefinite, all of its eigenvalues are real and nonnegative. In what follows, we also use $\rho^{\omega *}_k\rho_k$ to denote its matrix representation with respect to the basis $\{e_{\sigma}\colon \sigma\in X(k)\}$. By noticing that
\[\begin{array}{lll}
&&\rho_k^{\omega *}\rho_k(e_{\tau})=\rho_k^{\omega *}\left(\sum_{\sigma\in X(k)\colon \sigma\sim\tau}\operatorname{sgn}(\sigma\cap\tau,\tau)e_{\sigma,\tau}\right)\\[2mm]
&=&\sum_{\sigma\in X(k)\colon \sigma\sim\tau}\operatorname{sgn}(\sigma\cap\tau,\tau)\rho_k^{\omega *}(e_{\sigma,\tau})\\[2mm]
&=&\sum_{\sigma\in X(k)\colon \sigma\sim\tau}\operatorname{sgn}(\sigma\cap\tau,\tau)\left(\frac{\prod_{u\in \sigma\cup\tau}\omega(u)}{\omega(\tau)}\operatorname{sgn}(\sigma\cap\tau,\tau)e_{\tau}+\frac{\prod_{u\in \sigma\cup\tau}\omega(u)}{\omega(\sigma)}\operatorname{sgn}(\sigma\cap\tau,\sigma)e_{\sigma}\right)\\[2mm]
&=&\left(\sum_{\sigma\in X(k)\colon \sigma\sim\tau}\frac{\prod_{u\in\sigma\cup \tau}\omega(u)}{\omega(\tau)}\right)e_{\tau}+\sum_{\sigma\in X(k)\colon \sigma\sim\tau}(-1)^{\varepsilon(\sigma,\tau)}\frac{\prod_{u\in \sigma\cup\tau}\omega(u)}{\omega(\sigma)}e_{\sigma}\\[2mm]
&=&\left(\sum_{\sigma\in X(k)\colon \sigma\sim\tau}\omega(\sigma\setminus\tau)\right)e_{\tau}+\sum_{\sigma\in X(k)\colon \sigma\sim\tau}(-1)^{\varepsilon(\sigma,\tau)}\omega(\tau\setminus\sigma)e_{\sigma},
\end{array}\]
and that
\[\begin{array}{lll}
&&\sum_{\sigma\in X(k)\colon \sigma\sim \tau}\omega(\sigma\setminus \tau)\\[2mm]
&=&\sum_{\eta\in\tau(k-1)}\sum_{v\in \operatorname{lk}_X(\eta)\setminus\{\tau\cup \operatorname{lk}_{X}(\tau)\}}\omega(v)\\[2mm]
&=&\sum_{\eta\in\tau(k-1)}\sum_{v\in \operatorname{lk}_X(\eta)}\omega(v)-\sum_{v\in\tau}\omega(v)-(k+1)\sum_{v\in \operatorname{lk}_X(\tau)}\omega(v),
\end{array}\]
 we have
\[\left(\rho^{\omega *}_k\rho_k\right)_{\sigma, \tau}= \begin{cases}\sum_{\eta\in\tau(k-1)}\sum_{v\in \operatorname{lk}_X(\eta)}\omega(v)-\sum_{v\in\tau}\omega(v)-(k+1)\sum_{v\in \operatorname{lk}_X(\tau)}\omega(v)& \text { if } \sigma=\tau,\\[4mm] (-1)^{\varepsilon(\sigma, \tau)} \omega(\tau\setminus\sigma) & \text { if } \sigma\sim \tau,\\[4mm] 0 & \text { otherwise, }\end{cases}\]
for every $\sigma, \tau \in X(k)$.
By Lemma \ref{lem-3}, we have
\[L_k^{\omega}(X)=D^{\omega}_k+\rho^{\omega *}_k\rho_k,\]
where $D^{\omega}_k$ is a diagonal matrix with diagonal entries
\begin{equation}\label{eq-1}
	\left(D^{\omega}_k\right)_{\tau,\tau}=2\sum _{v \in \tau} \omega(v)+(k+2)\sum _{v \in \operatorname{lk}_X(\tau)} \omega(v)-\sum _{\eta \in \tau(k-1)}\sum _{v \in \operatorname{lk}_X(\eta)} \omega(v).
\end{equation}
Let $0 \neq \operatorname{x} \in \mathbb{R}^{f_k(X)}$ be a unit eigenvector of $L_k^\omega(X)$ with eigenvalue $\lambda_1^{\uparrow}(L_k^{\omega}(X))$.
Then
\[\begin{array}{lll}
	&&\lambda_1^{\uparrow}(L_k^{\omega}(X))=\operatorname{x}^{\top} L_k^{\omega}(X)\operatorname{x}=\operatorname{x}^{\top} D^{\omega}_k\operatorname{x}+\operatorname{x}^{\top} \rho^{\omega *}_k\rho^{\omega}_k\operatorname{x}\ge \min_{\tau\in X(k)}\left(D^{\omega}_k\right)_{\tau,\tau}\\[2mm]&\ge& \min_{\tau\in X(k)}\left\{2\sum _{v \in \tau} \omega(v)+(k+2)\sum _{v \in \operatorname{lk}_X(\tau)} \omega(v)-\sum _{\eta \in \tau(k-1)}\sum _{v \in \operatorname{lk}_X(\eta)} \omega(v)\right\}.
\end{array}\]
From Lemma \ref{lem-1}, 
we have
\[\begin{array}{lll}
	&&\lambda_1^{\uparrow}(L_k^{\omega}(X))\\[2mm]&\ge& \min_{\tau\in X(k)}\left\{2\sum _{v \in \tau} \omega(v)+(k+2)\sum _{v \in \operatorname{lk}_X(\tau)} \omega(v)-\sum _{\eta \in \tau(k-1)}\sum _{v \in \operatorname{lk}_X(\eta)} \omega(v)\right\}\\[2mm]&\ge& \min_{\tau\in X(k)}\left\{(d+1)\sum_{v \in \operatorname{lk}_X(\tau)} \omega(v)+(d+1) \sum_{v\in \tau}\omega(v)-d\sum_{v\in V}\omega(v)\right\}
	\\[2mm]&=&\min_{\tau\in X(k)}\left\{(d+1)\sum_{v \in \tau\cup \operatorname{lk}_X(\tau)} \omega(v)-d\sum_{v\in V}\omega(v)\right\}
	\\&=&(d+1)m_k-d\sum_{v\in V}\omega(v).
\end{array}\]	
This completes the proof.
\end{proof}

As a consequence of Theorem \ref{thm-1}, we obtain a new sufficient condition for the vanishing of the cohomology groups.
\begin{corollary}
	Let $X$ be a simplicial complex on vertex set $V$ of size $n$, with $h(X)=d$, and let $\omega\colon X \rightarrow \mathbb{R}_{>0}$ be a vertex weight function of $X$. Then $\tilde{H}^k(X ; \mathbb{R})=0$ for all $0\le k\le \operatorname{dim}(X)$ that satisfies
	\[\min_{\sigma\in X(k)}\sum_{v \in \sigma\cup\operatorname{lk}(\sigma)} \omega(v)>\frac{d}{d+1}\sum_{v\in V}\omega(v).\]
\end{corollary}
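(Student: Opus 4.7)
The plan is to combine the spectral gap bound of Theorem \ref{thm-1} with the discrete Hodge theorem (Theorem \ref{thm-6}). The hypothesis is designed precisely so that the lower bound given by Theorem \ref{thm-1} becomes strictly positive, which forces the kernel of the Laplacian to be trivial and hence the cohomology to vanish.

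More concretely, first I would rewrite the hypothesis
\[
\min_{\sigma\in X(k)}\sum_{v \in \sigma\cup\operatorname{lk}_X(\sigma)} \omega(v)>\frac{d}{d+1}\sum_{v\in V}\omega(v)
\]
as $m_k(X) > \tfrac{d}{d+1}\sum_{v\in V}\omega(v)$, where $m_k(X)$ is the quantity introduced in Theorem \ref{thm-1}. Multiplying both sides by $d+1$ and rearranging yields
\[
(d+1)\, m_k(X) - d\sum_{v\in V}\omega(v) > 0.
\]
Next, invoking Theorem \ref{thm-1}, the $k$-th weighted spectral gap satisfies
\[
\lambda_1^{\uparrow}\!\bigl(L_k^{\omega}(X)\bigr)\;\ge\;(d+1)\, m_k(X) - d\sum_{v\in V}\omega(v)\;>\;0.
\]
Since $L_k^{\omega}(X)$ is self-adjoint and positive semidefinite, all of its eigenvalues are nonnegative, and the smallest being strictly positive forces $\ker L_k^{\omega}(X)=0$. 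Finally, by the discrete Hodge theorem (Theorem \ref{thm-6}),
\[
\widetilde{H}^k(X;\mathbb{R})\;\cong\;\ker L_k^{\omega}(X)\;=\;0,
\]
which is the desired conclusion.

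There is no real obstacle: the argument is a direct chain of implications from the already-established spectral gap bound and the Hodge isomorphism. The only point requiring minor care is to note that the hypothesis is a \emph{strict} inequality, which is what guarantees $\lambda_1^{\uparrow}(L_k^{\omega}(X))>0$ rather than merely $\ge 0$; this strictness is essential for concluding that the kernel is trivial.
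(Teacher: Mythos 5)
Your proposal is correct and follows exactly the paper's own argument: the hypothesis is rearranged to make the lower bound of Theorem \ref{thm-1} strictly positive, and the discrete Hodge theorem (Theorem \ref{thm-6}) then gives $\widetilde{H}^k(X;\mathbb{R})\cong\ker L_k^{\omega}(X)=0$. The only difference is that you spell out the Hodge step explicitly, which the paper leaves implicit.
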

\begin{proof}
If $k$ satisfies $\min_{\sigma\in X(k)}\sum_{v \in \sigma\cup\operatorname{lk}(\sigma)} \omega(v)>\frac{d}{d+1}\sum_{v\in V}\omega(v)$,
then by Theorem  \ref{thm-1}, we have
\[\lambda_1^{\uparrow}(L_k^{\omega}(X))\ge (d+1)m_k-d\sum_{v\in V}\omega(v)>0.\] It follows that $\tilde{H}^k(X ; \mathbb{R})=0$.
\end{proof}

From the inequality $\sum_{v \in \sigma\cup \operatorname{lk}X(\sigma)} \omega(v) \ge \sum_{v \in \sigma} \omega(v)$, a weaker bound can be derived immediately from Theorem \ref{thm-1}.
\begin{corollary}\label{cor-l-1}
Let $X$ be a simplicial complex on vertex set $V$ of size $n$, with $h(X)=d$, and let $\omega\colon X \rightarrow \mathbb{R}_{>0}$ be a vertex weight function of $X$. Then for all $-1\le k\le \operatorname{dim}(X)$, we have	
\[	\lambda_1^{\uparrow}(L_k^{\omega}(X))\ge (d+1)\min_{\sigma\in X(k)}\sum_{v \in \sigma} \omega(v) -d\sum_{v\in V}\omega(v).\]
\end{corollary}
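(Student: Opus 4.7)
The proof is essentially one line of arithmetic on top of Theorem \ref{thm-1}. The plan is simply to replace $m_k = \min_{\sigma \in X(k)} \sum_{v \in \sigma \cup \operatorname{lk}_X(\sigma)} \omega(v)$ by the smaller quantity $\min_{\sigma \in X(k)} \sum_{v \in \sigma} \omega(v)$ in the lower bound provided by Theorem \ref{thm-1}, and to verify that the direction of the inequality is preserved when this substitution is made.

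First I would observe that, since $\omega$ takes strictly positive values and $\sigma \cap \operatorname{lk}_X(\sigma) = \emptyset$ by definition of the link, for every $\sigma \in X(k)$ one has
\[
\sum_{v \in \sigma \cup \operatorname{lk}_X(\sigma)} \omega(v) \;=\; \sum_{v \in \sigma} \omega(v) + \sum_{v \in \operatorname{lk}_X(\sigma)} \omega(v) \;\ge\; \sum_{v \in \sigma} \omega(v).
\]
Taking the minimum over $\sigma \in X(k)$ on both sides yields $m_k \ge \min_{\sigma \in X(k)} \sum_{v \in \sigma} \omega(v)$. Because $h(X) = d \ge 0$, the coefficient $d+1$ is positive, so multiplying through preserves the inequality, and substitution into Theorem \ref{thm-1} gives
\[
\lambda_1^{\uparrow}(L_k^{\omega}(X)) \;\ge\; (d+1) m_k - d \sum_{v \in V} \omega(v) \;\ge\; (d+1) \min_{\sigma \in X(k)} \sum_{v \in \sigma} \omega(v) - d \sum_{v \in V} \omega(v),
\]
which is exactly the claimed bound.

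There is no substantive obstacle here; the corollary is worth recording because its right-hand side depends only on the weights of the $k$-faces themselves and not on their links, making it easier to evaluate in practice. The only mild point is the edge case $k = -1$, where $X(-1) = \{\emptyset\}$ and the minimum on the right degenerates to $0$; the bound then reads $-d \sum_{v \in V} \omega(v)$, which is trivially satisfied since $\lambda_1^{\uparrow}(L_{-1}^{\omega}(X)) = \sum_{v \in V} \omega(v) \ge 0$.
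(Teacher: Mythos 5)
Your proposal is correct and is exactly the argument the paper uses: the paper derives this corollary in one line from Theorem \ref{thm-1} via the pointwise inequality $\sum_{v \in \sigma\cup \operatorname{lk}_X(\sigma)} \omega(v) \ge \sum_{v \in \sigma} \omega(v)$, which after taking minima and multiplying by $d+1>0$ gives the weaker bound. Your additional remark on the $k=-1$ edge case is a harmless sanity check and does not change the argument.
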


\subsection{Example attaining the lower bound in Theorem \ref{thm-1}}

Now we construct a simplicial complex whose spectral gap attains the lower bound in Theorem \ref{thm-1}. Let $d\ge 1$, $r\ge 1$  and $t\ge 1$ be integers. For any $k\ge -1$, define \[\vartheta_k\colon=\left\{\begin{array}{cl}
	\left\lfloor\frac{k+1}{d}\right\rfloor & \text { if } \quad-1 \le k \le d t-2,\\
	t & \text { if } \quad d t-1 \le k \le d t+r-1.
\end{array}\right.\]
\begin{prop}\label{prop-1}
	Let $X_i=\Delta_d^{(d-1)}$ for $i\in [t]$ and $X_{t+1}=\Delta_{r-1}$ be simplicial complexes on pairwise disjoint vertex sets $V_i$, each equipped with a vertex weight function $\omega_i \colon X_i \to \mathbb{R}_{>0}$.
	Let $X=X_1 * \cdots * X_{t+1}$ be the join complex, endowed with the vertex weight function $\omega$ given by $\omega(v)=\omega_i(v)$ for $v\in V_i$. Then
	\[\lambda_1^{\uparrow}(L_k^{\omega}(X))=\min\left\{
\sum_{i \in I}\sum_{v\in V_{{i}}}\omega(v)
\colon I\in\binom{[t]}{t-\vartheta_k} 
\right\}+\sum\limits_{v\in V_{t+1}}\omega(v).\] 
\end{prop}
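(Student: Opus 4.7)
The plan is to compute $\lambda_1^{\uparrow}(L_k^{\omega}(X))$ by combining the join spectrum formula of Theorem \ref{thm-j} with the explicit spectra of skeleta of complete simplicial complexes given by Corollary \ref{cor-2}, and then extract the minimum by a short combinatorial optimization. Since $\dim X_j = d-1$ for $j \in [t]$ and $\dim X_{t+1} = r-1$, Theorem \ref{thm-j} expresses $\mathbf{s}_k^{\omega}(X)$ as the multiset union of sums $\sum_{j=1}^{t+1}\lambda_j$ over all tuples $(i_1,\ldots,i_{t+1})$ with $\sum_j i_j = k-t$, $-1 \le i_j \le d-1$ for $j\in[t]$ and $-1 \le i_{t+1}\le r-1$, where $\lambda_j \in \mathbf{s}_{i_j}^{\omega_j}(X_j)$. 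Writing $W_j := \sum_{v\in V_j}\omega(v)$, Corollary \ref{cor-2} (with $n = d+1$, $p = d-1$) says that for $j \in [t]$ every eigenvalue of $L_{i_j}^{\omega_j}(X_j)$ equals $W_j$ when $-1 \le i_j \le d-2$, while at $i_j = d-1$ the spectrum is $\{W_j^{(d)}, 0\}$ and contains zero. For $j=t+1$, every level spectrum consists solely of $W_{t+1}$.

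Thus to minimize $\sum_j\lambda_j$ I take each $\lambda_j$ to be the smallest eigenvalue available: for $j \in [t]$ this is $0$ if $i_j = d-1$ and $W_j$ otherwise, while $\lambda_{t+1} = W_{t+1}$ in all cases. Setting $S := \{j\in[t] : i_j = d-1\}$ and $s := |S|$, the resulting value becomes $\sum_{j\in[t]\setminus S}W_j + W_{t+1}$. For fixed $s$ this is minimized by choosing $S$ to consist of the $s$ indices of largest $W_j$---equivalently, $I := [t]\setminus S$ is a size-$(t-s)$ subset of $[t]$ minimizing $\sum_{j\in I}W_j$---and the overall minimum is then obtained by taking $s$ as large as possible.

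It remains to identify the maximum feasible $s$. The residual constraint $\sum_{j\in[t]\setminus S}i_j + i_{t+1} = k-t-s(d-1)$, with $i_j \in [-1,d-2]$ for $j \in [t]\setminus S$ and $i_{t+1}\in[-1,r-1]$, forces the right-hand side to lie in $[-(t-s)-1,\,(t-s)(d-2)+(r-1)]$. The lower inequality rearranges to $sd \le k+1$, which together with $s \le t$ gives $s \le \vartheta_k$; the upper inequality rearranges to $s \ge k-(d-1)t-r+1$. A direct check---splitting into the two cases defining $\vartheta_k$---shows $\vartheta_k \ge k-(d-1)t-r+1$ throughout the range $-1\le k\le dt+r-1$, so $s = \vartheta_k$ is always attainable by explicitly allocating the residual $k - t - \vartheta_k(d-1)$ among the remaining coordinates within their allowed intervals. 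I expect this feasibility verification to be the main, though modest, obstacle, as it is the only step requiring nontrivial arithmetic. Substituting $s = \vartheta_k$ into the formula for the minimum then yields precisely the claimed expression.
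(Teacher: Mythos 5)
Your proposal is correct and follows essentially the same route as the paper: apply Theorem \ref{thm-j} to reduce to minimizing sums of smallest factor eigenvalues, use Corollary \ref{cor-2} to see that each $\Delta_d^{(d-1)}$ contributes $0$ exactly when its index equals $d-1$ and $\sum_{v\in V_j}\omega(v)$ otherwise, and then maximize the number of such indices, which is $\vartheta_k$. Your feasibility check is in fact slightly more explicit than the paper's (you verify both endpoint constraints on the residual, whereas the paper only records the binding lower one), but the argument is the same.
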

\begin{proof}
	Note that $\operatorname{dim}(X)=d t+r-1$. For $-1 \le k \le d t+r-1$, by Theorem \ref{thm-j}, we have
\begin{equation}\label{eq-03}
\footnotesize\lambda_1^{\uparrow}(L_k^{\omega}(X))=\min \left\{\lambda_1^{\uparrow}\left(L_{i_1}^{\omega_1}(X_1)\right)+\cdots+\lambda_1^{\uparrow}\left(L_{i_t}^{\omega_t}(X_t)\right)+\lambda_1^{\uparrow}\left(L_{i_{t+1}}^{\omega_{t+1}}(X_{t+1})\right)\colon \begin{array}{l}
		-1 \le i_1, \dots, i_t \le d-1, \\
		-1 \le i_{t+1} \le r-1, \\
		i_1+\cdots+i_t+i_{t+1}=k-t
	\end{array}\right\}.
\end{equation}
	By Corollary \ref{cor-2}, for $j\in [t]$, we have
	\[\lambda_1^{\uparrow}\left(L_{i_j}^{\omega_j}(X_j)\right)=\begin{cases}
    \sum_{v\in V_j}\omega_j(v)  & \text { if } -1 \le i_j \le d-2, \\
		0  & \text { if } i_j=d-1,
    \end{cases}\]
	and \[\lambda_1^{\uparrow}\left(L_{i_{t+1}}^{\omega_{t+1}}(X_{t+1})\right)=\lambda_1^{\uparrow}\left(L_{i_{t+1}}^{\omega_{t+1}}(\Delta_{r-1})\right) =\sum_{v\in V_{t+1}}\omega_{t+1}(v) \text{ for all }-1 \le i_{t+1} \le r-1.\] Let $m$ be the maximal integer between $0$ and $t$ such that there exist $-1 \le i_{s_1}, \dots, i_{s_{t-m}} \le d-2$ and $-1 \le i_{t+1} \le r-1$ satisfying
	\begin{equation}\label{eq-02}
		m(d-1)+i_{s_1}+\cdots+i_{s_{t-m}}+i_{t+1}=k-t,
	\end{equation}
	where $s_{\ell}\in [t]$ for each $\ell\in [t-m]$. This implies that
\[\begin{array}{lll}
	m&=&\max\left\{m\in [t]\colon m(d-1)-(t-m+1) \le k-t\right\}\\[2mm]
	&=&\max\left\{m\in [t]\colon m\le \frac{k+1}{d}\right\}\\[2mm]
	&=&\min\left\{t,\left\lfloor\frac{k+1}{d}\right\rfloor\right\}\\[2mm]
	&=& \left\{\begin{array}{cl}
		\left\lfloor\frac{k+1}{d}\right\rfloor & \text { if } \quad-1 \le k \le d t-2\\
		t & \text { if } \quad d t-1 \le k \le d t+r-1
	\end{array}\right.\\[6mm]
	&=&\vartheta_k.
\end{array}\]
	Take ${s_1},\ldots,{s_{t-\vartheta_k}}$ to be the indices satisfying \eqref{eq-02} that minimizing $\sum_{\ell=1}^{t-\vartheta_k}\sum_{v\in V_{{s_{\ell}}}}\omega(v)$. It follows from \eqref{eq-03} that \[ \lambda_1^{\uparrow}(L_k^{\omega}(X))=\sum_{\ell=1}^{t-\vartheta_k}\sum\limits_{v\in V_{s_{\ell}}}\omega(v)+\sum_{v\in V_{t+1}}\omega_{t+1}(v).\]
Therefore,
	\[\lambda_1^{\uparrow}(L_k^{\omega}(X))=\min\left\{
	\sum_{i \in I}\sum_{v\in V_{{i}}}\omega(v)
	\colon I\in\binom{[t]}{t-\vartheta_k} 
	\right\}+\sum\limits_{v\in V_{t+1}}\omega(v).\] 
  This completes the proof.
\end{proof} 
\begin{prop}\label{prop-2}
	Let $X_i=\Delta_d^{(d-1)}$ for $i\in [t]$ and $X_{t+1}=\Delta_{r-1}$ be simplicial complexes on pairwise disjoint vertex sets $V_i$, and let $X=X_1 * \cdots * X_{t+1}$ be the join complex, endowed with the vertex weight function $\omega\colon  \bigcup_{i=1}^{t+1}V_i \to \mathbb{R}_{>0}$
    such that $\omega $ is constant on each $V_i$ for each $i\in [t]$.  Then, for $-1\le k \le dt+r-1$, we have
	\[\lambda_1^{\uparrow}(L_k^{\omega}(X))=(d+1)m_k(X)-d\sum_{v\in \bigcup_{i=1}^{t+1}V_i}\omega(v),\]
		where $m_k(X)=\min_{\sigma\in X(k)}\sum_{v \in \sigma\cup \operatorname{lk}_X(\sigma)} \omega(v)$.
\end{prop}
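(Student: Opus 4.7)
Proposition 4.1 already gives an explicit formula for $\lambda_1^{\uparrow}(L_k^{\omega}(X))$, and Theorem 3.1, once we know $h(X)=d$, gives the matching lower bound $(d+1)m_k(X)-d\sum_{v\in V}\omega(v)$. The strategy is therefore to verify that these two quantities agree, so that Theorem 3.1 forces equality.

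First, I would record that $h(X)=d$. Since a missing face of a join must lie entirely in one of the factor vertex sets and be a missing face of that factor, and since the only missing face of $\Delta_d^{(d-1)}$ is the full vertex set $V_i$ (of dimension $d$) while $\Delta_{r-1}$ has none, the missing faces of $X$ are precisely $\{V_i:i\in[t]\}$, all of dimension $d$. Hence Theorem 3.1 applies.

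Next, I would compute $m_k(X)$ explicitly. For $\sigma\in X(k)$ write $\sigma_i=\sigma\cap V_i$; then $\sigma_i\in X_i$, so $|\sigma_i|\le d$ for $i\in[t]$ and $|\sigma_{t+1}|\le r$. A vertex $v\in V_i\setminus\sigma_i$ lies in $\operatorname{lk}_X(\sigma)$ iff $\sigma_i\cup\{v\}\in X_i$, which for $i\in[t]$ holds iff $|\sigma_i|\le d-1$, and always holds for $i=t+1$. Writing $J=J(\sigma):=\{i\in[t]:|\sigma_i|=d\}$, and letting $w_i$ denote the common value of $\omega$ on $V_i$ for $i\in[t]$ (so $W_i:=\sum_{v\in V_i}\omega(v)=(d+1)w_i$), this yields
\[
\sum_{v\in\sigma\cup\operatorname{lk}_X(\sigma)}\omega(v)=W_{t+1}+\sum_{i\in[t]\setminus J}W_i+d\sum_{i\in J}w_i.
\]
The key observation is that moving an index $i_0$ from $[t]\setminus J$ into $J$ changes this sum by $-W_{i_0}+dw_{i_0}=-w_{i_0}<0$, so larger $|J|$ strictly decreases it. The same combinatorial count used in the proof of Proposition 4.1 shows that $|J|\le\vartheta_k$ and that $|J|=\vartheta_k$ is realizable (pick $\vartheta_k$ of the factors $X_i$ with $i\in[t]$ and take each $\sigma_i$ to be a size-$d$ subset of $V_i$; then distribute the remaining $k+1-d\vartheta_k$ vertices among the other $V_i$ (each contributing at most $d-1$) and $V_{t+1}$ (contributing at most $r$)). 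Thus in an optimal $\sigma$ we have $|J|=\vartheta_k$ and it remains to maximize $\sum_{i\in J}w_i$, which is achieved by taking $J^*$ to be the $\vartheta_k$ indices with largest $w_i$ (equivalently, largest $W_i$). Setting $I^*=[t]\setminus J^*$, the set $I^*$ has size $t-\vartheta_k$ and minimizes $\sum_{i\in I}W_i$ over $\binom{[t]}{t-\vartheta_k}$.

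Finally, substituting $W_i=(d+1)w_i$ and expanding, I would compute
\[
(d+1)m_k(X)-d\sum_{v\in V}\omega(v)=W_{t+1}+\sum_{i\in I^*}W_i,
\]
which by Proposition 4.1 equals $\lambda_1^{\uparrow}(L_k^{\omega}(X))$. Combined with the reverse inequality from Theorem 3.1, this yields the required equality. The only delicate point in the argument is the realizability claim that $|J|=\vartheta_k$ can be attained, which splits into the two cases in the definition of $\vartheta_k$; everything else is a direct comparison of the two formulas.
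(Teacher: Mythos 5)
Your proposal is correct and follows essentially the same route as the paper: both compute $\lambda_1^{\uparrow}(L_k^{\omega}(X))$ via Proposition \ref{prop-1}, describe $\operatorname{lk}_X(\sigma)$ factor by factor to express $\sum_{v\in\sigma\cup\operatorname{lk}_X(\sigma)}\omega(v)$ in terms of the set of indices $i\in[t]$ with $|\sigma\cap V_i|=d$, identify the maximal size of that set with $\vartheta_k$ from the proof of Proposition \ref{prop-1}, and match the two resulting formulas. The only cosmetic difference is your invocation of Theorem \ref{thm-1} for a "reverse inequality," which is superfluous since the direct comparison already yields equality (the paper likewise does not use it).
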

\begin{proof}
	 Let $V\colon= \bigcup_{i=1}^{t+1}V_i$ denote the vertex set of $X$, and $\omega|_{V_i}\equiv c_i$, for each $i\in  [t]$. 
By Proposition \ref{prop-1}, we have 
	\begin{equation}\label{eq-l}
		\lambda_1^{\uparrow}(L_k^{\omega}(X))=(d+1)\min\left\{
		\sum_{i\in I}c_{i}\colon
		I\in \binom{[t]}{t-\vartheta_k} 
		\right\}+\sum_{v\in V_{t+1}}\omega(v). 
	\end{equation}
For any $\sigma \in X(k)$, notice that a vertex $v \in V \setminus \sigma$ belongs to $\operatorname{lk}_X(\sigma)$ unless there exists $i \in[t]$ such that $v\in V_i$ and $\left|\sigma \cap V_i\right|=d$, in which case $v$ is precisely the unique vertex in $V_i\setminus \sigma$.
This implies that 
\[\operatorname{lk}_X(\sigma)\colon=\left(V\setminus\sigma\right)\setminus\left\{V_i\setminus \sigma\colon i\in [t], \left|\sigma \cap V_i\right|=d\right\}.\]
Therefore,
\[\sum_{v \in \sigma\cup \operatorname{lk}_X(\sigma)} \omega(v)=\sum_{v \in V} \omega(v)-\sum_{v \in \left\{V_i\setminus \sigma\colon i\in [t], \left|\sigma \cap V_i\right|=d\right\}} \omega(v)=\sum_{v \in V} \omega(v)-\sum_{i\in I_{\sigma}}c_i,\] where
	$I_{\sigma}=\left\{i\in [t]\colon \left|\sigma \cap V_i\right|=d\right\}$. Observe that 
$
\max_{\sigma \in X(k)} |I_{\sigma}| = m=\vartheta_k$, where $m$ is the parameter defined in the proof of Proposition \ref{prop-1}.	
	Therefore, we have 
		\[\begin{array}{lll}
			m_k(X)&=&\min_{\sigma\in X(k)}\sum_{v \in \sigma\cup \operatorname{lk}_X(\sigma)} \omega(v)\\[2mm]&=&\sum_{v \in V} \omega(v)-\max_{\sigma \in X(k)}\sum_{i\in I_{\sigma}}c_i \\[2mm]
		&=&\sum_{v \in V} \omega(v)-\max \left\{
		\sum_{i\in I}c_{i}
		\colon 	I\in \binom{[t]}{\vartheta_k} 
		\right\}\\[2mm]
		&=&\sum_{v \in V} \omega(v)-\left(\sum_{i=1}^{t}c_i-\min\left\{
		\sum_{i\in I}c_{i}
		\colon  I\in \binom{[t]}{t-\vartheta_k} 
		\right\}\right)\\[2mm]
		&=& \sum_{v\in V}\omega(v)-\sum_{i=1}^{t}c_i+\min\left\{
		\sum_{i\in I}c_{i}
		\colon 	I\in \binom{[t]}{t-\vartheta_k} 
		\right\}.
	\end{array}\]
	It follows from \eqref{eq-l} that
	\[\begin{array}{lll}
	&&(d+1)m_k(X)-d\sum_{v\in V}\omega(v)\\[2mm]&=&\sum_{v \in V} \omega(v)-(d+1)\sum_{i=1}^{t}c_i+(d+1)\min\left\{
	\sum_{i\in I}c_{i}
	\colon 	I\in \binom{[t]}{t-\vartheta_k} 
	\right\}\\[2mm]
		&=&\sum_{v\in V_{t+1}}\omega(v) +(d+1)\min\left\{
		\sum_{i\in I}c_{i}
		\colon 	I\in \binom{[t]}{t-\vartheta_k} 
		\right\}\\[2mm]
		&=&\lambda_1^{\uparrow}(L_k^{\omega}(X)).
	\end{array}\]

We complete this proof.
\end{proof}
By taking $c_i=1$ for each $i$, from Propositions \ref{prop-1} and \ref{prop-2}, we immediately recover a result of Lew (see \cite[Proposition 1.5]{L2020}).
\begin{corollary}[\cite{L2020}]
Let $X=\left(\Delta_d^{(d-1)}\right)^{* t} * \Delta_{r-1}$, and let $n=(d+1) t+r$ be the number of vertices of $X$. Then
\[\lambda_1^{\uparrow}(L_k(X))= \begin{cases}(d+1)\left(t-\left\lfloor\frac{k+1}{d}\right\rfloor\right)+r & \text { if }-1 \leq k \leq d t-1, \\ r & \text { if } \quad d t \leq k \leq d t+r-1,\end{cases}\]
and
\[\delta_k(X)=\left\{\begin{array}{cl}
	n-(k+1)-\left\lfloor\frac{k+1}{d}\right\rfloor & \text { if }-1 \leq k \leq d t-1, \\
	n-(k+1)-t & \text { if } d t \leq k \leq d t+r-1 .
\end{array}\right.\]
\end{corollary}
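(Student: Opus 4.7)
The plan is to specialize Propositions \ref{prop-1} and \ref{prop-2} to the uniform weight $\omega\equiv 1$, under which the vertex-weighted Laplacian coincides with the combinatorial Laplacian. Under this choice, $\sum_{v\in V_i}\omega(v)=|V_i|$ equals $d+1$ for each $i\in[t]$ (since $X_i=\Delta_d^{(d-1)}$ has $d+1$ vertices) and equals $r$ for $i=t+1$, and the constants $c_i$ appearing in Proposition \ref{prop-2} are all $1$, so the per-block constancy hypothesis of Proposition \ref{prop-2} holds trivially. Thus $n=(d+1)t+r$ is indeed the total number of vertices.

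Plugging $\omega\equiv 1$ into Proposition \ref{prop-1} gives
\[
\lambda_1^{\uparrow}(L_k(X))=\min\!\left\{\sum_{i\in I}|V_i|\colon I\in\binom{[t]}{t-\vartheta_k}\right\}+|V_{t+1}|=(d+1)(t-\vartheta_k)+r,
\]
because the inner sum is constant in $I$. Substituting the two-piece definition of $\vartheta_k$ then yields $(d+1)\bigl(t-\lfloor (k+1)/d\rfloor\bigr)+r$ for $-1\le k\le dt-2$, and $r$ for $dt-1\le k\le dt+r-1$. Since $\lfloor dt/d\rfloor=t$, both expressions agree at the overlap $k=dt-1$, so the piecewise ranges stated in the corollary ($-1\le k\le dt-1$ and $dt\le k\le dt+r-1$) match ours.

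For $\delta_k(X)$, I would reuse the explicit link description derived in the proof of Proposition \ref{prop-2}: for every $\sigma\in X(k)$,
\[
\operatorname{lk}_X(\sigma)=(V\setminus\sigma)\setminus\bigl\{V_i\setminus\sigma\colon i\in[t],\ |\sigma\cap V_i|=d\bigr\},
\]
which immediately gives $\deg_X^+(\sigma)=|\operatorname{lk}_X(\sigma)|=n-(k+1)-|I_\sigma|$ with $I_\sigma=\{i\in[t]:|\sigma\cap V_i|=d\}$. That same proof establishes $\max_{\sigma\in X(k)}|I_\sigma|=\vartheta_k$, hence $\delta_k(X)=n-(k+1)-\vartheta_k$; unpacking $\vartheta_k$ yields the two claimed expressions for $\delta_k(X)$.

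No real obstacle arises here: once $\omega\equiv 1$ is imposed, Propositions \ref{prop-1} and \ref{prop-2} do all the heavy lifting, and the only bookkeeping needed is the consistency check at the boundary $k=dt-1$, which is handled by the elementary identity $\lfloor dt/d\rfloor=t$. The corollary is therefore an immediate specialization of the preceding two propositions.
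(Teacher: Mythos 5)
Your proposal is correct and follows essentially the same route as the paper, which derives this corollary precisely by setting $c_i=1$ (i.e., $\omega\equiv 1$) in Propositions \ref{prop-1} and \ref{prop-2}; the link description and the identity $\max_{\sigma\in X(k)}|I_\sigma|=\vartheta_k$ that you invoke for $\delta_k(X)$ are exactly the ingredients already established in the proof of Proposition \ref{prop-2}. The boundary check at $k=dt-1$ via $\lfloor dt/d\rfloor=t$ is the only bookkeeping needed, and you handle it correctly.
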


\subsection{The complexes attaining the lower bound in Corollary \ref{cor-l-1}}

In \cite{ZHL2026}, Zhan, Huang and Lin characterize the simplicial complexes $X$ for which the spectral gap of the combinatorial $k$-dimensional Laplacian attains the lower bound $(h(X)+1)(k+1)-h(X)|V(X)|$, thereby proving the conjecture posed by Lew (see [\citealp{L2020}, Conjecture 5.1]).
\begin{theorem}[\cite{ZHL2026}]\label{thm-21}
Let $X$ be a simplicial complex on vertex set $V$ of size $n$, with $h(X)=d$, such that $\lambda^{\uparrow}(L_k(X))=(d+1)(k+1)-d n$ for some $k$. Then
	\[X \cong\left(\Delta_d^{(d-1)}\right)^{*(n-k-1)} * \Delta_{(d+1)(k+1)-d n-1}\]
(and in particular, $\operatorname{dim}(X)=k$ ).
\end{theorem}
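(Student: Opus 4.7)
The plan is to trace the equality case in the proof of Theorem \ref{thm-1} (specialized to $\omega\equiv 1$, which recovers Theorem \ref{thm-01}) and use the structural rigidity supplied by Lemma \ref{lem-17} to reconstruct $X$. Let $x\neq 0$ be a unit eigenvector of $L_k(X)$ for the eigenvalue $\lambda_1^{\uparrow}(L_k(X))=(d+1)(k+1)-dn$, and recall from the proof of Theorem \ref{thm-1} the decomposition $L_k(X)=D_k+\rho_k^*\rho_k$ with $\rho_k^*\rho_k$ positive semidefinite. Since $x^\top L_k(X)x$ equals the minimum of the diagonal entries of $D_k$ (up to the $\rho_k^*\rho_k$ correction), I would first extract two tightness consequences: (i) $\rho_k x=0$, and (ii) $(D_k)_{\tau,\tau}=(d+1)(k+1)-dn$ for every $\tau$ in $\mathrm{supp}(x)$.

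The second consequence, combined with formula \eqref{eq-1} for $(D_k)_{\tau,\tau}$, forces equality in Lemma \ref{lem-1} for every such $\tau$: that is, $|N_\tau(v)|=d$ for every vertex $v\in V\setminus(\tau\cup\operatorname{lk}_X(\tau))$, and simultaneously the link $\operatorname{lk}_X(\tau)$ must have a prescribed size so that the linear combination matches $(d+1)(k+1)-dn$. Invoking Lemma \ref{lem-17}, each such non-link vertex $v$ then satisfies $X[\{v\}\cup M_\tau(v)]\cong\Delta_d^{(d-1)}$ and $X[\{v\}\cup\tau]\cong\Delta_d^{(d-1)}*\Delta_{k-d}$. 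This is the local structural input, giving $t\colon=n-k-1$ disjoint ``missing-face'' blocks of size $d+1$ whose $(d-1)$-skeleta lie in $X$, plus a ``full'' block of size $(d+1)(k+1)-dn$ that is a simplex.

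Next I would globalize. The first step is to propagate the local description from one $\tau\in\mathrm{supp}(x)$ to every $\tau'\in X(k)$. For this I would use (i): the kernel condition $\rho_k x=0$ together with the sign structure of $\rho_k$ (the same one that forces the off-diagonal cancellation in Lemma \ref{lem-3}) shows that $\mathrm{supp}(x)$ is union-closed under the adjacency $\sim$ on $X(k)$, and by Corollary \ref{cor-06} applied to the principal submatrix indexed by $\mathrm{supp}(x)$ I can upgrade the local rigidity to all of $X(k)$. The second step is to verify $\dim X=k$: if some $\sigma\in X(k+1)$ existed, then $L_k^{\omega\operatorname{up}}(X)$ would contribute a strictly positive quantity to the Rayleigh quotient on a suitably chosen test cochain supported on $\sigma(k)$, contradicting the assumption that the minimum is already attained by the lower bound. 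Once $\dim X=k$ and the local partition of $V$ into $t$ blocks of size $d+1$ (each contributing a copy of $\Delta_d^{(d-1)}$) plus one block of size $(d+1)(k+1)-dn$ (contributing a full simplex) is established uniformly, Lemma \ref{lem-10} and the definition of the join yield the required isomorphism $X\cong(\Delta_d^{(d-1)})^{*(n-k-1)}*\Delta_{(d+1)(k+1)-dn-1}$.

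The main obstacle I expect is the globalization step: translating the pointwise conclusion ``every $\tau\in\mathrm{supp}(x)$ sits in a $\Delta_d^{(d-1)}*\Delta_{k-d}$ neighborhood'' into a coherent partition of $V$ into $t+1$ blocks that are \emph{consistent across all $\tau$}. Concretely, one must show that the blocks $M_\tau(v)$ (with $v$ ranging over $V\setminus(\tau\cup\operatorname{lk}_X(\tau))$) are independent of the choice of $\tau$, which amounts to proving that the relation ``$u$ and $v$ belong to the same missing $(d-1)$-sphere'' is well-defined and transitive. I would approach this via a double-counting argument on $\mathrm{supp}(x)$ together with the constraint from (i), using the fact that any inconsistency between two such blocks would either produce a missing face of dimension exceeding $d=h(X)$ or violate the eigenvector equation $\rho_k x=0$.
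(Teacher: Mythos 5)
Your first half tracks the paper's argument (which proves the weighted generalization, Theorem \ref{thm-22}, and obtains Theorem \ref{thm-21} by setting $\omega\equiv 1$): the tightness analysis giving $\rho_k\boldsymbol{x}=0$ and $(D_k)_{\tau,\tau}=(d+1)(k+1)-dn$ on the support of the eigenvector, equality in Lemma \ref{lem-1} forcing $\operatorname{lk}_X(\sigma_0)=\emptyset$ and $|N_{\sigma_0}(v)|=d$, and then Lemma \ref{lem-17} producing the local $\Delta_d^{(d-1)}$ blocks. Two remarks on your "globalization" step. First, the difficulty is not where you locate it: the paper never needs the blocks $M_\tau(v)$ to be consistent across different $\tau$; it works with a \emph{single} maximal-modulus face $\sigma_0$ and proves that the sets $M_{\sigma_0}(u)$ and $M_{\sigma_0}(v)$ are pairwise disjoint for distinct $u,v\in V\setminus\sigma_0$ (Claims \ref{c-2} and \ref{c-3}), via an orientation-sign identity extracted from the Ger\v{s}gorin equality case together with $\rho_k\boldsymbol{x}=0$. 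Your proposed "double-counting" for transitivity of a block relation is both undeveloped and unnecessary. The disjointness plus a cardinality count already yields the partition of $\sigma_0$ into $n-k-1$ blocks of size $d$ and a residual set $A$ of size $(d+1)(k+1)-dn$, hence the containment $X\subseteq X'\colon=\left(\Delta_d^{(d-1)}\right)^{*(n-k-1)}*\Delta_{r-1}$ and, as a byproduct, $\dim(X)=k$ (your separate Rayleigh-quotient argument for $\dim(X)=k$ is not needed and, as stated, does not work: a single test cochain with positive up-Laplacian energy does not contradict the value of a minimum).

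The genuine gap is the final step. You assert that once the partition is established, "Lemma \ref{lem-10} and the definition of the join yield the required isomorphism," but the local rigidity only gives $X\subseteq X'$; nothing in your argument excludes $X$ being a \emph{proper} subcomplex of $X'$ (the local conditions constrain only the faces at combinatorial distance one from $\sigma_0$). This is the hardest part of the paper's proof: one shows that $L_k^{\omega}(X)$ is a principal submatrix of $L_k^{\omega}(X')$ with the same smallest eigenvalue, so by the equality case of eigenvalue interlacing (Corollary \ref{cor-06}) a proper containment would force an eigenvector of $L_k^{\omega}(X')$ for $\lambda_1^{\uparrow}$ with a zero entry; Claim \ref{c-4} rules this out by observing that $\left(\Delta_d^{(d-1)}\right)^{*(n-k-1)}$ triangulates the sphere $S^{k-r}$ (Lemmas \ref{lem-8}--\ref{lem-10}), so every proper subcomplex has vanishing top cohomology and hence, by the discrete Hodge theorem (Theorem \ref{thm-6}), a strictly positive spectral gap. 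Without this cohomological/interlacing argument (or a substitute for it), the proof is incomplete.
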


Proposition \ref{prop-2} provides an example of a weighted simplicial complex for which the spectral gap of the  vertex-weighted $k$-dimensional Laplacian attains the lower bound given in Theorem \ref{thm-1}. It is interesting that, for each $k$, this example also attains the lower bound given in Corollary \ref{cor-l-1} by taking $t=n-k-1$ and $r=(d+1)(k+1)-dn-1$. In this section, we give a complete characterization of the complexes attaining the lower bound given in Corollary \ref{cor-l-1}. Our main result is stated as follows.
\begin{theorem}\label{thm-22}
	Let $X$ be a simplicial complex on vertex set $V$ of size $n$ with $h(X)=d$, and endowed with a vertex weight function $\omega\colon  X \rightarrow \mathbb{R}_{>0}$. If
	\[\lambda_1^{\uparrow}(L_k^{\omega}(X))=(d+1)\min _{\sigma\in X(k)}\sum_{v\in \sigma}\omega(v)-d\sum_{v\in V}\omega(v)\]
	 for some $k$,  then we have
	\begin{enumerate}[\rm (i)]
		\item 	$\operatorname{dim}(X)=k$ and $X \cong\left(\Delta_d^{(d-1)}\right)^{*(n-k-1)} * \Delta_{(d+1)(k+1)-dn-1}$;
		\item for each $\Delta_d^{(d-1)}$, $\omega|_{V\left(\Delta_d^{(d-1)} \right)}$ is constant.
	\end{enumerate}
\end{theorem}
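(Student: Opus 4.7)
The plan is to prove (i) and (ii) together by extracting equality conditions from the proof of Corollary \ref{cor-l-1}, using the resulting local structure around a facet to globalize to the desired join decomposition, and then using the eigenvector equation once more to pin down the weights.

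Let $x$ be a unit eigenvector of $L_k^{\omega}(X)$ with eigenvalue $\lambda_1^{\uparrow}(L_k^{\omega}(X))$. Using the decomposition $L_k^{\omega}(X)=D_k^{\omega}+\rho_k^{\omega *}\rho_k$ from the proof of Theorem \ref{thm-1}, equality in the chain of inequalities of Corollary \ref{cor-l-1} forces three things simultaneously: $\rho_k x=0$; the support of $x$ consists of simplices $\tau\in X(k)$ minimizing $(D_k^{\omega})_{\tau,\tau}$; and for every such supporting $\tau$, both equality in Lemma \ref{lem-1} and equality in $\sum_{v\in\tau\cup\operatorname{lk}_X(\tau)}\omega(v)\geq\sum_{v\in\tau}\omega(v)$ hold. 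Since $\omega$ is strictly positive, the second equality forces $\operatorname{lk}_X(\tau)=\emptyset$; hence every supporting $\tau$ is a facet of $X$ attaining $A^{*}\colon=\min_{\sigma\in X(k)}\sum_{v\in\sigma}\omega(v)$, and equality in Lemma \ref{lem-1} upgrades the bound $|N_{\tau}(v)|\leq d$ of Lemma \ref{lem-17} to $|N_{\tau}(v)|=d$ for all $v\in V\setminus\tau$.

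Fixing one such supporting facet $\tau$, the equality case of Lemma \ref{lem-17} gives $X[\{v\}\cup M_{\tau}(v)]\cong\Delta_d^{(d-1)}$ and $X[\{v\}\cup\tau]\cong\Delta_d^{(d-1)}*\Delta_{k-d}$ for every $v\in V\setminus\tau$. The next step is to globalize: following the combinatorial argument of Zhan, Huang, and Lin for Theorem \ref{thm-21}, the family $\{M_{\tau}(v):v\in V\setminus\tau\}$ induces a partition of $\tau$ into $t$ blocks of size $d$ together with a residual subset of size $r$; the vertices of $V\setminus\tau$ group according to which block they attach to, each block-plus-attachments piece spans a copy of $\Delta_d^{(d-1)}$ inside $X$, and the residual vertices together with any unattached vertices of $V\setminus\tau$ span $\Delta_{r-1}$. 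A dimension count using $|V|=n$ and $|\tau|=k+1$ yields $t=n-k-1$ and $r=(d+1)(k+1)-dn$. One then checks, using that $\tau$ is a facet and $h(X)=d$, that $X$ contains no simplices outside this join and that $\dim(X)=k$, which establishes (i).

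For (ii), given the join structure, let $V_1,\dots,V_t$ denote the vertex sets of the $\Delta_d^{(d-1)}$-factors and $V_{t+1}$ that of the $\Delta_{r-1}$-factor. Every $k$-facet has the form $\tau=\bigcup_{i\in[t]}(V_i\setminus\{v_i\})\cup V_{t+1}$. A direct check shows that $\sigma\sim\tau$ in $X(k)$ occurs exactly when $\sigma$ arises from $\tau$ by an in-block swap within some $V_i$ with $i\in[t]$: any swap across distinct blocks or involving $V_{t+1}$ forces $V_j\subseteq\sigma$ for some $j\in[t]$ and thus pushes $\sigma$ out of $X$. The constraint $\rho_k x=0$ translates, via the explicit formula for $\rho_k$, into $|x_\tau|=|x_\sigma|$ for each such in-block pair; hence every swap-neighbor of a supporting facet is itself in the support and therefore satisfies $A(\sigma)=A(\tau)=A^{*}$. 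Expanding $A(\sigma)-A(\tau)=\omega(v_i)-\omega(w)$ yields $\omega(w)=\omega(v_i)$ for every $w\in V_i\setminus\{v_i\}$; letting $w$ vary shows $\omega|_{V_i}$ is constant, which is exactly (ii). The main obstacle is the globalization step inside (i): turning the local $\Delta_d^{(d-1)}$-attachments at individual vertices of $V\setminus\tau$ into a coherent join decomposition of $X$. The combinatorial engine here is essentially that of Zhan, Huang, and Lin's proof of Theorem \ref{thm-21}; weights play no role at this stage, but some care will be needed to confirm that a single supporting facet $\tau$—rather than the $\delta_k$-minimizer exploited in the unweighted setting—is enough for their argument to run, and that no extraneous simplices of $X$ survive outside the prescribed join.
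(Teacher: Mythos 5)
Your overall strategy coincides with the paper's: extract equality conditions from the spectral lower bound (yielding $\rho_k\boldsymbol{x}=\boldsymbol{0}$, $\operatorname{lk}_X(\sigma_0)=\emptyset$, and $|N_{\sigma_0}(v)|=d$ for all $v\notin\sigma_0$), invoke the equality case of Lemma \ref{lem-17} for the local $\Delta_d^{(d-1)}$ structure, globalize to a join, and read off the weights from swap-neighbours. Your argument for (ii) is essentially the paper's Claim on $\omega(u)=\omega(v)$ for $u\in M_{\sigma_0}(v)$, and is correct.

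There are, however, two gaps in your treatment of (i). First, the disjointness of the sets $M_{\sigma_0}(u)$ for distinct $u\in V\setminus\sigma_0$ — which is what makes $\{M_{\sigma_0}(v)\}$ a partition of part of $\sigma_0$ and hence makes the join decomposition possible — is not a purely combinatorial fact that "weights play no role" in: the paper derives it from a sign identity on the eigenvector (its Claims 4 and 5), using both the Geršgorin equality condition $x_\sigma=(-1)^{\varepsilon(\sigma,\sigma_0)}$ and the relation $\rho_k\boldsymbol{x}=\boldsymbol{0}$ applied to the pair $\{\sigma,\eta\}$ with $\sigma=\{u\}\cup(\sigma_0\setminus\{w\})$, $\eta=\{v\}\cup(\sigma_0\setminus\{w\})$. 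You have all the ingredients on the table but never carry out this step; deferring it to "the combinatorial engine of Zhan--Huang--Lin" leaves the partition unproved. Second, and more seriously, you only argue the containment $X\subseteq X'$ ("no extraneous simplices outside the join"); you never address why $X$ must equal the \emph{full} join $X'=\left(\Delta_d^{(d-1)}\right)^{*(n-k-1)}*\Delta_{r-1}$ rather than a proper subcomplex of it with the same $k$-faces removed. This reverse containment is where the paper's heaviest machinery lives: it assumes $X\subsetneq X'$, notes $L_k^{\omega}(X)$ is then a proper principal submatrix of $L_k^{\omega}(X')$ with the same smallest eigenvalue, applies Corollary \ref{cor-06} to produce an eigenvector of $L_k^{\omega}(X')$ with a zero entry, and rules this out by showing (via Lemmas \ref{lem-8}--\ref{lem-10}, the sphere triangulation $\|K^Y\|\cong S^{k-r}$, and Theorem \ref{thm-6}) that every eigenvector of $L_k^{\omega}(X')$ for the bottom eigenvalue is nowhere zero. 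Your proposal contains no substitute for this argument, so the isomorphism in (i) is not established.
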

\begin{remark}
	By taking $\omega \equiv 1$, Theorem \ref{thm-22} immediately specializes to Theorem \ref{thm-21}.
\end{remark}

Next we prove Theorem \ref{thm-22}. Recall that  \[E_k\left(X\right)\colon=\left\{{\left\{\sigma, \tau\right\}}\colon \sigma,\tau \in X(k),\sigma\sim \tau\right\}\] and 
\begin{equation}\label{eq-Ls}
    L_k^{\omega}(X)=D^{\omega}_k+\rho^{\omega *}_k\rho_k,
\end{equation}
where  $D^{\omega}_k$, $\rho_k$ and $\rho^{\omega *}_k$ are as previously defined in the proof of Theorem \ref{thm-1}.
\begin{proof}
Let $\operatorname{x}=\left(x_\sigma\right)_{\sigma \in X(k)} \in \mathbb{R}^{f_k(X)}$ be an eigenvector of $L_k^{\omega}(X)$ associated with the $k$-th spectral gap $\lambda_1^{\uparrow}(L_k^{\omega}(X))$. Without loss of generality, we can assume that $\max _{\sigma \in X(k)}\left|x_\sigma\right|=1$. Let $\sigma_0 \in X(k)$ be such that $\left|x_{\sigma_0}\right|=1$. Then by Theorem \ref{thm-4},
\begin{equation}\label{eq-7}
	   \lambda_1^{\uparrow}(L_k^{\omega}(X))\ge L_k^{\omega}(X)_{\sigma_0, \sigma_0}-\sum_{\sigma_0\ne \sigma \in X(k)}\left|L_k^{\omega}(X)_{\sigma_0, \sigma}\right|= L_k^{\omega}(X)_{\sigma_0, \sigma_0}-\sum_{\substack{\sigma\in X(k)\colon\\\left\{\sigma,\sigma_0\right\}\in E_k(X)}}\left|L_k^\omega(X)_{\sigma_0, \sigma}\right|.
\end{equation}
By Lemma \ref{lem-3}, we have
\begin{equation}\label{eq-d}
    L_k^\omega(X)_{\sigma_0, \sigma_0}=\sum_{u \in \operatorname{lk}_X(\sigma_0)} \omega(u)+\sum_{v \in \sigma_0} \omega(v)
\end{equation}
and
\begin{equation}\label{eq-nds}
    \begin{aligned}
	\sum_{\substack{\sigma\in X(k)\colon\\\left\{\sigma,\sigma_0\right\}\in E_k(X)}}\left|L_k^\omega(X)_{\sigma_0, \sigma}\right|&=\sum_{\substack{\sigma\in X(k)\colon\\\left\{\sigma,\sigma_0\right\}\in E_k(X)}}\omega(\sigma\setminus\sigma_0)=\sum_{\substack{v\in V\setminus \sigma_0\colon\\v\notin \operatorname{lk}_X(\sigma_0)}}\omega(v)\left|N_{\sigma_0}(v)\right|\\[2mm]&=\sum_{\eta\in \sigma_0(k-1)}\sum_{\substack{v\in V\setminus\sigma_0\colon\\v\in \operatorname{lk}_X(\eta)\setminus \operatorname{lk}_X(\sigma_0)}}\omega(v)\\&=\sum_{\eta\in \sigma_0(k-1)}\left(\sum_{v\in \operatorname{lk}_X(\eta)}\omega(v)-\sum_{v\in \operatorname{lk}_X(\sigma_0)}\omega(v)-\omega(\sigma_0\setminus\eta) \right)\\[2mm]&=\sum_{\eta\in \sigma_0(k-1)}\sum_{v\in \operatorname{lk}_X(\eta)}\omega(v)-(k+1)\sum_{v\in \operatorname{lk}_X(\sigma_0)}\omega(v)-\sum_{v\in\sigma_0}\omega(v).
\end{aligned}
\end{equation}
Combining \eqref{eq-d} with \eqref{eq-nds}, it follows from \eqref{eq-7} that
\begin{align*}
    \lambda_1^{\uparrow}(L_k^{\omega}(X)) &\ge \sum_{v \in \operatorname{lk}_X(\sigma_0)} \omega(v)+\sum_{v \in \sigma_0} \omega(v)-\sum_{\eta\in \sigma_0(k-1)}\sum_{v\in \operatorname{lk}_X(\eta)}\omega(v)+(k+1)\sum_{v\in \operatorname{lk}_X(\sigma_0)}\omega(v)+\sum_{v\in\sigma_0}\omega(v)\\[2mm]&=(k+2)\sum_{v \in \operatorname{lk}_X(\sigma_0)} \omega(v)-\sum_{\eta\in \sigma_0(k-1)}\sum_{v\in \operatorname{lk}_X(\eta)}\omega(v)+2\sum_{v\in\sigma_0}\omega(v)\\[2mm]&\ge(k+2)\sum_{u \in \operatorname{lk}_X(\sigma_0)} \omega(u)-d\sum_{v\in V}\omega(v)+(d-1) \sum_{v\in \sigma_0}\omega(v)-(k+1-d)\sum_{v\in \operatorname{lk}_X(\sigma_0)}\omega(v)
	\\&\ \ \ \ +2\sum_{v\in\sigma_0}\omega(v)\\[2mm]&=(d+1)\sum_{v \in \operatorname{lk}_X(\sigma_0)} \omega(v)+(d+1) \sum_{v\in \sigma_0}\omega(v)-d\sum_{v\in V}\omega(v)
	\\&\ge(d+1)\min _{\sigma\in X(k)}\sum _{v\in \sigma}\omega(v)-d\sum_{v\in V}\omega(v)\\&=\lambda_1^{\uparrow}(L_k^{\omega}(X)),
\end{align*}
where the second inequality is obtained from Lemma \ref{lem-1}. Hence, all of the above inequalities are equalities, and the following facts hold:
\begin{fact}\label{f-1}
\rm	 $\lambda_1^{\uparrow}(L_k^{\omega}(X))=L_k^{\omega}(X)_{\sigma_0, \sigma_0}-\sum_{\sigma \in X(k), \sigma \neq \sigma_0}\left|L_k^{\omega}(X)_{\sigma_0, \sigma}\right|$.
\end{fact}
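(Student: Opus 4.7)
The plan is to read off Fact~\ref{f-1} as the equality case of the Ger\v{s}gorin bound applied to the row of $L_k^{\omega}(X)$ indexed by $\sigma_0$, using the running hypothesis $\lambda_1^{\uparrow}(L_k^{\omega}(X))=(d+1)\min_{\sigma\in X(k)}\sum_{v\in\sigma}\omega(v)-d\sum_{v\in V}\omega(v)$ to force a chain of natural inequalities to collapse to equalities. Fact~\ref{f-1} will then be precisely the tightness of the first link in that chain.

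First, since $\operatorname{x}=(x_\sigma)_{\sigma\in X(k)}$ is an eigenvector of $L_k^{\omega}(X)$ with eigenvalue $\lambda_1^{\uparrow}(L_k^{\omega}(X))$, and $|x_{\sigma_0}|=\max_{\sigma\in X(k)}|x_\sigma|=1$, I would invoke Theorem~\ref{thm-4} on $L_k^{\omega}(X)$ to get
\[
\bigl|\lambda_1^{\uparrow}(L_k^{\omega}(X))-L_k^{\omega}(X)_{\sigma_0,\sigma_0}\bigr|\le \sum_{\sigma\ne \sigma_0}\bigl|L_k^{\omega}(X)_{\sigma_0,\sigma}\bigr|,
\]
which in particular yields the Ger\v{s}gorin lower bound
\[
\lambda_1^{\uparrow}(L_k^{\omega}(X))\;\ge\; L_k^{\omega}(X)_{\sigma_0,\sigma_0}-\sum_{\sigma\ne \sigma_0}\bigl|L_k^{\omega}(X)_{\sigma_0,\sigma}\bigr|.\tag{$\ast$}
\]
This is exactly the right-hand side in Fact~\ref{f-1}, so it suffices to show $(\ast)$ is an equality.

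Next, I would expand both sides of $(\ast)$ using Lemma~\ref{lem-3}. The diagonal term equals $\sum_{v\in\sigma_0}\omega(v)+\sum_{v\in\operatorname{lk}_X(\sigma_0)}\omega(v)$; for the off-diagonal sum, only pairs $\sigma\sim\sigma_0$ contribute, so reindexing by $v\in V\setminus\sigma_0$ with $v\notin\operatorname{lk}_X(\sigma_0)$ and using $|N_{\sigma_0}(v)|$ gives
\[
\sum_{\sigma\ne\sigma_0}\bigl|L_k^{\omega}(X)_{\sigma_0,\sigma}\bigr| = \sum_{\eta\in\sigma_0(k-1)}\sum_{v\in\operatorname{lk}_X(\eta)}\omega(v)-(k+1)\sum_{v\in\operatorname{lk}_X(\sigma_0)}\omega(v)-\sum_{v\in\sigma_0}\omega(v).
\]
Substituting these into the right-hand side of $(\ast)$, then applying Lemma~\ref{lem-1} to control $\sum_{\eta\in\sigma_0(k-1)}\sum_{v\in\operatorname{lk}_X(\eta)}\omega(v)$, produces the further lower bound $(d+1)\sum_{v\in\sigma_0\cup\operatorname{lk}_X(\sigma_0)}\omega(v)-d\sum_{v\in V}\omega(v)$, which is in turn at least $(d+1)\min_{\sigma\in X(k)}\sum_{v\in\sigma}\omega(v)-d\sum_{v\in V}\omega(v)$ (using $\operatorname{lk}_X(\sigma_0)\supseteq\emptyset$ and minimizing over $\sigma\in X(k)$). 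By the standing hypothesis this last quantity equals $\lambda_1^{\uparrow}(L_k^{\omega}(X))$, so the entire chain is squeezed into equalities; the tightness of $(\ast)$ is Fact~\ref{f-1}.

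The main obstacle I expect is purely bookkeeping: one must carefully reorganize the off-diagonal sum — first grouping by pairs $\{\sigma,\sigma_0\}\in E_k(X)$ via Lemma~\ref{lem-3}, then switching the order of summation to a sum over $v$ with weight $|N_{\sigma_0}(v)|$ — so that Lemma~\ref{lem-1} applies cleanly. Once this reorganization is aligned with the form needed by Lemma~\ref{lem-1}, the collapse of the chain to equality is purely formal, and Fact~\ref{f-1} follows immediately.
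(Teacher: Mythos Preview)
Your proposal is correct and follows essentially the same argument as the paper: apply the Ger\v{s}gorin bound (Theorem~\ref{thm-4}) at the row $\sigma_0$, expand the diagonal and off-diagonal terms via Lemma~\ref{lem-3}, bound $\sum_{\eta\in\sigma_0(k-1)}\sum_{v\in\operatorname{lk}_X(\eta)}\omega(v)$ using Lemma~\ref{lem-1}, and then squeeze the resulting chain against the standing hypothesis to force all inequalities to equalities. The bookkeeping you flag as the main obstacle is exactly the computation the paper carries out in \eqref{eq-d} and \eqref{eq-nds}.
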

\begin{fact}\label{f-2}
\rm	$\operatorname{lk}_X(\sigma_0)=\emptyset$. 
\end{fact}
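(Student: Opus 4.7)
The plan is to extract Fact \ref{f-2} as a direct consequence of the equality case of the inequality chain already constructed in the paragraph preceding Fact \ref{f-1}. Under the hypothesis
\[
\lambda_1^{\uparrow}(L_k^{\omega}(X)) = (d+1)\min_{\sigma \in X(k)}\sum_{v \in \sigma}\omega(v) - d\sum_{v \in V}\omega(v),
\]
that chain begins and ends with $\lambda_1^{\uparrow}(L_k^{\omega}(X))$, so every intermediate $\ge$ must in fact be an equality. Fact \ref{f-1} expresses the equality of the very first $\ge$ (the Ger\v{s}gorin bound); Fact \ref{f-2} will be read off the very last $\ge$.

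The concrete step is to isolate the final weak inequality of the chain, namely
\[
(d+1)\sum_{v \in \operatorname{lk}_X(\sigma_0)}\omega(v) + (d+1)\sum_{v \in \sigma_0}\omega(v) - d\sum_{v \in V}\omega(v) \ge (d+1)\min_{\sigma \in X(k)}\sum_{v \in \sigma}\omega(v) - d\sum_{v \in V}\omega(v),
\]
and rewrite its slack as the sum of the two nonnegative quantities
\[
(d+1)\sum_{v \in \operatorname{lk}_X(\sigma_0)}\omega(v) \quad \text{and} \quad (d+1)\Bigl(\sum_{v \in \sigma_0}\omega(v) \;-\; \min_{\sigma \in X(k)}\sum_{v \in \sigma}\omega(v)\Bigr).
\]
Equality forces both summands to vanish. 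In particular $\sum_{v \in \operatorname{lk}_X(\sigma_0)}\omega(v) = 0$, and since $\omega$ is strictly positive on every vertex, this is possible only if $\operatorname{lk}_X(\sigma_0) = \emptyset$, which is precisely Fact \ref{f-2}.

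There is essentially no technical obstacle here: all of the real work was done in assembling the inequality chain via Lemma \ref{lem-1} and the Ger\v{s}gorin bound, and Fact \ref{f-2} is simply the positivity-of-weights reading of one term of the resulting equality. As an incidental byproduct of the same argument, the vanishing of the second summand gives $\sum_{v \in \sigma_0}\omega(v) = \min_{\sigma \in X(k)}\sum_{v \in \sigma}\omega(v)$, so the distinguished simplex $\sigma_0$ (chosen only as one where the eigenvector attains its maximum modulus) is automatically a minimum-weight $k$-simplex; this observation will be convenient when pushing on to the join decomposition in part~(i) of Theorem \ref{thm-22}.
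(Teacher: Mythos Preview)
Your proposal is correct and matches the paper's own argument exactly: the paper derives Facts \ref{f-1}--\ref{f-3} simultaneously by observing that the displayed chain of inequalities collapses to equalities under the hypothesis, and Fact \ref{f-2} is precisely the consequence of equality in the final $\ge$ together with strict positivity of $\omega$. Your incidental byproduct is the paper's Fact \ref{f-2-1}.
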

\begin{fact}\label{f-2-1}
\rm	$\sum_{v \in \sigma_0} \omega(v)=\min_{\sigma\in X(k)}\sum_{v\in \sigma}\omega(v)$.
\end{fact}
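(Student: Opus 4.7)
\medskip

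The plan is to read Fact \ref{f-2-1} off the equality case of the chain of inequalities already established, combined with Fact \ref{f-2}. Since the hypothesis $\lambda_1^{\uparrow}(L_k^{\omega}(X))=(d+1)\min_{\sigma\in X(k)}\sum_{v\in\sigma}\omega(v)-d\sum_{v\in V}\omega(v)$ forces every inequality in the displayed chain to be an equality, each individual step of that chain can be extracted as a separate structural fact about $\sigma_0$. Fact \ref{f-1} comes from the tightness of Ger\v{s}gorin (Theorem \ref{thm-4}) applied to the row indexed by $\sigma_0$, and Fact \ref{f-2} comes from the tightness of the link-counting inequality from Lemma \ref{lem-1} in the second step.

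For Fact \ref{f-2-1}, I would isolate the penultimate inequality in the chain, namely
\[
(d+1)\sum_{v\in\operatorname{lk}_X(\sigma_0)}\omega(v)+(d+1)\sum_{v\in\sigma_0}\omega(v)-d\sum_{v\in V}\omega(v)\;\ge\;(d+1)\min_{\sigma\in X(k)}\sum_{v\in\sigma}\omega(v)-d\sum_{v\in V}\omega(v),
\]
which holds because $\sum_{v\in\operatorname{lk}_X(\sigma_0)}\omega(v)\ge 0$ and $\sum_{v\in\sigma_0}\omega(v)\ge\min_{\sigma\in X(k)}\sum_{v\in\sigma}\omega(v)$. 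Since the whole chain collapses to equalities, this inequality is also an equality. Cancelling the common term $-d\sum_{v\in V}\omega(v)$ and dividing by $d+1>0$ gives
\[
\sum_{v\in\operatorname{lk}_X(\sigma_0)}\omega(v)+\sum_{v\in\sigma_0}\omega(v)=\min_{\sigma\in X(k)}\sum_{v\in\sigma}\omega(v).
\]

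Applying Fact \ref{f-2}, the first summand vanishes, yielding
\[
\sum_{v\in\sigma_0}\omega(v)=\min_{\sigma\in X(k)}\sum_{v\in\sigma}\omega(v),
\]
which is exactly Fact \ref{f-2-1}. No separate obstacle arises, as the whole content is bookkeeping on the established equality chain; the only subtlety is to verify that the two nonnegative slacks being combined in the penultimate inequality, namely $\sum_{v\in\operatorname{lk}_X(\sigma_0)}\omega(v)$ and $\sum_{v\in\sigma_0}\omega(v)-\min_{\sigma\in X(k)}\sum_{v\in\sigma}\omega(v)$, must each vanish individually (they do, because their sum equals zero and each is nonnegative, given $\omega>0$).
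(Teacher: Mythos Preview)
Your proposal is correct and follows essentially the same approach as the paper: the paper simply notes that all inequalities in the displayed chain are equalities and then lists Facts \ref{f-1}--\ref{f-3} as immediate consequences, with Fact \ref{f-2-1} (and Fact \ref{f-2}) coming from the equality case of the final step, exactly as you unpack. Your closing observation that the two nonnegative slacks must vanish individually is in fact the cleanest way to read off both Fact \ref{f-2} and Fact \ref{f-2-1} simultaneously, making the appeal to Fact \ref{f-2} in your main argument redundant but harmless.
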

\begin{fact}\label{f-3}
\rm		For every $v\in V\setminus \sigma_0$, $\left|N_{\sigma_0}(v)\right|=\left|M_{\sigma_0}(v)\right|=d$.
\end{fact}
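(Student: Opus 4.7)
The plan is to extract Fact~\ref{f-3} as a direct consequence of the equality conditions in the chain of inequalities already displayed in the proof of Theorem~\ref{thm-22}. Since the hypothesis $\lambda_1^{\uparrow}(L_k^{\omega}(X))=(d+1)\min_{\sigma\in X(k)}\sum_{v\in\sigma}\omega(v)-d\sum_{v\in V}\omega(v)$ forces that chain to collapse to equalities throughout, and the only step in the chain that uses the bound $|N_{\sigma_0}(v)|\le d$ is the invocation of Lemma~\ref{lem-1} applied to $\sigma_0$, the claim will follow by unpacking the equality case of that lemma, combined with the already-established Fact~\ref{f-2}.

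Concretely, I would recall from the proof of Lemma~\ref{lem-1} the exact identity
\[
\sum_{\eta\in \sigma_0(k-1)}\sum_{v\in \operatorname{lk}_X(\eta)}\omega(v)=\sum_{v\in \sigma_0}\omega(v)+(k+1)\sum_{v\in \operatorname{lk}_X(\sigma_0)}\omega(v)+\sum_{v\in V\setminus(\sigma_0\cup \operatorname{lk}_X(\sigma_0))}\omega(v)\,|N_{\sigma_0}(v)|,
\]
so that Lemma~\ref{lem-1} is obtained simply by replacing each $|N_{\sigma_0}(v)|$ in the last sum with its Lemma~\ref{lem-17} upper bound $d$. Equality in Lemma~\ref{lem-1} applied to $\sigma_0$ therefore reads
\[
\sum_{v\in V\setminus(\sigma_0\cup \operatorname{lk}_X(\sigma_0))}\omega(v)\bigl(d-|N_{\sigma_0}(v)|\bigr)=0.
\]
Since $\omega$ is strictly positive on $V$ and every summand is nonnegative, this forces $|N_{\sigma_0}(v)|=d$ for each $v\in V\setminus(\sigma_0\cup \operatorname{lk}_X(\sigma_0))$. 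Invoking Fact~\ref{f-2}, which gives $\operatorname{lk}_X(\sigma_0)=\emptyset$, we have $V\setminus(\sigma_0\cup \operatorname{lk}_X(\sigma_0))=V\setminus\sigma_0$, whence $|N_{\sigma_0}(v)|=d$ for every $v\in V\setminus\sigma_0$. The companion equality $|N_{\sigma_0}(v)|=|M_{\sigma_0}(v)|$ is immediate from the definitions via the bijection $\eta\mapsto \sigma_0\setminus\eta$, already noted in the setup preceding Lemma~\ref{lem-17}.

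The only point demanding care is identifying that it is precisely the Lemma~\ref{lem-1} step in the displayed chain whose equality case yields the claim; this is automatic because the chain begins and ends with $\lambda_1^{\uparrow}(L_k^{\omega}(X))$ and each intermediate inequality is monotone, so every single link must be an equality. There is therefore no genuine obstacle here; the proof amounts to a clean extraction of an equality condition already latent in the derivation of the lower bound, and it will feed directly into later arguments (which exploit the second half of Lemma~\ref{lem-17}, namely $X[\{v\}\cup \sigma_0]\cong \Delta_d^{(d-1)}*\Delta_{k-d}$) to pin down the global structure of $X$ as a join.
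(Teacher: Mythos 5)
Your proposal is correct and is exactly the argument the paper intends: the paper simply asserts Fact~\ref{f-3} as one of the consequences of the chain of inequalities collapsing to equalities, and your unpacking of the equality case of Lemma~\ref{lem-1} (via the exact identity from its proof, positivity of $\omega$, and Fact~\ref{f-2} to identify $V\setminus(\sigma_0\cup\operatorname{lk}_X(\sigma_0))$ with $V\setminus\sigma_0$) is precisely the intended derivation. No gaps.
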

From the above facts, we derive the following claims:
\begin{claim}\label{c-1}
	For every $v \in V \setminus \sigma_0$, we have
	\[X\left[\{v\} \cup M_{\sigma_0}(v)\right] \cong \Delta_d^{(d-1)} \text { and } X\left[\{v\} \cup V\left(\sigma_0\right)\right] \cong \Delta_d^{(d-1)} * \Delta_{k-d}.\]
\end{claim}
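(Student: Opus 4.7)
The plan is to derive the claim as a direct consequence of Lemma \ref{lem-17} applied to $\sigma = \sigma_0$, with the hypothesis of equality supplied by Facts \ref{f-2} and \ref{f-3}.

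First, I would fix an arbitrary $v \in V \setminus \sigma_0$. By Fact \ref{f-2} we have $\operatorname{lk}_X(\sigma_0) = \emptyset$, so in particular $v \notin \operatorname{lk}_X(\sigma_0)$. This verifies the hypothesis of Lemma \ref{lem-17} for the pair $(\sigma_0, v)$, which yields the inequality $|N_{\sigma_0}(v)| = |M_{\sigma_0}(v)| \le d$.

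Next, Fact \ref{f-3} guarantees that this inequality is in fact an equality, i.e.\ $|N_{\sigma_0}(v)| = |M_{\sigma_0}(v)| = d$. Therefore we are in the equality case of Lemma \ref{lem-17}, which directly yields
\[
X\bigl[\{v\} \cup M_{\sigma_0}(v)\bigr] \cong \Delta_d^{(d-1)} \quad \text{and} \quad X\bigl[\{v\} \cup V(\sigma_0)\bigr] \cong \Delta_d^{(d-1)} * \Delta_{k-d}.
\]
Since $v \in V \setminus \sigma_0$ was arbitrary, this establishes the claim. There is essentially no main obstacle here: the work was already done in establishing Facts \ref{f-2} and \ref{f-3} from the equality assumption on $\lambda_1^{\uparrow}(L_k^{\omega}(X))$, and the claim is then a one-step invocation of the equality case of Lemma \ref{lem-17}. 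The only subtlety worth checking is that the orientations and vertex sets match so that the hypothesis $v \notin \operatorname{lk}_X(\sigma_0)$ is genuinely available for every $v \in V \setminus \sigma_0$, which is exactly what Fact \ref{f-2} provides.
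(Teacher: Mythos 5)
Your proof is correct and follows exactly the paper's argument: Fact \ref{f-2} gives $v \notin \operatorname{lk}_X(\sigma_0)$ so that Lemma \ref{lem-17} applies, and Fact \ref{f-3} puts you in its equality case, from which both isomorphisms follow immediately. No gaps.
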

\begin{proof}[\rm\textbf{Proof of Claim \ref{c-1}}]
	By Fact \ref{f-2}, we have $v\notin \operatorname{lk}_X(\sigma_0)$ for any $v \in V \setminus \sigma_0$. Combining Lemma \ref{lem-17} with Fact \ref{f-3}, the claim then follows immediately.
\end{proof}

By Theorem \ref{thm-4} and Fact \ref{f-1}, we have that $\left|x_\sigma\right|=\left|x_{\sigma_0}\right|=1$ and the quantities \[L_k^{\omega}(X)_{\sigma_0, \sigma}x_\sigma=(-1)^{\varepsilon(\sigma, \sigma_0)} \omega(\sigma\setminus\sigma_0)x_\sigma\] are sign–constant for all $\sigma \in X(k) \setminus\left\{\sigma_0\right\}$ with $\left\{\sigma, \sigma_0\right\}\in E_k$.
 Hence, exactly one of the following two cases occurs:
\begin{enumerate}[\rm  \textbf{Case} 1.]
	\item $x_{\sigma}=(-1)^{\varepsilon(\sigma, \sigma_0)}$ for all $\sigma \in X(k) \setminus\left\{\sigma_0\right\}$ with $\left\{\sigma, \sigma_0\right\}\in E_k$;
	\item $x_{\sigma}=-(-1)^{\varepsilon(\sigma, \sigma_0)}$ for all $\sigma \in X(k) \setminus\left\{\sigma_0\right\}$ with $\left\{\sigma, \sigma_0\right\}\in E_k$.
\end{enumerate}
Without loss of generality, assume Case 1 holds. Let \[\boldsymbol{x}\colon=\sum _{\sigma\in X(k)}x_{\sigma}e_{\sigma}\] be the cochain in $C^k(X, \mathbb{R})$ associated with the eigenvector
$\operatorname{x}$ chosen above.  Let $\mathcal{D}^{\omega}_k\colon C^k(X, \mathbb{R})\rightarrow C^k(X, \mathbb{R})$ denote the linear operator whose matrix representation with respect to the basis $\left\{e_{\sigma}\colon \sigma\in X(k)\right\}$ is $D^{\omega}_k$. 
\begin{claim}\label{c-1-1}
The following statements hold:
\begin{enumerate}[\rm (i)]
	\item $\rho_k\boldsymbol{x}=\boldsymbol{0}$;
	\item For every $\sigma^{\prime} \in X(k) \setminus \left\{\sigma_0\right\}$ with $\left\{\sigma^{\prime}, \sigma_0\right\}\in E_k(X)$, we have $\operatorname{lk}_X(\sigma^{\prime})=\emptyset$ and 
\[\sum _{v\in\sigma^{\prime}}\omega(v)=\sum _{v\in\sigma_0}\omega(v).\]
\end{enumerate}
\end{claim}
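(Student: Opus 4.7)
The plan is to combine the decomposition $L_k^{\omega}(X)=D_k^{\omega}+\rho_k^{\omega *}\rho_k$ from \eqref{eq-Ls} with the Rayleigh-quotient analysis underlying Theorem \ref{thm-1}, and then read off the equality cases. Throughout, inner products refer to the $\omega$-weighted inner product on $C^k(X,\mathbb{R})$, with respect to which $\rho_k^{\omega *}$ is by construction the adjoint of $\rho_k$ and the operator $\rho_k^{\omega *}\rho_k$ is positive semidefinite.

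To establish (i), pair the eigenvalue equation $L_k^{\omega}(X)\boldsymbol{x}=\lambda_1^{\uparrow}(L_k^{\omega}(X))\boldsymbol{x}$ with $\boldsymbol{x}$ under the weighted inner product, which gives
\[
\lambda_1^{\uparrow}(L_k^{\omega}(X))\langle\boldsymbol{x},\boldsymbol{x}\rangle=\langle D_k^{\omega}\boldsymbol{x},\boldsymbol{x}\rangle+\langle\rho_k\boldsymbol{x},\rho_k\boldsymbol{x}\rangle.
\]
Formula \eqref{eq-1} together with Lemma \ref{lem-1}—the two ingredients driving the proof of Theorem \ref{thm-1}—imply that every diagonal entry satisfies $(D_k^{\omega})_{\tau,\tau}\ge\lambda_1^{\uparrow}(L_k^{\omega}(X))$, whence $\langle D_k^{\omega}\boldsymbol{x},\boldsymbol{x}\rangle\ge\lambda_1^{\uparrow}(L_k^{\omega}(X))\langle\boldsymbol{x},\boldsymbol{x}\rangle$. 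Combined with $\langle\rho_k\boldsymbol{x},\rho_k\boldsymbol{x}\rangle\ge 0$, all inequalities become equalities, forcing $\rho_k\boldsymbol{x}=\boldsymbol{0}$.

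For (ii), substituting (i) into the eigenvalue equation reduces it to $D_k^{\omega}\boldsymbol{x}=\lambda_1^{\uparrow}(L_k^{\omega}(X))\boldsymbol{x}$; since $D_k^{\omega}$ is diagonal, this yields $(D_k^{\omega})_{\tau,\tau}=\lambda_1^{\uparrow}(L_k^{\omega}(X))$ for every $\tau\in X(k)$ with $x_\tau\neq 0$. In Case 1 one has $x_{\sigma'}=(-1)^{\varepsilon(\sigma',\sigma_0)}\neq 0$ for every $\sigma'\in X(k)\setminus\{\sigma_0\}$ with $\{\sigma',\sigma_0\}\in E_k(X)$, so the chain of lower bounds from the proof of Theorem \ref{thm-1},
\[
(D_k^{\omega})_{\sigma',\sigma'}\ge(d+1)\sum_{v\in\sigma'}\omega(v)+(d+1)\sum_{v\in\operatorname{lk}_X(\sigma')}\omega(v)-d\sum_{v\in V}\omega(v)\ge\lambda_1^{\uparrow}(L_k^{\omega}(X)),
\]
must be tight for each such $\sigma'$. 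Tightness in the second inequality forces $\sum_{v\in\operatorname{lk}_X(\sigma')}\omega(v)=0$, hence $\operatorname{lk}_X(\sigma')=\emptyset$ (since $\omega>0$), while tightness in the last inequality combined with Fact \ref{f-2-1} yields $\sum_{v\in\sigma'}\omega(v)=\min_{\sigma\in X(k)}\sum_{v\in\sigma}\omega(v)=\sum_{v\in\sigma_0}\omega(v)$, which is precisely (ii).

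The main subtle point is to carry out the Rayleigh-quotient argument in the $\omega$-weighted inner product rather than the standard Euclidean one: only under the weighted inner product is $\rho_k^{\omega *}\rho_k$ self-adjoint and positive semidefinite, so that $\langle\rho_k\boldsymbol{x},\rho_k\boldsymbol{x}\rangle\ge 0$ can be invoked. For non-uniform weights the matrix $\rho_k^{\omega *}\rho_k$ is asymmetric in the standard basis, and the naive form $\operatorname{x}^{\top}\rho_k^{\omega *}\rho_k\operatorname{x}$ can take negative values, so this identification of inner product is essential to the argument.
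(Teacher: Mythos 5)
Your proof is correct and follows essentially the same route as the paper's: the decomposition $L_k^{\omega}(X)=\mathcal{D}^{\omega}_k+\rho^{\omega*}_k\rho_k$, the diagonal lower bound coming from \eqref{eq-1} and Lemma \ref{lem-1}, and the equality analysis of the resulting Rayleigh-quotient chain to force $\rho_k\boldsymbol{x}=\boldsymbol{0}$ and $(\mathcal{D}^{\omega}_k)_{\sigma',\sigma'}=\lambda_1^{\uparrow}(L_k^{\omega}(X))$ at every $\sigma'$ with $x_{\sigma'}\neq 0$. Your closing remark about carrying out the computation in the $\omega$-weighted inner product, so that $\langle\rho_k\boldsymbol{x},\rho_k\boldsymbol{x}\rangle\ge 0$ is legitimate even though $\rho^{\omega*}_k\rho_k$ is not symmetric in the standard basis, is exactly the point the paper uses (somewhat implicitly in the proof of Theorem \ref{thm-1} and explicitly here).
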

\begin{proof}[\rm\textbf{Proof of Claim \ref{c-1-1}}]
By \eqref{eq-1} and Lemma \ref{lem-1}, for any $\sigma \in X(k)$,
\begin{equation}\label{eq-3}
	\begin{aligned}
		\left(D^{\omega}_k\right)_{\sigma,\sigma}&=2\sum _{v \in \sigma} \omega(v)+(k+2)\sum _{v \in \operatorname{lk}_X(\sigma)} \omega(v)-\sum _{\eta \in \sigma(k-1)}\sum _{v \in \operatorname{lk}_X(\eta)} \omega(v)\\&\ge(d+1)\sum_{v \in \sigma\cup \operatorname{lk}_X(\sigma)} \omega(v)-d\sum_{v\in V}\omega(v)\\&\ge (d+1)\min_{\sigma\in X(k)}\sum_{v\in \sigma}\omega(v)-d\sum_{v\in V}\omega(v)\\&=\lambda_1^{\uparrow}(L_k^{\omega}(X)),
	\end{aligned}
\end{equation}
and thus \begin{equation}\label{eq-md}
    \min_{\sigma\in X(k)}	\left(D^{\omega}_k\right)_{\sigma,\sigma}\ge \lambda_1^{\uparrow}(L_k^{\omega}(X)).
\end{equation}
Moreover, by \eqref{eq-Ls}, we obtain the operator identity
\[L_k^{\omega}(X)=\mathcal{D}^{\omega}_k+\rho^{\omega *}_k\rho_k.\]
This implies that
\[\begin{aligned}
	\lambda_1^{\uparrow}(L_k^{\omega}(X))\left\langle\boldsymbol{x},\boldsymbol{x}\right\rangle&=	\left\langle\boldsymbol{x},L_k^{\omega}(X)\boldsymbol{x}\right\rangle\\&=	\left\langle\boldsymbol{x},\mathcal{D}^{\omega}_k\boldsymbol{x}\right\rangle+\left\langle\boldsymbol{x},\rho^{\omega *}_k\rho_k\boldsymbol{x}\right\rangle\\&\ge \min_{\sigma\in X(k)}	\left(D^{\omega}_k\right)_{\sigma,\sigma}\left\langle\boldsymbol{x},\boldsymbol{x}\right\rangle+\left\langle\rho_k\boldsymbol{x},\rho_k\boldsymbol{x}\right\rangle\\&\ge 	\lambda_1^{\uparrow}(L_k^{\omega}(X))\left\langle\boldsymbol{x},\boldsymbol{x}\right\rangle,
\end{aligned}\]
where the last inequality follows from \eqref{eq-md}.
Therefore, all of the above inequalities are equalities, i.e., $\left\langle\boldsymbol{x},\mathcal{D}^{\omega}_k\boldsymbol{x}\right\rangle=\lambda_1^{\uparrow}(L_k^{\omega}(X))\left\langle\boldsymbol{x},\boldsymbol{x}\right\rangle$ and $\rho_k\boldsymbol{x}=\boldsymbol{0}$. 
It follows that \[\left(D^{\omega}_k\right)_{\sigma^{\prime},\sigma^{\prime}}= \min_{\sigma\in X(k)}	\left(D^{\omega}_k\right)_{\sigma,\sigma}=\lambda_1^{\uparrow}(L_k^{\omega}(X))\] 
for each $\sigma^{\prime}\in X(k)$ with $x_{\sigma^{\prime}}\ne 0$. 
Since $\left|x_{\sigma^{\prime}}\right|=1\ne 0$
for all $\sigma^{\prime} \in X(k) \setminus \left\{\sigma_0\right\}$ with $\left\{\sigma^{\prime}, \sigma_0\right\}\in E_k(X)$, we have $\left(D^{\omega}_k\right)_{\sigma^{\prime},\sigma^{\prime}}=\lambda_1^{\uparrow}(L_k^{\omega}(X))$ for such $\sigma^{\prime}$, which implies that all of the inequalities in \eqref{eq-3} are equalities. Consequently, for all $\sigma^{\prime} \in X(k) \setminus \left\{\sigma_0\right\}$ with $\left\{\sigma^{\prime}, \sigma_0\right\}\in E_k(X)$, we have $\operatorname{lk}_X(\sigma^{\prime})=\emptyset$ and 
\[\sum _{v\in\sigma^{\prime}}\omega(v)=\min _{\sigma\in X(k)}\sum_{v\in \sigma}\omega(v)=\sum _{v\in\sigma_0}\omega(v).\]
\end{proof}

\begin{claim}\label{c-1-2}
	For any $v\in V\setminus\sigma_0$ and $u\in M_{\sigma_0}(v)$, we have 
\[\omega(u)=\omega(v).\]
\end{claim}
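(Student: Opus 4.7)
The plan is to exploit Claim \ref{c-1-1}(ii), which says that every simplex $\sigma'$ related to $\sigma_0$ via the relation $\sim$ has the same vertex-weight sum as $\sigma_0$. The idea is: given $v\in V\setminus\sigma_0$ and $u\in M_{\sigma_0}(v)$, construct the right simplex $\sigma'\in X(k)$ which $\sim$-related to $\sigma_0$ so that comparing $\sum_{w\in \sigma'}\omega(w)$ with $\sum_{w\in\sigma_0}\omega(w)$ forces $\omega(u)=\omega(v)$.

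More explicitly, by the definition of $M_{\sigma_0}(v)$, the element $u\in M_{\sigma_0}(v)$ has the form $u=\sigma_0\setminus\eta$ for some $\eta\in N_{\sigma_0}(v)\subseteq \sigma_0(k-1)$, so $\eta=\sigma_0\setminus\{u\}$ and $v\in\operatorname{lk}_X(\eta)$. I set
\[
\sigma'\colon=(\sigma_0\setminus\{u\})\cup\{v\}=\eta\cup\{v\}.
\]
Since $v\in\operatorname{lk}_X(\eta)$, we have $\sigma'\in X(k)$. Moreover $|\sigma'\cap\sigma_0|=k$, with $\sigma'\setminus\sigma_0=\{v\}$ and $\sigma_0\setminus\sigma'=\{u\}$, and $\sigma'\cup\sigma_0=\sigma_0\cup\{v\}\notin X$ because Fact \ref{f-2} gives $\operatorname{lk}_X(\sigma_0)=\emptyset$. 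Therefore $\{\sigma',\sigma_0\}\in E_k(X)$.

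Now Claim \ref{c-1-1}(ii) applied to $\sigma'$ yields
\[
\sum_{w\in\sigma'}\omega(w)=\sum_{w\in\sigma_0}\omega(w).
\]
On the other hand, from $\sigma'=(\sigma_0\setminus\{u\})\cup\{v\}$, one gets
\[
\sum_{w\in\sigma'}\omega(w)=\sum_{w\in\sigma_0}\omega(w)-\omega(u)+\omega(v),
\]
so comparing the two expressions forces $\omega(u)=\omega(v)$, as desired.

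The argument is essentially immediate once one has the construction of $\sigma'$; there is no real obstacle to overcome, provided we have carefully verified that $\sigma'\in X(k)$ and $\{\sigma',\sigma_0\}\in E_k(X)$ so that Claim \ref{c-1-1}(ii) applies. The only subtlety is ensuring $\sigma'\cup\sigma_0\notin X$, which is exactly where Fact \ref{f-2} ($\operatorname{lk}_X(\sigma_0)=\emptyset$) is used.
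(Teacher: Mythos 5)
Your proposal is correct and follows essentially the same route as the paper: define $\tau=(\sigma_0\setminus\{u\})\cup\{v\}$, note $\tau\in X(k)$ by the definition of $M_{\sigma_0}(v)$, use Fact \ref{f-2} to get $\{\tau,\sigma_0\}\in E_k(X)$, and then apply Claim \ref{c-1-1}(ii) to equate the weight sums. The verification that $\sigma'\cup\sigma_0\notin X$ via $\operatorname{lk}_X(\sigma_0)=\emptyset$ is exactly the step the paper also relies on.
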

\begin{proof}[\rm\textbf{Proof of Claim \ref{c-1-2}}]
Let $v\in V\setminus\sigma_0$, $u\in M_{\sigma_0}(v)$, and define \[\tau\colon=\left(\sigma_0\setminus\{u\}\right)\cup \{v\}.\]
By the definition of $M_{\sigma_0}(v)$, we have $\tau\in X(k)$. Noticing that $|\sigma_0\cap\tau|=|\sigma_0\setminus\{u\}|=k$, it follows from Fact \ref{f-2} that $\{\tau,\sigma_0\}\in E_k(X)$. By Claim \ref{c-1-1}, we have \[\sum_{v^{\prime}\in\tau}\omega(v^{\prime})=\sum _{v^{\prime}\in\sigma_0}\omega(v^{\prime}).\] Therefore, \[\omega(u)=\sum _{v^{\prime}\in\sigma_0}\omega(v^{\prime})-\sum _{v^{\prime}\in\sigma_0\cap\tau}\omega(v^{\prime})=\sum _{v^{\prime}\in\tau}\omega(v^{\prime})-\sum _{v^{\prime}\in\sigma_0\cap\tau}\omega(v^{\prime})=\omega(v).\]
\end{proof}

\begin{claim}\label{c-2}
	Let $\sigma, \eta \in X(k)$ be such that $\left|\sigma \cap \sigma_0\right|=\left|\eta \cap \sigma_0\right|=|\sigma \cap \eta|=k$. Then
	\[\operatorname{sgn}\left( \sigma_0 \cap \sigma,\sigma_0\right) \operatorname{sgn}\left(\sigma_0 \cap \sigma,\sigma\right) \operatorname{sgn}\left( \sigma_0 \cap \eta,\sigma_0\right) \operatorname{sgn}\left(\sigma_0 \cap \eta,\eta\right) \operatorname{sgn}(\sigma \cap \eta,\sigma) \operatorname{sgn}(\sigma \cap \eta,\eta)=-1 .\]
\end{claim}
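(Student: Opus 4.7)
My approach is to translate the claimed sign product into a chain-level identity and extract it from the nilpotency relation $\partial_k\circ\partial_{k+1}=0$. Writing $\sigma_0\setminus\sigma=\{a\}$, $\sigma\setminus\sigma_0=\{b\}$, $\sigma_0\setminus\eta=\{c\}$, $\eta\setminus\sigma_0=\{d\}$ (which are singletons by the intersection hypotheses), a direct set-theoretic expansion gives
\[
\sigma\cap\eta=\left(\sigma_0\setminus\{a,c\}\right)\cup\left(\{b\}\cap\{d\}\right),
\]
so the condition $|\sigma\cap\eta|=k$ forces exactly one of: (i) the ``triangle'' configuration $a\ne c$, $b=d$, in which $\tilde{\sigma}\colon=\sigma_0\cup\sigma\cup\eta$ has cardinality $k+2$ and $\sigma_0,\sigma,\eta$ are three distinct $k$-faces of $\tilde{\sigma}$; or (ii) the ``fan'' configuration $a=c$, $b\ne d$, in which all three pairwise intersections coincide with the single $(k-1)$-face $\sigma_0\setminus\{a\}$.

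For configuration (i) I would view $\tilde{\sigma}$ as a formal $(k+1)$-subset of $V$ inside the ambient simplex $\Delta_{n-1}$ and apply $\partial_k\partial_{k+1}([\tilde{\sigma}])=0$. With $\sigma_0=\tilde{\sigma}\setminus\{b\}$, $\sigma=\tilde{\sigma}\setminus\{a\}$, $\eta=\tilde{\sigma}\setminus\{c\}$, each of the three $(k-1)$-faces $\sigma_0\cap\sigma=\tilde{\sigma}\setminus\{a,b\}$, $\sigma_0\cap\eta=\tilde{\sigma}\setminus\{b,c\}$ and $\sigma\cap\eta=\tilde{\sigma}\setminus\{a,c\}$ appears with total coefficient zero in $\partial_k\partial_{k+1}[\tilde{\sigma}]$, yielding three sign identities of the form $\operatorname{sgn}(F,\sigma')\operatorname{sgn}(F,\sigma'')=-\operatorname{sgn}(\sigma',\tilde{\sigma})\operatorname{sgn}(\sigma'',\tilde{\sigma})$, one for each pair $\{\sigma',\sigma''\}\subseteq\{\sigma_0,\sigma,\eta\}$ sharing the face $F$. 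Multiplying the three identities together, each factor $\operatorname{sgn}(\sigma_0,\tilde{\sigma})$, $\operatorname{sgn}(\sigma,\tilde{\sigma})$, $\operatorname{sgn}(\eta,\tilde{\sigma})$ appears exactly twice (and squares to $1$), while the three minus signs combine to $(-1)^3=-1$, which is precisely the stated identity.

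The main obstacle is configuration (ii). Using the auxiliary position identity $\operatorname{sgn}(\sigma\cap\tau,\sigma)\operatorname{sgn}(\sigma\cap\tau,\tau)=(-1)^{\varepsilon(\sigma,\tau)}$, a short direct check shows that the six factors in (ii) pair off as $\operatorname{sgn}(\sigma_0\setminus\{a\},\sigma_0)^2\operatorname{sgn}(\sigma\setminus\{b\},\sigma)^2\operatorname{sgn}(\eta\setminus\{d\},\eta)^2=+1$, so the $\partial^2=0$ argument cannot apply in this case. I therefore expect the claim to be invoked only for triples satisfying the additional constraint $|\sigma_0\cup\sigma\cup\eta|=k+2$, a restriction that should be derivable from the downstream use of Claim \ref{c-2} together with the sign-constancy $x_\sigma=(-1)^{\varepsilon(\sigma,\sigma_0)}$ and the relation $\rho_k\boldsymbol{x}=\boldsymbol{0}$ established just prior to the claim; under this reduction, the triangle argument above completes the proof.
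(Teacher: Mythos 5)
There is a genuine gap. Your case analysis is correct: under the hypotheses, either $|\sigma_0\cup\sigma\cup\eta|=k+2$ (your ``triangle'' configuration) or all three pairwise intersections equal a single $(k-1)$-set $\sigma_0\setminus\{a\}$ (your ``fan'' configuration), and your $\partial_k\partial_{k+1}=0$ argument correctly yields the sign product $-1$ in the first configuration. But your proposed escape from the second configuration --- that the claim ``should be invoked only for triples'' of the first type --- is exactly backwards: in the paper the claim is applied \emph{precisely} to a fan-configuration triple. In the proof of Claim \ref{c-3} one takes $\sigma=\{u\}\cup(\sigma_0\setminus\{w\})$ and $\eta=\{v\}\cup(\sigma_0\setminus\{w\})$ with $w\in M_{\sigma_0}(u)\cap M_{\sigma_0}(v)$, computes (as you did) that the combinatorial sign product is $+1$, and concludes from Claim \ref{c-2} that such a triple cannot exist. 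If Claim \ref{c-2} were only established in the triangle configuration, that contradiction argument would collapse, and with it the disjointness of the sets $M_{\sigma_0}(v)$ on which the entire structural classification rests. So the fan case is not a degenerate case to be excluded; it is the case that carries the load.

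The missing step is to prove the identity in the fan configuration as well, and you already name the two ingredients needed: since $\{\sigma,\sigma_0\}$ and $\{\eta,\sigma_0\}$ lie in $E_k(X)$ (because $\operatorname{lk}_X(\sigma_0)=\emptyset$ by Fact \ref{f-2}), Case 1 gives $x_\sigma=(-1)^{\varepsilon(\sigma,\sigma_0)}=\operatorname{sgn}(\sigma_0\cap\sigma,\sigma_0)\operatorname{sgn}(\sigma_0\cap\sigma,\sigma)$ and likewise for $\eta$; and since $\operatorname{lk}_X(\sigma)=\operatorname{lk}_X(\eta)=\emptyset$ by Claim \ref{c-1-1}(ii), the pair $\{\sigma,\eta\}$ lies in $E_k(X)$, so the $\{\sigma,\eta\}$-component of $\rho_k\boldsymbol{x}=\boldsymbol{0}$ reads $\operatorname{sgn}(\sigma\cap\eta,\sigma)x_\sigma+\operatorname{sgn}(\sigma\cap\eta,\eta)x_\eta=0$, i.e.\ $\operatorname{sgn}(\sigma\cap\eta,\sigma)\operatorname{sgn}(\sigma\cap\eta,\eta)\,x_\sigma x_\eta=-1$. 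Multiplying these three relations gives the claimed product $-1$ uniformly, with no case split; this is the paper's proof. The fact that it contradicts the $+1$ you compute combinatorially in the fan configuration is not a defect --- it is the mechanism by which Claim \ref{c-3} rules that configuration out of $X$. Your triangle-case computation is a nice purely combinatorial alternative there, but the claim cannot be reduced to that case.
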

\begin{proof}[\rm\textbf{Proof of Claim \ref{c-2}}]
By Fact \ref{f-2}, $\operatorname{lk}_X(\sigma_0)=\emptyset$. This implies $\sigma \cup \sigma_0 \notin X(k+1)$ and $\eta \cup \sigma_0 \notin X(k+1)$. Thus, $\left\{\sigma,\sigma_0\right\},\left\{\eta,\sigma_0\right\}\in E_k(X)$. 
Then from the assumtion of Case 1, we have
\begin{equation}\label{eq-4}
	\left\{\begin{array}{l}
		x_\sigma=(-1)^{\varepsilon(\sigma, \sigma_0)}=\operatorname{sgn}\left( \sigma_0 \cap \sigma,\sigma_0\right) \operatorname{sgn}\left(\sigma_0 \cap \sigma,\sigma\right), \\
		x_\eta=(-1)^{\varepsilon(\eta, \sigma_0)}=\operatorname{sgn}\left( \sigma_0 \cap \eta,\sigma_0\right) \operatorname{sgn}\left(\sigma_0 \cap \eta,\eta\right).
	\end{array}\right.
\end{equation}
From Claim \ref{c-1-1} (ii), we have
$\operatorname{lk}_X(\sigma)=\operatorname{lk}_X(\eta)=\emptyset$, which implies 
$\sigma \cup \eta \notin X(k+1)$, and thereby $\left\{\sigma ,\eta\right\}\in E_k(X)$. 
It follows from Claim \ref{c-1-1} (i) that $\rho_k\boldsymbol{x}=\boldsymbol{0}$. Therefore, the component of $\rho_k\boldsymbol{x}$ indexed by $\{\sigma,\eta\}$ satisfies \[0=\left(\rho_k\boldsymbol{x}\right)_{\{\sigma, \eta\}}=\operatorname{sgn}(\sigma \cap \eta,\sigma)x_\sigma +\operatorname{sgn}(\sigma \cap \eta,\eta)x_\eta,\] i.e.,
\begin{equation}\label{eq-8}
	\operatorname{sgn}(\sigma \cap \eta,\sigma)\operatorname{sgn}(\sigma \cap \eta,\eta) x_\sigma x_\eta=-1.
\end{equation}
Combining the identities in \eqref{eq-4} and \eqref{eq-8}, we immediately deduce the claim.
\end{proof}

\begin{claim}\label{c-3}
	$M_{\sigma_0}(u) \cap M_{\sigma_0}(v)=\emptyset$ for any two distinct $u, v \in V \setminus \sigma_0$.
\end{claim}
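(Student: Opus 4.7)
The natural strategy is proof by contradiction using Claim \ref{c-2}. Suppose there exist distinct $u,v \in V\setminus\sigma_0$ with a common element $w \in M_{\sigma_0}(u)\cap M_{\sigma_0}(v)$. Unfolding the definitions of $N_{\sigma_0}$ and $M_{\sigma_0}$, this precisely means that $w\in \sigma_0$ and both
\[
\sigma\colon=(\sigma_0\setminus\{w\})\cup\{u\}\qquad\text{and}\qquad \eta\colon=(\sigma_0\setminus\{w\})\cup\{v\}
\]
belong to $X(k)$. Since $u\ne v$, these three $k$-simplices $\sigma_0,\sigma,\eta$ are pairwise distinct, and each pair intersects in exactly the $(k-1)$-face $\sigma_0\setminus\{w\}$. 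Hence they satisfy the hypotheses of Claim \ref{c-2}, so the product of the six sign factors displayed there must equal $-1$.

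The plan is then to compute each of those six signs directly from the definition $\operatorname{sgn}(\alpha\cap\beta,\alpha) = (-1)^{j-1}$, where $j$ is the position within $\alpha$ of the vertex removed. Because all three pairwise intersections equal $\sigma_0\setminus\{w\}$, the removed vertex is always one of $w$ (from $\sigma_0$), $u$ (from $\sigma$), or $v$ (from $\eta$). Letting $p_w$, $p_u$, $p_v$ denote the positions of $w$ in $\sigma_0$, of $u$ in $\sigma$, and of $v$ in $\eta$ respectively, the six factors are
\[
\operatorname{sgn}(\sigma_0\cap\sigma,\sigma_0)(-1)^{p_u-1}\cdot \operatorname{sgn}(\sigma_0\cap\eta,\sigma_0)(-1)^{p_v-1}\cdot (-1)^{p_u-1}(-1)^{p_v-1},
\]
with $\operatorname{sgn}(\sigma_0\cap\sigma,\sigma_0)=\operatorname{sgn}(\sigma_0\cap\eta,\sigma_0)=(-1)^{p_w-1}$. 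Each of $p_w$, $p_u$, $p_v$ therefore contributes to exactly two of the six factors, so the total product is $(-1)^{2(p_w-1)+2(p_u-1)+2(p_v-1)}=+1$.

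This contradicts the conclusion of Claim \ref{c-2} (which forces the product to be $-1$), so no such $w$ exists and the claim follows. The only subtlety—and the step worth writing carefully—is keeping track of which vertex is deleted in each of the six signs and verifying that Claim \ref{c-2} genuinely applies; in particular, one must ensure $\sigma\ne\eta$ (which is immediate from $u\ne v$) and $\sigma,\eta\ne\sigma_0$ (immediate from $u,v\notin\sigma_0$), so all three $k$-simplices are pairwise distinct and the intersections indeed have size exactly $k$.
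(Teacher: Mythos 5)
Your proof is correct and follows essentially the same route as the paper: assume a common element $w$, form $\sigma=(\sigma_0\setminus\{w\})\cup\{u\}$ and $\eta=(\sigma_0\setminus\{w\})\cup\{v\}$, and contradict Claim \ref{c-2} by showing the six-fold sign product equals $+1$. Your direct computation (each of the three deleted-vertex positions $p_w,p_u,p_v$ contributes to exactly two of the six factors, so the product is a perfect square) is if anything slightly cleaner than the paper's device of fixing a convenient total order with $w\prec u\prec v\prec z$, since it works for the originally fixed ordering without any appeal to order-invariance of the product.
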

\begin{proof}[\rm\textbf{Proof of Claim \ref{c-3}}]
Suppose to the contrary that $M_{\sigma_0}(u) \cap M_{\sigma_0}(v)\ne \emptyset$ for some two distinct  $u, v \in V \setminus \sigma_0$.
Let $w \in M_{\sigma_0}(u) \cap M_{\sigma_0}(v)$, and set $\sigma=\{u\} \cup\left(\sigma_0 \setminus\{w\}\right)$, $\eta=\{v\} \cup\left(\sigma_0 \setminus\{w\}\right)$. By definition, $\sigma, \eta \in X(k)$ and $\left|\sigma \cap \sigma_0\right|=\left|\eta \cap \sigma_0\right|=|\sigma \cap \eta|=k$. Fix a total order $\prec$ on $V$ such that $w \prec u \prec v \prec z$ for all $z \in \sigma_0 \setminus\{w\}$. It is easy to see that
\[\operatorname{sgn}\left( \sigma_0 \cap \sigma,\sigma_0\right) \operatorname{sgn}\left(\sigma_0 \cap \sigma,\sigma\right) \operatorname{sgn}\left( \sigma_0 \cap \eta,\sigma_0\right) \operatorname{sgn}\left(\sigma_0 \cap \eta,\eta\right) \operatorname{sgn}(\sigma \cap \eta,\sigma) \operatorname{sgn}(\sigma \cap \eta,\eta)=1^6=1,\]
which contradicts Claim \ref{c-2}. Hence, $M_{\sigma_0}(u) \cap M_{\sigma_0}(v)=\emptyset$.
\end{proof}

Define \[B\colon=\cup_{v \in V \setminus \sigma_0} M_{\sigma_0}(v)\text{  and  } A\colon =\sigma_0\setminus B.\]
Let $r=|A|$. Clearly, $X[A] \cong \Delta_{r-1}$. By Fact \ref{f-3}, we have \[r=|A|=k+1-(n-k-1) d=(d+1)(k+1)-d n.\]
Let $V\setminus \sigma_0\colon=\left\{v_1,v_2,\dots, v_{n-k-1}\right\}$ and \[Y\colon=X\left[\{v_1\} \cup M_{\sigma_0}(v_1)\right]*X\left[\{v_2\} \cup M_{\sigma_0}(v_2)\right]*\dots*X\left[\{v_{n-k-1}\} \cup M_{\sigma_0}(v_{n-k-1})\right].\]
By Claim \ref{c-1}, for every $v\in V\setminus\sigma_0$, we have $X\left[\{v\} \cup M_{\sigma_0}(v)\right] \cong \Delta_d^{(d-1)}$, and thus
\[Y \cong\left(\Delta_d^{(d-1)}\right)^{*(n-k-1)} \text{ and }\operatorname{dim}(Y)=d(n-k-1)-1=k-r.\]
Set $X^{\prime}=Y * X[A]$. Then 
\[X^{\prime} \cong\left(\Delta_d^{(d-1)}\right)^{*(n-k-1)} * \Delta_{r-1} \text{ and }\operatorname{dim}\left(X^{\prime}\right)=d(n-k-1)+ r-1=k.\] Moreover, by Proposition \ref{prop-1}, Claim \ref{c-1-2}, Claim \ref{c-3} and Fact \ref{f-3}, we have 
\begin{align*}
   \lambda_1^{\uparrow}\left(L_{k}^{\omega}\left(X^{\prime}\right)\right)&=\sum_{v\in A}\omega(v)\\&=\sum_{v\in \sigma_0}\omega(v)-\sum_{v\in V\setminus\sigma_0}\sum_{u\in M_{\sigma_0}(v)}\omega(u)\\&=\sum_{v\in \sigma_0}\omega(v)-\sum_{v\in V\setminus\sigma_0}\sum_{u\in M_{\sigma_0}(v)}\omega(v)\\&=\sum_{v\in \sigma_0}\omega(v)-d\sum_{v\in V\setminus\sigma_0}\omega(v)\\&=(d+1)\sum_{v\in \sigma_0}\omega(v)-d\sum_{v\in V}\omega(v)\\&=(d+1)\min _{\sigma\in X(k)}\sum_{v\in \sigma}\omega(v)-d\sum_{v\in V}\omega(v)\\&=\lambda_1^{\uparrow}(L_k^{\omega}(X)) .
\end{align*}
By Claim \ref{c-1} and Claim \ref{c-3}, we have $X \subseteq X^{\prime}$. Therefore, $k \le \operatorname{dim}(X) \le \operatorname{dim}\left(X^{\prime}\right)=k$, which implies $\operatorname{dim}(X)=\operatorname{dim}\left(X^{\prime}\right)=k$.

Next we show that $X=X^{\prime}$. Suppose to the contrary that $X\subsetneq X^{\prime}$. Noticing that the fact $\operatorname{dim}(X)=k$, it follows that $L_k^{\omega,\operatorname{up}}(X)=0$, and thus $L^{\omega}_k(X)=L_k^{\omega,\operatorname{down}}(X)$.
 By Lemma \ref{lem-4}, its matrix representation $L^{\omega}_k(X)$ satisfies
\[L^{\omega}_k(X)_{\sigma, \tau}= \begin{cases}\sum _{v \in \sigma} \omega(v), & \text { if } \sigma=\tau, \\[3mm] (-1)^{\varepsilon(\sigma, \tau)} \omega(\tau \setminus \sigma), & \text { if }|\sigma \cap \tau|=k,\\[3mm] 0, & \text { otherwise, }\end{cases}\]
which implies that $L^{\omega}_k(X)$ is a principal submatrix of $L^{\omega}_k(X^{\prime})$. Note that \[\lambda_1^{\uparrow}(L_k^{\omega}(X))=\sum _{v\in A}\omega(v)=\lambda_1^{\uparrow}\left(L_{k}^{\omega}\left(X^{\prime}\right)\right).\] By Corollary \ref{cor-06}, we obtain a contradiction provided that the following claim holds:
\begin{claim}\label{c-4}
	Every eigenvector of $L^{\omega}_k\left(X^{\prime}\right)$ corresponding to $\lambda_1^{\uparrow}\left(L_{k}^{\omega}\left(X^{\prime}\right)\right)=\sum _{v\in V\left(\Delta_{r-1}\right)}\omega(v)$ has no zero entries.
\end{claim}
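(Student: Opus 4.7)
The plan is to exploit the join structure of $X' = \left(\Delta_d^{(d-1)}\right)^{*(n-k-1)} * \Delta_{r-1}$ via Theorem \ref{thm-j}, pinpoint the unique tensor block responsible for the smallest eigenvalue, and then show by direct expansion in the simplex basis that the resulting one-dimensional eigenspace is coordinate-wise nonvanishing.

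Write $X' = X_1 * \cdots * X_{n-k}$, where $X_j = \Delta_d^{(d-1)}$ on an ordered vertex set $V_j$ for $j \in [n-k-1]$ and $X_{n-k} = \Delta_{r-1}$ on vertex set $V_{n-k}$. Applying Theorem \ref{thm-j} together with Corollary \ref{cor-2}, one observes that $\mathbf{s}_{i_j}^{\omega_j}\!\left(\Delta_d^{(d-1)}\right)$ takes only the value $\sum_{v\in V_j}\omega(v)$, with at most one additional zero which occurs exactly when $i_j = d-1$, while $\mathbf{s}_{i_{n-k}}^{\omega_{n-k}}(\Delta_{r-1}) = \{\sum_{v\in V_{n-k}}\omega(v)\}$ for every admissible $i_{n-k}$. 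Since each summand $\lambda_j$ of $\sum_{j=1}^{n-k}\lambda_j$ is nonnegative while the last summand is pinned at $\sum_{v\in V_{n-k}}\omega(v)$, achieving the minimum forces $\lambda_j = 0$ for every $j \in [n-k-1]$; this in turn forces $i_j = d-1$ for $j \in [n-k-1]$, and the constraint $\sum_j i_j = k - (n-k-1)$ then gives $i_{n-k} = r-1$. Hence the minimum eigenvalue is realized inside the single tensor block $\bigotimes_{j=1}^{n-k-1} C^{d-1}(X_j) \otimes C^{r-1}(X_{n-k})$.

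Next I would identify the $\lambda_1^\uparrow$-eigenspace. By the derivation formula \eqref{eq-j} used in the proof of Theorem \ref{thm-j}, inside this block $L_k^\omega(X')$ acts by $\sum_j I \otimes \cdots \otimes L_{d-1}^{\omega_j}(X_j) \otimes \cdots \otimes I$ (with the final summand contributing the scalar $\sum_{v\in V_{n-k}}\omega(v)$), so its $\lambda_1^\uparrow$-eigenspace is
\[
\ker L_{d-1}^{\omega_1}(X_1) \otimes \cdots \otimes \ker L_{d-1}^{\omega_{n-k-1}}(X_{n-k-1}) \otimes C^{r-1}(X_{n-k}).
\]
By Theorem \ref{thm-6} together with Lemma \ref{lem-8}, each $\ker L_{d-1}^{\omega_j}(\Delta_d^{(d-1)}) \cong \widetilde{H}^{d-1}(S^{d-1};\mathbb{R}) \cong \mathbb{R}$; the factor $C^{r-1}(\Delta_{r-1};\mathbb{R})$ is also one-dimensional (spanned by $e_{V_{n-k}}$), so the eigenspace is one-dimensional. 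Labelling $V_j = \{u_0^{(j)} < \cdots < u_d^{(j)}\}$ and writing $\sigma_i^{(j)} \colon= V_j \setminus \{u_i^{(j)}\}$, a direct check against every basis element $e_\tau$ of $C^{d-2}(X_j)$ (using Lemma \ref{lem-4}, the adjoint formula, and the fact that $L_{d-1}^{\omega_j,\operatorname{up}}(X_j)=0$) verifies that
\[
f_j \colon= \sum_{i=0}^d (-1)^i \omega_j\!\left(u_i^{(j)}\right) e_{\sigma_i^{(j)}}
\]
generates $\ker L_{d-1}^{\omega_j}(X_j)$, and all its coefficients are nonzero. The eigenspace is therefore spanned by $\boldsymbol{f}\colon= f_1 \otimes \cdots \otimes f_{n-k-1} \otimes e_{V_{n-k}}$.

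Finally I would expand $\boldsymbol{f}$ in the standard basis $\{e_\sigma : \sigma \in X'(k)\}$, obtaining
\[
\boldsymbol{f} = \sum_{(i_1, \ldots, i_{n-k-1}) \in \{0,\ldots,d\}^{n-k-1}} (-1)^{i_1+\cdots+i_{n-k-1}} \prod_{j=1}^{n-k-1} \omega_j\!\left(u_{i_j}^{(j)}\right) \cdot e_{\sigma_{i_1}^{(1)} \cup \cdots \cup \sigma_{i_{n-k-1}}^{(n-k-1)} \cup V_{n-k}}.
\]
The decisive combinatorial point — which I expect to be the pivotal step rather than a genuine obstacle — is that every $\sigma \in X'(k)$ arises in this way. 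Indeed, $\sigma \cap V_j \in X_j$ forces $|\sigma \cap V_j| \leq d$ for $j \in [n-k-1]$ and $|\sigma \cap V_{n-k}| \leq r$; summing and using $r = (d+1)(k+1) - dn$ gives $k+1 = |\sigma| \leq (n-k-1)d + r = k+1$, so each individual inequality is an equality, and $\sigma$ is determined by the choice of the $n-k-1$ omitted vertices $u_{i_j}^{(j)}\in V_j$. Consequently, the coordinate of $e_\sigma$ in $\boldsymbol{f}$ is a nonzero product of vertex weights for every $\sigma \in X'(k)$; since the eigenspace is one-dimensional, every $\lambda_1^\uparrow$-eigenvector of $L_k^\omega(X')$ is a nonzero scalar multiple of $\boldsymbol{f}$ and therefore has no zero entry.
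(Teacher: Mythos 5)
Your argument is correct, and it takes a genuinely different route from the paper. The paper's proof is by contradiction: it uses that $A\subseteq\sigma$ for every $\sigma\in X'(k)$ to establish the matrix identity $L^{\omega}_k(X')=L^{\omega}_{k-r}(Y)+\sum_{v\in A}\omega(v)\,I$, reduces the claim to eigenvectors of $L^{\omega}_{k-r}(Y)$ for the eigenvalue $0$, and then argues that a zero entry at $\tau_0$ would force (via Corollary \ref{cor-06}) $\lambda_1^{\uparrow}(L^{\omega}_{k-r}(Y\setminus\tau_0))=0$, contradicting the discrete Hodge theorem together with the fact that a proper subcomplex of a triangulation of $S^{k-r}$ has vanishing top cohomology. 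You instead work constructively through the join decomposition of Theorem \ref{thm-j}: you isolate the unique tensor block and the unique eigenvalue combination realizing the minimum, identify the $\lambda_1^{\uparrow}$-eigenspace as a one-dimensional tensor product of kernels, and exhibit an explicit generator $\boldsymbol{f}=f_1\otimes\cdots\otimes f_{n-k-1}\otimes e_{V_{n-k}}$ whose coordinates are all nonzero products of vertex weights. The counting step ($k+1=|\sigma|\le (n-k-1)d+r=k+1$ forces equality in each factor) correctly guarantees that every $k$-face of $X'$ is hit, and one can verify directly that each $f_j$ lies in $\ker L^{\omega_j}_{d-1}(\Delta_d^{(d-1)})$, whose dimension is exactly $1$ either by your Hodge/sphere argument or directly from Corollary \ref{cor-2}. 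Both proofs ultimately lean on the sphere structure of $(\Delta_d^{(d-1)})^{*(n-k-1)}$, but yours buys an explicit description of the extremal eigenvector (and of the full spectrum's bottom), at the cost of some tensor bookkeeping, while the paper's deletion argument is shorter and avoids computing any eigenvector. One minor point worth flagging: in the degenerate case $r=0$ (i.e. $A=\emptyset$) the last factor is $\Delta_{-1}$ and the pinned summand is $0$ rather than a positive number; your argument still goes through verbatim, but it deserves a sentence.
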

\begin{proof}[\rm\textbf{Proof of Claim \ref{c-4}}]
	Recall that $X^{\prime}=Y * X[A]$ and $\operatorname{dim}\left(X^{\prime}\right)=k$, where \[Y \cong\left(\Delta_d^{(d-1)}\right)^{*(n-k-1)}\] with $\operatorname{dim}(Y)=k-r$ and $X[A] \cong \Delta_{r-1}$ with $\operatorname{dim}\left(X[A]\right)=r-1$.
This implies that $A \subseteq \sigma$ for every $\sigma \in X^{\prime}(k)$, and thus \[Y(k-r)=\left\{\sigma \setminus A\colon  \sigma \in X^{\prime}(k)\right\}.\] Therefore, for any $\sigma, \tau \in X^{\prime}(k)$, \[\left\{\sigma \setminus A, \tau \setminus A\right\}\in E_{k-r}(Y) \text{ if and only if }\left\{\sigma, \tau\right\}\in E_k\left(X^{\prime}\right).\]
Moreover, since $\operatorname{dim}(Y)=k-r$ and $\operatorname{dim}\left(X^{\prime}\right)=k$, we have \[\operatorname{lk}_Y(\sigma \setminus A)=\operatorname{lk}_{X^{\prime}}(\sigma)=\emptyset\] for all $\sigma \in X^{\prime}(k)$. 
It follows that
\[E_k\left(X^{\prime}\right)\colon=\left\{{\left\{\sigma, \tau\right\}}\colon \sigma,\tau \in X(k),|\sigma\cap\tau|=k\right\}\]
and \[E_k\left(Y\right)\colon=\left\{{\left\{\sigma \setminus A, \tau \setminus A\right\}}\colon \left\{\sigma, \tau\right\}\in E_k\left(X^{\prime}\right)\right\}. \] 
Fix an ordering $\prec$ on $V\left(X^{\prime}\right)$ such that $u \prec v$ whenever $v \in A$ and $u \in V\left(X^{\prime}\right) \setminus A$.  
For any $\left\{\sigma, \tau\right\} \in E_k(X^{\prime})$, we have $\tau\setminus\sigma=\left(\tau\setminus A\right)\setminus\left(\sigma\setminus A\right)$ and $(-1)^{\varepsilon(\sigma, \tau)} =(-1)^{\varepsilon(\sigma\setminus A, \tau\setminus A)}$ under the ordering, which implies that 
\[L^{\omega}_k\left(X^{\prime}\right)_{\sigma, \tau}=(-1)^{\varepsilon(\sigma, \tau)}\omega\left( \tau\setminus\sigma\right)=(-1)^{\varepsilon(\sigma\setminus A, \tau\setminus A)}\omega\left( \left(\tau\setminus A\right)\setminus\left(\sigma\setminus A\right)\right)=L^{\omega}_{k-r}(Y)_{\sigma\setminus A, \tau\setminus A}.\] Consequently, the corresponding Laplacian matrices satisfy
	\[L^{\omega}_k\left(X^{\prime}\right)=L^{\omega}_{k-r}(Y)+\sum _{v\in A}\omega(v) I_{f_k(X^{\prime})},\]
    where $I_{f_k(X^{\prime})}$ is the $f_k(X^{\prime}) \times f_k(X^{\prime})$ identity matrix.
Combining this relation with $\lambda_1^{\uparrow}\left(L_{k}^{\omega}\left(X^{\prime}\right)\right)=\sum _{v\in A}\omega(v)$, we deduce that $\lambda_1^{\uparrow}\left(L_{k-r}^{\omega}\left(Y\right)\right)=0$, and that the matrices $L^{\omega}_k\left(X^{\prime}\right)$ and $L^{\omega}_{k-r}(Y)$ share exactly the same eigenvectors with respect to $\lambda_1^{\uparrow}\left(L_{k}^{\omega}\left(X^{\prime}\right)\right)$ and $\lambda_1^{\uparrow}\left(L_{k-r}^{\omega}\left(Y\right)\right)$, respectively. Therefore,
it suffices to show that every eigenvector of $L^{\omega}_{k-r}(Y)$ with respect to $\lambda_1^{\uparrow}\left(L_{k-r}^{\omega}\left(Y\right)\right)$ has no zero entries.

For the sake of contradiction, assume that $\operatorname{y}=\left(y_\tau\right)_{\tau \in Y(k-r)} \in \mathbb{R}^{f_{k-r}(Y)}$ is an eigenvector of $L^{\omega}_{k-r}(Y)$ with respect to $\lambda_1^{\uparrow}\left(L_{k-r}^{\omega}\left(Y\right)\right)$ such that $y_{\tau_0}=0$ for some $\tau_0 \in Y(k-r)$. By  Lemma \ref{lem-8}, Lemma \ref{lem-9} and Lemma \ref{lem-10}, we have that the polyhedron of $Y$ satisfies
 \[\|K^{Y}\| \cong \left\|K^{\left(\Delta_d^{(d-1)}\right)^{*(n-k-1)}}\right\| \cong\left\|K^{\Delta_d^{(d-1)}}\right\|^{*(n-k-1)} \cong\left(S^{d-1}\right)^{*(n-k-1)} \cong S^{d(n-k-1)-1}=S^{k-r}.\]
Hence, $Y$ is a triangulation of the $(k-r)$-dimensional sphere $S^{k-r}$. 
Let $Z=Y \setminus \tau_0$, which is a $(k-r)$-dimensional proper subcomplex of $Y$. Since any proper subcomplex of a triangulation of an $(k-r)$-dimensional sphere has trivial $(k-r)$-dimensional cohomology, it follows that $\widetilde{H}^{k-r}(Z ; \mathbb{R})=0$. By Theorem \ref{thm-6}, we have
	\begin{equation}\label{eq-6}
		\lambda_1^{\uparrow}\left(L_{k-r}^{\omega}\left(Z\right)\right)>0.
	\end{equation}
	Note that $\operatorname{dim}(Z)=\operatorname{dim}(Y)=k-r$ since $d=h(X)\ge 1$. It is easy to see that $L^{\omega}_{k-r}(Z)$ is a proper principal submatrix of $L^{\omega}_{k-r}(Y)$ indexed by $Y(k-r) \setminus\tau_0$. As $\operatorname{y}$ is an eigenvector of $L^{\omega}_{k-r}(Y)$ with respect to $\lambda_1^{\uparrow}\left(L_{k-r}^{\omega}\left(Y\right)\right)$ and satisfies $y_{\tau_0}=0$, it follows from Corollary \ref{cor-06} that 
	\[\lambda_1^{\uparrow}\left(L_{k-r}^{\omega}\left(Z\right)\right)=\lambda_1^{\uparrow}\left(L_{k-r}^{\omega}\left(Y\right)\right)=0,\]
which contradicts to \eqref{eq-6}. This completes the proof of Claim \ref{c-4}.
\end{proof}
Returning to the main argument, recall that $L^{\omega}_k(X)$ is a principal submatrix of $L^{\omega}_k(X^{\prime})$ and \[\lambda_1^{\uparrow}(L_k^{\omega}(X))=\lambda_1^{\uparrow}\left(L_k^{\omega}\left(X^{\prime}\right)\right).\]
 By Corollary \ref{cor-06}, $L^{\omega}_k(X^{\prime})$ has an eigenvector $\operatorname{x}$ with respect to $\lambda_1^{\uparrow}\left(L_k^{\omega}\left(X^{\prime}\right)\right)$ such that $x_{\tau}=0$ for all $\tau \notin X(k)$, which contradicts Claim \ref{c-4}.
Therefore, \[X=X^{\prime} \cong\left(\Delta_d^{(d-1)}\right)^{*(n-k-1)} * \Delta_{r-1}=\left(\Delta_d^{(d-1)}\right)^{*(n-k-1)} * \Delta_{(d+1)(k+1)-d n-1}.\] In particular, $\operatorname{dim}(X)=\operatorname{dim}\left(X^{\prime}\right)=k$.
 Hence, (i) holds.
  Combining (i) with Claim \ref{c-1-2}, we have that $\omega|_{V\left(\Delta_d^{(d-1)} \right)}$ is constant for each $\Delta_d^{(d-1)}$. Hence, (ii) also holds.
  
   We complete this proof.
\end{proof}
\section{ The $i$-smallest eigenvalue of vertex-weighted Laplacian operators of simplicial complexes}

Recall that the clique complex $X$ is the simplicial complex whose simplices correspond to all cliques of its underlying graph $G_X$. The \textit{independence complex} of a graph $G=(V, E)$ is the simplicial complex $I(G)$ on vertex set $V$ whose simplices are the independent sets in $G$. Clearly, it is precisely the clique complex of the complement graph $\overline{G}$.

In 2005, Aharoni, Berger, and Meshulam \cite{ABM2005} established a recursive inequality for the smallest eigenvalues of the combinatorial Laplacians of a clique complex on $n$ vertices.
\begin{theorem}[\cite{ABM2005}]
Let $X$ be a clique complex on $n$ vertices. Then  \[k\lambda_1^{\uparrow}\left(L_k(X)\right) \ge (k+1)\lambda_1^{\uparrow}\left(L_{k-1}(X)\right)-n.\]
\end{theorem}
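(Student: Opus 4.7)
The plan is to follow the classical contraction technique of Aharoni, Berger and Meshulam, adapted to the present notation. Write $\mu_j := \lambda_1^{\uparrow}(L_j(X))$ for brevity. If $\mu_{k-1} = 0$ the asserted inequality reduces to $k\mu_k \ge -n$, which is trivial since $L_k(X)$ is positive semidefinite; so assume throughout that $\mu_{k-1} > 0$. Under this assumption $\ker L_{k-1}(X) = 0$ and the Rayleigh quotient characterization of Theorem~\ref{thm-5} gives
\[
\mu_{k-1}\|g\|^2 \le \langle L_{k-1}(X) g, g\rangle \qquad \text{for every } g \in C^{k-1}(X;\mathbb{R}).
\]

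Let $f \in C^k(X;\mathbb{R})$ be a unit eigenfunction of $L_k(X)$ corresponding to $\mu_k$. For every vertex $v \in V$ I would introduce the \emph{contraction} $f_v \in C^{k-1}(X;\mathbb{R})$ defined by
\[
f_v(\tau) = \begin{cases} \operatorname{sgn}(\tau,\{v\}\cup\tau)\, f(\{v\}\cup\tau) & \text{if } v\notin\tau \text{ and } \{v\}\cup\tau \in X(k),\\ 0 & \text{otherwise}. \end{cases}
\]
A straightforward double-counting of incidences $(v,\sigma)$ with $v\in\sigma\in X(k)$ immediately yields the norm identity
\[
\sum_{v\in V}\|f_v\|^2 \;=\; \sum_{\sigma\in X(k)}(k+1)\,f(\sigma)^2 \;=\; (k+1)\|f\|^2.
\]

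The heart of the argument is the combinatorial inequality
\[
\sum_{v\in V}\langle L_{k-1}(X)\,f_v,\,f_v\rangle \;\le\; k\,\langle L_k(X)\,f,f\rangle \;+\; n\,\|f\|^2,
\]
which I would prove by decomposing $\langle L_{k-1}(X) f_v, f_v\rangle = \|d_{k-1} f_v\|^2 + \|d_{k-2}^* f_v\|^2$, expanding each norm as an explicit sum indexed by simplices, and collecting terms. The decisive point is that $X$ is a clique complex, i.e. $h(X)=1$: a set $\{v\}\cup\tau$ belongs to $X$ exactly when every edge $\{v,w\}$ with $w\in\tau$ is present in the underlying graph $G_X$. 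This closure property ensures that cross-terms arising from contractions at different vertices either cancel via sign analysis or reassemble into entries of $L_k(X)$; the coefficient $k$ on the right arises from each $k$-simplex being touched by $k+1$ different contractions, and the additive $n\|f\|^2$ slack absorbs the diagonal contributions indexed by the $n$ vertices.

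Combining the three displays via the Rayleigh bound applied to each $f_v$ gives
\[
(k+1)\mu_{k-1}\|f\|^2 = \mu_{k-1}\sum_{v\in V}\|f_v\|^2 \le \sum_{v\in V}\langle L_{k-1}(X)f_v,f_v\rangle \le k\mu_k\|f\|^2 + n\|f\|^2,
\]
and dividing by $\|f\|^2 = 1$ yields $k\mu_k \ge (k+1)\mu_{k-1} - n$, as required. The principal obstacle is verifying the middle combinatorial inequality: it demands a careful sign bookkeeping for how the contractions interact with both $d_{k-1}$ and $d_{k-2}^*$, and the clique hypothesis $h(X)=1$ is exactly what is needed to keep spurious non-simplex triples from appearing and to make the expansion telescope. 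Verifying tightness on $X=\Delta_{n-1}$ (where both sides equal $kn$ by Corollary~\ref{cor-2}) gives a useful sanity check on the identity and its signs.
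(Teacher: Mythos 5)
The paper itself gives no proof of this statement --- it is quoted verbatim from \cite{ABM2005} --- so your proposal can only be measured against the original Aharoni--Berger--Meshulam argument, and your skeleton is indeed exactly theirs: contract the unit eigenfunction $f$ of $L_k(X)$ at each vertex, use the norm identity $\sum_{v}\|f_v\|^2=(k+1)\|f\|^2$ (which you verify correctly by double counting), bound each $\langle L_{k-1}(X)f_v,f_v\rangle$ from below by $\lambda_1^{\uparrow}(L_{k-1}(X))\,\|f_v\|^2$ via Theorem~\ref{thm-5}, and close the loop with an upper bound on $\sum_v\langle L_{k-1}(X)f_v,f_v\rangle$. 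The assembly at the end is correct, and the opening case split on $\lambda_1^{\uparrow}(L_{k-1}(X))=0$ is harmless but unnecessary, since the Rayleigh bound $\langle L_{k-1}(X)g,g\rangle\ge\lambda_1^{\uparrow}(L_{k-1}(X))\|g\|^2$ holds for every $g$ whether or not the kernel is trivial.

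The genuine gap is that the one step carrying essentially all of the content, namely
$\sum_{v\in V}\langle L_{k-1}(X)f_v,f_v\rangle \le k\langle L_k(X)f,f\rangle + n\|f\|^2$,
is asserted rather than proven: you describe the expansion you would perform and give a heuristic for the coefficients, but you never carry out the sign bookkeeping, and everything surrounding this inequality is routine. The inequality is true, and it splits naturally along $L_{k-1}=L_{k-1}^{\operatorname{up}}+L_{k-1}^{\operatorname{down}}$ into a down-part identity $\sum_{v}\|d_{k-2}^{*}f_v\|^2=k\,\|d_{k-1}^{*}f\|^2$ that holds for \emph{every} simplicial complex, and an up-part estimate $\sum_{v}\|d_{k-1}f_v\|^2\le k\,\|d_kf\|^2+n\|f\|^2$ that is the only place where $h(X)=1$ enters; but your write-up does not establish either piece, nor even isolate them. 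That the clique hypothesis cannot be waved at is shown by $X=\Delta_2^{(1)}$ (the hollow triangle, $h(X)=2$): by Corollary~\ref{cor-2} one has $\lambda_1^{\uparrow}(L_1(X))=0$ and $\lambda_1^{\uparrow}(L_0(X))=3$ with $n=3$, so the claimed recursion would read $0\ge 3$. In other words, a careless version of your "cancel or reassemble" argument would go through equally well for non-flag complexes and prove a false statement; until the cross-term expansion is written out and the precise point where missing faces of dimension $\ge 2$ obstruct it is exhibited, the proof is incomplete. Your sanity check on $\Delta_{n-1}$ confirms the constants but cannot substitute for this verification.
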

As a corollary of the above inequality, they also obtained a lower bound for $\lambda_1^{\uparrow}\left(L_k(X)\right)$.
\begin{corollary}[\cite{ABM2005}]\label{cor-3}
	Let $X$ be a clique complex on $n$ vertices. Then
	\[
	\lambda_1^{\uparrow}\bigl(L_k(X)\bigr) \ge (k+1)\,\lambda_1^{\uparrow}\bigl(L(G_X) + J\bigr) - k n,
	\]
	where $L(G_X)$ denotes the Laplacian of the underlying graph $G_X$, and $J$ denotes the $n \times n$ matrix with all entries equal to $1$.
\end{corollary}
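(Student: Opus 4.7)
The plan is to derive the corollary by iterating the recursive inequality
\[
k\,\lambda_1^{\uparrow}\bigl(L_k(X)\bigr) \;\ge\; (k+1)\,\lambda_1^{\uparrow}\bigl(L_{k-1}(X)\bigr) - n
\]
of the preceding theorem. First I would divide both sides by $k(k+1)$ (the case $k=0$ being trivial) to convert the multiplicative recursion into an additive one:
\[
\frac{\lambda_1^{\uparrow}(L_k(X))}{k+1} \;\ge\; \frac{\lambda_1^{\uparrow}(L_{k-1}(X))}{k} - \frac{n}{k(k+1)}.
\]
Writing $a_j \colon= \lambda_1^{\uparrow}(L_j(X))/(j+1)$ and using the partial-fraction identity $\tfrac{1}{j(j+1)} = \tfrac{1}{j} - \tfrac{1}{j+1}$, I would telescope from $j=1$ to $j=k$ to obtain
\[
a_k \;\ge\; a_0 - n\sum_{j=1}^{k}\!\left(\frac{1}{j}-\frac{1}{j+1}\right) \;=\; a_0 - \frac{kn}{k+1},
\]
which is equivalent to $\lambda_1^{\uparrow}(L_k(X)) \ge (k+1)\,\lambda_1^{\uparrow}(L_0(X)) - kn$.

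The remaining step is to identify $L_0(X)$ with $L(G_X) + J$. By definition $L_0(X) = L_0^{\operatorname{up}}(X) + L_0^{\operatorname{down}}(X)$; the up-part $L_0^{\operatorname{up}}(X) = d_0^* d_0$ equals the combinatorial graph Laplacian $L(G_X)$ of the underlying graph, which follows at once from Lemma~\ref{lem-14} with $\omega\equiv 1$. For the down-part, a direct computation from the definition of $d_{-1}\colon C^{-1}(X,\mathbb{R}) \to C^0(X,\mathbb{R})$ gives $d_{-1}(e_{\emptyset}) = \sum_{v \in V} e_v$ and therefore, by the adjoint relation, $d_{-1}^*(e_v) = e_{\emptyset}$ for every vertex $v$. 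Hence every entry of the matrix representation of $d_{-1} d_{-1}^*$ equals $1$, so $L_0^{\operatorname{down}}(X) = J$. Substituting this identification into the bound above yields the desired inequality.

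The argument is essentially routine once the preceding theorem is in hand; I expect the only step requiring care to be the bookkeeping in the telescoping and the explicit computation of $L_0^{\operatorname{down}}(X)$ on the one-dimensional space $C^{-1}(X,\mathbb{R})$. No new spectral machinery is needed beyond what has already been set up.
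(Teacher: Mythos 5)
The paper states this corollary as a quoted result of Aharoni--Berger--Meshulam and gives no proof of its own, describing it only as a consequence of the preceding recursive inequality; your derivation --- dividing by $k(k+1)$, telescoping via $\tfrac{1}{j(j+1)}=\tfrac1j-\tfrac1{j+1}$ to get $\lambda_1^{\uparrow}(L_k(X))\ge (k+1)\lambda_1^{\uparrow}(L_0(X))-kn$, and then identifying $L_0^{\operatorname{up}}(X)=L(G_X)$ and $L_0^{\operatorname{down}}(X)=J$ --- is correct and is exactly the intended route. Both identifications check out against Lemma~\ref{lem-14} and Lemma~\ref{lem-4} with $\omega\equiv 1$, so I see no gap.
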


%In 2020, Lew \cite{L20202} and Shukla--Yogeshwaran \cite{SY2020} independently extended the recursive inequality for the smallest eigenvalues obtained by Aharoni et al., to general simplicial complexes.
%\begin{theorem}[\cite{L20202}]
% Let $X$ be a simplicial complex with $h(X)=d$, on vertex set $V$, where $|V|=n$. Then for $k \geq d$,
%\[(k-d+1)\lambda_1^{\uparrow}\left(L_k(X)\right) \ge (k+1)\lambda_1^{\uparrow}\left(L_{k-1}(X)\right)-dn.\]
%\end{theorem}
In 2024, Lew \cite{L2024} derived the following lower bounds for the $i$-th smallest eigenvalue of the vertex-weighted Laplacians of a clique complex $X$ using additive compound matrices. This result not only generalizes Corollary \ref{cor-3} to vertex-weighted Laplacians, but also provides bounds for all eigenvalues. Recall that the underlying graph $G_X$ has vertex set $V$ and edge set $X(1)$. For any vertex weight function $\omega$, the vertex-weighted Laplacian of $G_X$ is the matrix $L^\omega(G_X) \in \mathbb{R}^{\left|V\right|\times \left|V\right|}$ defined by
\[L^w(G_X)_{u, v}= \begin{cases}\sum_{u^{\prime} \in N_{G_X}(u)} \omega\left(u^{\prime}\right), & \text { if } u=v, \\ -\omega(v), & \text { if }v\in \operatorname{lk}_X(u), \\ 0, & \text { otherwise. }\end{cases}\]
Let $J^{\omega}$ be the $\lvert V \rvert \times \lvert V \rvert$ matrix with entries $J^{\omega}_{u,v} = \omega(v)$ for all $u,v \in V$.
\begin{theorem}[\cite{L2024}]\label{thm-12}
	Let $X$ be a clique complex on vertex set $V(X)$, and let $\omega\colon X \rightarrow \mathbb{R}_{\geq 0}$ be a vertex weight function of $X$. For $k \geq 0$ and $1 \leq i \leq f_k(X)$, 
	\[\lambda_i^{\uparrow}\left(L_k^{\omega}(X)\right) \geq S_{k+1, i}^{\uparrow}\left(L^{\omega}(G_X)+J^{\omega}\right)-k \sum_{v \in V} \omega(v).\]	
\end{theorem}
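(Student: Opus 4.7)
The plan is to realize $L_k^\omega(X) + k\bigl(\sum_{v\in V}\omega(v)\bigr)I$ as a principal submatrix of the $(k+1)$-th additive compound $M^{[k+1]}$ of $M\colon= L^\omega(G_X)+J^\omega$ plus a diagonal correction that is positive semidefinite, after which Weyl, Cauchy interlacing, and the additive compound spectrum formula deliver the claim. Using Theorem \ref{thm-9}, the rows and columns of $M^{[k+1]}$ are indexed by $\binom{V}{k+1}$; because $X$ is a clique complex, $X(k)$ is exactly the set of $(k+1)$-cliques of $G_X$, so it indexes a natural principal submatrix $M_X$ of $M^{[k+1]}$.

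The core computation is a term-by-term comparison of $M_X$ with the formula for $L_k^\omega(X)$ in Lemma \ref{lem-3}. For the off-diagonal entries indexed by $\sigma,\tau$ with $|\sigma\cap\tau|=k$, setting $\{i\}=\sigma\setminus\tau$ and $\{j\}=\tau\setminus\sigma$, I would check that $M_{i,j}=\omega(j)$ when $i,j$ are non-adjacent in $G_X$ (equivalently $\sigma\cup\tau\notin X(k+1)$, i.e., $\sigma\sim\tau$), and $M_{i,j}=0$ when they are adjacent; both cases match Lemma \ref{lem-3} exactly, so the off-diagonals of $M_X$ and $L_k^\omega(X)$ coincide. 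For the diagonal, double-counting
$\sum_{v\in\sigma}\sum_{u\in N_{G_X}(v)}\omega(u)$ by instead summing over $u\in V$ with weight $|N_{G_X}(u)\cap\sigma|$ gives
\[
(M^{[k+1]})_{\sigma,\sigma}-L_k^\omega(X)_{\sigma,\sigma}
=k\sum_{v\in\sigma\cup\operatorname{lk}_X(\sigma)}\omega(v)
+\sum_{u\notin\sigma\cup\operatorname{lk}_X(\sigma)}\omega(u)\,|N_{G_X}(u)\cap\sigma|.
\]
The key observation, which crucially uses that $X$ is a clique complex, is that if $u\notin\sigma\cup\operatorname{lk}_X(\sigma)$ then $u$ must fail to be adjacent to some vertex of $\sigma$, so $|N_{G_X}(u)\cap\sigma|\le k$; otherwise $\{u\}\cup\sigma$ would be a clique and hence a simplex, putting $u$ in $\operatorname{lk}_X(\sigma)$. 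Consequently the right-hand side is at most $k\sum_{v\in V}\omega(v)$, so one may write $L_k^\omega(X)=M_X+D-k\bigl(\sum_{v\in V}\omega(v)\bigr)I$ with $D$ a diagonal matrix having nonnegative entries.

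To conclude, I would apply Lemma \ref{lem-12} (Weyl) to $M_X+D$ to obtain $\lambda_i^\uparrow(M_X+D)\ge\lambda_i^\uparrow(M_X)+\lambda_1^\uparrow(D)\ge\lambda_i^\uparrow(M_X)$, then Corollary \ref{cor-1} (Cauchy interlacing for the principal submatrix $M_X$ of $M^{[k+1]}$) to get $\lambda_i^\uparrow(M_X)\ge\lambda_i^\uparrow(M^{[k+1]})$, and finally Theorem \ref{thm-10} to identify $\lambda_i^\uparrow(M^{[k+1]})=S_{k+1,i}^{\uparrow}(M)$. Undoing the scalar shift on the left then yields
\[
\lambda_i^\uparrow\bigl(L_k^\omega(X)\bigr)
\ge S_{k+1,i}^{\uparrow}\bigl(L^\omega(G_X)+J^\omega\bigr)-k\sum_{v\in V}\omega(v).
\]
The main obstacle is the diagonal bookkeeping in the previous paragraph: one must track precisely how the $J^\omega$ contribution inflates every diagonal entry of $M^{[k+1]}$, and verify that the clique-complex hypothesis is exactly what forces the surplus $\sum_u\omega(u)|N_{G_X}(u)\cap\sigma|$ to be absorbed by the uniform shift $k\sum_{v\in V}\omega(v)$; without the clique hypothesis a vertex $u\notin\operatorname{lk}_X(\sigma)$ could still satisfy $|N_{G_X}(u)\cap\sigma|=k+1$, breaking the bound.
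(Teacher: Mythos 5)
Your proof is correct, and it is essentially the specialization to clique complexes of the argument the paper gives for its generalization, Theorem \ref{thm-11}: there the matrix $Q^{\omega}$ plays the role of your $M_X$ (the principal submatrix of $(L^{\omega}(G_X)+J^{\omega})^{[k+1]}$ indexed by $X(k)$), the diagonal discrepancy $P^{\omega}$ is your $C$, and the conclusion follows by the same chain of Weyl's inequality (Lemma \ref{lem-12}), interlacing (Corollary \ref{cor-1}), and the additive compound spectrum (Theorem \ref{thm-10}). Your double-counting bound $|N_{G_X}(u)\cap\sigma|\le k$ for $u\notin\sigma\cup\operatorname{lk}_X(\sigma)$ is exactly where the paper's correction term $\max_{\sigma}\sum_j(j+1)\sum_{u\in\sigma[j]}\omega(u)$ vanishes in the clique-complex case.
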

Recently, Zhan, Huang, and Zhou \cite{ZHZ2025} employed elementary matrix techniques to extend the results of Theorem \ref{thm-12} to the combinatorial Laplacians of general simplicial complexes. They introduced a new notation $\sigma[j]$ to distinguish a simplicial complex $X$ from the clique complex of its underlying graph $G_X$.
For any $0\le k \le \operatorname{dim}(X)$, $0 \leq j \leq k+1$ and $\sigma \in X(k)$,
\[\sigma[j]\colon=\left\{u \in \bigcap_{v\in \sigma}\operatorname{lk}_X(v)\setminus \operatorname{lk}_X(\sigma)\colon \left|N_{\sigma}(u)\right|=j
\right\}.\]
One readily checks that $\{\sigma[0], \ldots, \sigma[k+1]\}$ constitutes a partition of the set
\[\bigcap_{v\in \sigma}\operatorname{lk}_X(v)\setminus \operatorname{lk}_X(\sigma).\]
Note that $\sigma[j]=\emptyset$ whenever $X$ is a clique complex, since in this case the condition $u \in \operatorname{lk}_X(v)$ for all $v \in \sigma$ already implies $u \in \operatorname{lk}_X(\sigma)$. 
Moreover, for clique complex, if the condition is removed, then the quantity $\max_{\sigma\in X(k)}|\sigma[j]|$ coincides with the parameter $D_k(X,j)$ originally introduced by Shukla and Yogeshwaran \cite{SY2020}.
\begin{theorem}[\cite{ZHZ2025}]\label{thm-12-1}
Let $X$ be a simplicial complex on $n$ vertices. Then, for $k \geq 0$ and $1 \leq i \leq f_k(X)$,
\[\lambda_i^{\uparrow}\left(L_k(X)\right) \geq S_{k+1, i}^{\uparrow}\left(L\left(G_X\right)+J\right)-k n-\max _{\sigma \in X(k)} \sum_{j=0}^{k+1}(j+1) \cdot|\sigma[j]|.\]   
\end{theorem}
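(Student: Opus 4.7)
The plan is to adapt Lew's additive-compound argument (Theorem \ref{thm-12}) from the clique-complex setting to arbitrary simplicial complexes, absorbing the discrepancy into a Ger\v{s}gorin-type correction controlled by the partition $\{\sigma[0],\ldots,\sigma[k+1]\}$. First, I set $A := L(G_X)+J$ and form the $(k+1)$-st additive compound $A^{[k+1]}$ via Theorem \ref{thm-9}; by Theorem \ref{thm-10} its eigenvalues are precisely the sums $S_{k+1,\cdot}^{\uparrow}(A)$. Let $M$ be the principal submatrix of $A^{[k+1]}$ indexed by $X(k)\subseteq\binom{[n]}{k+1}$. Since $A$ is real symmetric, so are $A^{[k+1]}$ and $M$, and Corollary \ref{cor-1} yields the ``free'' half of the bound:
\[
\lambda_i^{\uparrow}(M)\ge \lambda_i^{\uparrow}(A^{[k+1]})=S_{k+1,i}^{\uparrow}(L(G_X)+J).
\]

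Next I would compare $M$ with $L_k(X)$ entry by entry (via Theorem \ref{thm-9} and Lemma \ref{lem-3} with $\omega\equiv 1$). Their off-diagonal entries coincide except in the \emph{exceptional case}: $|\sigma\cap\tau|=k$, $\sigma\setminus\tau=\{i\}$, $\tau\setminus\sigma=\{j\}$, $\{i,j\}\in X(1)$, yet $\sigma\cup\tau\notin X(k+1)$; in this situation $|(M-L_k(X))_{\sigma,\tau}|=1$. On the diagonal,
\[
(M-L_k(X))_{\sigma,\sigma}=\sum_{v\in\sigma}\deg_{G_X}(v)-|\operatorname{lk}_X(\sigma)|.
\]
The heart of the proof is a uniform Ger\v{s}gorin bound on $M-L_k(X)$. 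I would split $\sum_{v\in\sigma}\deg_{G_X}(v)$ into the internal contribution $k(k+1)$ and the external contribution $\sum_{u\in V\setminus\sigma}|N_{G_X}(u)\cap\sigma|$; the vertices in $\operatorname{lk}_X(\sigma)$ and in $\bigcup_j\sigma[j]$ each contribute $k+1$ (full adjacency to $\sigma$), while every remaining external $u$ contributes at most $k$ (it has a non-neighbour in $\sigma$). Summing yields $(M-L_k(X))_{\sigma,\sigma}\le kn+\sum_{j=0}^{k+1}|\sigma[j]|$. For the off-diagonal row sum at $\sigma$, I will argue that an exceptional pair $(\sigma,\tau)$ with $\tau\setminus\sigma=\{u\}$ forces $u\in\sigma[j']$ for some $j'$: if $u$ has any non-neighbour $v\in\sigma$, then $(\sigma\setminus\{i\})\cup\{u\}\in X$ would force $i=v$, but then $\{i,u\}=\{v,u\}\notin X(1)$ violates the exceptional hypothesis. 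Moreover each $u\in\sigma[j']$ admits exactly $j'=|N_\sigma(u)|$ valid choices of $i$, so the off-diagonal row sum at $\sigma$ equals $\sum_{j'}j'|\sigma[j']|$, and the total Ger\v{s}gorin radius at $\sigma$ is at most $kn+\sum_j(j+1)|\sigma[j]|$.

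Finally, Theorem \ref{thm-4} yields $\lambda_1^{\downarrow}(M-L_k(X))\le kn+\max_{\sigma\in X(k)}\sum_{j=0}^{k+1}(j+1)|\sigma[j]|$, and Weyl's inequality (Theorem \ref{thm-8}) applied to the decomposition $L_k(X)=M-(M-L_k(X))$ gives
\[
\lambda_i^{\uparrow}(L_k(X))\ge \lambda_i^{\uparrow}(M)-\lambda_1^{\downarrow}(M-L_k(X))\ge S_{k+1,i}^{\uparrow}(L(G_X)+J)-kn-\max_{\sigma\in X(k)}\sum_{j=0}^{k+1}(j+1)|\sigma[j]|,
\]
as desired. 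The main technical obstacle I anticipate is the off-diagonal counting: one must track the three simultaneous conditions $\{i,j\}\in X(1)$, $(\sigma\setminus\{i\})\cup\{j\}\in X$, and $\sigma\cup\{j\}\notin X$ and verify that exceptional pairs are organized exactly by the $\sigma[j']$ partition, so that each $u\in\sigma[j']$ contributes precisely $j'$ such pairs.
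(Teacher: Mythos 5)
Your proposal is correct and follows essentially the same route as the paper's proof of its weighted generalization (Theorem \ref{thm-11}): your matrix $M$ is exactly the paper's $Q^{\omega}$ (with $\omega\equiv 1$) realized as a principal submatrix of $(L(G_X)+J)^{[k+1]}$, your difference $M-L_k(X)$ is exactly the paper's $P^{\omega}$, and your Ger\v{s}gorin estimate (diagonal count plus the observation that exceptional off-diagonal pairs at $\sigma$ are enumerated by $\eta\in N_\sigma(u)$ for $u\in\sigma[j']$, giving row sum $\sum_{j'}j'\,|\sigma[j']|$) reproduces the paper's bounds on $T_1$ and $T_2$ in Lemma \ref{lem-13}, after which the same interlacing-plus-Weyl conclusion applies.
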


Building directly upon the method established in \cite{ZHZ2025}, this section extends Theorem \ref{thm-12-1} from the combinatorial Laplacians to the vertex-weighted Laplacians of general simplicial complexes, yielding an upper bound on the dimension of the $k$-th cohomology group $\widetilde{H}^k(X;\mathbb{R})$ of the complex $X$.

\subsection{Lower bound of the $i$-smallest eigenvalue}
We now state our main result as follows.
\begin{theorem}\label{thm-11}
	Let $X$ be a simplicial complex on vertex set $V$ with a vertex weight function $\omega\colon X\to \mathbb{R}_{>0}$. Then, for $k \geq 0$ and $1 \leq i \leq f_k(X)$,
	\[\lambda^{\uparrow}_i\left(L^{\omega}_k(X)\right) \ge S^{\uparrow}_{k+1, i}\left(L^{\omega}\left(G_X\right)+J^{\omega}\right)-k\sum _{u\in V}\omega(u)-\max _{\sigma \in X(k)}\left\{\sum_{j=0}^{k+1}(j+1)\sum _{u\in \sigma[j]}\omega(u)\right\}.\]
\end{theorem}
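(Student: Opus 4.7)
The plan is to adapt the additive-compound-matrix technique: Lew used it to prove the clique-complex version (Theorem \ref{thm-12}), and Zhan, Huang and Zhou refined it to produce the $\sigma[j]$-correction in the combinatorial case (Theorem \ref{thm-12-1}); I would combine the two ideas in the vertex-weighted setting. Set $M := L^{\omega}(G_X)+J^{\omega}$, form its $(k+1)$-th additive compound $N := M^{[k+1]}$, and let $\widetilde{N}$ be the principal submatrix of $N$ indexed by $X(k)\subseteq\binom{V}{k+1}$. With respect to the weighted inner product $\langle e_u,e_u\rangle=\omega(u)$ on the vertex set, $M$ is self-adjoint (the cross relation $M_{v,u}\omega(v)=M_{u,v}\omega(u)$ is easily verified), so $N$ is self-adjoint with respect to the induced inner product $\langle e_\sigma,e_\sigma\rangle=\omega(\sigma)$, and hence so is its compression $\widetilde{N}$ acting on $C^{k}(X,\mathbb{R})$. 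Setting $Q:=\widetilde{N}-L^{\omega}_k(X)$, the inequality of the theorem will then follow by chaining four ingredients: Weyl's inequality (Lemma \ref{lem-12}), Cauchy interlacing (Corollary \ref{cor-1}), the additive-compound eigenvalue identity (Theorem \ref{thm-10}), and a Ger\v{s}gorin-type estimate (Theorem \ref{thm-4}) on the largest eigenvalue of $Q$.

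The heart of the argument is the explicit computation of $Q$ and the emergence of $\sigma[j]$. Using $M_{u,v}=0$ when $v\in\operatorname{lk}_X(u)$ and $M_{u,v}=\omega(v)$ otherwise, together with Theorem \ref{thm-9} and Lemma \ref{lem-3}, one checks that $Q_{\sigma,\tau}$ is nonzero---equal to $-(-1)^{\varepsilon(\sigma,\tau)}\omega(v)$---exactly when $|\sigma\cap\tau|=k$, $\{u,v\}\in X$, and $\sigma\cup\tau\notin X$, where $u:=\sigma\setminus\tau$ and $v:=\tau\setminus\sigma$. Since $\tau=(\sigma\setminus\{u\})\cup\{v\}\in X$ forces $v$ to be adjacent in $G_X$ to every vertex of $\sigma\setminus\{u\}$ and $\{u,v\}\in X$ supplies the missing edge, $v$ must lie in $\bigcap_{u'\in\sigma}\operatorname{lk}_X(u')\setminus\operatorname{lk}_X(\sigma)=\bigsqcup_{j=0}^{k+1}\sigma[j]$; for each such $v\in\sigma[j]$, exactly $|N_\sigma(v)|=j$ choices of $u\in\sigma$ produce a nonzero entry. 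On the diagonal, writing $Q_{\sigma,\sigma}=\sum_{v\in V}|\{u\in\sigma:v\in\operatorname{lk}_X(u)\}|\,\omega(v)-\sum_{v\in\operatorname{lk}_X(\sigma)}\omega(v)$ and splitting the first sum according to whether $v\in\sigma$ (multiplicity $k$), $v\in\operatorname{lk}_X(\sigma)$ or $v\in\sigma[j]$ (multiplicity $k+1$), or $v$ is not adjacent to all of $\sigma$ (multiplicity at most $k$), yields $Q_{\sigma,\sigma}\le k\sum_{v\in V}\omega(v)+\sum_{j=0}^{k+1}\sum_{v\in\sigma[j]}\omega(v)$.

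With these computations in hand, Weyl combined with Cauchy interlacing and Theorem \ref{thm-10} gives
\[\lambda_i^{\uparrow}(L^{\omega}_k(X))\ge \lambda_i^{\uparrow}(\widetilde{N})-\lambda_1^{\downarrow}(Q)\ge \lambda_i^{\uparrow}(N)-\lambda_1^{\downarrow}(Q)=S_{k+1,i}^{\uparrow}(M)-\lambda_1^{\downarrow}(Q),\]
while Ger\v{s}gorin bounds $\lambda_1^{\downarrow}(Q)\le\max_{\sigma\in X(k)}\bigl[Q_{\sigma,\sigma}+\sum_{\tau\ne\sigma}|Q_{\sigma,\tau}|\bigr]$. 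Combining the diagonal bound above with the off-diagonal row sum $\sum_{\tau\ne\sigma}|Q_{\sigma,\tau}|=\sum_{j=0}^{k+1}j\sum_{v\in\sigma[j]}\omega(v)$ produces exactly $k\sum_{v\in V}\omega(v)+\max_{\sigma}\sum_{j=0}^{k+1}(j+1)\sum_{v\in\sigma[j]}\omega(v)$, matching the theorem. The main obstacle will be the combinatorial bookkeeping for the diagonal of $Q$: one must simultaneously track the multiplicity $|\{u\in\sigma:v\in\operatorname{lk}_X(u)\}|$ for each class of $v$ and exploit the cancellation $-\omega(v)+\omega(v)=0$ in $M$ to confine the off-diagonal contributions of $Q$ to vertices lying in $\bigsqcup_{j=0}^{k+1}\sigma[j]$; this cancellation is precisely what makes the correction depend on $\sigma[j]$ rather than on the whole of $V\setminus(\sigma\cup\operatorname{lk}_X(\sigma))$, and explains why the correction vanishes for clique complexes (Lew's case, where every $\sigma[j]$ is empty) and collapses to $\max_\sigma\sum_j(j+1)|\sigma[j]|$ when $\omega\equiv 1$ (Zhan--Huang--Zhou's case).
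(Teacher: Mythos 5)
Your proposal is correct and takes essentially the same route as the paper: your matrix $Q=\widetilde{N}-L^{\omega}_k(X)$ is precisely the matrix $P^{\omega}$ in the paper's decomposition $L^{\omega}_k(X)=Q^{\omega}-P^{\omega}$, where $Q^{\omega}=\widetilde{N}$ is shown to be the principal submatrix of $\left(L^{\omega}(G_X)+J^{\omega}\right)^{[k+1]}$ indexed by $X(k)$. The paper then chains exactly the same four ingredients (Weyl, interlacing, Theorem \ref{thm-10}, and a Ger\v{s}gorin estimate on $P^{\omega}$ with the same $\sigma[j]$ bookkeeping for the diagonal and off-diagonal row sums), so there are no substantive differences.
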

\begin{remark}
On the one hand, when $\omega \equiv 1$, an application of Theorem \ref{thm-11} directly yields lower bounds for the $i$-th smallest eigenvalue of the combinatorial Laplacians of general simplicial complexes given by \cite{ZHZ2025}; on the other hand, if $X$ be a clique complex, then it follows from the above arguments that \[\max _{\sigma \in X(k)}\left\{\sum_{j=0}^{k+1}(j+1)\sum _{u\in \sigma[j]}\omega(u)\right\}=0.\] Therefore, Theorem \ref{thm-11} immediately reduces to Theorem \ref{thm-12}.
\end{remark}

Let $X$ be a simplicial complex with a vertex weight function $\omega\colon X\to \mathbb{R}_{>0}$. To prove Theorem \ref{thm-11}, we define two $f_k(X) \times f_k(X)$ matrices $P^{\omega}$ and $Q^{\omega}$ by
\[P^{\omega}_{\sigma, \tau}= \begin{cases}\sum _{v \in\sigma}\sum _{\left\{u,v\right\}\in X(1)}\omega(u)-\sum _{v \in \operatorname{lk}_X(\sigma)} \omega(v),& \text { if } \sigma=\tau, \\[4mm] -(-1)^{\varepsilon(\sigma, \tau)} \omega(\tau \setminus \sigma), & \text { if }\sigma \sim \tau \text { and } \sigma\triangle\tau\in X(1),\\[4mm] 0, & \text { otherwise, }\end{cases}\]
and
\[Q^{\omega}_{\sigma, \tau}= \begin{cases}\sum _{v \in\sigma}\sum _{\left\{u,v\right\}\in X(1)}\omega(u)+\sum _{v \in \sigma} \omega(v), & \text { if } \sigma=\tau, \\[4mm] (-1)^{\varepsilon(\sigma, \tau)} \omega(\tau\setminus\sigma), & \text { if } \sigma\sim\tau \text { and } \sigma\triangle\tau\notin X(1),\\[4mm] 0, & \text { otherwise, }\end{cases}\]	for any $\sigma, \tau \in X(k)$, where $\varepsilon(\sigma, \tau)$ denotes the number of elements in $\sigma \cap \tau$ between $i$ and $j$ for $|\sigma \cap \tau|=k, \sigma \setminus \tau=\{i\}, \tau \setminus \sigma=\{j\}$. By Lemma \ref{lem-3}, \[L_k^{\omega}(X)=Q^{\omega}-P^{\omega}.\]

The proof of Theorem \ref{thm-11} mainly relies on the following lemma.
\begin{lemma}\label{lem-13}
For $k \geq 0$,
	\[\lambda_{1}^{\downarrow}(P^{\omega}) \le k\sum _{u\in V}\omega(u)+\max _{\sigma \in X(k)}\left\{\sum_{j=0}^{k+1}(j+1)\sum _{u\in \sigma[j]}\omega(u)\right\}.\]
\end{lemma}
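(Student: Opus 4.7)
The plan is to apply the Ger\v{s}gorin circle theorem (Theorem \ref{thm-4}) to the matrix $P^{\omega}$. First I would observe that $P^{\omega}$ represents a self-adjoint operator on $C^k(X,\mathbb{R})$ equipped with the vertex-weighted inner product $\langle e_\sigma, e_\tau\rangle = \omega(\sigma)\delta_{\sigma\tau}$. Indeed, $L_k^\omega(X)=Q^\omega-P^\omega$ and both $Q^\omega,L_k^\omega(X)$ are self-adjoint with respect to this inner product; a short computation using the identity $\omega(\sigma\setminus\tau)\omega(\tau)=\omega(\tau\setminus\sigma)\omega(\sigma)$ (which holds because $\omega$ is multiplicative on vertex sets) verifies $P^\omega_{\tau,\sigma}\omega(\tau)=P^\omega_{\sigma,\tau}\omega(\sigma)$. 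Consequently all eigenvalues of $P^\omega$ are real, so Ger\v{s}gorin yields
\[
\lambda_1^{\downarrow}(P^\omega)\le \max_{\sigma\in X(k)}\Big(P^\omega_{\sigma,\sigma}+R_\sigma\Big), \qquad R_\sigma\colon=\sum_{\tau\neq\sigma}|P^\omega_{\sigma,\tau}|.
\]

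The core of the proof is then to bound each quantity $P^\omega_{\sigma,\sigma}+R_\sigma$ by the right-hand side of the claimed inequality, uniformly in $\sigma$. For the diagonal term, I would switch the order of summation in $\sum_{v\in\sigma}\sum_{\{u,v\}\in X(1)}\omega(u) = \sum_u n_\sigma(u)\omega(u)$, where $n_\sigma(u)\colon=|\{v\in\sigma:\{u,v\}\in X(1)\}|$ counts the adjacencies of $u$ to $\sigma$ in $G_X$. Splitting the sum according to whether $u\in\sigma$, $u\in\operatorname{lk}_X(\sigma)$, or $u\notin\sigma\cup\operatorname{lk}_X(\sigma)$ gives, after cancelling the term $\sum_{v\in\operatorname{lk}_X(\sigma)}\omega(v)$,
\[
P^\omega_{\sigma,\sigma}=k\!\!\sum_{u\in\sigma}\omega(u)+k\!\!\sum_{u\in\operatorname{lk}_X(\sigma)}\!\!\omega(u)+\!\!\sum_{u\notin\sigma\cup\operatorname{lk}_X(\sigma)}\!\!n_\sigma(u)\,\omega(u).
\]
For the row sum, the nonzero off-diagonal entries correspond to pairs $(v,u)$ with $v\in\sigma$, $u\notin\sigma$, $u\in\operatorname{lk}_X(\sigma\setminus\{v\})\setminus\operatorname{lk}_X(\sigma)$, and $\{u,v\}\in X(1)$. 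The delicate step is to show that only $u\in\bigcap_{v\in\sigma}\operatorname{lk}_X(\{v\})\setminus\operatorname{lk}_X(\sigma)$ can contribute: if some $w_0\in\sigma$ is not adjacent to $u$, then the simultaneous requirements $u\in\operatorname{lk}_X(\sigma\setminus\{v\})$ (forcing $w_0\in\sigma\setminus\{v\}$ adjacent to $u$) and $\{u,v\}\in X(1)$ (forcing $v\neq w_0$) cannot both hold. For qualifying $u$, the number of admissible $v$ is exactly $|N_\sigma(u)|$, so that $R_\sigma=\sum_{j=0}^{k+1}j\sum_{u\in\sigma[j]}\omega(u)$.

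Adding the two pieces and using $n_\sigma(u)=k+1$ for $u\in\sigma[j]$ while $n_\sigma(u)\le k$ for every remaining $u\notin\sigma\cup\operatorname{lk}_X(\sigma)\cup\bigcup_j\sigma[j]$, I obtain
\[
P^\omega_{\sigma,\sigma}+R_\sigma\le k\sum_{u\in V}\omega(u)+\sum_{j=0}^{k+1}(j+1)\sum_{u\in\sigma[j]}\omega(u),
\]
where the excess term $\sum(k-n_\sigma(u))\omega(u)\ge0$ accounts for the difference. Taking the maximum over $\sigma\in X(k)$ and combining with Ger\v{s}gorin finishes the proof. The main obstacle I anticipate is the combinatorial step isolating the contributing $u$ in $R_\sigma$; once that bookkeeping is done, the rest reduces to term-by-term comparison.
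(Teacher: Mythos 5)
Your proof is correct and takes essentially the same route as the paper's: the Ger\v{s}gorin circle theorem applied to $P^{\omega}$, a count for each $u\in V$ of the number of $v\in\sigma$ adjacent to it (the paper's ``number of occurrences of $\omega(u)$ in $T_1$''), and the exact evaluation of the off-diagonal row sum as $\sum_{j} j\sum_{u\in\sigma[j]}\omega(u)$ via $N_\sigma(u)$ and $M_\sigma(u)$. Your explicit verification that $P^{\omega}$ is self-adjoint for the weighted inner product (hence has real eigenvalues) is a small addition that the paper leaves implicit.
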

\begin{proof}
By Theorem \ref{thm-4}, we have
\[\lambda_{1}^{\downarrow}(P)\le \max _{\sigma \in X(k)}\left\{\sum _{v \in\sigma}\sum _{u\in \operatorname{lk}_X(v)}\omega(u)-\sum_{v \in \operatorname{lk}_X(\sigma)} \omega(v)+\sum _{\substack{\tau\in X(k)\colon \\ \sigma\sim\tau, \sigma\triangle\tau\in X(1)}}\omega\left(\tau\setminus\sigma\right)\right\}.\]
Let 
\[T_1\colon=\sum _{v \in\sigma}\sum _{u\in \operatorname{lk}_X(v)}\omega(u)\]
and \[T_2\colon=\sum _{\substack{\tau\in X(k)\colon \\ \sigma\sim \tau, \sigma\triangle\tau\in X(1)}}\omega\left(\tau\setminus\sigma\right).\]
Therefore, it suffices to estimate the upper bounds of $T_1$ and $T_2$.

Firstly, we discuss the upper bounds of $T_1$.
For any $u\in V$, it suffices to compute the number of occurrences of $\omega(u)$ in $T_1$, i.e., the number of $v\in \sigma$ satisfying $u\in \operatorname{lk}_X(v)$. If $u \in \operatorname{lk}(\sigma)$, then $u \in \operatorname{lk}(v)$ for every $v \in \sigma$, implying that $\omega(u)$ appears exactly $k+1$ times in $T_1$. For $u \notin \operatorname{lk}(\sigma)$, two cases may arise: either $u \in \bigcup_{j=0}^{k+1} \sigma[j]$ or $u \notin \bigcup_{j=0}^{k+1} \sigma[j]$. In the former case, by the definition of $\sigma[j]$, $\omega(u)$ appears exactly $k+1$ times; in the latter case, since $u \notin \bigcup_{j=0}^{k+1} \sigma[j]$, $\omega(u)$ appears at most $k$ times. Therefore,
\begin{equation}\label{eq-9}
\begin{aligned}
T_1&\le (k+1)\left(
      \sum_{u\in\operatorname{lk}_X(\sigma)}\omega(u)
      + \sum_{u\in\bigcup_{j=0}^{k+1}\sigma[j]}\omega(u)
     \right)+ k\left(
      \sum_{u\in V}\omega(u)
      - \sum_{u\in\operatorname{lk}_X(\sigma)}\omega(u)\right.
\\[-1mm]
&\qquad
\left.
      - \sum_{u\in\bigcup_{j=0}^{k+1}\sigma[j]}\omega(u)
    \right)
\\[3mm]
&= k\sum_{u\in V}\omega(u)
 + \sum_{u\in\operatorname{lk}_X(\sigma)}\omega(u)
 + \sum_{u\in\bigcup_{j=0}^{k+1}\sigma[j]}\omega(u).
\end{aligned}
\end{equation}

We next consider $T_2$. Fix $\sigma \in X(k)$. On the one hand, if $\tau \in X(k)$ is such that $\sigma \sim\tau$ and $\sigma\triangle\tau\in X(1)$, and let $\left\{u\right\}=\tau\setminus\sigma$, $\{w\}=\sigma\setminus\tau$, it follows that 
\[u \notin \operatorname{lk}(\sigma), u\in\operatorname{lk}(\sigma\setminus\{w\}) \text{ and } u \in \operatorname{lk}(v)\]
for all $v \in \sigma$. Thus, we have $u \in \bigcup_{j=1}^{k+1} \sigma[j]$. On the other hand, for $1 \leq j \leq k+1$ and $u \in \sigma[j]$, there are exactly $j$ simplices in $X(k)$, say $\tau_1^u, \dots, \tau_j^u$, such that \[\sigma \sim\tau_i^u, \sigma\triangle\tau_i^u\in X(1)\text{ and } \tau_i^u \backslash \sigma=\{u\}\]
for $1 \leq i \leq j$. Note that for any two distinct $u \in \sigma[j]$ and $u^{\prime} \in \sigma[j^{\prime}]$, the two sets $\left\{\tau_1^u, \dots, \tau_j^u\right\}$ and $\left\{\tau_1^{u^{\prime}}, \dots, \tau_{j^{\prime}}^{u^{\prime}}\right\}$ are disjoint. Therefore, 
\begin{equation}\label{eq-10}
	T_2=\sum _{\substack{\tau\in X(k)\colon \\ \sigma\sim\tau, \sigma\triangle\tau\in X(1)}}\omega\left(\tau\setminus\sigma\right)=\sum_{j=1}^{k+1}\sum _{u\in \sigma[j]}j\omega(u).
\end{equation}
Combinning \eqref{eq-9} with \eqref{eq-10}, we have
\begin{align*}
	\lambda_{1}^{\downarrow}(P) 
	&\le \max _{\sigma \in X(k)} \left\{
	\sum _{v \in\sigma}\sum _{u\in \operatorname{lk}_X(v)}\omega(u)
	-\sum _{v \in \operatorname{lk}_X(\sigma)} \omega(v)
	+\sum _{\tau\in X(k)\colon \sigma\sim\tau, \sigma\triangle\tau\in X(1)}\omega(\tau\setminus\sigma)
	\right\} \notag \\[2mm]
	&\le \max_{\sigma\in X(k)}
\Biggl\{
k\sum_{u\in V}\omega(u)
+ \sum_{u\in\operatorname{lk}_X(\sigma)}\omega(u)
+ \sum_{u\in \bigcup_{j=0}^{k+1}\sigma[j]}\omega(u)- \sum_{v\in\operatorname{lk}_X(\sigma)}\omega(v)
\\[-1mm]
&\hphantom{\le\max_{\sigma\in X(k)}\Biggl\{}
\qquad
+ \sum_{j=1}^{k+1} j\sum_{u\in\sigma[j]}\omega(u)
\Biggr\}
\notag\\[2mm]
	&= \max _{\sigma \in X(k)}\left\{
	k\sum _{u\in V} \omega(u)
	+\sum_{j=0}^{k+1}\sum _{u\in \sigma[j]} (j+1)\omega(u)
	\right\} \notag\\[2mm]
	&= k\sum _{u\in V} \omega(u)
	+\max _{\sigma \in X(k)}\left\{
	\sum_{j=0}^{k+1}(j+1)\sum _{u\in \sigma[j]}\omega(u)
	\right\}.
\end{align*}

This completes the proof.
\end{proof}

Now we prove Theorem \ref{thm-11}.
\begin{proof}[\rm\textbf{Proof of Theorem \ref{thm-11}}]
Recall that the vertex-weighted Laplacian $L^{\omega}(G_X)$ of $G_X$ and $J^{\omega}$. We have
\begin{equation}\label{eq-10-1}
	\left(L^{\omega}(G_X)+J^{\omega}\right)_{u, v}= \begin{cases}\sum_{u^{\prime} \in N_{G_X}(u)} \omega\left(u^{\prime}\right)+\omega(u),& \text { if } u=v, \\ \omega(v),& \text { if }v\notin \operatorname{lk}_X(u),\\ 0, & \text { otherwise,}\end{cases}
\end{equation}
for all $u, v \in V$. 

For $\sigma, \tau \in X(k)$ with $|\sigma \cap \tau|=k$, let $\sigma\setminus\tau=\{u\}$ and $\tau\setminus\sigma=\{v\}$. If $v \notin \operatorname{lk}_X(u)$, then $\sigma \sim \tau$ and $\sigma \triangle \tau\notin X(1)$.
Conversely, if $\sigma \sim \tau$ and $\sigma \triangle \tau\notin X(1)$, then $v \notin \operatorname{lk}_X(u)$. Therefore,
 \[v \notin \operatorname{lk}_X(u) \text{ if and only if }\sigma \sim \tau \text{ and } \sigma \triangle \tau\notin X(1).\]
  It follows from \eqref{eq-10-1} and Theorem \ref{thm-9} that the matrix $Q^{\omega}$ is a principal submatrix of $\left(L^{\omega}\left(G_X\right)+J^{\omega}\right)^{[k+1]}$, with rows and columns indexed by $X(k)$. 
It is easy to see that $\left(L^{\omega}\left(G_X\right)+J^{\omega}\right)^{[k+1]}$ is a matrix representation of some self-adjoint operator. Therefore, by Corollary \ref{cor-1} and Theorem \ref{thm-10}, we have
	\[\lambda_{i}^{\uparrow}(Q^{\omega})\ge\lambda_i^{\uparrow}\left(\left(L^{\omega}\left(G_X\right)+J^{\omega}\right)^{[k+1]}\right)=S_{k+1, i}^{\uparrow}\left(L^{\omega}\left(G_X\right)+J^{\omega}\right)\]
	Then it follows from Lemma \ref{lem-12} and Lemma \ref{lem-13} that
\[\begin{aligned}
		\lambda^{\uparrow}_i\left(L^{\omega}_k(X)\right) & \geq \lambda^{\uparrow}_i(Q^{\omega})+\lambda^{\uparrow}_{1}(-P^{\omega})\\&\ge S^{\uparrow}_{k+1, i}\left(L^{\omega}\left(G_X\right)+J^{\omega}\right)-\lambda^{\uparrow}_{f_k(X)}(P^{\omega})\\&=S^{\uparrow}_{k+1, i}\left(L^{\omega}\left(G_X\right)+J^{\omega}\right)-\lambda^{\downarrow}_{1}(P^{\omega}) \\
		& \ge S^{\uparrow}_{k+1, i}\left(L^{\omega}\left(G_X\right)+J^{\omega}\right)-k\sum _{u\in V}\omega(u)-\max _{\sigma \in X(k)}\left\{\sum_{j=0}^{k+1}(j+1)\sum _{u\in \sigma[j]}\omega(u)\right\}.
	\end{aligned}\]
	This completes the proof.
\end{proof}
\subsection{Upper bound on the dimension of the cohomology group}
As a consequence of Theorem \ref{thm-11}, we obtain an upper bound on the dimension of the $k$-th cohomology group $\widetilde{H}^k(X;\mathbb{R})$ of the simplicial complex $X$, which covers the result of Zhan et al. (see \cite[Corollary 1.5]{ZHZ2025}), improves the result of Aharoni et al. (see \cite[Corollary 1.7]{ABM2005}), and extends Lew’s result (see \cite[Theorem 1.4]{L2024}) to general simplicial complexes.
\begin{theorem}\label{thm-23}
	Let $X$ be a simplicial complex on vertex set $V$ of size $n$, and let $\omega\colon X \rightarrow \mathbb{R}_{>0}$ be a vertex weight function of $X$. For any $k \geq 0$,
	\begin{align*}
		\operatorname{dim}\left(\widetilde{H}^k(X;\mathbb{R})\right)
		&\le
		\Big|\Big\{I\in\binom{[n]}{k+1}\colon 
		\sum_{i\in I} \lambda_i^{\uparrow}\!\left(L^{\omega}\!\left(G_X\right)+J^{\omega}\right)\\
		&\qquad\le 
		k\!\sum_{u\in V}\!\omega(u)
		+\max_{\sigma \in X(k)}\!
		\Big\{\sum_{j=0}^{k+1}(j+1)\!\sum_{u\in \sigma[j]}\!\omega(u)\Big\}
		\Big\}\Big|.
	\end{align*}
\end{theorem}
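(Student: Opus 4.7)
\medskip

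\noindent\textbf{Proof proposal.} The plan is to combine the Hodge-theoretic identification $\dim \widetilde{H}^k(X;\mathbb{R})=\dim \ker L_k^{\omega}(X)$ from Theorem \ref{thm-6} with the pointwise lower bound for $\lambda_i^{\uparrow}(L_k^{\omega}(X))$ supplied by Theorem \ref{thm-11}, reading off how many of the ordered eigenvalues of $L_k^{\omega}(X)$ are forced to be strictly positive.

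First, set
\[
R(X,\omega,k)\colon= k\sum_{u\in V}\omega(u)+\max_{\sigma \in X(k)}\Big\{\sum_{j=0}^{k+1}(j+1)\sum_{u\in \sigma[j]}\omega(u)\Big\}
\]
for the right-hand side of Theorem \ref{thm-11}. By Theorem \ref{thm-6} we have $\dim \widetilde{H}^k(X;\mathbb{R})=\dim\ker L_k^{\omega}(X)$, which equals the multiplicity of $0$ as an eigenvalue of $L_k^{\omega}(X)$; equivalently, it equals the number of indices $i\in[f_k(X)]$ for which $\lambda_i^{\uparrow}(L_k^{\omega}(X))=0$. Since $L_k^{\omega}(X)$ is positive semidefinite, these indices are precisely those with $\lambda_i^{\uparrow}(L_k^{\omega}(X))\le 0$.

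Next, I would apply Theorem \ref{thm-11} to each such index: if $\lambda_i^{\uparrow}(L_k^{\omega}(X))=0$, then
\[
S_{k+1,i}^{\uparrow}\!\left(L^{\omega}(G_X)+J^{\omega}\right)\le R(X,\omega,k).
\]
Recall that the elements of the multiset $S_{k+1}(L^{\omega}(G_X)+J^{\omega})$ are exactly the sums $\sum_{i\in I}\lambda_i^{\uparrow}(L^{\omega}(G_X)+J^{\omega})$ as $I$ ranges over $\binom{[n]}{k+1}$, and $S_{k+1,i}^{\uparrow}$ denotes its $i$-th smallest element. Hence the number of indices $i$ for which $S_{k+1,i}^{\uparrow}\!\left(L^{\omega}(G_X)+J^{\omega}\right)\le R(X,\omega,k)$ is exactly
\[
\Big|\Big\{I\in\tbinom{[n]}{k+1}\colon \sum_{i\in I}\lambda_i^{\uparrow}\!\left(L^{\omega}(G_X)+J^{\omega}\right)\le R(X,\omega,k)\Big\}\Big|.
\]

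Combining the two counts, every index $i\in[f_k(X)]$ with $\lambda_i^{\uparrow}(L_k^{\omega}(X))=0$ contributes one $I\in\binom{[n]}{k+1}$ satisfying the displayed inequality, so the number of zero eigenvalues of $L_k^{\omega}(X)$ is bounded by the cardinality on the right, which is precisely the claimed bound. There is no substantive obstacle here: once Theorem \ref{thm-11} is in hand, the argument is a direct counting reduction via the Hodge isomorphism, and the only minor point to verify is that the inequality $f_k(X)\le \binom{n}{k+1}$ guarantees that $S_{k+1,i}^{\uparrow}$ is defined for every index $i$ in the range used by Theorem \ref{thm-11}.
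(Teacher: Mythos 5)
Your proposal is correct and follows essentially the same route as the paper: both arguments combine the Hodge isomorphism (Theorem \ref{thm-6}) with the eigenvalue lower bound of Theorem \ref{thm-11} and then count how many of the ordered sums $S_{k+1,i}^{\uparrow}\left(L^{\omega}(G_X)+J^{\omega}\right)$ can fall at or below the threshold; the paper phrases this via the maximal index $j$ with $S_{k+1,j}^{\uparrow}$ below the bound, while you count the zero eigenvalues directly, which is the same computation.
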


\begin{corollary}[{\cite[Corollary 1.5]{ZHZ2025}}]\label{cor-1-5}
Let $X$ be a simplicial complex on $n$ vertices. For any $k \geq 0$,
\[\operatorname{dim}\left(\widetilde{H}^k(X ; \mathbb{R})\right) \leq\left\{A \in\binom{[n]}{k+1}\colon\sum_{i \in A} \lambda_i\left(L\left(G_X\right)+J\right) \leq k n+\max _{\sigma \in X(k)} \sum_{j=0}^{k+1}(j+1)|\sigma[j] |\right\}.\]
\end{corollary}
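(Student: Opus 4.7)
The plan is to derive this corollary as an immediate consequence of Theorem \ref{thm-11} combined with the discrete Hodge theorem (Theorem \ref{thm-6}). The strategy is simply to translate a multiplicity of zero eigenvalues of $L_k^{\omega}(X)$ into a constraint on sums of eigenvalues of the auxiliary operator $L^{\omega}(G_X)+J^{\omega}$.

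First, I would invoke Theorem \ref{thm-6} to obtain
\[
\dim \widetilde{H}^k(X;\mathbb{R}) \;=\; \dim \ker L_k^{\omega}(X) \;=\; \operatorname{m}_{L_k^{\omega}(X)}(0),
\]
so the quantity we wish to bound is exactly the number of indices $i \in [f_k(X)]$ for which $\lambda_i^{\uparrow}(L_k^{\omega}(X)) = 0$. Introducing the abbreviation
\[
T \;:=\; k\sum_{u\in V}\omega(u) \;+\; \max_{\sigma \in X(k)} \sum_{j=0}^{k+1}(j+1)\sum_{u\in \sigma[j]}\omega(u),
\]
Theorem \ref{thm-11} asserts that $\lambda_i^{\uparrow}(L_k^{\omega}(X)) \geq S_{k+1,i}^{\uparrow}(L^{\omega}(G_X)+J^{\omega}) - T$ for every $i \in [f_k(X)]$. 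Consequently, whenever $\lambda_i^{\uparrow}(L_k^{\omega}(X))=0$, we must have $S_{k+1,i}^{\uparrow}(L^{\omega}(G_X)+J^{\omega}) \leq T$; that is, each zero eigenvalue of $L_k^{\omega}(X)$ forces one element of $S_{k+1}(L^{\omega}(G_X)+J^{\omega})$ (counted from the bottom) to lie at or below $T$.

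Finally, by the definition of $S_{k+1}(L^{\omega}(G_X)+J^{\omega})$, its elements are indexed by the $(k+1)$-subsets of $[n]$, each $I \in \binom{[n]}{k+1}$ contributing the sum $\sum_{i\in I}\lambda_i^{\uparrow}(L^{\omega}(G_X)+J^{\omega})$. Combining this with the previous paragraph shows that the number of $i$ with $\lambda_i^{\uparrow}(L_k^{\omega}(X))=0$ is at most the number of such subsets $I$ whose sum does not exceed $T$, which is precisely the right-hand side of the claimed inequality. The argument is essentially a one-line deduction; the only routine care needed is to treat $S_{k+1}$ consistently as a multiset indexed by $\binom{[n]}{k+1}$, and there is no serious obstacle since the quantitative spectral input has already been delivered by Theorem \ref{thm-11}.
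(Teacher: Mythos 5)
Your proposal is correct and follows essentially the same route as the paper: the paper proves the weighted version (Theorem \ref{thm-23}) by combining Theorem \ref{thm-11} with the Hodge isomorphism of Theorem \ref{thm-6}, then specializes to $\omega\equiv 1$ to obtain this corollary. Your direct count of zero eigenvalues (each forcing $S_{k+1,i}^{\uparrow}\le T$) is just the contrapositive phrasing of the paper's argument, which instead defines $j$ as the cardinality of the index set and deduces $\lambda_{j+1}^{\uparrow}(L_k^{\omega}(X))>0$; the two are logically identical.
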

\begin{corollary}[{\cite[Corollary 1.7]{ABM2005}}]
Let $k$-skeleton of $X$ be same as that of the $k$-skeleton of the clique complex of $G_X$. If \[\lambda_2^{\uparrow}\left(L\left(G_X\right)\right)>\frac{k n}{k+1}+(k+1) D_k(X, k+1),\] then $\widetilde{H}^k(X; \mathbb{R})=0$.
\end{corollary}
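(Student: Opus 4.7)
The plan is to derive this vanishing theorem as a direct corollary of Corollary \ref{cor-1-5}. That bound states that $\dim\widetilde{H}^k(X;\mathbb{R})$ is at most the number of $(k+1)$-subsets $A\in\binom{[n]}{k+1}$ for which the partial sum $\sum_{i\in A}\lambda_i(L(G_X)+J)$ is bounded above by $kn+\max_{\sigma\in X(k)}\sum_{j=0}^{k+1}(j+1)|\sigma[j]|$, so it is enough to show that no such $A$ exists. Since this partial sum is minimized by $A=\{1,\ldots,k+1\}$, the task reduces to proving the single strict inequality
\[
\sum_{i=1}^{k+1}\lambda_i^{\uparrow}\!\left(L(G_X)+J\right)\;>\;kn+\max_{\sigma\in X(k)}\sum_{j=0}^{k+1}(j+1)|\sigma[j]|.
\]

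For the left-hand side, a short spectral computation identifies the eigenvalues of $L(G_X)+J$. The all-ones vector $\mathbf{1}$ is a common eigenvector: $L(G_X)\mathbf{1}=\mathbf{0}$ and $J\mathbf{1}=n\mathbf{1}$, contributing the eigenvalue $n$, while every eigenvector of $L(G_X)$ lying in $\mathbf{1}^{\perp}$ is annihilated by $J$ and so keeps its $L(G_X)$-eigenvalue. Hence the spectrum of $L(G_X)+J$ is $\{\lambda_2^{\uparrow}(L(G_X)),\ldots,\lambda_n^{\uparrow}(L(G_X)),\,n\}$, and the classical bound $\lambda_i^{\uparrow}(L(G_X))\le n$ shows that every eigenvalue of $L(G_X)+J$ is at least $\lambda_2^{\uparrow}(L(G_X))$. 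Consequently
\[
\sum_{i=1}^{k+1}\lambda_i^{\uparrow}\!\left(L(G_X)+J\right)\;\ge\;(k+1)\,\lambda_2^{\uparrow}(L(G_X)).
\]

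For the right-hand side, the assumption that the $k$-skeleton of $X$ coincides with that of the clique complex of $G_X$ collapses the sum over $j$. Fix $\sigma\in X(k)$ and any $u\in\bigcap_{v\in\sigma}\operatorname{lk}_X(v)$; then $u$ is adjacent in $G_X$ to every vertex of $\sigma$, so for each $\eta\in\sigma(k-1)$ the set $\eta\cup\{u\}$ is a $(k+1)$-clique in $G_X$. The $k$-skeleton hypothesis forces $\eta\cup\{u\}\in X(k)$, which means $|N_\sigma(u)|=k+1$. This rules out membership of $u$ in any $\sigma[j]$ with $j\le k$, leaving $\sigma[j]=\emptyset$ for $j\le k$ and $\sigma[k+1]\subseteq\bigcap_{v\in\sigma}\operatorname{lk}_X(v)$. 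Unwinding the definition of $D_k(X,k+1)$ recalled in the excerpt then gives $|\sigma[k+1]|\le D_k(X,k+1)$, whence
\[
\max_{\sigma\in X(k)}\sum_{j=0}^{k+1}(j+1)|\sigma[j]|\;=\;(k+2)\max_{\sigma\in X(k)}|\sigma[k+1]|\;\le\;(k+2)D_k(X,k+1).
\]

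Combining the two estimates, the desired inequality reduces to
\[
(k+1)\lambda_2^{\uparrow}(L(G_X))\;>\;kn+(k+2)D_k(X,k+1),
\]
which follows by multiplying the hypothesis $\lambda_2^{\uparrow}(L(G_X))>\tfrac{kn}{k+1}+(k+1)D_k(X,k+1)$ through by $k+1$ and then using the elementary fact $(k+1)^2\ge k+2$ valid for $k\ge 1$; the degenerate case $k=0$ already reduces to the classical equivalence between connectedness of $G_X$ and the vanishing of $\widetilde{H}^0(X;\mathbb{R})$. The only real subtlety is reconciling the coefficient $(k+2)$ produced on the right, coming from the single surviving bin $\sigma[k+1]$, with the coefficient $(k+1)^2$ afforded by the hypothesis, but this is purely arithmetic and the gap between them is nonnegative for every $k\ge 1$.
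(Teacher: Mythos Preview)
Your proof is correct and follows essentially the same route as the paper's own derivation in the remark after Theorem~\ref{thm-23}: both apply Corollary~\ref{cor-1-5}, identify $\lambda_1^{\uparrow}(L(G_X)+J)=\lambda_2^{\uparrow}(L(G_X))$, use the $k$-skeleton hypothesis to collapse the error term to $(k+2)\max_{\sigma}|\sigma[k+1]|=(k+2)D_k(X,k+1)$, and then compare $(k+1)^2$ with $k+2$. Your explicit treatment of the $k=0$ case and your spectral computation for $L(G_X)+J$ add detail the paper leaves implicit, but the overall argument is the same.
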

\begin{corollary}[{\cite[Theorem 1.4]{L2024}}]
Let $G=(V, E)$ be a graph on $n$ vertices, and let $\omega\colon  V \rightarrow \mathbb{R}_{\geq 0}$. Then, for all $k \geq 0$,
\[\operatorname{dim}\left(\widetilde{H}_k(I(G) ; \mathbb{R})\right) \leq\left|\left\{I \in\binom{[n]}{k+1}\colon  \sum_{i \in I} \lambda_i^{\downarrow}\left(L^w(G)\right) \geq \sum_{v \in V} \omega(v)\right\}\right|.\]
\end{corollary}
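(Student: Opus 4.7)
The plan is to deduce this corollary from Theorem \ref{thm-23} applied to the simplicial complex $X = I(G)$, together with a key matrix identity that relates $L^{\omega}(G)$ and $L^{\omega}(\overline{G}) + J^{\omega}$.

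First I would recall that the independence complex $I(G)$ is by definition the clique complex of the complement graph $\overline{G}$, so its underlying graph is $G_{I(G)} = \overline{G}$. A crucial simplification in this case comes from the remark preceding Theorem \ref{thm-12}: whenever $X$ is a clique complex, the condition $u \in \bigcap_{v \in \sigma}\operatorname{lk}_X(v)$ already forces $u \in \operatorname{lk}_X(\sigma)$, so $\sigma[j] = \emptyset$ for every $\sigma \in X(k)$ and every $0 \le j \le k+1$. Hence the extra term
\[
\max_{\sigma \in X(k)}\Bigl\{\sum_{j=0}^{k+1}(j+1)\sum_{u \in \sigma[j]}\omega(u)\Bigr\}
\]
appearing in Theorem \ref{thm-23} vanishes for $X = I(G)$.

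The heart of the argument is the identity
\[
L^{\omega}(\overline{G}) + J^{\omega} = \Bigl(\sum_{v \in V}\omega(v)\Bigr)I_n - L^{\omega}(G),
\]
which I would verify entrywise: on the diagonal, $\sum_{u' \in N_{\overline G}(u)}\omega(u') + \omega(u) = \sum_{u' \notin N_G(u)}\omega(u')$, which matches $\sum_{v}\omega(v) - \sum_{u' \in N_G(u)}\omega(u')$; off-diagonal for $v \in N_{\overline G}(u)$ one gets $-\omega(v) + \omega(v) = 0 = -L^{\omega}(G)_{u,v}$; and off-diagonal for $v \in N_G(u)$ one gets $0 + \omega(v) = \omega(v) = -L^{\omega}(G)_{u,v}$. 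From this identity, the eigenvalues of $L^{\omega}(\overline G)+J^{\omega}$ are precisely $\sum_v \omega(v) - \mu$ for $\mu \in \mathbf{s}(L^{\omega}(G))$, so
\[
\lambda_i^{\uparrow}\!\bigl(L^{\omega}(\overline G)+J^{\omega}\bigr) = \sum_{v \in V}\omega(v) - \lambda_i^{\downarrow}\!\bigl(L^{\omega}(G)\bigr)
\]
for every $i \in [n]$.

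Combining these two observations, the condition appearing in Theorem \ref{thm-23} for $I \in \binom{[n]}{k+1}$, namely
\[
\sum_{i \in I}\lambda_i^{\uparrow}\!\bigl(L^{\omega}(\overline G)+J^{\omega}\bigr) \le k\sum_{v\in V}\omega(v),
\]
is equivalent, after substitution, to
\[
(k+1)\sum_{v\in V}\omega(v) - \sum_{i \in I}\lambda_i^{\downarrow}\!\bigl(L^{\omega}(G)\bigr) \le k\sum_{v\in V}\omega(v),
\]
i.e.\ $\sum_{i \in I}\lambda_i^{\downarrow}(L^{\omega}(G)) \ge \sum_{v\in V}\omega(v)$. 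Finally, since $\widetilde H_k(I(G);\mathbb R) \cong \widetilde H^k(I(G);\mathbb R)$ as vector spaces, Theorem \ref{thm-23} applied to $X = I(G)$ yields exactly the claimed upper bound. The only real subtlety is the verification of the identity $L^{\omega}(\overline G)+J^{\omega} = (\sum_v\omega(v))I_n - L^{\omega}(G)$; every other step is a direct specialization or bookkeeping.
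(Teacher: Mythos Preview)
Your proof is correct and follows essentially the same approach as the paper: apply Theorem~\ref{thm-23} to $X=I(G)$, use that $I(G)$ is a clique complex so the $\sigma[j]$ term vanishes, invoke the identity $L^{\omega}(\overline G)+J^{\omega}=\bigl(\sum_v\omega(v)\bigr)I_n-L^{\omega}(G)$, and translate the eigenvalue condition. Your version is slightly more detailed (entrywise verification of the matrix identity and explicit mention of $\widetilde H_k\cong\widetilde H^k$), but the argument is the same.
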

We discuss the improvements and extensions in detail in the following remark.
\begin{remark}
On the one hand, if $\omega\equiv 1$, then Theorem \ref{thm-23} immediately reduces to Corollary \ref{cor-1-5}. If we further assume that the $k$-skeleton of $X$ coincides with that of the clique complex of $G_X$, then $\sigma[j] = \emptyset$ for all $\sigma \in X(k)$ and all $0 \le j \le k$.
 In this case, \[\max_{\sigma \in X(k)}\Big\{\sum_{j=0}^{k+1}(j+1)\sum_{u\in \sigma[j]}\omega(u)\Big\}=(k+2)\max_{\sigma \in X(k)}\left| \sigma[k+1]\right|=(k+2)D_k(X,k+1).
  \]
It follows from $\lambda_1^{\uparrow}\left(L\left(G_X\right)+J\right)=\lambda_2^{\uparrow}\left(L\left(G_X\right)\right)$ that
\[\sum_{i\in I} \lambda_i^{\uparrow}\left(L^{\omega}\left(G_X\right)+J^{\omega}\right)\ge (k+1)\lambda_2^{\uparrow}\left(L\left(G_X\right)\right).\]
Therefore, if \[ \lambda_2^{\uparrow}\left(L\left(G_X\right)\right)>\frac{kn}{k+1}+\frac{k+2}{k+1}D_k(X,k+1),
\]
 then by Theorem \ref{thm-23}, we have $\operatorname{dim}\left(\widetilde{H}^k(X;\mathbb{R})\right)=0$, i.e., $\widetilde{H}^k(X ; \mathbb{R})=0$. This improves upon the result of \cite[Corollary 1.7]{ABM2005}, since \[\frac{kn}{k+1}+(k+1)D_k(X,k+1)>\frac{kn}{k+1}+\frac{k+2}{k+1}D_k(X,k+1).\]

\vspace{8pt}
On the other hand, when $X$ is an independence complex of graph $G$, we have \[\max_{\sigma \in X(k)}
\Big\{\sum_{j=0}^{k+1}(j+1)\sum_{u\in \sigma[j]}\!\omega(u)\Big\}=0.\]
It follows from Theorem \ref{thm-23} that
\[	\operatorname{dim}\left(\widetilde{H}^k(X;\mathbb{R})\right)\le 	\Big|\Big\{I\in\binom{[n]}{k+1}\colon 
\sum_{i\in I} \lambda_i^{\uparrow} \left(L^{\omega} \left(\overline{G} \right)+J^{\omega}\right)\le 
k\sum_{u\in V} \omega(u)\Big\}\Big|.\]
Note that \[L^{\omega} \left(\overline{G} \right)+J^{\omega}=\sum_{v\in V}\omega(v)I_n-L^{\omega} \left(G\right),\]
which implies that \[\sum_{i\in I} \lambda_i^{\uparrow} \left(L^{\omega} \left(\overline{G} \right)+J^{\omega}\right)=(k+1)\sum_{v\in V}\omega(v)-\sum_{i\in I} \lambda_i^{\downarrow} \left(L^{\omega} \left(G\right)\right),\]
and thus
\[	\operatorname{dim}\left(\tilde{H}^k(I(G) ; \mathbb{R})\right) \leq\left|\left\{I \in\binom{[n]}{k+1}\colon  \sum_{i \in I} \lambda_i^{\downarrow}\left(L^\omega(G)\right) \geq \sum_{v \in V} \omega(v)\right\}\right|.\] This coincides with the result established by Lew \cite[Theorem 1.4]{L2024}.
\end{remark}

Next we prove Theorem \ref{thm-23}.

\begin{proof}[\rm\textbf{Proof of Theorem \ref{thm-23}}]
Let 
%%\[j=\left|\left\{I\in\binom{V}{k+1}\colon \sum_{i\in I}\lambda^{\uparrow}_{ i}\left(L^{\omega}\left(G_X\right)+J^{\omega}\right)\le k\sum _{u\in V}\omega(u)+\max _{\sigma \in X(k)}\left\{\sum_{j=1}^{k+1}(j+1)\sum _{u\in \sigma[j]}\omega(u)\right\} \right\}\right|=\max\left\{\left\{1\le i\le \binom{n}{k+1}\colon S^{\uparrow}_{k+1, i}\left(L^{\omega}\left(G_X\right)+J^{\omega}\right)\le k\sum _{u\in V}\omega(u)+\max _{\sigma \in X(k)}\left\{\sum_{j=1}^{k+1}(j+1)\sum _{u\in \sigma[j]}\omega(u)\right\}\cup \left\{0\right\}\right\} \right\}\]
\begin{align*}
	j
	&=	\Big|\Big\{I\in\binom{V}{k+1}\colon 
	\sum_{i\in I} \lambda_i^{\uparrow}\!\left(L^{\omega}\!\left(G_X\right)+J^{\omega}\right)\le 
	k\!\sum_{u\in V}\!\omega(u)\\
	&\qquad
	+\max_{\sigma \in X(k)}\!
	\Big\{\sum_{j=0}^{k+1}(j+1)\!\sum_{u\in \sigma[j]}\!\omega(u)\Big\}
	\Big\}\Big|\\[2mm]
	&=\max\Bigg\{
	\Bigg\{1\le i\le \binom{n}{k+1}\colon \;
S^{\uparrow}_{k+1,i}\!\left(L^{\omega}\!\left(G_X\right)+J^{\omega}\right)\le
	k\!\sum_{u\in V}\!\omega(u)\\
	&\qquad
	+\max_{\sigma \in X(k)}\!\left\{\sum_{j=0}^{k+1}(j+1)\!\sum_{u\in \sigma[j]}\!\omega(u)\right\}
	\Bigg\}\cup \{0\}
	\Bigg\}.
\end{align*}
If $j=\binom{n}{k+1}$, then we have 
$\operatorname{dim}\left(\widetilde{H}^k(X;\mathbb{R})\right)\le f_k(X)\le j$ as desired. If $j<\binom{n}{k+1}$, then by the maximality of $j$, we have \[S^{\uparrow}_{k+1,j+1}\left(L^{\omega}\left(G_X\right)+J^{\omega}\right)>k\sum _{u\in V}\omega(u)+\max _{\sigma \in X(k)}\left\{\sum_{j=0}^{k+1}(j+1)\sum _{u\in \sigma[j]}\omega(u)\right\}.\]
Therefore, by Theorem \ref{thm-11}, it follows that $\lambda^{\uparrow}_{j+1}\left(L^{\omega}_k(X)\right) >0$. Combining this with Theorem \ref{thm-6}, we have \[	\operatorname{dim}\left(\widetilde{H}^k(X;\mathbb{R})\right)
\le j.\]	
This completes the proof.
\end{proof}

\section{The vertex-weighted Laplacian spectra of a simplicial complex and its subcomplexes}

For a simplicial complex and its subcomplex, Shukla and Yogeshwaran \cite{SY2020} established a relation between the smallest eigenvalues of their respective combinatorial Laplacians in each dimension. 
\begin{theorem}[{\cite[Theorem 1.4]{SY2020}}]\label{thm-02}
	For every simplicial complex $X$ and every subcomplex $X^{\prime}$ of $X$, and for $k \geq 1$,
	\[\lambda^{\uparrow}_1\left(L_k\left(X^{\prime}\right)\right) \geq \lambda^{\uparrow}_1\left(L_k\left(X\right)\right)-(k+2) \max _{\sigma \in X^{\prime}(k)}\left(\deg^{+}_X(\sigma)-\deg^{+}_{X^{\prime}}(\sigma)\right).\]
\end{theorem}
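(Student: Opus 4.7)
The plan is to transfer the problem from $L_k(X')$ to a principal submatrix of $L_k(X)$ and then control the discrepancy via Weyl's inequality and Gershgorin's circle theorem. Identifying $C^k(X',\mathbb{R})$ with the subspace of $C^k(X,\mathbb{R})$ spanned by $\{e_\sigma : \sigma \in X'(k)\}$, let $M$ denote the principal submatrix of $L_k(X)$ indexed by $X'(k)$. Cauchy's interlacing theorem (Theorem \ref{thm-7}) immediately yields $\lambda_1^{\uparrow}(M) \ge \lambda_1^{\uparrow}(L_k(X))$, so the remaining task is to compare $L_k(X')$ with $M$.

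The heart of the argument is to identify the difference matrix $D := L_k(X') - M$ and show that it is nearly diagonal with small row sums. From the explicit formula in Lemma \ref{lem-3} with $\omega\equiv 1$, the off-diagonal entry $L_k(X)_{\sigma,\tau}$ vanishes precisely when $|\sigma\cap\tau|=k$ and $\sigma\cup\tau\in X(k+1)$, because the contributions of the up- and down-Laplacians cancel there. Consequently, for $\sigma,\tau\in X'(k)$: the diagonal entry $D_{\sigma,\sigma}$ equals $-(\deg_X^+(\sigma)-\deg_{X'}^+(\sigma))$; the off-diagonal entry $D_{\sigma,\tau}$ equals $(-1)^{\varepsilon(\sigma,\tau)}$ precisely when $|\sigma\cap\tau|=k$ and $\sigma\cup\tau\in X(k+1)\setminus X'(k+1)$ (the cancellation happens in $L_k(X)$ but not in $L_k(X')$); and $D_{\sigma,\tau}=0$ otherwise. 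Weyl's inequality (Lemma \ref{lem-12}) then yields
\[\lambda_1^{\uparrow}(L_k(X'))\ge \lambda_1^{\uparrow}(M)+\lambda_1^{\uparrow}(D)\ge \lambda_1^{\uparrow}(L_k(X))+\lambda_1^{\uparrow}(D).\]

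To bound $\lambda_1^{\uparrow}(D)$ from below I would apply Gershgorin's theorem (Theorem \ref{thm-4}). For a fixed $\sigma\in X'(k)$, every nonzero off-diagonal entry in row $\sigma$ is parametrized by a pair $(\rho,v)$, where $\rho\in X(k+1)\setminus X'(k+1)$ contains $\sigma$ and $v\in\sigma$, via $\tau=(\sigma\setminus\{v\})\cup(\rho\setminus\sigma)$ (provided $\tau\in X'(k)$). Thus each such $\rho$ contributes at most $k+1$ off-diagonal entries of absolute value $1$, and the $\sigma$-th row sum is bounded by $(k+1)(\deg_X^+(\sigma)-\deg_{X'}^+(\sigma))$. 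Gershgorin then forces every eigenvalue $\lambda$ of $D$ to satisfy, for some $\sigma$,
\[\lambda\ \ge\ D_{\sigma,\sigma}-\sum_{\tau\ne\sigma}|D_{\sigma,\tau}|\ \ge\ -(k+2)\bigl(\deg_X^+(\sigma)-\deg_{X'}^+(\sigma)\bigr),\]
so that $\lambda_1^{\uparrow}(D)\ge -(k+2)\max_{\sigma\in X'(k)}(\deg_X^+(\sigma)-\deg_{X'}^+(\sigma))$. Chaining the three bounds completes the proof.

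The main obstacle is the sign/cancellation bookkeeping in identifying $D$: one must verify that the up- and down-Laplacian off-diagonal contributions really cancel whenever $\sigma\cup\tau\in X(k+1)$, so that the only surviving off-diagonal entries of $D$ come from pairs whose union is a $(k+1)$-face of $X$ but not of $X'$, and then count these pairs accurately to produce the factor $k+1$ in the row-sum estimate and hence the factor $k+2$ in the final bound.
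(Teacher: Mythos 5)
Your proposal is correct and follows essentially the same route the paper uses to prove the weighted generalization (Theorem \ref{thm-2}): take the principal submatrix of $L_k(X)$ indexed by $X'(k)$, identify the difference with $L_k(X')$ explicitly via Lemma \ref{lem-3}, bound its spectrum by Gershgorin (with the same $(k+1)+1=k+2$ row-sum count), and chain interlacing with Weyl's inequality. The only cosmetic difference is that the paper bounds $\lambda_1^{\downarrow}(L'-L_k^{\omega}(X'))$ from above rather than $\lambda_1^{\uparrow}(D)$ from below, which is the same estimate up to sign.
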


Recently, Zhan, Huang, and Zhou \cite{ZHZ2025} extended Theorem \ref{thm-02} to each eigenvalue of the combinatorial Laplacians.
\begin{theorem}[{\cite[Theorem 1.7]{ZHZ2025}}]\label{thm-1-7}
Let $X$ be a simplicial complex, and let $X^{\prime}$ be a subcomplex of $X$. Then, for $k \geq 0$ and $1 \leq i \leq f_k\left(X^{\prime}\right)$,
\[\lambda^{\uparrow}_i\left(L_k\left(X^{\prime}\right)\right) \geq \lambda^{\uparrow}_i\left(L_k(X)\right)-(k+2) \max _{\sigma \in X^{\prime}(k)}\left(\deg^{+}_X(\sigma)-\deg^{+}_{X^{\prime}}(\sigma)\right).\]
\end{theorem}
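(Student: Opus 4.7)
The plan is to compare $L_k(X')$ with the principal submatrix of $L_k(X)$ indexed by $X'(k)$, and then absorb the discrepancy with a Weyl-type perturbation estimate. Write $L_k(X)|_{X'(k)}$ for this principal submatrix. Because $X'$ is a subcomplex, for any $\sigma,\tau\in X'(k)$ their intersection already lies in $X'$, so the down-Laplacian entries satisfy $L_k^{\operatorname{down}}(X)_{\sigma,\tau}=L_k^{\operatorname{down}}(X')_{\sigma,\tau}$. Hence the difference
\[
M \;:=\; L_k(X)\big|_{X'(k)} \;-\; L_k(X')
\]
equals $L_k^{\operatorname{up}}(X)|_{X'(k)} - L_k^{\operatorname{up}}(X')$, and decomposes as a sum of rank-one contributions indexed by the ``missing'' cofaces $\mu \in X(k+1)\setminus X'(k+1)$. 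In particular $M$ is positive semidefinite, its diagonal entry at $\sigma$ is exactly $\delta(\sigma):=\deg^{+}_{X}(\sigma)-\deg^{+}_{X'}(\sigma)$, and its $(\sigma,\tau)$ off-diagonal entry is $-(-1)^{\varepsilon(\sigma,\tau)}$ whenever $\sigma\cup\tau\in X(k+1)\setminus X'(k+1)$ (and zero otherwise).

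With this in hand I would chain three standard tools. First, apply Cauchy interlacing (Corollary~\ref{cor-1}) to the principal submatrix $L_k(X)|_{X'(k)}$ of $L_k(X)$ to get
\[
\lambda^{\uparrow}_i\!\left(L_k(X)\big|_{X'(k)}\right)\;\ge\;\lambda^{\uparrow}_i(L_k(X))
\]
for every $1\le i\le f_k(X')$. Second, since $L_k(X')=L_k(X)|_{X'(k)}+(-M)$, Weyl's inequality (Theorem~\ref{thm-8}) gives
\[
\lambda^{\uparrow}_i(L_k(X'))\;\ge\;\lambda^{\uparrow}_i\!\left(L_k(X)\big|_{X'(k)}\right)+\lambda^{\uparrow}_1(-M)\;=\;\lambda^{\uparrow}_i\!\left(L_k(X)\big|_{X'(k)}\right)-\lambda^{\downarrow}_1(M).
\]
Third, bound $\lambda^{\downarrow}_1(M)$ by Gerschgorin (Theorem~\ref{thm-4}): for each $\sigma\in X'(k)$, the off-diagonal row sum in row $\sigma$ consists, for each of the $\delta(\sigma)$ cofaces $\mu\supset\sigma$ with $\mu\in X(k+1)\setminus X'(k+1)$, of at most $k+1$ unit-magnitude entries corresponding to the other $k$-faces of $\mu$ that lie in $X'(k)$. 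Hence the Gerschgorin disc at $\sigma$ has centre $\delta(\sigma)$ and radius at most $(k+1)\delta(\sigma)$, so
\[
\lambda^{\downarrow}_1(M)\;\le\;(k+2)\max_{\sigma\in X'(k)}\delta(\sigma).
\]
Combining the three inequalities yields the claim.

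The main obstacle I anticipate is the second step — verifying cleanly that $M$ is exactly the ``up-Laplacian contribution of the excised $(k{+}1)$-simplices'' and is positive semidefinite. This amounts to checking that, restricted to $C^k(X',\mathbb{R})$, the relevant block of $d_k^* d_k$ splits additively over $\mu\in X(k+1)\setminus X'(k+1)$ with matching signs; this is a careful bookkeeping exercise with $\operatorname{sgn}(\sigma\cap\tau,\sigma)$ rather than a conceptual difficulty. Once that is in place, Cauchy interlacing, Weyl, and Gerschgorin combine mechanically. The same argument would transport verbatim to the vertex-weighted setting by replacing the unit weights with $\omega(\tau\setminus\sigma)$ in the off-diagonal Gerschgorin estimate and $\sum_{v\in\operatorname{lk}_X(\sigma)\setminus\operatorname{lk}_{X'}(\sigma)}\omega(v)$ in the diagonal, which is presumably the direction in which the authors will generalise Theorem~\ref{thm-1-7}.
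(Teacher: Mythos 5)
Your proposal is correct and follows essentially the same route as the paper: the paper proves the weighted generalization (Theorem \ref{thm-2}) by forming exactly your matrix $M=L'-L_k^{\omega}(X')$ with $L'$ the principal submatrix of $L_k^{\omega}(X)$ indexed by $X'(k)$, bounding $\lambda_1^{\downarrow}(M)\le(k+2)S_k^{\omega}(X,X')$ via Ger\v{s}gorin, and then chaining Weyl's inequality with Cauchy interlacing, which specializes to the stated theorem when $\omega\equiv 1$. The only difference is that the positive-semidefiniteness and rank-one decomposition of $M$ that you flag as the main obstacle are never needed — the entrywise description of $M$ from Lemma \ref{lem-3} plus Ger\v{s}gorin already suffices.
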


In this section, applying the method developed in \cite{ZHZ2025}, we derive a relationship between the $i$-th smallest eigenvalues of the vertex-weighted Laplacians of a simplicial complex and those of its subcomplexes, which generalizes Theorem \ref{thm-1-7} to all eigenvalues of the vertex-weighted Laplacians.

We shall now on use $X$ to denote a complex and $X^{\prime}$ to denote a subcomplex of $X$.  In order to measure the difference between the two complexes, we introduce a new notation. Let $\omega\colon X\to \mathbb{R}$ be a vertex weight function of simplicial complex $X$. For $1\le k \le \operatorname{dim}(X)$, define
\[S^{\omega}_k\left(X, X^{\prime}\right)\colon =\max_{\sigma \in X^{\prime}(k)}\sum_{v\in \operatorname{lk}_X(\sigma)\setminus \operatorname{lk}_{X^{\prime}}(\sigma)}\omega(v).\] 
In particular, when the weight function $\omega \equiv 1$, then the quantity $S^{\omega}_k\left(X, X^{\prime}\right)$ coincides with the parameter $S_k\left(X, X^{\prime}\right)$ originally introduced by Shukla and Yogeshwaran \cite{SY2020}.

\begin{theorem}\label{thm-2}
Let $X$ be a simplicial complex with vertex weight function $\omega\colon X \rightarrow \mathbb{R}_{>0}$. For every subcomplex $X^{\prime}$ of $X$, and for $k \geq 1$,
	\[\lambda^{\uparrow}_i\left(L^{\omega}_k\left(X^{\prime}\right)\right) \ge  \lambda^{\uparrow}_i\left(L^{\omega}_k(X)\right)-(k+2)S^{\omega}_k\left(X, X^{\prime}\right).\]
\end{theorem}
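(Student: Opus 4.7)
The plan is to sandwich $\lambda_i^{\uparrow}(L_k^{\omega}(X'))$ between $\lambda_i^{\uparrow}(L_k^{\omega}(X))$ and an easily controlled additive perturbation. First, I will identify $C^k(X',\mathbb{R})$ with the subspace of $C^k(X,\mathbb{R})$ spanned by $\{e_{\sigma}:\sigma\in X'(k)\}$ via extension by zero, and let $M$ denote the compression of $L_k^{\omega}(X)$ to this subspace. By Corollary \ref{cor-1} applied to the orthogonal basis $\{e_{\sigma}:\sigma\in X(k)\}$ of $C^k(X,\mathbb{R})$, the matrix of $M$ in the basis $\{e_{\sigma}:\sigma\in X'(k)\}$ is exactly the principal submatrix of $L_k^{\omega}(X)$ indexed by $X'(k)$, and Lemma \ref{lem-11} immediately yields $\lambda_i^{\uparrow}(L_k^{\omega}(X))\le\lambda_i^{\uparrow}(M)$ for all $1\le i\le f_k(X')$.

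Next I will write $L_k^{\omega}(X')=M+D$ and invoke the Weyl-type bound in Lemma \ref{lem-12}, giving $\lambda_i^{\uparrow}(L_k^{\omega}(X'))\ge\lambda_i^{\uparrow}(M)+\lambda_1^{\uparrow}(D)$. Combining this with the compression bound from the previous step reduces the theorem to the single estimate $\lambda_1^{\uparrow}(D)\ge -(k+2)\,S_k^{\omega}(X,X')$. Using the explicit entry formula from Lemma \ref{lem-3} on both $L_k^{\omega}(X)$ and $L_k^{\omega}(X')$ and subtracting, one sees that $-D$ is supported as follows: on the diagonal, $(-D)_{\sigma,\sigma}=\sum_{v\in \operatorname{lk}_X(\sigma)\setminus \operatorname{lk}_{X'}(\sigma)}\omega(v)$; on the off-diagonal, $|(-D)_{\sigma,\tau}|=\omega(\tau\setminus\sigma)$ precisely when $\sigma,\tau\in X'(k)$ satisfy $|\sigma\cap\tau|=k$ and $\sigma\cup\tau\in X(k+1)\setminus X'(k+1)$, and $(-D)_{\sigma,\tau}=0$ otherwise. (The point is that the $X$-relation $\sim$ and the $X'$-relation $\sim$ can differ on a pair $\{\sigma,\tau\}\subseteq X'(k)$ only when $\sigma\cup\tau$ is a simplex of $X$ but not of $X'$.)

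The final step is a Gershgorin estimate (Theorem \ref{thm-4}) applied to $-D$: any eigenvalue $\mu$ satisfies $\mu\le (-D)_{\sigma,\sigma}+\sum_{\tau\ne\sigma}|(-D)_{\sigma,\tau}|$ for some $\sigma\in X'(k)$. The diagonal term is at most $S_k^{\omega}(X,X')$ by definition. For the off-diagonal row sum, I will reparametrize each nonzero contribution by the pair $(u,v)$ with $u=\sigma\setminus\tau\in\sigma$ and $v=\tau\setminus\sigma$; the conditions $\sigma\cup\tau\in X(k+1)\setminus X'(k+1)$ and $\tau\in X'(k)$ translate to $v\in \operatorname{lk}_X(\sigma)\setminus \operatorname{lk}_{X'}(\sigma)$ together with $(\sigma\setminus\{u\})\cup\{v\}\in X'(k)$. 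For each fixed $v$ there are at most $|\sigma|=k+1$ admissible $u$, each contributing $\omega(v)$, so the off-diagonal row sum is at most $(k+1)\sum_{v\in \operatorname{lk}_X(\sigma)\setminus \operatorname{lk}_{X'}(\sigma)}\omega(v)\le(k+1)\,S_k^{\omega}(X,X')$. Adding the two contributions gives $\lambda_{\max}(-D)\le(k+2)\,S_k^{\omega}(X,X')$, i.e., $\lambda_1^{\uparrow}(D)\ge -(k+2)\,S_k^{\omega}(X,X')$, which completes the chain of inequalities.

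The main obstacle will be the combinatorial bookkeeping in the Gershgorin step, namely confirming that the off-diagonal support of $-D$ is exactly governed by pairs $(u,v)$ with $v$ in the "extra link" $\operatorname{lk}_X(\sigma)\setminus \operatorname{lk}_{X'}(\sigma)$ and the swapped face still lying in $X'(k)$—this requires a careful comparison of the $\sim$-relation in $X$ versus $X'$, together with the observation that self-adjointness of $-D$ (inherited from $L_k^{\omega}(X')$ and the compression $M$) lets Lemma \ref{lem-12} apply despite the matrix representation being not literally symmetric in the chosen non-orthonormal basis. Once this bookkeeping is in place, the factor $k+2=1+(k+1)$ emerges naturally as the sum of the diagonal contribution and the $(k+1)$-fold bound from $|\sigma|=k+1$.
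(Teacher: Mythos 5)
Your proposal is correct and follows essentially the same route as the paper: take the principal submatrix (compression) $L'$ of $L_k^{\omega}(X)$ indexed by $X'(k)$, use Corollary \ref{cor-1} for $\lambda_i^{\uparrow}(L')\ge\lambda_i^{\uparrow}(L_k^{\omega}(X))$, apply the Weyl bound of Lemma \ref{lem-12} to the decomposition $L_k^{\omega}(X')=L'+(L_k^{\omega}(X')-L')$, and control the perturbation by Ger\v{s}gorin, with the diagonal contributing $S_k^{\omega}(X,X')$ and the off-diagonal row sum contributing at most $(k+1)S_k^{\omega}(X,X')$. Your entrywise identification of the support of $L'-L_k^{\omega}(X')$ and the $(u,v)$-reparametrization of the row sum match the paper's computation, so no gap remains.
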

\begin{proof}
	Let $L^{\prime}$ be the principal submatrix of $L_k^{\omega}(X)$ with rows and colums indexed by $X^{\prime}(k)$. By Lemma \ref{lem-3}, we have \[\left(L^{\prime}-L_k^{\omega}(X^{\prime})\right)_{\sigma, \tau}= \begin{cases}\sum _{v \in \operatorname{lk}_X(\sigma)\setminus \operatorname{lk}_{X^{\prime}}(\sigma)} \omega(v),& \text { if } \sigma=\tau, \\[4mm] -(-1)^{\varepsilon(\sigma, \tau)} \omega(\tau \setminus \sigma), & \text { if }|\sigma \cap \tau|=k, \sigma \cup \tau \in X(k+1) \setminus X^{\prime}(k+1),\\[4mm] 0, & \text { otherwise},\end{cases}\]	for every $\sigma, \tau \in X(k)$. 
	By the Geršgorin circle theorem, we obtain
	\[\begin{aligned}
		\lambda^{\downarrow}_{1}\left(L^{\prime}-L^{\omega}_k\left(X^{\prime}\right)\right)& \le \max _{\sigma\in X^{\prime}(k)}\left\{\sum _{v \in \operatorname{lk}_X(\sigma)\setminus \operatorname{lk}_{X^{\prime}}(\sigma)} \omega(v)+\sum_{ \substack{\tau\in X(k)\colon |\sigma \cap \tau|=k,\\ \sigma \cup \tau \in X(k+1) \setminus X^{\prime}(k+1)}}\omega(\tau \setminus \sigma)\right\}\\&=\max _{\sigma\in X^{\prime}(k)}\left\{\sum _{v \in \operatorname{lk}_X(\sigma)\setminus \operatorname{lk}_{X^{\prime}}(\sigma)} \omega(v)+(k+1)\sum_{ v\in \operatorname{lk}_X(\sigma)\setminus \operatorname{lk}_{X^{\prime}}(\sigma)}\omega(v)\right\}\\&=(k+2)S^{\omega}_k\left(X, X^{\prime}\right),
	\end{aligned}\]
	and thus \[\lambda^{\uparrow}_{1}\left(L^{\omega}_k\left(X^{\prime}\right)-L^{\prime}\right)=-\lambda^{\downarrow}_{1}\left(L^{\prime}-L^{\omega}_k\left(X^{\prime}\right)\right)\ge -(k+2)S^{\omega}_k\left(X, X^{\prime}\right).\]
	By Lemma \ref{lem-12} and Corollary \ref{cor-1},
	\[\begin{aligned}
		\lambda_i^{\uparrow}\left(L^{\omega}_k\left(X^{\prime}\right)\right) & \geq \lambda_i^{\uparrow}\left(L^{\prime}\right)+\lambda^{\uparrow}_{1}\left(L^{\omega}_k\left(X^{\prime}\right)-L^{\prime}\right) \\
		& \geq \lambda^{\uparrow}_i\left(L^{\omega}_k(X)\right)-(k+2)S^{\omega}_k\left(X, X^{\prime}\right).
	\end{aligned}\]
	This completes the proof.
\end{proof}

\begin{remark}
When $\omega\equiv 1$, it is easy to see that
\[S^{\omega}_k\left(X, X^{\prime}\right)=S_k\left(X, X^{\prime}\right)=\max _{\sigma \in X^{\prime}(k)}\left(\deg^{+}_X(\sigma)-\deg^{+}_{X^{\prime}}(\sigma)\right).\] Therefore, Theorem \ref{thm-2} immediately reduces to Theorem \ref{thm-1-7}.
\end{remark}

As a consequence of Theorem \ref{thm-2}, we obtain a sufficient condition for the vanishing of the reduced cohomology groups of a subcomplex $X^{\prime}$.

\[\lambda^{\uparrow}_i\left(L^{\omega}_k(X)\right) \ge S^{\uparrow}_{k+1, i}\left(L^{\omega}\left(G_X\right)+J^{\omega}\right)-k\sum _{u\in V}\omega(u)-\max _{\sigma \in X(k)}\left\{\sum_{j=0}^{k+1}(j+1)\sum _{u\in \sigma[j]}\omega(u)\right\}.\]
\begin{corollary}\label{cor-4}
	Let $X$ be a simplicial complex with vertex weight function $\omega\colon X \rightarrow \mathbb{R}_{>0}$, and let $X^{\prime}$ be a subcomplex of $X$. If \[\begin{aligned}
	    \lambda_{1}^{\uparrow}\left(L^{\omega}(G_X)+J^{\omega}\right)&>\frac{k}{k+1} \sum_{v \in V} \omega(v)+\frac{1}{k+1}\max _{\sigma \in X(k)}\left\{\sum_{j=0}^{k+1}(j+1)\sum _{u\in \sigma[j]}\omega(u)\right\}\\&\quad+\frac{k+2}{k+1} S^{\omega}_k\left(X, X^{\prime}\right),
	\end{aligned}\] then $\widetilde{H}^k\left(X^{\prime}\right)=0$.
\end{corollary}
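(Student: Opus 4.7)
The plan is to combine three earlier results in a straightforward chain. First, by the discrete Hodge theorem (Theorem \ref{thm-6}), we have $\widetilde{H}^k(X';\mathbb{R}) \cong \ker L_k^{\omega}(X')$, so it suffices to prove that $\lambda_1^{\uparrow}(L_k^{\omega}(X')) > 0$ under the stated hypothesis. This reduces the problem to bounding the spectral gap of $L_k^{\omega}(X')$ from below.

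Next, I would apply Theorem \ref{thm-2} (in the case $i=1$) to pass from the subcomplex $X'$ to the ambient complex $X$, obtaining
\[
\lambda_1^{\uparrow}\left(L_k^{\omega}(X')\right) \ge \lambda_1^{\uparrow}\left(L_k^{\omega}(X)\right) - (k+2)\,S_k^{\omega}(X,X').
\]
Then I would apply Theorem \ref{thm-11} with $i=1$ to bound $\lambda_1^{\uparrow}(L_k^{\omega}(X))$ in terms of the weighted graph Laplacian of $G_X$:
\[
\lambda_1^{\uparrow}\left(L_k^{\omega}(X)\right) \ge S_{k+1,1}^{\uparrow}\!\left(L^{\omega}(G_X)+J^{\omega}\right) - k\sum_{v\in V}\omega(v) - \max_{\sigma\in X(k)}\left\{\sum_{j=0}^{k+1}(j+1)\!\sum_{u\in\sigma[j]}\omega(u)\right\}.
\]

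The key elementary observation is that the smallest sum of $k+1$ eigenvalues of any Hermitian-type operator is at least $(k+1)$ times its smallest eigenvalue; explicitly,
\[
S_{k+1,1}^{\uparrow}\!\left(L^{\omega}(G_X)+J^{\omega}\right) = \sum_{i=1}^{k+1}\lambda_i^{\uparrow}\!\left(L^{\omega}(G_X)+J^{\omega}\right) \ge (k+1)\,\lambda_1^{\uparrow}\!\left(L^{\omega}(G_X)+J^{\omega}\right).
\]
Chaining the three inequalities together yields
\[
\lambda_1^{\uparrow}\!\left(L_k^{\omega}(X')\right) \ge (k+1)\lambda_1^{\uparrow}\!\left(L^{\omega}(G_X)+J^{\omega}\right) - k\sum_{v\in V}\omega(v) - \max_{\sigma\in X(k)}\!\Big\{\sum_{j=0}^{k+1}(j+1)\sum_{u\in\sigma[j]}\omega(u)\Big\} - (k+2)S_k^{\omega}(X,X').
\]
The hypothesis on $\lambda_1^{\uparrow}(L^{\omega}(G_X)+J^{\omega})$ is precisely the one that forces the right-hand side to be strictly positive, after dividing through by $k+1$. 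Hence $\lambda_1^{\uparrow}(L_k^{\omega}(X')) > 0$, giving $\widetilde{H}^k(X';\mathbb{R})=0$.

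There is no real obstacle here — the corollary is a direct packaging of Theorem \ref{thm-2}, Theorem \ref{thm-11}, and the Hodge identification. The only small subtlety to verify is the inequality $S_{k+1,1}^{\uparrow}(M) \ge (k+1)\lambda_1^{\uparrow}(M)$, which is immediate from the monotonicity of the ordered spectrum. Assuming $L^{\omega}(G_X)+J^{\omega}$ has at least $k+1$ eigenvalues (i.e.\ $|V|\ge k+1$, which is automatic whenever $X^{(k)}\ne\emptyset$), the argument goes through verbatim.
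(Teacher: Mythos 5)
Your proposal is correct and follows essentially the same route as the paper: chain Theorem \ref{thm-2} (with $i=1$) and Theorem \ref{thm-11} (with $i=1$), bound $S_{k+1,1}^{\uparrow}\left(L^{\omega}(G_X)+J^{\omega}\right)$ below by $(k+1)\lambda_1^{\uparrow}\left(L^{\omega}(G_X)+J^{\omega}\right)$, and conclude via the discrete Hodge theorem that a positive spectral gap forces $\widetilde{H}^k(X';\mathbb{R})=0$. No gaps.
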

\begin{proof}
	By Theorem \ref{thm-2} and Theorem \ref{thm-11}, we have
	\[\begin{aligned}
		&\lambda_1^{\uparrow}\left(L_k^{\omega}\left(X^{\prime}\right)\right)\\&\geq \lambda_1^{\uparrow}(L_k^{\omega}(X))-(k+2) S^{\omega}_k\left(X, X^{\prime}\right)
		\\& \ge S_{k+1, 1}^{\uparrow}\left(L^{\omega}(G_X)+J^{\omega}\right)-k \sum_{v \in V} \omega(v)-\max _{\sigma \in X(k)}\left\{\sum_{j=0}^{k+1}(j+1)\sum _{u\in \sigma[j]}\omega(u)\right\}\\&\quad -(k+2) S^{\omega}_k\left(X, X^{\prime}\right)
		\\&\ge (k+1)\lambda_{1}^{\uparrow}\left(L^{\omega}(G_X)+J^{\omega}\right)
	-k \sum_{v \in V} \omega(v)-\max _{\sigma \in X(k)}\left\{\sum_{j=0}^{k+1}(j+1)\sum _{u\in \sigma[j]}\omega(u)\right\}\\&\quad-(k+2) S^{\omega}_k\left(X, X^{\prime}\right).
	\end{aligned}\]
	Therefore, if \[\begin{aligned}
	    \lambda_{1}^{\uparrow}\left(L^{\omega}(G_X)+J^{\omega}\right)&>\frac{k}{k+1} \sum_{v \in V} \omega(v)+\frac{1}{k+1}\max _{\sigma \in X(k)}\left\{\sum_{j=0}^{k+1}(j+1)\sum _{u\in \sigma[j]}\omega(u)\right\}\\&\quad+\frac{k+2}{k+1} S^{\omega}_k\left(X, X^{\prime}\right),
	\end{aligned}\] then $\widetilde{H}^k\left(X^{\prime};\mathbb{R}\right)=0$.
\end{proof}

\begin{remark}
If we restrict to $\omega \equiv 1$, then
\[S^{\omega}_k\left(X, X^{\prime}\right)=S_k\left(X, X^{\prime}\right)\]
and
\[\lambda_{1}^{\uparrow}\left(L^{\omega}(G_X)+J^{\omega}\right)=\lambda_{1}^{\uparrow}\left(L(G_X)+J\right)=\lambda_{2}^{\uparrow}\left(L(G_X)\right).\]
In this case, Corollary \ref{cor-4} generalizes the result established by Shukla and Yogeshwaran (see \cite[Corollary 1.5]{SY2020}) from clique complexes to general simplicial complexes, since
\[
\frac{1}{k+1}\max _{\sigma \in X(k)}\left\{\sum_{j=0}^{k+1}(j+1)\sum _{u\in \sigma[j]}\omega(u)\right\}= 0
\]
when $X$ is a clique complex.
\end{remark}

\section{Concluding remarks}

For any simplicial complex $X$ on the vertex set $V$ with a positive weight function $\omega$ and any $0 \le k \le \operatorname{dim}(X)$, Theorem \ref{thm-18} implies that the upper bound $\sum_{v \in V} \omega(v)$ is an eigenvalue of $L_k^{\omega}(X)$ whenever either $f_k(X) > \binom{n-1}{k+1}$ or $f_{k+1}(X) > \binom{n-1}{k+2}$ holds. However, this condition is not optimal. Below we present an example of a simplicial complex that does not satisfy it but nonetheless has $\sum_{v \in V} \omega(v)$ as its largest eigenvalue.
\begin{example}
	\begin{figure}[htbp]
		\centering
		\includegraphics[width=5cm]{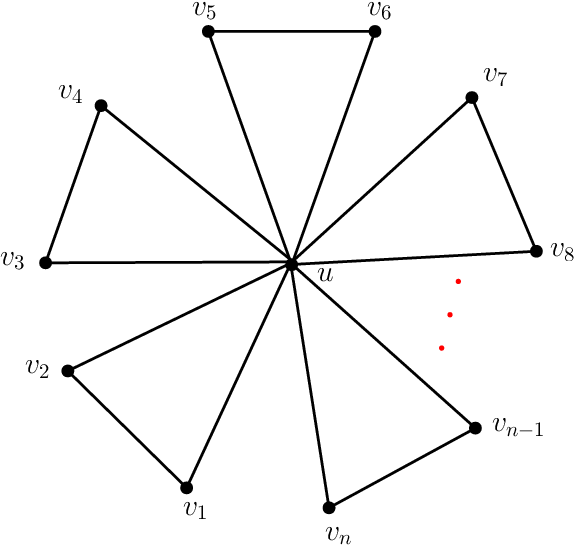}
		\caption{The friendship graph $F_n$}
		\label{fig-4}
	\end{figure}
	The friendship graph $F_n$ is obtained by gluing together $n$ triangles along a single common vertex (see Fig. \ref{fig-4}). Let $X$ denote the clique complex of $F_n$ ($n>2$). 
Note that $\operatorname{dim}(X)=2$. Moreover, 
\[
f_0(X)=2n+1>\binom{2n}{1}, \ \ 
f_1(X)=3n<\binom{2n}{2}, \ \ 
f_2(X)<\binom{2n}{3}.
\]
Therefore, $f_1(X)$ and $f_2(X)$ do not satisfy the sufficient condition in Theorem \ref{thm-18}. However,  a straightforward computation shows that $\sum_{v\in V(X)}\omega(v)$ is an eigenvalue 
of $L_{1}^{\omega}(X)$ with multiplicity $1$.
\end{example}
This naturally raises the following problem.
\begin{problem}
	Determine the necessary and sufficient conditions under which the upper bound in Theorem \ref{thm-18} is attained.
\end{problem}

Theorem \ref{thm-22} shows that the simplicial complex
\[
\left(\Delta_d^{(d-1)}\right)^{*(n-k-1)} * \Delta_{(d+1)(k+1)-dn-1},
\]
on vertex set $V$, equipped with a positive vertex weight function $\omega$ that is constant on the vertex set of each copy of $\Delta_d^{(d-1)}$, is the unique simplicial complex whose $k$-th vertex-weighted spectral gap attains the lower bound\begin{equation}\label{eq-nlk}
	(d+1)\min_{\sigma\in X(k)}\sum_{v\in \sigma} \omega(v)-d\sum_{v\in V}\omega(v),
\end{equation}
provided by Corollary \ref{cor-l-1} for some $-1 \le k \le \dim(X)$. 
 Here, uniqueness refers to the combinatorial structure (i.e., up to isomorphism); the vertex weight function $\omega$ itself is not unique and may vary while still attaining the bound. It is clear that the bound in \eqref{eq-nlk} is weaker than the lower bound
\begin{equation}\label{eq-lk}
	(d+1)\min_{\sigma\in X(k)}\sum_{v\in \sigma\cup \operatorname{lk}_X(\sigma)} \omega(v)-d\sum_{v\in V}\omega(v)
\end{equation}
established in Theorem \ref{thm-1}. Motivated by Theorem \ref{thm-22}, we seek to determine whether the simplicial complexes whose $k$-th vertex-weighted  spectral gap attains the lower bound in \eqref{eq-lk} for some $-1 \le k \le \operatorname{dim}(X)$ are unique, and to provide a complete characterization of these complexes.

If $k=\operatorname{dim}(X)$, then $\operatorname{lk}_X(\sigma)=\emptyset$ for any $\sigma\in X(k)$, and thus the bound in \eqref{eq-lk} reduces to the bound $\min_{\sigma\in X(k)}\sum_{v \in \sigma} \omega(v)-d\sum_{v\in V}\omega(v)$ given in \eqref{eq-nlk}. We immediately obtain the following corollary, which indicates the uniqueness of the simplicial complex attaining the lower bound in this case.
\begin{corollary}
Let $X$ be a simplicial complex on vertex set $V$ of size $n$ with $h(X)=d$, and endowed with a vertex weight function $\omega\colon  X \rightarrow \mathbb{R}_{>0}$. If
\[\lambda_1^{\uparrow}(L_k^{\omega}(X))=(d+1)\min _{\sigma\in X(k)}\sum_{v\in \sigma\cup\operatorname{lk}_X(\sigma)}\omega(v)-d\sum_{v\in V}\omega(v)\]
for $k=\operatorname{dim}(X)$,  then $X \cong\left(\Delta_d^{(d-1)}\right)^{*(n-k-1)} * \Delta_{(d+1)(k+1)-dn-1}$ and $\omega|_{V\left(\Delta_d^{(d-1)} \right)}$ is constant for each $\Delta_d^{(d-1)}$.
\end{corollary}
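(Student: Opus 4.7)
The plan is to derive the corollary as an immediate specialization of Theorem \ref{thm-22}, using the observation that for $k = \dim(X)$ every $k$-simplex of $X$ is maximal. First I would note that by the very definition of the link, if $\sigma \in X(k)$ and $k = \dim(X)$, then no $(k+1)$-simplex of $X$ contains $\sigma$, and hence $\operatorname{lk}_X(\sigma) = \emptyset$. Consequently, for every $\sigma \in X(k)$,
\[
\sum_{v \in \sigma \cup \operatorname{lk}_X(\sigma)} \omega(v) \;=\; \sum_{v \in \sigma} \omega(v),
\]
so the hypothesis of the corollary rewrites as
\[
\lambda_1^{\uparrow}(L_k^{\omega}(X)) \;=\; (d+1)\min_{\sigma \in X(k)} \sum_{v \in \sigma} \omega(v) \;-\; d \sum_{v \in V} \omega(v),
\]
which is precisely the equality condition treated in Theorem \ref{thm-22}.

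With the hypotheses matched, I would invoke Theorem \ref{thm-22} directly. Its conclusion (i) gives $\dim(X) = k$ together with $X \cong (\Delta_d^{(d-1)})^{*(n-k-1)} * \Delta_{(d+1)(k+1)-dn-1}$, while conclusion (ii) gives that $\omega$ is constant on $V(\Delta_d^{(d-1)})$ for each copy of $\Delta_d^{(d-1)}$ in the join. Since the hypothesis $k = \dim(X)$ is built into our assumption and Theorem \ref{thm-22} independently forces this equality, both conditions are compatible and no contradiction arises.

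There is no genuine obstacle here: the entire content is the reduction $\operatorname{lk}_X(\sigma) = \emptyset$ for top-dimensional $\sigma$, after which Theorem \ref{thm-22} supplies the full conclusion. The only bookkeeping to verify is that the parameters $n = |V|$, $d = h(X)$, and $k = \dim(X)$ appearing in the corollary coincide with those in Theorem \ref{thm-22}, and that the exponent $(d+1)(k+1) - dn - 1$ is a legitimate nonnegative index under the given data, both of which are automatic from the setup.
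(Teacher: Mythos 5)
Your proposal is correct and matches the paper's own argument: the paper likewise observes that for $k=\operatorname{dim}(X)$ every $\sigma\in X(k)$ has $\operatorname{lk}_X(\sigma)=\emptyset$, so the hypothesis collapses to that of Theorem \ref{thm-22}, which then yields both conclusions. Nothing further is needed.
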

If $k=-1$, then $X(k)=\left\{\emptyset\right\}$ and $\operatorname{lk}_X(\emptyset)=V$. In this case, the bound in \eqref{eq-lk} reduces to $\sum_{v\in V}\omega(v)$. It is a basic fact that, for any simplicial complex $X$ on the vertex set $V$, endowed with a positive vertex weight function $\omega$, the spectrum is $\mathbf{s}^{\omega}_{-1}\left(X\right)=\left\{\sum_{v\in V}\omega(v)\right\}$, and hence the spectral gap automatically attains this bound. Consequently, the simplicial complexes achieving this lower bound are far from unique.
For $0 \le k < \operatorname{dim}(X)$, however, a complete characterization of all simplicial complexes whose $k$-th vertex-weighted spectral gap attains the lower bound in \eqref{eq-lk} remains a challenging open problem. 
\begin{problem}
	Characterize the simplicial complexes whose $k$-th spectral gap attains the lower bound in \eqref{eq-lk} for some $0 \le k < \operatorname{dim}(X)$.
\end{problem}

Recall that, for any simplicial complex $X$ on the vertex set $V$ with a positive weight function $\omega$ and any $0 \le k \le \operatorname{dim}(X)$, Theorem \ref{thm-11} provides the following lower bound for the $i$-th smallest eigenvalue of $L^{\omega}_k(X)$:
\begin{equation}\label{eq-5l}
	\lambda^{\uparrow}_i\bigl(L^{\omega}_k(X)\bigr) \ge 
	S^{\uparrow}_{k+1, i}\Bigl(L^{\omega}(G_X)+J^{\omega}\Bigr)
	- k \sum_{u \in V} \omega(u)
	- \max_{\sigma \in X(k)} \Biggl\{ \sum_{j=0}^{k+1} (j+1) \sum_{u \in \sigma[j]} \omega(u) \Biggr\}.
\end{equation}
As illustrated by the following example, this lower bound is sharp.
\begin{example}
For the complete simplicial complex $\Delta_{n-1}$ on $n$ vertices, let $V$ and $\omega$ denote its vertex set and a positive weight function, respectively. From Corollary \ref{cor-2}, we have
\[\mathbf{s}_{k }^{\omega}\left(\Delta_{n-1}\right)=\left\{\left(\sum_{v\in V}\omega(v)\right)^{\left(\binom{n}{k+1}\right)} \right\},\] which implies that $\lambda^{\uparrow}_i\bigl(L^{\omega}_k(\Delta_{n-1})\bigr)=\sum_{v\in V}\omega(v)$ for any $1\le i\le f_k(\Delta_{n-1})$. Moreover, the underlying graph of $\Delta_{n-1}$ satisfies $G_{\Delta_{n-1}} \cong K_n$. Hence, $L^{\omega}(G_{\Delta_{n-1}}) + J^{\omega}$ is a diagonal matrix with all diagonal entries equal to $\sum_{v \in V} \omega(v)$, and thus $S^{\uparrow}_{k+1, i}\Bigl(L^{\omega}(G_X)+J^{\omega}\Bigr)=(k+1)\sum_{v \in V} \omega(v)$. Finally, since $\Delta_{n-1}$ itself is a clique complex, the last term in \eqref{eq-5l} vanishes.Therefore, the lower bound in Theorem \ref{thm-11} is attained for $\Delta_{n-1}$.
\end{example}
So, we end up this paper by the following problem
\begin{problem}
	Characterize the simplicial complexes $X$ that attain the equality of \eqref{eq-5l} for $-1\le k\le \operatorname{dim}(X)$ and $1\le i\le f_k(X)$.
\end{problem}

\section*{Acknowledgement}

This work is supported by National Natural Science Foundation of China (No. 12371362).

% 在文档末尾的参考文献部分之前
\clearpage
\addcontentsline{toc}{section}{References}  % 对于 article
\bibliographystyle{plain}
{\small}
\end{document}